\numberwithin{equation}{section}
\newtheoremstyle{thmlemcorr}{10pt}{10pt}{\itshape}{}{\bfseries}{.}{10pt}{{\thmname{#1}\thmnumber{ #2}\thmnote{ (#3)}}}
\newtheoremstyle{thmlemcorr*}{10pt}{10pt}{\itshape}{}{\bfseries}{.}\newline{{\thmname{#1}\thmnumber{ #2}\thmnote{ (#3)}}}
\newtheoremstyle{remexample}{10pt}{10pt}{}{}{\bfseries}{.}{10pt}{{\thmname{#1}\thmnumber{ #2}\thmnote{ (#3)}}}
\newtheoremstyle{ass}{10pt}{10pt}{}{}{\bfseries}{.}{10pt}{{\thmname{#1}\thmnumber{ A#2}\thmnote{ (#3)}}}
\theoremstyle{thmlemcorr}
\newtheorem{theorem}{Theorem}
\numberwithin{theorem}{section}
\newtheorem{lemma}[theorem]{Lemma}
\newtheorem{corollary}[theorem]{Corollary}
\newtheorem{proposition}[theorem]{Proposition}
\newtheorem{definition}[theorem]{Definition}
\theoremstyle{thmlemcorr*}
\newtheorem{theorem*}{Theorem}
\newtheorem{lemma*}[theorem]{Lemma}
\newtheorem{corollary*}[theorem]{Corollary}
\newtheorem{proposition*}[theorem]{Proposition}
\newtheorem{problem*}[theorem]{Problem}
\newtheorem{conjecture*}[theorem]{Conjecture}
\newtheorem{definition*}[theorem]{Definition}
\newtheorem{assumption*}[theorem]{Assumption}
\theoremstyle{remexample}
\newtheorem{remark}[theorem]{Remark}
\theoremstyle{ass}
\newcommand{\Acal}{\mathcal{A}}
\newcommand{\Dcal}{\mathcal{D}}
\newcommand{\Fcal}{\mathcal{F}}
\newcommand{\Lcal}{\mathcal{L}}
\newcommand{\Scal}{\mathcal{S}}
\newcommand{\N}{\mathbb{N}}
\newcommand{\R}{\mathbb{R}}
\newcommand{\C}{\mathbb{C}}
\newcommand{\Z}{\mathbb{Z}}
\newcommand{\eps}{\epsilon}
\def\XXint#1#2#3{{\setbox0=\hbox{$#1{#2#3}{\int}$}
\vcenter{\hbox{$#2#3$}}\kern-.5\wd0}}
\renewcommand{\eps}{\varepsilon}
\renewcommand{\epsilon}{\varepsilon}
\renewcommand{\phi}{\varphi}
\begin{document}


\title[Long-time asymptotics of solutions to fourth-order quasilinear degenerate-parabolic problems]{Long-time asymptotics and regularity estimates for weak solutions to a doubly degenerate thin-film equation in the Taylor--Couette setting
}

\author{Christina Lienstromberg \& Juan J. L. Vel\'{a}zquez}
\address{Institute of Applied Mathematics, University of Bonn, Endenicher Allee~60, 53115 Bonn, Germany}
\email{lienstromberg@iam.uni-bonn.de} 
\email{velazquez@iam.uni-bonn.de}

\begin{abstract}
We study the dynamic behaviour of solutions to a fourth-order quasilinear degenerate parabolic equation for large times arising in fluid dynamical applications. The degeneracy occurs both with respect to the unknown and with respect to the sum of its first and third spatial derivative.
The modelling equation appears as a thin-film limit for the interface separating two immiscible viscous fluid films confined between two cylinders rotating at small relative angular velocity. More precisely, the fluid occupying the layer next to the outer cylinder is considered to be Newtonian, i.e. it has constant viscosity, while we assume that the layer next to the inner cylinder is filled by a shear-thinning power-law fluid.

Using energy methods, Fourier analysis and suitable regularity estimates for higher-order parabolic equations, we prove global existence of positive weak solutions in the case of low initial energy. Moreover, these global solutions are polynomially stable, in the sense that interfaces which are initially close to a circle, converge at rate $1/t^{1/\beta}$ for some $\beta > 0$ to a circle, as time tends to infinity. 

In addition, we provide regularity estimates for general nonlinear degenerate parabolic equations of fourth order.
\end{abstract}
\vspace{4pt}







\maketitle
\bigskip

\noindent\textsc{MSC (2010): 76A05, 76A20, 35B40, 35Q35, 35K35, 35K65;}

\noindent\textsc{Keywords: Taylor--Couette flow, non-Newtonian fluid, power-law fluid, degenerate parabolic equation, weak solution, long-time asymptotics, thin-film equation}
\bigskip

\noindent\textsc{Acknowledgement. } The authors have been supported by the Deutsche Forschungsgemeinschaft (DFG, German Research Foundation) through the collaborative research centre `The mathematics of emerging effects' (CRC 1060, Project-ID  211504053) and the Hausdorff Center for Mathematics (GZ 2047/1, Project-ID 390685813).

\bigskip

\section{Introduction}
In this paper, we study regularity properties and the asymptotic behaviour of positive weak solutions to the evolution problem 
\begin{equation} \label{eq:PDE}
    \begin{cases}
        \partial_t h + \partial_\theta \bigl(h^{\alpha+2} |\partial_\theta h + \partial_\theta^3 h|^{\alpha-1} (\partial_\theta h + \partial_\theta^3 h)\bigr) = 0, 
        & 
        t > 0,\ \theta \in S^1 = (0,2\pi)
        \\
        h(0,\cdot) = h_0(\cdot), 
        & 
        \theta \in S^1,
    \end{cases}
\end{equation}
with periodic boundary conditions for exponents $\alpha > 1$. Observe that this is a
quasilinear problem of fourth order which is doubly-degenerate in the sense that we lose uniform parabolicity if either $h=0$ or $\partial_\theta h + \partial_\theta^3 h = 0$. Problem \eqref{eq:PDE} appears for instance as a model for a surface-tension driven flow of a non-Newtonian liquid in a circular geometry. More precisely, it may describe the evolution of an interface separating two immiscible viscous fluid films located between two concentric rotating cylinders as sketched in Figure \ref{figdom}. Here, the liquid layer located next to the outer cylinder is assumed to be filled by a Newtonian fluid with constant viscosity $\mu_+ > 0$, whereas the layer located next to the inner cylinder is assumed to be filled by a non-Newtonian fluid with a strain rate-dependent power-law rheology. 
\begin{center}
   \begin{figure}[h]
    \center
    \includegraphics[width=90mm]{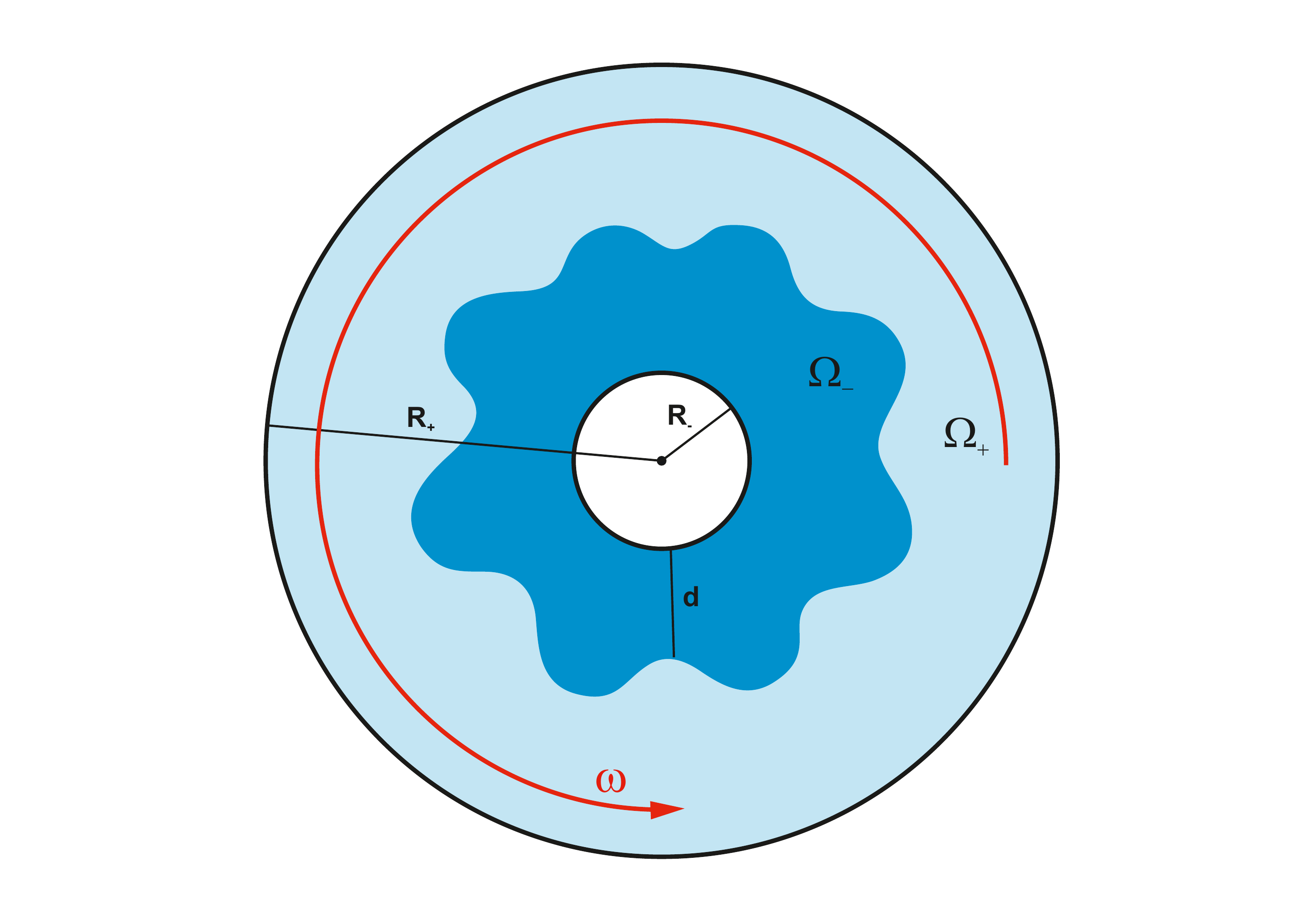}
    \caption{Two-phase flow of liquids confined between two rotating cylinders}
    \label{figdom}
    \end{figure}
\end{center}
That is, the viscosity $\mu_-$ of the non-Newtonian fluid is a function of the strain rate $s \in \R$ and it is given by
\begin{equation}\label{eq:power-law}
    \mu(\left|s\right|) = \mu_0 \left|s\right|^{\frac{1}{\alpha}-1}, \quad s \in \R.
\end{equation}
Fluids whose rheology is defined by \eqref{eq:power-law} are also called \emph{Ostwald--de Waele fluids}. The parameter $\mu_0 > 0$ is the so-called flow-consistency index and the parameter $\alpha > 0$ is the flow behaviour exponent which determines the rheological behaviour of the fluid. More precisely, fluids of the form \eqref{eq:power-law} are called Newtonian if $\alpha=1$. For $\alpha < 1$ the corresponding fluids are called shear-thickening as their viscosity increases with increasing shear rate. Conversely, if $\alpha > 1$, the viscosity decreases with increasing shear rate and the fluids are called shear-thinning fluids. The Newtonian case has been treated in \cite{PV1} and the shear-thickening case has been studied in \cite{LPCV:2022}. More details on these two cases are provided in Subsection \ref{ssec:known_results}. 

The main subject of the present work is to describe rigorously the asymptotic behaviour of positive weak solutions to problem \eqref{eq:PDE} for large times in the shear-thinning regime $\alpha > 1$. However, it turns out that such a precise description of the asymptotic behaviour of solutions requires good (interior) regularity estimates. These regularity estimates constitute a relevant part of this paper and might be useful in a broader context of fourth-order nonlinear degenerate parabolic partial differential equations. In the case of second-order parabolic equations similar regularity estimate have been developed, cf. for instance \cite{DiBenedetto}.

\medskip

\noindent\textbf{\textsc{Notation. }}
We briefly introduce some notation used throughout this paper. We identify $S^1$ with the interval $[0,2\pi]$ and functions $\phi \in L_p(S^1)$ with $2\pi$-periodic functions $\phi \in L_{p,\text{loc}}(\R)$, where $L_p(\R)$ denotes the usual Lebesgue space. 
Moreover, by $W^k_p(S^1)$ and $H^k(S^1)$ we denote the usual Sobolev spaces, defined as the closure of the restriction of $2\pi$-periodic functions in $C^{\infty}(\R)$ to the interval $[0,2\pi]$ with respect to the $W^k_p((0,2\pi))$-norm, respectively the $H^k((0,2\pi))$-norm. 
Finally, we consider $W^k_p(S^1)$ and $H^k(S^1)$ as closed subspaces of the complex spaces $W^{k}_p(S^1;\C)$ and  $H^{k}(S^1;\C)$, respectively. This allows us in particular to write any function $v\in H^k(S^1)$ in terms of its Fourier series
\begin{equation*}
	v(\theta)=\sum_{n\in\Z} v_n e^{in\theta},
	\quad \theta \in S^1, 
\end{equation*}
where $v_n=\bar{v}_{-n}$.

\medskip

\subsection{Known results and the Newtonian case $\alpha = 1$} \label{sec:known_results_Newtonian}
In the case of two Newtonian fluid films, the evolution equation for the separating interface becomes
\begin{equation} \label{eq:intro_Newtonian}
    \begin{cases}
        \partial_t h + \partial_\theta\left( h^3 (\partial_\theta h + \partial_\theta^3 h)\right)
        =
        0,
        &
        t > 0,\ \theta \in S^1
        \\
        h(0,\cdot) = h_0(\cdot),
        &
        \theta \in S^1.
    \end{cases}
\end{equation}
This problem is studied in \cite{PV1}, where the authors prove the following results. 
\begin{itemize}
    \item Global existence and uniqueness of positive solutions 
        $$h \in C\bigl([0,\infty);H^4(S^1)\bigr) \cap C^1\bigl((0,\infty);H^4(S^1)\bigr)
        $$
        for positive initial values $h_0 \in H^4(S^1)$ that are close to circles centered at the origin (i.e. at the common center of the rotating cylinders).
    \item Steady states are given by functions of the form $h(\theta) = \bar{h}_0 + \kappa_1 \cos(\theta) + \kappa_{-1} \sin(\theta)$ with $\kappa_{\pm 1} \in \R$. Thus, for every $\bar{h}_0 > 0$ this defines a two-dimensional manifold of steady states.
    \item The manifold of steady states is locally stable and solutions in a neighbourhood of it approach it exponentially fast.
\end{itemize}
Moreover, in \cite{PV1} the authors also study the more involved model
\begin{equation} \label{eq:intro_Newtonian_Burgers}
    \begin{cases}
        \partial_t h + \partial_\theta\left(\tfrac{h^2}{2}\right)
        +
        \partial_\theta\left( h^3 (\partial_\theta h + \partial_\theta^3 h)\right)
        =
        0,
        &
        t > 0,\ \theta \in S^1
        \\
        h(0,\cdot) = h_0(\cdot),
        &
        \theta \in S^1,
    \end{cases}
\end{equation}
for which they prove the following results.
\begin{itemize}
    \item Global existence and uniqueness of solutions  for positive initial values in $H^4(S^1)$ that are close to circles centered at the origin.
    \item The only steady states $h$ that are close to constants are given by positive constant solutions $h\equiv\bar{h}_0 >0$.
    \item As $t\to \infty$, solutions which are initially close to a circle centered at the origin converge at rate $1/t$ to a circle and, in addition the center of the circle spirals at rate $1/\sqrt{t}$ to the origin.
    \item Non-existence of travelling-wave solutions, where the change of $h$ is a small along the whole wave.
\end{itemize}
Problem \eqref{eq:intro_Newtonian} is also studied, both analytically and numerically, in \cite{renardy}, where the author is concerned with the stability of the two-phase flow for different ranges of viscosities, densities and surface tensions, when gravity is neglected.
Much more literature is available dealing with the dynamics of one single Newtonian fluid between two concentric rotating cylinders. Among other physical and mathematical contributions it is worthwhile to mention \cite{Baumert,Chandra,Chossat,Drazin,Schlichting,renardy}.

\medskip

\subsection{Known results in the shear-thickening case $0 < \alpha < 1$} \label{ssec:known_results}
We briefly summarise what is known about problem \eqref{eq:PDE} in the case of flow-behaviour exponents $0 < \alpha < 1$, i.e. when the non-Newtonian fluid located next to the interior cylinder is shear-thickening. This problem is studied in \cite{LPCV:2022}, where the following results are proved.
\begin{itemize}
    \item Local-in-time existence of positive weak solutions (this has even been proved for all flow-behaviour exponents $\alpha > 0$).
    \item Solutions which are initially close to a circle centered at the origin converge to a circle \emph{in finite time} $0 < t_\ast < \infty$.
    \item These positive weak solutions exists globally in time and keep their circular shape for all times $t \geq t_\ast$. In particular, the center of the limit circle, which does in general not coincide with the origin, does not move for times $t \geq t_\ast$.
\end{itemize}

\medskip

\subsection{The shear-thinning case $\alpha > 1$ -- main results of the paper}
As already mentioned, the contribution of the present paper is twofold. The main issue is to describe the asymptotic behaviour of positive weak solutions to \eqref{eq:PDE} in the shear-thinning regime $\alpha > 1$. However, it turns out that this asymptotic analysis requires good interior regularity estimates for the corresponding solutions. 

Concerning the asymptotic behaviour of positive weak solutions to \eqref{eq:PDE} with $\alpha > 1$ the main results of this paper are the following.
\begin{itemize}
    \item Steady states of \eqref{eq:PDE} are given by functions of the form $h(\theta) = \bar{h}_0 + \kappa_{-1} \cos(\theta) + \kappa_1 \sin(\theta),\ \kappa_{\pm 1} \in \R$, which, in the limit of a vanishing height of the thin non-Newtonian fluid layer, correspond to circular interfaces, not necessarily centered at the origin.
    \item For positive initial data which is $\eps$-close to the manifold of steady states in $H^1(S^1)$, problem \eqref{eq:PDE} possesses globally in time defined positive weak solutions.
    \item The shapes of these global solutions stay, for all times $t \geq 0$, close to a circle. The center of this circle remains for all times $t \geq 0$ close to the origin, i.e. to the common center of the rotating cylinders. The distance of the circle's center to the origin is bounded by $C \eps \bigl(1 + \log(1/\eps^{1-\alpha})\bigr)$.
    \item As $t \to \infty$, solutions converge with a power-law decay $1/t^\frac{1}{\alpha-1}$ to a circle (not necessarily centered at the origin).
\end{itemize}
It is worthwhile to mention that the convergence to a circle relies on Fourier methods and energy estimates only. However, in order to get information on the position of the circle's center, one needs to control the first Fourier modes $n = \pm 1$. Using the differential equation, one observes that this requires to control the $L_1$ in time and $L_{\alpha}$ in space norm of the solution's third derivative. This lead us to prove the following interior regularity result for general fourth-order degenerate parabolic equations of the form
\begin{equation} \label{eq:parabolic_intro}
    \partial_t v + \partial_x\bigl(\Phi(v,\partial_x v,\partial_x^2 v, \partial_x^3 v)\bigr) + S(t,x) = 0,
    \quad
    t \in (1/4,1),\, x \in (-1,1),
\end{equation}
with a source term $S \in L_1\bigl((1/4,1);H^2([-1,1])\bigr)$, cf. Theorem \ref{thm:reg_estimate_general}. This regularity result states the following.
\begin{itemize}
    \item Under some structural conditions on the nonlinearity of the function $\Phi$ bounded weak solutions to \eqref{eq:parabolic_intro} satisfy the following interior regularity estimate 
    \begin{equation*}
        \int_{1/2}^1 \int_{-1/8}^{1/8} |\partial_x^3 v|^{\alpha+1}\, dx\, dt
        \leq
        C + C \|S\|_{L_1((1/4,1);H^2([-1,1]))},
    \end{equation*}
    where $C > 0$ is a generic constant related to the norm $\|v\|_{L_\infty((1/4,1)\times[-1,1])}$.
\end{itemize}
Moreover, based on this regularity estimates, an iterative argument allows us to derive an $L_1((1/2,1);L_\alpha(S^1))$-estimate for the solution's third derivative (instead of $\alpha+1$), cf. Theorem \ref{thm:improved_estimate_alpha}. Observe that this is an improvement for regions in which the third derivative is small. The corresponding result is the following (cf. Theorem \ref{thm:improved_estimate_alpha}).
\begin{itemize}
    \item For positive initial data which is $\eps$-close to a circle $h_c$ in $H^1(S^1)$, solutions satisfy
    \begin{equation*}
        \int_{t/2}^{t} \int_{S^1} |\partial_\theta^3(h - h_c)|^{\alpha}\, d\theta\, dt
        \leq
        C 
        \frac{\eps}{\bigl(1 + \bar{C} \eps^{\alpha-1}\bigr)^\frac{1}{\alpha-1}}
    \end{equation*}
    as long as they exists and do not touch the inner cylinder. 
\end{itemize}


\medskip
\noindent\textbf{\textsc{Related results.}}
The evolution equation \eqref{eq:PDE} belongs to a class of shear-thinning non-Newtonian thin-film equations that has been studied in different settings in the literature, see for instance \cite{AG:2002,AG:2004,K:2001a,King:2001b,LM:2020,MBB:1965,SW:1994}.

For multi-phase thin-film problems we refer the reader to the articles \cite{Belinchon,Escher,Laurencot}, all of them treating the Newtonian case. Moreover, a large number of two-fluid viscous flows in various geometrical settings is described in the textbook \cite{renardybook}.

A more detailed review on the above mentioned works is provided in the introduction of our previous paper \cite{LPCV:2022}.
\medskip

\subsection{Derivation of the equation from the two-phase Taylor--Couette setting}
We briefly recall from our previous work \cite{LPCV:2022}  how problem \eqref{eq:PDE} can be derived as a model equation for the interface separating two viscous, non-Newtonian immiscible fluids that are confined between two concentric cylinders rotating at a small relative velocity. For a detailed derivation of the model we refer the reader to \cite[Section 2]{LPCV:2022}.

\medskip

\noindent\textbf{\textsc{Dimensionless Navier--Stokes system for one Newtonian and one non-Newtonian fluid.}}

We consider two immiscible fluid films confined between two concentric cylinders of radii $\R_{\pm} > 0$, centered at the origin and rotating at different angular velocities. A sketch of the setting is given in Figure \ref{figdom}. 
We assume that the hydrodynamic behaviour of the two fluids is determined by the Navier--Stokes equations. After a suitable  rescaling, cf. \cite[Eq. (2.2)]{LPCV:2022}, such that the innerl cylinder is at rest and has radius $1$ and the outer cylinder rotates at angular velocity $1$ and has radius $\eta =R_+/R_- > 1$, we obtain the dimensionless Navier--Stokes system
\begin{equation} \label{eq:Navier-Stokes}
	\begin{cases}
		\rho \left(\textbf{u}^-_t + (\textbf{u}^-\cdot \nabla) \textbf{u}^-\right)
		=
		- \nabla p^- 
		+\frac{2\mu}{\text{Re}} \nabla\cdot \bigl( \mu_-\bigl(\tau \left\|\textbf{D}^-\textbf{u}^-\right\|\bigr) \textbf{D}^-\textbf{u}^-\bigr)
		&
		\text{in } \Omega_-(t)
		\\
		\bigl(\textbf{u}^+_t + (\textbf{u}^+\cdot \nabla) \textbf{u}^+\bigr)
		=
		- \nabla p^+ 
		+\frac{1}{\text{Re}} \Delta \textbf{u}^+
		&
		\text{in } \Omega_+(t)
		\\
		\nabla\cdot \textbf{u}^\pm
		=
		0
		&
		\text{in } \Omega_\pm(t).
	\end{cases}
\end{equation}
See Figure \ref{figdomthin} for a sketch of the problem in dimensionless variables. In the dimensionless Navier--Stokes system \eqref{eq:Navier-Stokes} we use the notation
$\textbf{u}^\pm(t,x) = (u^\pm(t,x),v^\pm(t,x))$ for the nondimensional velocity field at time $t > 0$ and position $x \in \R^2$ and $p^\pm$ for the pressure. Moreover, $\textbf{D}^- \textbf{u}^-=\frac{1}{2}\bigl(\nabla \textbf{u}^- + (\nabla \textbf{u}^-)^T\bigr)$ denotes the symmetric gradient of the velocity field $\textbf{u}^-$ of the inner fluid and $\left\|\textbf{D}^- \textbf{u}^-\right\| = \sqrt{\text{tr}(|\textbf{D}^- \textbf{u}^-|^2)}$. 
Denoting by $\rho_{\pm} > 0$ the densities of the inner (-), respectively outer (+) fluid, we have that $\rho = \rho_{-}/ \rho_{+}$. 
Furthermore, the fluid (+) next to the outer cylinder is assumed to be Newtonian with constant viscosity $\mu_+ > 0$, while the fluid (-) next to the inner cylinder is assumed to be a shear-thinning power-law fluid. More precisely, the constitutive law for its viscosity is $\mu_-(s) = \mu_0 |s|^{\frac{1}{\alpha}-1},\, s \in \R$, with a \emph{flow-behaviour exponent} $\alpha > 0$ and a \emph{characteristic viscosity} $\mu_0 > 0$ which is the fluid's viscosity for 
$\tau \|\textbf{D}^- \textbf{u}^-\|$ 
of order one. Here, $\tau > 0$ is the outer characteristic time induced by the relative angular velocity of the rotating cylinders. Note that having a flow-behaviour exponent $\alpha > 1$ reflects the shear-thinning behaviour of the fluid as it guarantees the viscosity to decrease with increasing strain rate. Finally, $\text{Re} = \rho_{+}\omega R_{-}^2/\mu_+$ denotes the Reynolds number, reflecting the ratio of inertia and viscous forces.

We introduce polar coordinates $\textbf{x} = (x_1,x_2) = r(\cos \theta, \sin \theta) \in \R^2$ in order to describe spatial positions in the region between the two concentric cylinders.
Therewith, the dimensionless regions $\Omega_-(t)$ and $\Omega_+(t)$, filled by the inner, respectively the outer fluid, are given by
\begin{equation} \label{eq:def_Omega_rescaled}
    \begin{cases}
        \Omega_-(t) = \left\{\textbf{x} \in \R^2;\ 1 < r < 1 + \eps h(t,\theta)\right\} &
	    \\
	    \Omega_+(t) = \left\{\textbf{x} \in \R^2;\ 1 + \eps h(t,\theta) < r < \eta\right\}, &
    \end{cases}
\end{equation}
where $\eps > 0$ denotes the positive dimensionless average height of the non-Newtonian fluid film next to the inner cylinder and $h(t,\theta) > 0$ is the function describing the interface separating the two immiscible liquid films. Throughout the whole paper we assume that the function $h(t,\theta)$ is strictly positive, i.e. the interface of the two fluid films does never touch the inner cylinder. 
The system is complemented with the dimensionless boundary conditions
\begin{equation} \label{eq:NS_boundary_cond}
    \begin{cases}
	    \textbf{u}^-
	    =
	    0,
	    &
	    \textbf{x} \in \partial B_1(0)
	    \\
    	\textbf{u}^+
    	=
    	(-x_2,x_1),
	    &
	    \textbf{x} \in \partial B_{\eta}(0)
	    \\
	    
	    \textbf{u}^-\cdot \textbf{t}
	    =
	    \textbf{u}^+\cdot \textbf{t},
	    &
	    \textbf{x} \in \partial\Omega_{-} \cap \partial\Omega_+
	    \\
	    \textbf{u}^-\cdot \textbf{n}
	    =
	    \textbf{u}^+\cdot \textbf{n}
	    =
	    V_n,
	    &
	    \textbf{x} \in \partial\Omega_{-} \cap \partial\Omega_+
	    \\
	    \textbf{t} \left(\Sigma^+ - \Sigma^-\right) \cdot \textbf{n} = 0,
	    &
	    \textbf{x} \in \partial\Omega_{-} \cap \partial\Omega_+
	    \\
	    \textbf{n} \left(\Sigma^+ - \Sigma^-\right) \cdot \textbf{n} = \gamma \kappa,
	    &
	    \textbf{x} \in \partial\Omega_{-} \cap \partial\Omega_+.
	\end{cases}
\end{equation}
\begin{center}
  \begin{figure}[h]
    \center
    \includegraphics[width=90mm]{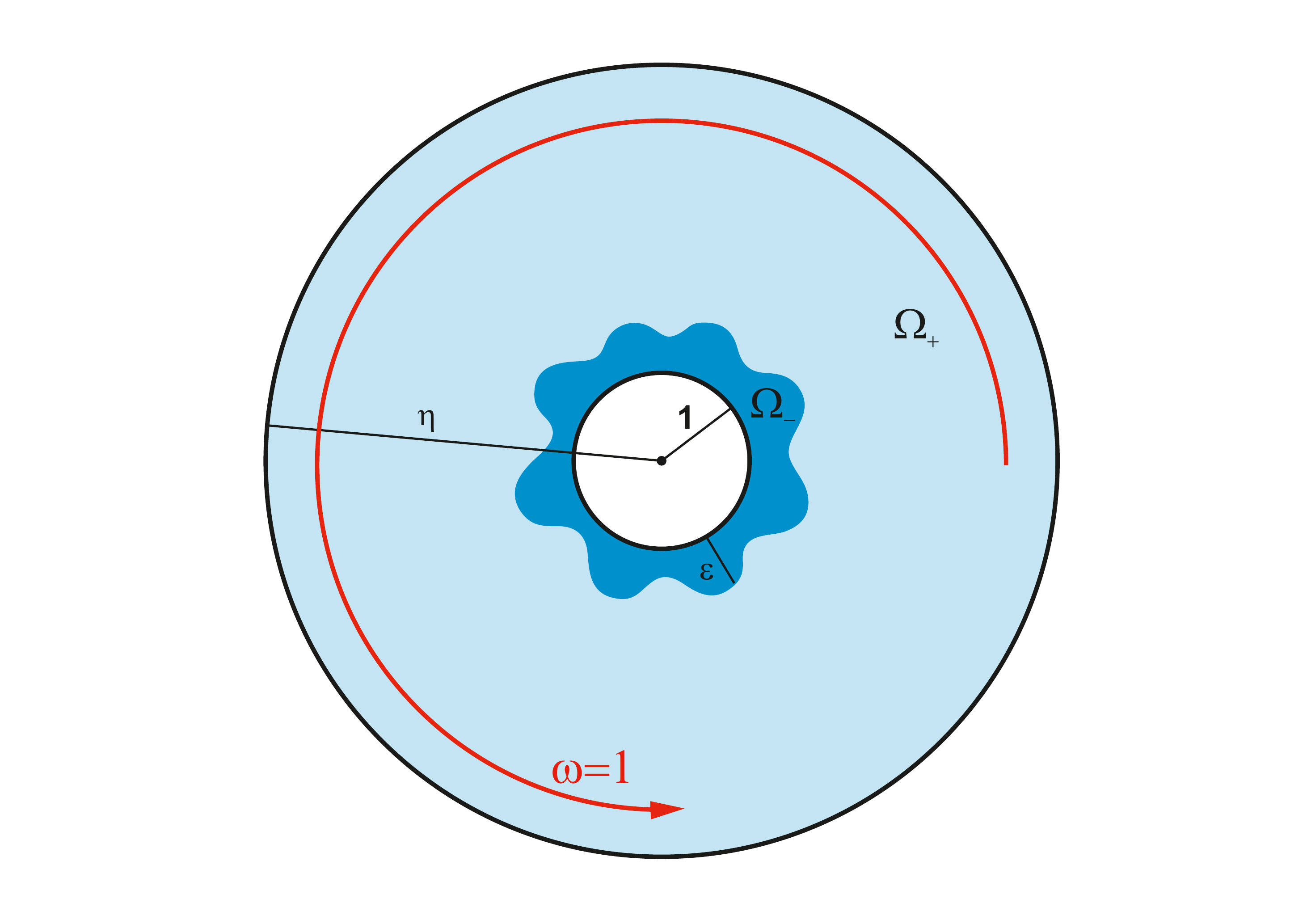}\caption{Problem setting with thin-film next to the inner cylinder in non-dimensional variables}
    \label{figdomthin}
\end{figure}
\end{center}
In the list \eqref{eq:NS_boundary_cond} of boundary conditions, $\Sigma^\pm(\textbf{u},p)$ denotes the stress tensor of the respective fluid $\pm$. Moreover, we use the notation $\textbf{n}$ and $\textbf{t}$ for the normal vector pointing from the region $\Omega_-$ occupied by the inner fluid to the region $\Omega_+$ occupied by the outer fluid and the tangential vector at the interface, respectively. Furthermore, $\kappa$ denotes the dimensionless mean curvature of the interface and $\gamma$ is the dimensionless constant surface tension. We require the tangential stress balance condition and the normal stress balance condition $\eqref{eq:NS_boundary_cond}_5$--$\eqref{eq:NS_boundary_cond}_6$ to be satisfied at the interface $\partial\Omega_{-} \cap \partial\Omega_+$. 

\medskip

\noindent\textbf{\textsc{Lubrication approximation and Taylor--Couette flows -- the non-Newtonian thin-film equation \eqref{eq:PDE}. }}
Taylor--Couette flows describe the dynamics of viscous fluids confined between two concentric cylinders.
It was experimentally observed by Maurice Couette at the end of the 19th century that the flow of a single Newtonian fluid confined between rotating cylinders is steady when the relative angular velocity of the cylinders is small and the distance between the cylinders is small compared to their radii. In his paper \cite{Couette} from 1890 Maurice provided an explicit formula for this so-called Couette flow. It has been proved mathematically in 1923 by G. I. Taylor \cite{Taylor} that the Couette flow becomes unstable as soon as the relative angular velocity of the cylinders exceeds a certain critical value and that the flow becomes more turbulent with increasing relative velocity of the rotating cylinders.

In the present paper, we are interested in the situation in which the volume of the fluid film $\Omega_-(t)$ is much smaller than the volume of $\Omega_+(t)$ such that the dynamics of the two-fluid flow is described by a small perturbation of the Taylor--Couette flow for one single fluid confined between two cylinders. Taking the formal limit $\eps \to 0$ and using formal matched asymptotics with the Taylor--Couette flow for the Newtonian fluid film, one may derive explicit expressions for the pressure $p^\pm$ as well as for the velocity fields $\textbf{u}^\pm$ of each of the fluids. The underlying Navier--Stokes system is thus reduced to a single evolution equation for the interface $h$ separating the two immiscible fluids.
Mathematically this corresponds to restricting to the leading-order system in $\eps$, i.e. to applying the so-called 
lubrication approximation \cite{Ockendon,giacomelli_otto,Gunther}.

When deriving thin-film equations from the Navier--Stokes system in the Taylor--Couette setting it turns out that there is a subtle interplay between different physical quantities. In particular, the effects of surface tension, the shear forces induced by the rotation of the cylinder and the characteristic stresses of the non-Newtonian have a strong impact on the form of the resulting evolution equation for the interface. In \cite{LPCV:2022} a more evolved equation is derived for the situation in which all three effects -- surface tension, shear induced by the rotation of the cylinders and the characteristic stress of the non-Newtonian fluid -- are comparable. 
In the present paper, we study the equation \eqref{eq:PDE} which is obtained in the following  asymptotic regime
\begin{itemize}
    \item The effects of surface tension dominate the effects of the shear forces induced by the rotation of the cylinder;
    \item the surface tension dominates the characteristic stresses of the non-Newtonian fluid or vice versa.
\end{itemize}
Moreover, it turns out that the ratio of the surface tension and the dimensionless height $\eps$ of the non-Newtonian fluid film strongly affects the nature of the resulting evolution for the separating interface.
In order to be able to observe nontrivial dynamics, we thus assume that the surface tension scales with the dimensionless average thickness $\eps$ as
\begin{equation}\label{eq:surface_tension}
    \gamma\approx\frac{b}{\varepsilon^{2}}\quad\textit{as}\quad\varepsilon\to 0, 
\end{equation} 
where $b > 0$ is a constant of order one.

\medskip

\subsection{Outline of the paper}
We close the introduction by a brief outline of the paper. In Section \ref{sec:shear-thinning} we first introduce some frequently used notation, definitions and results concerning solutions to \eqref{eq:PDE}. Moreover, we discuss steady state solutions to \eqref{eq:PDE} and their circular shape, when lubrication approximation is applied. At the end of Section \ref{sec:shear-thinning} we give a precise formulation of the main result concerning the asymptotic behaviour of positive weak solutions to \eqref{eq:PDE} which are initially close to a circle centered at the origin, cf. Theorem \ref{thm:main_results}. 
In Section \ref{sec:energy_estimates} we introduce a suitable regularised version of \eqref{eq:PDE}. For this regularised problem we prove local existence of positive (strong) solutions for positive initial data. Moreover, we derive a differential inequality for the corresponding energy functional which is then used to show that, if the shape of the interface is initially close to a circle (centered at the common center of the rotating cylinders), then, as long as the solution is strictly bounded away from zero and bounded above, it remains in an $\eps$-neighbourhood of a circle centered at the origin and decays like $1/t^\frac{1}{\alpha-1}$. However, at this point we do not have any information on the position of the circle's center. 
Section \ref{sec:regularity_estimates} provides interior regularity estimates for general nonlinear degenerate parabolic problems of fourth-order as they often appear in the context of non-Newtonian thin-film problems. More precisely, for general nonlinear degenerate parabolic problems of fourth-order we estimate (under some structural condition on the nonlinearity) the third derivative in $L_1$ in time and $L_{\alpha+1}$ in space in terms of the solution's $L_\infty$-norm and the norm of the source term.
In Section \ref{ssec:Fourier_modes} we apply the interior regularity estimates to Equation \eqref{eq:PDE_alpha>1}. After a suitable rescaling of the regularised problem, we obtain this estimate in terms of the solution's $L_\infty$-norm and the Fourier coefficients $h_{\pm 1}$.
It turns out that we need to control these Fourier modes $n = \pm 1$ in order to control the position of the circle's center. To this end, we use an iterative scheme to obtain decay estimates for $h_{\pm 1}$. This allows us at the end of Section \ref{ssec:Fourier_modes} to prove an existence result stating the following. Given initial values that are in $H^1(S^1)$ close to a circle centered at the origin, for any given finite time, there exists a positive weak solution to the regularised problem that stays bounded away from zero (i.e. does not touch the cylinder), conserves its mass and satisfies for almost all times a energy-dissipation identity.

In Section \ref{sec:sigma_to_zero} we consider the limit $\sigma \to 0^+$ of a vanishing regularisation parameter and prove global existence of positive weak solutions to the original problem \eqref{eq:PDE} for initial data that are close to circles centered at the common center of the two cylinders. The conservation of mass property and the energy-dissipation equality (for almost all positive times) are shown to be conserved in the limit of a vanishing regularisation parameter.

Finally, in Section \ref{sec:polynomial_stability} we prove polynomial stability of solutions to the original problem \eqref{eq:PDE} in the sense that solutions that are $\eps$-close to circles centered at the origin in $H^1(S^1)$, remain $\eps$-close and converge to circle at rate $1/t^\frac{1}{\alpha-1}$. Moreover, the position of the circle's center is controlled by $C\eps \bigl(1 + \log(\eps^{1-\alpha})\bigr)$ for some positive constant $C>0$.

\bigskip

\section{Global weak solutions and asymptotic behaviour for $\alpha > 1$ -- The main results.} \label{sec:shear-thinning}

In this work we study the case $\alpha > 1$ in which the thin inner layer is occupied by a shear-thinning power-law fluid. That is, for $\alpha > 1$ we consider the problem
\begin{equation}\label{eq:PDE_alpha>1}
    \begin{cases}
        \partial_t h + \partial_\theta \left(h^{\alpha+2} |\partial_\theta h+\partial_\theta^3 h|^{\alpha-1} \bigl(\partial_\theta h+\partial_\theta^3 h\bigr)\right)
        =
        0,
        &
        t > 0,\, \theta \in S^1
        \\
        h(0,\cdot)
        =
        h_0(\cdot), &
        \theta \in S^1.
    \end{cases}
\end{equation}
Note that since we are solving the problem in the unit circle $S^1$ we implicitly require periodic boundary conditions $\partial_\theta^k h(t,0) = \partial_\theta^k h(t,2\pi)$ for $0 \leq k \leq 3$. 
The partial differential equation in \eqref{eq:PDE_alpha>1} is quasilinear, of fourth order and (doubly) degenerate parabolic in the sense that we loose parabolicity if either $h=0$ or $\partial_\theta h + \partial_\theta^3 h = 0$. Moreover, since we study the case of flow behaviour exponents $\alpha > 1$ we have $(\alpha-1) > 0$ and the coefficients of the highest-order terms depend $(\alpha-1)$-H\"older-continuously on the lower-order terms. 


In this section we prove that, if the solution's shape is initially close to a circle in $H^1(S^1)$, then it stays close to a circle for all times and, moreover, it converges at decay rate $1/t^{\frac{1}{\alpha-1}}$ for large times.

\bigskip

\noindent\textbf{\textsc{Notation, definitions and frequently used facts. }}
We start by introducing the notion of a weak solutions to \eqref{eq:PDE_alpha>1} used throughout the paper.


\begin{definition} \label{def:weak_solution}
Let $T > 0$. By a weak solution to \eqref{eq:PDE_alpha>1} on $[0,T)$ we mean a function 
\begin{equation*}
    h \in
    C\bigl([0,T);H^1(S^1)\bigr)
    \cap 
    L_{\alpha+1}\bigl((0,T);W^3_{\alpha+1}(S^1)\bigr)
    \quad  \text{and} \quad
    \partial_t h \in  L_\frac{\alpha+1}{\alpha}\bigl((0,T);(W^1_{\alpha+1}(S^1))'\bigr),
\end{equation*}
that satisfies the initial and boundary condition $\eqref{eq:PDE_alpha>1}_2$--$\eqref{eq:PDE_alpha>1}_3$ pointwise and the quasilinear parabolic differential equation $\eqref{eq:PDE_alpha>1}_1$ in the weak form
\begin{equation} \label{eq:weak_form}
    \int_0^{T} \langle \partial_t h(t),\phi(t)\rangle_{W^1_{\alpha+1}(S^1)}
    =
    \int_0^{T} \int_{S^1} h^{\alpha+2} \psi\bigl(\partial_\theta h + \partial_\theta^3 h\bigr)\, \partial_\theta \phi\, d\theta\, dt
\end{equation}
for all $\phi \in L_\frac{\alpha+1}{\alpha}\bigl((0,T_\sigma);(W^3_{\alpha+1}(S^1))^\prime\bigr)$.
\end{definition}


Recall that such a solution exists, at least for short times, in view of \cite[Theorem 3.1]{LPCV:2022}. Moreover, such a solution conserves its mass in the sense of the following lemma.


\begin{lemma}[Conservation of mass] \label{lem:cons_mass}
Given $h_0 \in H^1(S^1)$, let 
\begin{equation*}
    h \in L_{\alpha+1}\bigl((0,T);W^3_{\alpha+1}(S^1)\bigr) \cap C\bigl([0,T];H^1(S^1)\bigr)
\end{equation*}
be a weak solution to \eqref{eq:PDE_alpha>1} on $[0,T]$. Then $h$ satisfies
\begin{equation*}
    \|h(t)\|_{L_1(S^1)} = \|h_0\|_{L_1(S^1)}, \quad t \in [0,T].
\end{equation*}
\end{lemma}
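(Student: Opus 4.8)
The plan is to test the weak formulation \eqref{eq:weak_form} with a function that is constant in $\theta$: then the flux term on the right-hand side, which carries a full $\partial_\theta$-derivative, drops out, and only the contribution of the time derivative survives.

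Concretely, I would first observe that the constant function $\theta\mapsto 1$ belongs to $W^1_{\alpha+1}(S^1)$ and that, through the embedding $W^1_{\alpha+1}(S^1)\embed C(S^1)\subset L_2(S^1)$ (valid since $S^1$ is one-dimensional) and the associated Gelfand triple $W^1_{\alpha+1}(S^1)\embed L_2(S^1)\cong L_2(S^1)'\embed (W^1_{\alpha+1}(S^1))'$, the pairing $\langle g,1\rangle_{W^1_{\alpha+1}(S^1)}$ is well defined for every $g\in (W^1_{\alpha+1}(S^1))'$ and coincides with $\int_{S^1}g\,d\theta$ whenever $g\in L_2(S^1)$. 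Since a weak solution in the sense of Definition \ref{def:weak_solution} has $h\in C([0,T];H^1(S^1))\subset C([0,T];(W^1_{\alpha+1}(S^1))')$ with distributional time derivative $\partial_t h\in L_\frac{\alpha+1}{\alpha}\bigl((0,T);(W^1_{\alpha+1}(S^1))'\bigr)\subset L_1\bigl((0,T);(W^1_{\alpha+1}(S^1))'\bigr)$, the fundamental theorem of calculus for Bochner-integrable functions yields
\begin{equation*}
    h(t) = h_0 + \int_0^t \partial_t h(s)\, ds \quad \text{in } (W^1_{\alpha+1}(S^1))',\qquad 0\le t\le T.
\end{equation*}
Pairing this identity with $1\in W^1_{\alpha+1}(S^1)$ gives $\int_{S^1}h(t)\,d\theta-\int_{S^1}h_0\,d\theta=\int_0^t\langle\partial_t h(s),1\rangle_{W^1_{\alpha+1}(S^1)}\,ds$ for every $t\in[0,T]$.

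It then remains to show that this integral vanishes. For a fixed $t\in[0,T]$ I would insert into \eqref{eq:weak_form} the test function $\phi(s,\theta):=\ONE_{(0,t)}(s)$, which is constant in $\theta$ and lies in $L_{\alpha+1}\bigl((0,T);W^1_{\alpha+1}(S^1)\bigr)$; a routine density argument shows that \eqref{eq:weak_form} extends to such $\phi$. Because $\partial_\theta\phi\equiv 0$, the entire right-hand side of \eqref{eq:weak_form} is zero, hence $\int_0^t\langle\partial_t h(s),1\rangle_{W^1_{\alpha+1}(S^1)}\,ds=0$, and therefore $\int_{S^1}h(t)\,d\theta=\int_{S^1}h_0\,d\theta$ for all $t\in[0,T]$. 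Finally, invoking the standing positivity assumption $h\ge 0$ (and $h_0\ge 0$) on $S^1$, these integrals equal $\|h(t)\|_{L_1(S^1)}$ and $\|h_0\|_{L_1(S^1)}$, which is the asserted identity.

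I do not expect a genuine obstacle here: this is the standard argument for conservation of mass for a scalar conservation law in divergence form. The only two points deserving an explicit line are (i) that the constant function is a legitimate argument of the duality pairing occurring in Definition \ref{def:weak_solution} — which is exactly where the embedding $W^1_{\alpha+1}(S^1)\embed L_2(S^1)$ is used, together with the consistency of $h$ and $\partial_t h$ as a vector-valued Sobolev function and its weak derivative — and (ii) the density argument allowing \eqref{eq:weak_form} to be tested against space-time functions of product form $\ONE_{(0,t)}(s)\,\psi(\theta)$ with $\psi\in W^1_{\alpha+1}(S^1)$, rather than only against the class literally stated there.
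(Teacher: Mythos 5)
Your argument is correct and is essentially the paper's own proof: the paper disposes of the lemma by ``testing the equation with $\phi\equiv 1$,'' which is exactly the idea you carry out, only you spell out the time localization via $\ONE_{(0,t)}(s)$ and the duality/embedding bookkeeping that makes the pairing $\langle\partial_t h,1\rangle$ meaningful. No gap, and no genuinely different route.
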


\begin{proof}
This follows immediately by testing the equation \eqref{eq:weak_form} with the function $\phi \equiv 1$.
\end{proof}


In virtue of Lemma \ref{lem:cons_mass} it makes sense to introduce the notation
\begin{equation*}
    \bar{h} 
    = 
    \frac{1}{2\pi} \int_{S^1} h(t,\theta)\, d\theta
    =
    \frac{1}{2\pi} \int_{S^1} h_0(\theta)\, d\theta
    =
    \bar{h}_0, 
    \quad t \in [0,T],
\end{equation*}
for the \textbf{spatial average $\bar{h} = \bar{h}_0$ of $h$}, which does not depend on time. Furthermore, we use the notation
\begin{equation*}
    h(t,\theta) 
    =
    \sum_{n\in \Z} h_n(t) e^{in\theta}
    =
    \bar{h}_0 + \sum_{n\in \Z, n \neq 0} h_n(t) e^{in\theta}, 
    \quad t \in (0,T),\, \theta \in S^1,
\end{equation*}
for the corresponding Fourier series with Fourier coefficients
\begin{equation*}
    h_n(t) = \frac{1}{2\pi} \int_{S^1} h(t,\theta)\, e^{-in\theta}\, d\theta,
    \quad t \in [0,T].
\end{equation*}
Finally, we fix $\alpha > 1$ and introduce the notation 
\begin{equation*}
    \psi(s) = |s|^{\alpha-1} s,
    \quad s \in \R,
\end{equation*}
such that the partial differential equation in \eqref{eq:PDE_alpha>1} can be rewritten as
\begin{equation} \label{eq:PDE_psi}
    \partial_t h + \partial_\theta \bigl(h^{\alpha+2} \psi\bigl(\partial_\theta h + \partial_\theta^3 h\bigr)\bigr) = 0, 
    \quad 
    t > 0,\ \theta \in S^1.
\end{equation}

\bigskip

\noindent\textbf{\textsc{Steady states of \eqref{eq:PDE_alpha>1}. }}
Positive steady states of \eqref{eq:PDE_alpha>1} in $S^1$ are solutions to the ordinary differential equation
\begin{equation} \label{eq:flux}
    h^{\alpha+2} |h^\prime + h^{\prime\prime\prime}|^{\alpha-1} \bigl(h^\prime + h^{\prime\prime\prime}\bigr) = C,
    \quad \theta \in S^1,
\end{equation}
with some integration constant $C \in \R$ that may physically be interpreted as the flux of the non-Newtonian fluid through the radius of the outer cylinder. 
Multiplying \eqref{eq:flux} by $(h^\prime + h^{\prime\prime\prime)}$ and integrating over $S^1$ yields
\begin{equation*}
    \int_{S^1} h^{\alpha+2} |h^\prime + h^{\prime\prime\prime}|^{\alpha+1}\, d\theta = 0.
\end{equation*}
If $h > 0$ in $S^1$, we find $(h^\prime + h^{\prime\prime\prime})=0$ and thus, the positive steady states of \eqref{eq:PDE_alpha>1} are given by functions of the form
\begin{equation*}
    h(\theta) = \bar{h}_0 + h_{-1} e^{-i\theta} + h_1 e^{i\theta},
    \quad
    \text{where} \quad
    \bar{h}_0 - 2|h_1| > 0,
    \quad
    \bar{h}_0 \in \R, h_{1}, h_{-1} \in \C
    \quad
    \text{and}
    \quad
    h_1 = \bar{h}_{-1}.
\end{equation*}
In terms of the corresponding real Fourier series this can be rewritten as
\begin{equation} \label{eq:circle_real}
    h(\theta) = \bar{h}_0 + \kappa_{-1} \cos(\theta) + \kappa_1 \sin(\theta),
    \quad
    \text{where}
    \quad
    \kappa_{-1} = \frac{1}{2}(h_{-1} + h_1)
    \quad \text{and} \quad
    \kappa_1 = \frac{i}{2}(h_{-1} - h_1).
\end{equation}


\begin{remark}
Note that solutions of the form \eqref{eq:circle_real} 
represent circles centered at the origin, when lubrication approximation has been applied. In other words, functions of the form \eqref{eq:circle_real} are $\eps^2$-close to circles centered at $\eps(\kappa_{-1},\kappa_1)$ in the $H^1(S^1)$-norm. Moreover, constant functions $h = \bar{h}_0$ correspond to circles centered at the origin. To see this, recall that in dimensionless variables the interface separating the two fluids is described by the curve
\begin{equation*}
    r = 1 + \eps h(t,\theta),
    \quad t > 0,\, \theta \in S^1.
\end{equation*}
We show that the $H^1(S^1)$-distance between the interface $r = 1 + \eps h(t,\theta)$ and the a circle $\tilde{h}_\eps$ with radius $1 + \eps \bar{h}_0$ centered at $\eps(\kappa_{-1},\kappa_1)$ can be estimated by
\begin{equation*}
   \|\tilde{h}_\eps - (1+\eps h)\|_{H^1(S^1)}
   \leq
   C \eps^2.
\end{equation*}
To this end, we consider w.l.o.g. $\kappa_1 = 0$. By the Poincar\'e-inequality (note that $h$ and $\tilde{h}_\eps$ have the same mass) it suffices to prove the estimate
\begin{equation*}
    \|\partial_\theta\tilde{h}_\eps - \partial_\theta (1+\eps h)\|_{L_2(S^1)}
   \leq
   C \eps^2.
\end{equation*}
First, observe that for $h(\theta) = \bar{h}_0 + \kappa_{-1} \cos(\theta),\ \theta \in S^1$, we have
\begin{equation*}
    \partial_\theta (1+\eps h) 
    =
    \eps \partial_\theta h
    =
    -\eps \kappa_{-1} \sin(\theta),
    \quad
    \theta \in S^1.
\end{equation*}
In order to calculate $\partial_\theta \tilde{h}_\eps$, we parametrise the circle with respect to the origin. By the law of cosines we find that
\begin{equation*}
    (1 + \eps \bar{h}_0)^2
    =
    \eps^2 \kappa_{-1}^2 + \tilde{h}_\eps^2 - 2 \eps \kappa_{-1} \cos(\theta),
    \quad \theta \in S^1,
\end{equation*}
and hence, using the positivity of $\tilde{h}_\eps$,
\begin{equation*}
    \tilde{h}_\eps(\theta)
    =
    \eps \kappa_{-1} \cos(\theta) 
    +
    \sqrt{\eps^2 \kappa_{-1}^2 (\cos(\theta))^2 + (1 + \eps \bar{h}_0)^2 - \eps^2 \kappa_{-1}^2},
    \quad
    \theta \in S^1,
\end{equation*}
whence
\begin{equation*}
    \partial_\theta \tilde{h}_\eps(\theta)
    =
    -
    \eps \kappa_{-1} \sin(\theta) 
    -
    \frac{\eps^2 \kappa_{-1}^2 \sin(\theta)\cos(\theta)}{\sqrt{\eps^2 \kappa_{-1}^2 (\cos(\theta))^2 + (1 + \eps \bar{h}_0)^2 - \eps^2 \kappa_{-1}^2}},
    \quad
    \theta \in S^1.
\end{equation*}
Choosing $\eps < 1/2$ small enough and using that $\kappa_{-1} < \bar{h}_0$, we obtain
\begin{equation*}
    \int_{S^1} |\partial_\theta\tilde{h}_\eps - \partial_\theta (1+\eps h)|^2\, d\theta
    \leq
    \int_{S^1} \Biggl|\frac{\eps^2 \kappa_{-1}^2 \sin(\theta)\cos(\theta)}{\sqrt{\eps^2 \kappa_{-1}^2 (\cos(\theta))^2 + (1 + \eps \bar{h}_0)^2 - \eps^2 \kappa_{-1}^2}}\Biggr|^2 d\theta
    \leq
    2\pi \kappa_{-1}^4 \eps^4,
\end{equation*}
by a pointwise estimate of the integrand. This is the desired estimate and we have shown that functions of the form \eqref{eq:circle_real} are circles up to order $\eps^2$ in $H^1(S^1)$.
\end{remark}


The main results of this paper concern the global existence of weak solutions to \eqref{eq:PDE_alpha>1} for initial data with small initial energy and their asymptotic behaviour for large times. It is summarised in the following theorem.


\begin{theorem}\label{thm:main_results}
Let $\alpha > 1$ be fixed. There exists an $\eps_0 > 0$ small such that for all $\eps \in (0,\eps_0)$ and all initial values $h_0\in H^1(S^1)$ with 
\begin{equation*}
    \frac{1}{2\pi}\int_{S^1} h_0\, d\theta = \bar{h}_0 
    \quad
    \text{and}
    \quad
    \|h_0 - \bar{h}_0\|_{H^1(S^1)} \leq \eps,
\end{equation*}
problem \eqref{eq:PDE_alpha>1} possesses a globally in time defined positive weak solution
\begin{equation*}
    h \in C\bigl([0,\infty);H^1(S^1)\bigr)
    \cap
    L_{\alpha+1}\bigl((0,\infty);W^3_{\alpha+1}(S^1)\bigr)
\end{equation*}
in the sense of Definition \ref{def:weak_solution}. Moreover, the solution has the following properties:
\begin{itemize}
    \item[(i)] There exist $\xi_{\pm 1} \in \R$ and hence a function $\bar{h}_\infty(\cdot) =  \bar{h}_0 + \xi_{\pm 1} e^{\pm i\cdot} \in H^1(S^1)$ such that 
    \begin{equation*}
        h(t,\cdot) 
        \longrightarrow 
        \bar{h}_\infty(\cdot)
        \quad 
        \text{in } H^1(S^1) \quad \text{as } t \to \infty.
    \end{equation*}
    Moreover, the numbers $\xi_{\pm 1}\in \R$ are given by $\xi_{\pm 1}=\lim_{t\to \infty} h_{\pm 1}(t)$ and they satisfy the estimate
    \begin{equation*}
        |\xi_{\pm 1}| 
        \leq 
        C_1 \eps \bigl(1 + \log\bigl(\eps^{1-\alpha}\bigr)\bigr),
    \end{equation*}
    for some positive constant $C_1>0$ that depends only on $\alpha > 1$.
    \item[(ii)] The convergence happens in the following way:
    \begin{equation*}
        \|h(t,\cdot) - \bar{h}_\infty(\cdot)\|_{H^1(S^1)}
        \leq
        \frac{\eps}{\bigl(1 + C_2 \eps^{\alpha-1} t\bigr)^\frac{1}{\alpha-1}}, 
        \quad
        t > 0,
    \end{equation*}
    where $C_2 > 0$ is a positive constant that depends only on $\alpha > 1$.
\end{itemize}
\end{theorem}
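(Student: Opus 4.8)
The strategy is to prove all statements first for a non-degenerate regularisation of \eqref{eq:PDE_alpha>1} (depending on a parameter $\sigma>0$), on a maximal time interval on which the solution stays quantitatively close to a circle, with all constants independent of $\sigma$, and then to pass to the limit $\sigma\to0^+$ (this last step is carried out in Section~\ref{sec:sigma_to_zero}). The dynamical engine is an energy--dissipation identity combined with the spectral gap of the linearised curvature operator; the extra, genuinely delicate ingredient needed to locate the centre of the limiting circle is the interior regularity of Section~\ref{sec:regularity_estimates}.

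\emph{Energy, spectral gap and polynomial decay of the higher modes.} Write $h=\bar h_0+w$ with $\int_{S^1}w\,d\theta=0$ (legitimate by Lemma~\ref{lem:cons_mass}). I would work with
\[
 E(h)=\tfrac12\int_{S^1}\bigl(|\partial_\theta h|^2-h^2+\bar h_0^2\bigr)\,d\theta=\tfrac12\int_{S^1}\bigl(|\partial_\theta w|^2-w^2\bigr)\,d\theta .
\]
Testing \eqref{eq:PDE_psi} with $-(h+\partial_\theta^2 h)$ and integrating by parts gives the dissipation identity $\tfrac{d}{dt}E(h)=-D(h)$, where $D(h):=\int_{S^1}h^{\alpha+2}\,|\partial_\theta h+\partial_\theta^3 h|^{\alpha+1}\,d\theta\ge0$. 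By the Poincaré--Wirtinger inequality on $S^1$, $E\ge0$, and $E$ is comparable to $\sum_{|n|\ge2}(1+n^2)|w_n|^2$; that is, $E$ measures exactly the $H^1$-distance of $h$ to the two-parameter family of circular steady states \eqref{eq:circle_real}, and $E(h_0)\le\tfrac12\eps^2$. Since $\partial_\theta h+\partial_\theta^3 h=\partial_\theta(1+\partial_\theta^2)w$ annihilates the modes $n=0,\pm1$, Hölder's inequality on $S^1$ (note $\alpha+1\ge2$) together with $n^2(n^2-1)\ge12$ for $|n|\ge2$ yields $D(h)\ge c\,E(h)^{(\alpha+1)/2}$ as long as $h$ is bounded above and away from zero. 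Hence $\tfrac{d}{dt}E\le-c\,E^{(\alpha+1)/2}$, which integrates to
\[
 E(h(t))\le\frac{\eps^2}{\bigl(1+c_2\,\eps^{\alpha-1}t\bigr)^{2/(\alpha-1)}},\qquad t>0 .
\]
A continuation/trapping argument then shows that, for $\eps$ small, $h$ indeed stays strictly positive and bounded: the monotonicity of $E$ confines the $n\ne\pm1$ part of $w$ to an $O(\eps)$ ball in $H^1$, and it remains to confine the modes $n=\pm1$.

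\emph{Control of the first Fourier modes.} Differentiating $h_{\pm1}(t)=\tfrac1{2\pi}\int_{S^1}h\,e^{\mp i\theta}\,d\theta$, using \eqref{eq:PDE_psi} and integrating by parts once, I get $\dot h_{\pm1}(t)=\mp\tfrac{i}{2\pi}\int_{S^1}h^{\alpha+2}\,\psi(\partial_\theta h+\partial_\theta^3 h)\,e^{\mp i\theta}\,d\theta$, hence $|\dot h_{\pm1}(t)|\le C\int_{S^1}h^{\alpha+2}\,|\partial_\theta h+\partial_\theta^3 h|^{\alpha}\,d\theta$. The task is to integrate this in time over $(0,\infty)$, i.e. to establish an $L_1$-in-time, $L_\alpha$-in-space bound for $\partial_\theta^3 h$; this does \emph{not} follow from the energy/Fourier estimates alone, which only see $\partial_\theta h+\partial_\theta^3 h$ in the $L_{\alpha+1}$-scale and are blind to the modes $n=\pm1$. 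Here I would rescale the (regularised) equation on dyadic time windows $[2^{k-1},2^k]$ to unit length, apply the interior regularity estimate of Theorem~\ref{thm:reg_estimate_general} and its iterated, $L_\alpha$-in-space refinement (Theorem~\ref{thm:improved_estimate_alpha}) on each window --- with the smallness of $E$ controlling the source terms produced by the $n=\pm1$ part of $h$ --- and sum over $k$. As the per-window bounds decay at the rate dictated by $E(2^k)$, the summation is at worst borderline and yields
\[
 \int_0^\infty|\dot h_{\pm1}(t)|\,dt\le C_1\,\eps\bigl(1+\log(\eps^{1-\alpha})\bigr),
\]
the logarithm being the price of the iteration. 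Consequently $h_{\pm1}(\cdot)$ has an absolutely integrable derivative and converges to $\xi_{\pm1}:=\lim_{t\to\infty}h_{\pm1}(t)$ with $|\xi_{\pm1}|\le|h_{\pm1}(0)|+C_1\eps(1+\log\eps^{1-\alpha})$, which gives the claimed bound and, since $\eps\log(1/\eps)\to0$, also closes the trapping argument. Moreover the tail $\int_t^\infty|\dot h_{\pm1}|$ is a geometric (non-accumulating) sum, so $|h_{\pm1}(t)-\xi_{\pm1}|\le C\,E(h(t))^{1/2}$.

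\emph{Passage to the limit and assembly; main obstacle.} The uniform energy and dissipation bounds provide the compactness needed to pass to the limit $\sigma\to0^+$, producing a global positive weak solution to \eqref{eq:PDE_alpha>1} in the sense of Definition~\ref{def:weak_solution} which still conserves mass and satisfies the energy--dissipation (in)equality. With $\bar h_\infty:=\bar h_0+\xi_1 e^{i\cdot}+\xi_{-1}e^{-i\cdot}$ a steady state of the form \eqref{eq:circle_real}, one has
\[
 \|h(t,\cdot)-\bar h_\infty(\cdot)\|_{H^1(S^1)}^2\le C\Bigl(\textstyle\sum_{|n|\ge2}(1+n^2)|h_n(t)|^2+\sum_{|n|=1}|h_n(t)-\xi_n|^2\Bigr)\le C\,E(h(t)),
\]
which combined with the decay of $E$ gives part~(ii), and part~(i) follows. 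Note that the convergence $h(t,\cdot)\to\bar h_\infty$ and its rate use only the energy estimate, whereas the bound on $|\xi_{\pm1}|$ --- the localisation of the centre --- is exactly where the regularity estimates are indispensable. Accordingly, the main obstacle is the second step: extracting, for a doubly degenerate fourth-order equation, the interior $L_1((t/2,t);L_\alpha(S^1))$-control of $\partial_\theta^3 h$ needed to tame the modes $n=\pm1$, and doing so in a form robust under the time-rescaling used to sum over scales; a secondary difficulty is making the continuation argument self-consistent simultaneously with the regularisation $\sigma$ and with the (only logarithmically small) drift of the first Fourier modes.
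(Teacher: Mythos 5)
Your plan follows the same route as the paper: regularise the equation, derive the energy--dissipation identity and the differential inequality $\tfrac{d}{dt}E\le -cE^{(\alpha+1)/2}$ via the Fourier spectral gap (Lemma~\ref{lem:E-E_min_J}, Corollary~\ref{cor:energy_decay}), then control the $n=\pm1$ modes by rescaling to unit time windows, invoking the interior regularity estimate of Theorem~\ref{thm:reg_estimate_general}, iterating to replace $L_{\alpha+1}$ by $L_{\alpha}$ in space (Theorem~\ref{thm:improved_estimate_alpha}), and summing dyadically to obtain the logarithmic bound on $\int|\dot h_{\pm1}|$, before passing to the limit $\sigma\to0^+$ by compactness and Minty's argument. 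This is exactly the architecture of Sections~\ref{sec:energy_estimates}--\ref{sec:polynomial_stability}; the only point you treat slightly more explicitly than the paper is the geometric-series tail estimate giving the rate $|h_{\pm1}(t)-\xi_{\pm1}|\lesssim\Lambda_\eps(t)$ required for part~(ii), which is a welcome precision rather than a deviation.
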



The strategy of the proof of Theorem \ref{thm:main_results} is the following. We introduce a regularised version of \eqref{eq:PDE_alpha>1} that removes the two degeneracies in the mobility $h^{\alpha+2}$ as well as in the nonlinear term $\psi(\partial_\theta h + \partial_\theta^3 h)$. This allows us to deduce from standard parabolic theory the existence of a locally in time existing positive (strong) solution for positive initial data, cf. Theorem \ref{thm:existence_regularised}.
Moreover, we use Fourier analysis and energy estimates to prove that, as long as the solution is bounded away from zero and bounded above in the sense that $\tfrac{1}{2}\bar{h}_0 \leq h(t,\theta) \leq 2 \bar{h}_0$, solutions corresponding to initial interfaces that do not touch the boundary and are close to a circle (mathematically this means that the initial value $h_0 >0$ is positive and that the initial energy is `small', cf. \eqref{eq:definition_E}), stay close to a circle for small times and converge at rate $t^{-\frac{1}{\alpha-1}}$ to a circle in $H^1(S^1)$, cf. Theorem \ref{thm:power-law_decay}.

\bigskip

\section{The regularised problem -- Local existence of solutions and polynomial stability}\label{sec:energy_estimates}
The main difficulties when studying weak solutions to \eqref{eq:PDE_alpha>1} are caused by the doubly nonlinear and doubly degenerate nature of the equation. In this chapter we introduce a suitable regularised version of \eqref{eq:PDE_alpha>1} and prove local existence of positive weak solutions as well as polynomial $1/t^\frac{1}{\alpha-1}$-stability of steady states. This means that solutions to the regularised problem which do initially not touch the cylinder and which are close to a circle in $H^1(S^1)$, stay close to a circle for small times and converge at rate $1/t^{\frac{1}{\alpha-1}}$ to a circle in $H^1(S^1)$. However, at this point we obtain the corresponding estimates only as long as the solution satisfies $\tfrac{1}{2}\bar{h}_0 \leq h(t,\theta) \leq 2 \bar{h}_0$. 

\bigskip

\subsection{Definition of the regularised problem and main result on local existence and asymptotic behaviour}


In order to handle the difficulties caused by the doubly nonlinear and doubly degenerate structure of the evolution problem \eqref{eq:PDE_alpha>1}, we introduc,e for a fixed regularisation parameter $\sigma \in (0,1)$ and all $s\in\R$, the function 
\begin{equation*}
    \psi_\sigma(s) = (s^2 + \sigma^2)^\frac{\alpha-1}{2} s,
    \quad 
    s \in \R,
\end{equation*}
and substitute the nonlinear term $\psi\bigl(\partial_\theta h + \partial_\theta^3 h\bigr)$ in \eqref{eq:PDE_alpha>1} accordingly. The \textbf{regularised problem} corresponding to \eqref{eq:PDE_alpha>1} thus reads
\begin{equation}\label{eq:PDE_regularised}\tag{$P_\sigma$}
    \begin{cases}
        \partial_t h^\sigma + \partial_\theta\left((h^\sigma)^{\alpha+2} \psi_\sigma\bigl(\partial_\theta h^\sigma + \partial_\theta^3 h^\sigma\bigr)\right)
        =
        0,
        &
        t > 0,\, \theta \in S^1
        \\
        h^\sigma(0,\cdot)
        =
        h_0(\cdot), &
        \theta \in S^1,
    \end{cases}
\end{equation}
with periodic boundary conditions. It follows from standard parabolic theory \cite{A:1993,Eidelman} that the regularised problem possesses (for each fixed $\sigma \in (0,1)$) a local weak solution $h^\sigma$ in the sense of the following theorem. 

\medskip

\begin{theorem}[Local existence for \eqref{eq:PDE_regularised}]\label{thm:existence_regularised}
Let $\alpha > 1$ and $\sigma \in (0,1)$ be fixed. Then the following holds true.
\begin{itemize}
    \item[(i)] For each initial film height         $h^\sigma_0 \in C^\infty(S^1)$ with $h_0^\sigma(\theta) > 0$ for all $\theta \in S^1$, the regularised problem \eqref{eq:PDE_regularised} possesses a unique solution $h^\sigma$ on some time interval $(0,T_\sigma)$ in the sense that $h^\sigma(t,\theta) > 0$ for $t\in [0,T_\sigma), \theta \in S^1,$
    \begin{equation*}
        h^\sigma \in
        C^1\bigl((0,T_\sigma);H^4(S^1)\bigr)
        \cap 
        C\bigl([0,T_\sigma);H^1(S^1)\bigr)
        \cap 
        L_{\alpha+1}\bigl((0,T_\sigma);W^3_{\alpha+1}(S^1)\bigr)
    \end{equation*}
    and $h^\sigma$ satisfies the quasilinear parabolic differential equation $\eqref{eq:PDE_regularised}_1$ in the weak form
    \begin{equation} \label{eq:weak_sol_reg}
        \int_0^{T_\sigma} \langle \partial_t h^\sigma(t),\phi(t)\rangle_{W^1_{\alpha+1}(S^1)}
        =
        \int_0^{T_\sigma} \int_{S^1} (h^\sigma)^{\alpha+2} \psi_\sigma\bigl(\partial_\theta h^\sigma + \partial_\theta^3 h^\sigma\bigr)\, \partial_\theta \phi\, d\theta\, dt
    \end{equation}
    for all $\phi \in L_\frac{\alpha+1}{\alpha}\bigl((0,T_\sigma);(W^1_{\alpha+1}(S^1))^\prime\bigr)$.
    \item[(ii)] If for a given $\tau > 0$ and all $t \in [0,T_\sigma) \cap [0,\tau]$ the solution $h^\sigma$ obtained in (i) satisfies
    \begin{equation*}
        h^\sigma(t,\theta) > 0 \quad \forall t \in [0,T_\sigma) \cap [0,\tau],\, \theta \in S^1
        \quad \text{and} \quad
        \|h^\sigma(t)\|_{L_\infty(([0,T_\sigma) \cap [0,\tau])\times S^1)} < \infty,
    \end{equation*}
    then we can choose $T_\sigma$ such that $T_\sigma > \tau$.
\end{itemize}
\end{theorem}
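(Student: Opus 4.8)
The plan is to treat the regularised problem \eqref{eq:PDE_regularised} as an abstract quasilinear parabolic evolution equation on the circle and to invoke the classical theory of \person{Amann} \cite{A:1993} together with interior parabolic Schauder/$L_p$ estimates \cite{Eidelman}; the whole purpose of the regularisation is precisely to make this reduction run. First I would expand the flux to expose the quasilinear structure. Using $\partial_\theta(\partial_\theta v+\partial_\theta^3 v)=\partial_\theta^2 v+\partial_\theta^4 v$, equation $\eqref{eq:PDE_regularised}_1$ takes the form
\[
 \partial_t h^\sigma + a(h^\sigma)\,\partial_\theta^4 h^\sigma = f(h^\sigma), \qquad a(v):=v^{\alpha+2}\,\psi_\sigma'\!\bigl(\partial_\theta v+\partial_\theta^3 v\bigr),
\]
where $f(v)$ depends only on $v,\partial_\theta v,\partial_\theta^2 v,\partial_\theta^3 v$ (through $\psi_\sigma$ and $\psi_\sigma'$) and is therefore of lower order. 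Since $\psi_\sigma'(s)=(s^2+\sigma^2)^{\frac{\alpha-1}{2}}+(\alpha-1)s^2(s^2+\sigma^2)^{\frac{\alpha-3}{2}}\geq\sigma^{\alpha-1}>0$ for every $s\in\R$ — this is where $\alpha>1$ enters — the principal coefficient $a(v)$ is strictly positive as soon as $v>0$, and $a$ and the coefficients of $f$ are smooth functions of $(v,\partial_\theta v,\partial_\theta^2 v,\partial_\theta^3 v)$ which, for $a$, are bounded above and away from zero on sets where $v$ is bounded, bounded below away from $0$, and $\partial_\theta v+\partial_\theta^3 v$ is bounded.

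Consequently the fourth-order operator $v\mapsto\mathcal A(v):=a(v)\partial_\theta^4+\text{(lower order)}$ is normally elliptic on the open set $\mathcal O$ of strictly positive functions inside the phase space ($H^4(S^1)$, or an appropriate little-Hölder space), the periodic boundary conditions being automatic on $S^1$, and $v\mapsto(\mathcal A(v),f(v))$ depends locally Lipschitz-continuously on $v\in\mathcal O$. With these structural conditions checked, the maximal-regularity version of \person{Amann}'s quasilinear parabolic theory \cite{A:1993} applies and yields, for every $h_0^\sigma\in C^\infty(S^1)$ with $h_0^\sigma>0$, a unique maximal positive solution $h^\sigma$ on a maximal interval $[0,T_\sigma)$ with $h^\sigma\in C([0,T_\sigma);\mathcal O)$; positivity persists because $\mathcal O$ is open and $h^\sigma$ is continuous in time with values in a space embedding into $C(S^1)$. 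The same theorem provides the blow-up alternative used for (ii): if $T_\sigma<\infty$, then as $t\uparrow T_\sigma$ either $\min_\theta h^\sigma(t,\theta)\to0$ or the phase-space norm of $h^\sigma(t)$ blows up.

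It then remains to upgrade the regularity to the class stated and to read off the weak form. Because $\psi_\sigma\in C^\infty(\R)$ and $u\mapsto u^{\alpha+2}$ is smooth on $(0,\infty)$, and since the equation is uniformly parabolic along the solution on every compact time subinterval of $(0,T_\sigma)$, a bootstrap based on interior parabolic estimates \cite{Eidelman} (differentiating the equation in $\theta$ and in $t$) gives $h^\sigma\in C^\infty((0,T_\sigma)\times S^1)$, in particular $h^\sigma\in C^1((0,T_\sigma);H^4(S^1))$; continuity $h^\sigma\in C([0,T_\sigma);H^1(S^1))$ up to $t=0$ is part of the abstract statement. The integrability $h^\sigma\in L_{\alpha+1}((0,T_\sigma);W^3_{\alpha+1}(S^1))$ follows either from maximal $L_{\alpha+1}$-regularity applied with integrability exponent $\alpha+1$ (using $W^4_{\alpha+1}(S^1)\hookrightarrow W^3_{\alpha+1}(S^1)$), or from the energy identity obtained by testing $\eqref{eq:PDE_regularised}_1$ with $-(h^\sigma+\partial_\theta^2 h^\sigma)$,
\[
 \frac{d}{dt}\,\tfrac12\!\int_{S^1}\!\bigl(|\partial_\theta h^\sigma|^2-|h^\sigma|^2\bigr)d\theta + \int_{S^1}\!(h^\sigma)^{\alpha+2}\bigl|\partial_\theta h^\sigma+\partial_\theta^3 h^\sigma\bigr|^2\bigl(|\partial_\theta h^\sigma+\partial_\theta^3 h^\sigma|^2+\sigma^2\bigr)^{\frac{\alpha-1}{2}}d\theta=0,
\]
whose dissipative term dominates $(h^\sigma)^{\alpha+2}|\partial_\theta h^\sigma+\partial_\theta^3 h^\sigma|^{\alpha+1}$; combined with conservation of mass (Lemma \ref{lem:cons_mass}), which bounds the energy below, and with elliptic regularity for $1+\partial_\theta^2$ on $S^1$, this gives the claimed space--time integrability. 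Then \eqref{eq:weak_sol_reg} follows by multiplying $\eqref{eq:PDE_regularised}_1$ by a test function, integrating over $S^1$, integrating by parts once, integrating in time, and a density argument. For the continuation criterion (ii), I would argue by contradiction: if $T_\sigma\leq\tau$, the assumed pointwise positivity and the finite $L_\infty$-bound on the slab, together with mass conservation and the energy identity, keep $a(h^\sigma)$ trapped between positive constants, so the equation stays uniformly parabolic on $[0,T_\sigma)$; interior-in-time parabolic estimates then bound $h^\sigma$ in the phase-space norm uniformly as $t\uparrow T_\sigma$ and keep it away from $\partial\mathcal O$, contradicting the blow-up alternative, so $T_\sigma>\tau$.

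The main obstacle is not a single hard inequality but rather the verification that the fourth-order quasilinear operator attached to \eqref{eq:PDE_regularised} genuinely meets the hypotheses of the abstract quasilinear parabolic machinery on the open cone of positive functions — normal ellipticity, generation of an analytic semigroup with maximal regularity, and local Lipschitz dependence on $h$ in the chosen scale of spaces — and, once this is in place, the parabolic bootstrap to interior $C^\infty$-smoothness together with the energy estimate near $t=0$ that is needed to land exactly in the solution class of the theorem. I expect the function-space bookkeeping (choice of integrability exponent or Hölder index, and the trace spaces for the initial datum) to be the most tedious part, whereas existence, uniqueness, the weak formulation and the continuation criterion are then essentially immediate consequences of \cite{A:1993,Eidelman}.
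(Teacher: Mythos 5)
Your proposal follows essentially the same route as the paper: cast \eqref{eq:PDE_regularised} as an abstract quasilinear Cauchy problem, verify normal ellipticity of the principal part on the open cone of positive functions so that \cite[Theorem 12.1]{A:1993} applies, and obtain (ii) from the resulting blow-up alternative by contraposition. You additionally sketch a parabolic bootstrap for interior smoothness and the energy identity yielding the $L_{\alpha+1}\bigl((0,T_\sigma);W^3_{\alpha+1}(S^1)\bigr)$ bound, details the paper leaves implicit; the only minor imprecision is that \cite[Theorem 12.1]{A:1993} is analytic-semigroup (Da Prato--Grisvard) theory rather than maximal $L_p$-regularity, but that does not affect the argument.
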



\begin{remark}
\begin{itemize}
    \item[(i)] Note that we do not assume the time $T_\sigma$ in Theorem \ref{thm:existence_regularised}  to be the maximal time of existence.
    \item[(ii)] In Theorem \ref{thm:existence_regularised} we choose $h^\sigma_0 \in C^\infty(S^1)$ for simplicity. In order to obtain strong solutions by standard analytic semigroup theory for quasilinear parabolic problems (cf.\cite[Theorem 12.1]{A:1993} or \cite[Theorem 4.2]{LM:2020}) it is enough to choose $h_0^\sigma \in W^{3+s}_p(S^1)$ with $s>1/p$. In order to obtain suitable a-priori estimates and to be able to pass to the limit $\sigma \to 0^+$ we only need 
    \begin{equation*}
        \frac{1}{2\pi}\int_{S^1} h^\sigma_0(\theta)\, d\theta = \bar{h}_0^\sigma,
        \quad
        \|h_0^\sigma - \bar{h}_0^\sigma\|_{H^1(S^1)} \leq \eps
        \quad \text{and} \quad
        h_0^\sigma \longrightarrow h_0 \quad \text{in } H^1(S^1) \quad \text{as } \sigma \to 0^+.
    \end{equation*}
\end{itemize}
\end{remark}


Since the main part of the paper is concerned with the regularised problem and we want to avoid overloading the notation, we drop the index $\sigma$ and denote the solution $h^\sigma$ to \eqref{eq:PDE_regularised} by $h=h^\sigma$ as long as no ambiguity is to be feared.


\begin{proof}
\noindent(i) The proof of existence is a simple application of \cite[Theorem 12.1]{A:1993}. Indeed, for $s > 1/2$ we define the open set 
\begin{equation*}
    \Scal = \left\{v \in H^{3+s}(S^1);\, v(\theta) > 0\ \forall \theta \in S^1\right\}
\end{equation*}
and define for $v(t) \in \Scal$ the linear differential operator
\begin{equation*}
    \Acal(v(t)) \in \Lcal\bigl(H^4(S^1);L_2(S^1)\bigr), 
    \quad
    \Acal(v(t)) u
    =
    A(v(t)) \partial_\theta^4 u
\end{equation*}
of fourth order, where
\begin{equation*}
    A(v(t)) = (\alpha-1) v^{\alpha+2} \bigl(|\partial_\theta v + \partial_\theta^3 v|^2 + \sigma^2\bigr)^\frac{\alpha-3}{2} |\partial_\theta v + \partial_\theta^3 v|^2
    +
    v^{\alpha+2} \bigl(|\partial_\theta v + \partial_\theta^3 v|^2 + \sigma^2\bigr)^\frac{\alpha-1}{2}.
\end{equation*}
Moreover, we introduce
\begin{align*}
    \Fcal(v(t)) 
    &=
    (\alpha-1) v^{\alpha+2} \bigl(|\partial_\theta v + \partial_\theta^3 v|^2 + \sigma^2\bigr)^\frac{\alpha-3}{2} |\partial_\theta v + \partial_\theta^3 v|^2 \partial_\theta^2 v
    +
    v^{\alpha+2} \bigl(|\partial_\theta v + \partial_\theta^3 v|^2 + \sigma^2\bigr)^\frac{\alpha-1}{2} \partial_\theta^2 v
    \\
    &\quad
    +
    (\alpha+2) v^{\alpha+1} \partial_\theta v\, \psi_\sigma\bigl(|\partial_\theta v + \partial_\theta^3 v|\bigr)
\end{align*}
and perceive \eqref{eq:PDE_regularised} as an abstract quasilinear Cauchy problem
\begin{equation*}
    \begin{cases}
        \dot{u} + \Acal(u)u = \Fcal(u), \quad t > 0
        \\
        u(0) = h_0.
    \end{cases}
\end{equation*}
Note that the maps
\begin{equation*}
    \Acal\colon H^{3+s}(S^1) \longrightarrow
    \Lcal\bigl(H^4(S^1);L_2(S^1)\bigr)
    \quad \text{and} \quad
    \Fcal\colon H^{3+s}(S^1) \longrightarrow L_2(S^1)
\end{equation*}
are locally Lipschitz continuous. In addition, the problem \eqref{eq:PDE_regularised} is parabolic in the sense that $-\Acal(v(t))$ generates an analytic semigroup on $L_2(S^1)$. Indeed, due to the embedding $H^{3+s}(S^1) \hookrightarrow C(S^1)$ and the fact that $\sigma > 0$, we have that $A(v(t,\cdot)) \in C(S^1)$.
Moreover, the principal symbol $a_{\pi}(\theta,\xi)$ satisfies
\begin{equation*}
    \text{Re}(a_\pi(\theta,\xi)\eta | \eta)
    =
    A(v(t,\theta))(i\xi)^4 \eta^2 > 0,
    \quad
    (\theta,\xi) \in S^1\times \{-1,1\},\, \eta \in \R\setminus\{0\}.
\end{equation*}
Consequently, $A(v(t))$, together with the periodic boundary conditions, is normally elliptic in the sense of \cite[Example 4.3(d)]{A:1993} and we can apply \cite[Theorem  4.1 and Remark 4.2(b)]{A:1993} to conclude that $-\Acal(v(t))$ generates an analytic semigroup on $L_2(S^1)$. Thus, we are in the abstract setting of \cite[
Theorem 12.1]{A:1993} which yields the existence of a maximal solution that satisfies the partial differential equation pointwise in $L_2(S^1)$. Clearly this solution is also a weak solution in the sense of equation \eqref{eq:weak_sol_reg}.

\noindent(ii) We prove the contraposition and assume  that $T_\sigma \leq \tau$. Then we have
\begin{equation*}
    \liminf_{t\to T_\sigma^-} \min_{\theta \in S^1} h^\sigma(t,\theta) = 0
    \quad \text{or} \quad
    \limsup_{t\to T_\sigma^-}\, \max_{\theta\in S^1} h^\sigma(t,\theta) = \infty,
\end{equation*}
which contradicts the assumption on $\tau$.
\end{proof}


One important property of solutions to the regularised problem \eqref{eq:PDE_regularised} is that they conserve their mass in the following sense.


\begin{lemma}[Conservation of mass for \eqref{eq:PDE_regularised}] \label{lem:cons_mass_reg}
Let $h$ be the solution to the regularised problem \eqref{eq:PDE_regularised} on $[0,T_\sigma)$, corresponding to the initial value $h_0$. Then $h$ conserves its mass in the $L_1(S^1)$-sense, i.e. we have
\begin{equation*}
    \|h(t)\|_{L_1(S^1)} = \|h_0\|_{L_1(S^1)},
    \quad
    t \in [0,T_\sigma).
\end{equation*}
\end{lemma}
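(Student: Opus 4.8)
The plan is to mimic the proof of Lemma~\ref{lem:cons_mass} for the unregularised problem, using the regularity of the solution obtained in Theorem~\ref{thm:existence_regularised}. Since $h = h^\sigma \in C^1\bigl((0,T_\sigma);H^4(S^1)\bigr) \cap C\bigl([0,T_\sigma);H^1(S^1)\bigr)$ is a (strong) solution of $\eqref{eq:PDE_regularised}_1$, the partial differential equation holds pointwise in $L_2(S^1)$ for every $t \in (0,T_\sigma)$. The key observation is that the evolution equation is in divergence form: writing $J^\sigma := (h^\sigma)^{\alpha+2}\psi_\sigma\bigl(\partial_\theta h^\sigma + \partial_\theta^3 h^\sigma\bigr)$ for the flux, we have $\partial_t h^\sigma = -\partial_\theta J^\sigma$. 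Note that $J^\sigma(t,\cdot)$ is a well-defined continuous (indeed $H^1(S^1)$) function for each $t \in (0,T_\sigma)$: since $h^\sigma(t,\cdot) \in H^4(S^1) \hookrightarrow C^3(S^1)$ and $\psi_\sigma$ is smooth (here $\sigma>0$ removes the degeneracy of $\psi$), $J^\sigma(t,\cdot)$ is periodic and at least $C^1$.

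Next I would integrate the equation over $S^1$. For fixed $t \in (0,T_\sigma)$,
\begin{equation*}
    \frac{d}{dt}\int_{S^1} h^\sigma(t,\theta)\, d\theta
    =
    \int_{S^1} \partial_t h^\sigma(t,\theta)\, d\theta
    =
    -\int_{S^1} \partial_\theta J^\sigma(t,\theta)\, d\theta
    =
    -\bigl(J^\sigma(t,2\pi) - J^\sigma(t,0)\bigr)
    =
    0,
\end{equation*}
where the interchange of $\frac{d}{dt}$ and $\int_{S^1}$ is justified by $h^\sigma \in C^1\bigl((0,T_\sigma);H^4(S^1)\bigr)$ (so $\partial_t h^\sigma$ is continuous in $t$ with values in $L_2(S^1) \hookrightarrow L_1(S^1)$), and the last equality uses the periodic boundary conditions $\partial_\theta^k h^\sigma(t,0) = \partial_\theta^k h^\sigma(t,2\pi)$ for $0 \le k \le 3$, which force $J^\sigma(t,\cdot)$ to be $2\pi$-periodic. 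Alternatively, and perhaps cleaner, one can test the weak formulation \eqref{eq:weak_sol_reg} with $\phi \equiv 1$ on $(0,t)$: the right-hand side vanishes since $\partial_\theta \phi = 0$, giving $\int_0^t \langle \partial_t h^\sigma(s), 1\rangle\, ds = 0$, i.e. $\int_{S^1} h^\sigma(t,\theta)\,d\theta = \int_{S^1} h_0(\theta)\,d\theta$.

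Since $h^\sigma(t,\theta) > 0$ for all $t \in [0,T_\sigma)$ and $\theta \in S^1$ by Theorem~\ref{thm:existence_regularised}(i), and likewise $h_0 > 0$, we have $\|h^\sigma(t)\|_{L_1(S^1)} = \int_{S^1} h^\sigma(t,\theta)\,d\theta$ and $\|h_0\|_{L_1(S^1)} = \int_{S^1} h_0(\theta)\,d\theta$. Combining with the conservation of the integral shown above, and using $h^\sigma \in C\bigl([0,T_\sigma);H^1(S^1)\bigr) \hookrightarrow C\bigl([0,T_\sigma);L_1(S^1)\bigr)$ to extend the identity continuously to $t = 0$, we conclude $\|h^\sigma(t)\|_{L_1(S^1)} = \|h_0\|_{L_1(S^1)}$ for all $t \in [0,T_\sigma)$, as claimed. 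There is no serious obstacle here; the only mild point requiring care is the justification of differentiating under the integral sign (or, in the weak-formulation approach, checking that the constant function $1$ is an admissible test function — which it is, being smooth and periodic, hence in $W^1_{\alpha+1}(S^1)$ and in every relevant dual space), together with keeping track of the endpoint $t=0$ via the stated continuity in time.
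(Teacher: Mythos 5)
Your proof is correct and takes essentially the same approach as the paper, which simply tests the weak formulation of $\eqref{eq:PDE_regularised}_1$ with the constant function $\phi\equiv 1$; your strong-form computation via the divergence structure and periodicity is just a more detailed unwinding of the same fact, and the extra care you take with positivity and continuity at $t=0$ is harmless but not needed beyond what you state.
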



\begin{proof}
This follows from testing the partial differential equation $\eqref{eq:PDE_regularised}_1$ with the constant function $\phi=1$.
\end{proof}


According to the mass conservation property of solutions to the regularised problem \ref{lem:cons_mass_reg} we also use the notation
\begin{equation*}
    \bar{h} 
    = 
    \frac{1}{2\pi} \int_{S^1} h(t,\theta)\, d\theta
    =
    \frac{1}{2\pi} \int_{S^1} h_0(\theta)\, d\theta
    =
    \bar{h}_0, 
    \quad t \in [0,T),
\end{equation*}
for the \textbf{spatial average $\bar{h} = \bar{h}_0$ of $h$}, which does not depend on time. Note that in this paper we consider only the case $\bar{h}_0 > 0$.
Furthermore, we use the notation
\begin{equation*}
    h(t,\theta) 
    =
    \sum_{n\in \Z} h_n(t) e^{in\theta}
    =
    \bar{h}_0 + \sum_{n\in \Z, n \neq 0} h_n(t) e^{in\theta}, 
    \quad t \in [0,T),\, \theta \in S^1,
\end{equation*}
for the corresponding Fourier series with Fourier coefficients
\begin{equation*}
    h_n(t) = \frac{1}{2\pi} \int_{S^1} h(t,\theta)\, e^{-in\theta}\, d\theta,
    \quad t \in [0,T].
\end{equation*}


In the sequel we study the asymptotic behaviour of solutions to \eqref{eq:PDE_regularised} for large times. 
For this purpose, we introduce the time $t_\ast$ of existence of weak solutions $h$ to \eqref{eq:PDE_regularised} that is bounded away from zero and bounded above in terms of the average. This number $t_\ast$ will be used frequently throughout the paper.


\begin{definition}
We define
\begin{equation*}
    t_\ast = \sup\left\{T > 0; \text{ $\exists$ solution $h$ to \eqref{eq:PDE_regularised} in the sense of Thm. \ref{thm:existence_regularised} s.t. $\tfrac{1}{2}\bar{h}_0 \leq h(t,\theta) \leq 2\bar{h}_0$ on $[0,T]\times S^1$}\right\}.
\end{equation*}
\end{definition}

Observe that, as a consequence of Theorem \ref{thm:existence_regularised} (ii), we have that $T_\sigma > t_\ast$.

\bigskip

\subsection{The energy functional and convergence to a circle} \label{sec:conv_to_circle}
This subsection is dedicated to the proof of part (ii) of Theorem \ref{thm:main_results}. We prove that if the shape of the interface is initially close to a circle and does not touch the cylinder, it remains close to a circle for small times. Moreover, for large times, the solution converges with a polynomial decay to a circle. However, we do not yet have any information on the circle's center. In particular, the center of the limit circle does not necessarily coincide with the common center of the cylinders. 
The proof relies solely on Fourier analysis and suitable estimates for a certain energy functional. 

\begin{theorem}\label{thm:power-law_decay}
Let $\alpha > 1$ and $\sigma \in (0,1)$ be fixed. Let $h$ be the local solution to \eqref{eq:PDE_regularised} on $[0,T]$, corresponding to an initial value $h_0\in H^1(S^1)$ with $h_0(\theta) > 0$ for all $\theta \in S^1$. Then there exists an $\eps_0 > 0$ such that for all $\eps \in (0,\eps_0)$ the additional conditions
\begin{equation*}
    \frac{1}{2\pi}\int_{S^1} h_0\, d\theta = \bar{h}_0 > 0
    \quad \text{and} \quad
    \|h_0 - \bar{h}_0\|_{H^1(S^1)} \leq \eps
\end{equation*}
imply that the solution satisfies
\begin{equation}\label{eq:decay_est}
    \|h(t,\theta) - (\bar{h}_0 + h_{-1}(t) e^{-i\theta} + h_1(t) e^{i\theta})\|_{C([\bar{t}/4,\bar{t}];H^1(S^1))} 
    \leq 
    C \Lambda_\eps(\bar{t}),
    \quad \bar{t} \in [0,t_\ast],
\end{equation}
for some positive constant $C > 0$,
where the function $\Lambda_\eps$ is defined by
\begin{equation}\label{eq:def_Lambda}
    \Lambda_\eps(t)
    =
    \frac{\eps}{\left(1 + C \eps^{\alpha-1} t\right)^\frac{1}{\alpha-1}},
    \quad
    t \geq 0.
\end{equation}
for suitable times $t$.
\end{theorem}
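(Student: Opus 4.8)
The plan is to work entirely in Fourier space and derive a closed differential inequality for a suitably weighted energy of the non-circular part of $h$. Write $h(t,\theta) = \bar h_0 + \sum_{n\neq 0} h_n(t) e^{in\theta}$ and set $g = h - \bar h_0$; the key observation is that the steady states are exactly the functions with $g$ supported on modes $n = \pm 1$, since $\partial_\theta g + \partial_\theta^3 g = \sum_{n\neq 0} (in - in^3) h_n e^{in\theta} = \sum_{n\neq 0} in(1-n^2) h_n e^{in\theta}$ annihilates precisely the modes $n=0,\pm 1$. So I would introduce the energy functional $E(t) = \tfrac12 \sum_{|n|\geq 2} (1 + n^2)|h_n(t)|^2$ (this is, up to the harmless $n=\pm1$ part, the $H^1$-norm of $g$ squared; I expect this is what \eqref{eq:definition_E} referred to in the outline), and the dissipation $D(t) = \int_{S^1} (h^\sigma)^{\alpha+2} \psi_\sigma(\partial_\theta h + \partial_\theta^3 h)(\partial_\theta h + \partial_\theta^3 h)\, d\theta$. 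Testing the weak formulation \eqref{eq:weak_sol_reg} with $\phi = -(\partial_\theta h + \partial_\theta^3 h) = -(g' + g''')$ (legitimate by the regularity in Theorem \ref{thm:existence_regularised}(i), since $h\in C^1((0,T);H^4)$) and integrating by parts gives
\begin{equation*}
    \frac{d}{dt} E(t) = -\int_{S^1} (h^\sigma)^{\alpha+2}\, \bigl(|\partial_\theta h + \partial_\theta^3 h|^2 + \sigma^2\bigr)^\frac{\alpha-1}{2}\, |\partial_\theta h + \partial_\theta^3 h|^2\, d\theta \leq -c\, D(t),
\end{equation*}
so $E$ is nonincreasing. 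On the interval $[0,t_\ast]$ we have the pointwise bounds $\tfrac12\bar h_0 \leq h \leq 2\bar h_0$, so the mobility $(h^\sigma)^{\alpha+2}$ is bounded above and below by positive constants depending only on $\bar h_0$ and $\alpha$.

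The next step is the crucial coercivity estimate: I claim $D(t) \geq c\, E(t)^\frac{\alpha+1}{2}$ on $[0,t_\ast]$. Dropping $\sigma$ (which only helps) and using the lower mobility bound,
\begin{equation*}
    D(t) \geq c \int_{S^1} |\partial_\theta h + \partial_\theta^3 h|^{\alpha+1}\, d\theta = c\, \|g' + g'''\|_{L_{\alpha+1}(S^1)}^{\alpha+1} \geq c\, \|g' + g'''\|_{L_2(S^1)}^{\alpha+1},
\end{equation*}
using $\alpha+1 > 2$ and $|S^1| < \infty$ (Hölder). Now by Parseval $\|g'+g'''\|_{L_2}^2 = \sum_{|n|\geq 2} n^2(n^2-1)^2 |h_n|^2$, and since $n^2(n^2-1)^2 \geq c_0 (1+n^2)$ for all $|n|\geq 2$ (the ratio is bounded below by its value at $n=2$, namely $4\cdot 9/5$), we get $\|g'+g'''\|_{L_2}^2 \geq c_0 \sum_{|n|\geq 2}(1+n^2)|h_n|^2 = 2c_0 E(t)$. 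Hence $D(t) \geq c\, E(t)^\frac{\alpha+1}{2}$, and combining with the energy identity yields the differential inequality
\begin{equation*}
    \frac{d}{dt} E(t) \leq -c\, E(t)^\frac{\alpha+1}{2}, \qquad E(0) \leq \tfrac12 \|h_0 - \bar h_0\|_{H^1(S^1)}^2 \leq \tfrac12\eps^2.
\end{equation*}
Integrating this Bernoulli-type ODE (dividing by $E^\frac{\alpha+1}{2}$ and integrating, using that the exponent $\tfrac{\alpha+1}{2}>1$ since $\alpha>1$) gives $E(t) \leq E(0)\bigl(1 + c'\, E(0)^\frac{\alpha-1}{2} t\bigr)^{-\frac{2}{\alpha-1}}$, hence $\|g'+g'''\|_{\ldots}$ and therefore $\|h(t) - (\bar h_0 + h_{-1}e^{-i\theta}+h_1 e^{i\theta})\|_{H^1(S^1)}^2 \lesssim E(t) \lesssim \eps^2 (1 + c'\eps^{\alpha-1} t)^{-\frac{2}{\alpha-1}}$, which is $C\Lambda_\eps(t)^2$. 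Taking square roots gives \eqref{eq:decay_est} at each fixed time; the $C([\bar t/4,\bar t];H^1)$-version in the statement follows since $E$ is monotone, so the sup over $[\bar t/4, \bar t]$ is controlled by $E(\bar t/4) \lesssim \Lambda_\eps(\bar t/4)^2 \leq C\Lambda_\eps(\bar t)^2$ (adjusting $C$, using that $\Lambda_\eps(\bar t/4)/\Lambda_\eps(\bar t)$ is bounded).

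Two points need care. First, I must make sure the test function $\phi = -(g'+g''')$ is admissible and that all integrations by parts on $S^1$ are justified — this is where Theorem \ref{thm:existence_regularised}(i), giving $h^\sigma \in C^1((0,T);H^4(S^1))$, is essential, and the periodicity kills all boundary terms. Second, and this is the main obstacle I anticipate: one must choose $\eps_0$ small enough that the condition $\tfrac12\bar h_0 \leq h(t,\theta)\leq 2\bar h_0$ propagates, i.e. that $t_\ast > 0$ and the estimate is self-consistent on $[0,t_\ast]$. But this is built into the statement, which only asserts \eqref{eq:decay_est} for $\bar t \in [0,t_\ast]$ — so on that interval the mobility bounds are available for free by definition of $t_\ast$, and no bootstrap is needed here; the propagation of $\tfrac12\bar h_0 \le h \le 2\bar h_0$ (hence a positive lower bound on $t_\ast$) is presumably handled separately in the paper using $\|h(t)-\bar h_0\|_{H^1} \lesssim \eps$ together with the Sobolev embedding $H^1(S^1)\hookrightarrow C(S^1)$, choosing $\eps_0$ so that $\|h-\bar h_0\|_\infty \le C\eps_0 < \tfrac12\bar h_0$. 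I would state this reduction explicitly and then carry out the energy computation above.
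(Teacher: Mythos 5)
Your overall strategy---a Fourier-weighted energy of the non-circular part, the dissipation identity, a coercivity estimate $D \gtrsim E^{\frac{\alpha+1}{2}}$ via mobility bounds, and integration of the resulting Bernoulli inequality---is exactly the paper's route (Lemma \ref{lem:energy_dissipation}, Lemma \ref{lem:E-E_min_J}, Corollary \ref{cor:energy_decay}, Proposition \ref{lem:H^1-bound_phi}). However, the crucial identity $\frac{d}{dt}E(t)=-D(t)$ does \emph{not} hold for your energy $E=\frac12\sum_{|n|\ge 2}(1+n^2)|h_n|^2$, nor does it follow from your test function.

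To get $D$ on the right-hand side of the weak formulation \eqref{eq:weak_sol_reg}, you need $\partial_\theta\phi = \partial_\theta h + \partial_\theta^3 h$, i.e.\ $\phi = h+\partial_\theta^2 h$, not $\phi=-(\partial_\theta h+\partial_\theta^3 h)$ (with your test function $\partial_\theta\phi=-(\partial_\theta^2 h+\partial_\theta^4 h)$ appears instead, and after one more integration by parts using $\partial_\theta M=-\partial_t h$ the computation collapses to a tautology). With the correct test function $h+\partial_\theta^2 h$, the quantity whose time derivative equals $-D$ is $\frac12\int_{S^1}(|\partial_\theta h|^2-h^2)\,d\theta$, whose Fourier expansion carries weights $\pi(n^2-1)$; this is the paper's $E$ from \eqref{eq:definition_Energy}, and the $n=0,\pm1$ modes are annihilated automatically. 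Your weights $(1+n^2)$ produce a different linear combination: writing $h_n'=-in M_n$ from the PDE, $\frac{d}{dt}E_{\text{paper}}=c\sum_{n\ge 2} n(n^2-1)\,\mathrm{Im}(M_n\bar h_n)$ while $\frac{d}{dt}E_{\text{yours}}=c'\sum_{n\ge 2} n(n^2+1)\,\mathrm{Im}(M_n\bar h_n)$, and since these weights are not proportional and the individual signs of $\mathrm{Im}(M_n\bar h_n)$ are uncontrolled, you cannot conclude even $\frac{d}{dt}E_{\text{yours}}\le -cD$ from the sign of the other sum. The two functionals are equivalent \emph{norms} for $\phi$ (each is comparable to $\|\phi\|_{H^1}^2$ on $|n|\ge 2$), so once the ODE is derived for the correct $E$ you recover your stated bound on $\|\phi\|_{H^1}$ --- but the differential identity must be established for the $(n^2-1)$-weighted $E$. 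The remaining steps (Hölder $\|\cdot\|_{L_2}\lesssim\|\cdot\|_{L_{\alpha+1}}$, the pointwise bound $\psi_\sigma(s)s\ge|s|^{\alpha+1}$, the mobility bound on $[0,t_\ast]$, integration of the Bernoulli inequality, and monotonicity of $E$ to pass from a pointwise bound to the $C([\bar t/4,\bar t];H^1)$ bound) are all correct and match the paper.
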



We now introduce the energy functional that provides the crucial estimates for the proof of Theorem \ref{thm:power-law_decay}. 


\begin{definition}
Let $v \in H^1(S^1)$ such that $v(\theta) \geq 0$ for all $\theta \in S^1$.
\begin{itemize}
    \item[(i)] We define the \textbf{energy functional}
        \begin{equation} \label{eq:definition_Energy}
            E[v]
            =
            \pi \bar{v}^2 +
            \frac{1}{2} \int_{S^1} \bigl(|\partial_\theta v|^2 - v^2\bigr)\, d\theta.
        \end{equation}
    \item[(ii)] Moreover, the so-called \textbf{modified energy functional} is given by
        \begin{equation} \label{eq:definition_E}
            e[v]
            =
            \frac{1}{2} \int_{S^1} \bigl(|\partial_\theta v|^2 - v^2\bigr)\, d\theta.
        \end{equation}
    \item[(iii)] If in addition $v \in W^3_{\alpha+1}(S^1)$, then we can in addition define the \textbf{dissipation functional} 
        \begin{equation*}
            J[v] = \int_{S^1} v^{\alpha+2} \left(|\partial_\theta v + \partial_\theta^3 v|^2 + \sigma^2\right)^\frac{\alpha-1}{2} |\partial_\theta v + \partial_\theta^3 v|^2\, d\theta.
        \end{equation*}
\end{itemize}
\end{definition}


As the following two lemmas show, the functional $E$ is an energy functional in the sense that it is a non-negative quantity which decreases along weak solutions to \eqref{eq:PDE_regularised}.


\begin{lemma}
The functional $E$ satisfies $E[v] \geq 0$ for all $v \in H^1(S^1)$.
\end{lemma}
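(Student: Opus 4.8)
The statement to prove is that the energy functional
\[
    E[v] = \pi \bar{v}^2 + \frac{1}{2}\int_{S^1}\bigl(|\partial_\theta v|^2 - v^2\bigr)\, d\theta
\]
is non-negative for every $v \in H^1(S^1)$. The plan is to expand everything in Fourier series and reduce the claim to an elementary inequality for the Fourier coefficients.

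Write $v(\theta) = \sum_{n\in\Z} v_n e^{in\theta}$, so that $\bar v = v_0$. By Parseval's identity,
\[
    \frac{1}{2\pi}\int_{S^1} |v|^2\, d\theta = \sum_{n\in\Z} |v_n|^2,
    \qquad
    \frac{1}{2\pi}\int_{S^1} |\partial_\theta v|^2\, d\theta = \sum_{n\in\Z} n^2 |v_n|^2.
\]
Substituting these into the definition gives
\[
    E[v] = \pi v_0^2 + \pi\sum_{n\in\Z} (n^2 - 1)|v_n|^2
    = \pi\sum_{n\in\Z} n^2 |v_n|^2 + \pi\sum_{|n|\geq 2}(n^2-1)|v_n|^2 \geq 0,
\]
since the $n=0$ term of $\pi\sum (n^2-1)|v_n|^2$ is $-\pi v_0^2$, which is exactly cancelled by $\pi v_0^2$, the $n=\pm 1$ terms of that sum vanish, and every remaining term $(n^2-1)|v_n|^2$ with $|n|\geq 2$ is non-negative. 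Equivalently and more transparently, one sees that $E[v] = \pi\sum_{n\in\Z}n^2|v_n|^2 + \pi\sum_{|n|\ge 2}(n^2-1)|v_n|^2$, so $E[v]$ is manifestly a sum of non-negative terms.

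An alternative, essentially equivalent, route avoids Fourier series: write $v = \bar v + w$ where $w = v - \bar v$ has zero average. Then $\int_{S^1} v^2 = 2\pi\bar v^2 + \int_{S^1} w^2$ and $\int_{S^1}|\partial_\theta v|^2 = \int_{S^1}|\partial_\theta w|^2$, so
\[
    E[v] = \pi\bar v^2 + \frac{1}{2}\int_{S^1}|\partial_\theta w|^2\, d\theta - \pi\bar v^2 - \frac{1}{2}\int_{S^1} w^2\, d\theta
    = \frac{1}{2}\int_{S^1}\bigl(|\partial_\theta w|^2 - w^2\bigr)\, d\theta,
\]
and this last quantity is non-negative by the Poincaré (Wirtinger) inequality $\int_{S^1} w^2 \leq \int_{S^1}|\partial_\theta w|^2$ valid for mean-zero periodic functions $w$, since the first nonzero eigenvalue of $-\partial_\theta^2$ on $S^1$ is $1$. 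I would present the Fourier-series computation as the main argument, since it is self-contained and also pinpoints the equality case (namely $v_n = 0$ for all $|n|\ge 2$, i.e.\ $v$ is a trigonometric polynomial of degree $\le 1$). There is no real obstacle here: the only point requiring the slightest care is the bookkeeping of the $n=0$ and $n=\pm1$ modes, where the $\pi\bar v^2$ term and the negative contribution $-\frac12\int v^2$ conspire to leave a non-negative remainder.
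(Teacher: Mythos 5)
Your approach is the same as the paper's: expand $v$ in Fourier series, use Parseval's identity, and observe that the $n=0$ and $n=\pm 1$ contributions cancel, leaving a manifestly non-negative remainder. The verbal argument is correct and the Poincar\'e--Wirtinger alternative is a fine equivalent reformulation.

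However, the displayed identity
\[
E[v] = \pi v_0^2 + \pi\sum_{n\in\Z}(n^2-1)|v_n|^2 = \pi\sum_{n\in\Z} n^2|v_n|^2 + \pi\sum_{|n|\ge 2}(n^2-1)|v_n|^2
\]
is \emph{not} an equality; the right-hand side exceeds the left by $\pi\sum_{n\in\Z}n^2|v_n|^2$. After cancelling the $n=0$ term (which produces $-\pi v_0^2$) and dropping the vanishing $n=\pm1$ terms, what remains is simply
\[
E[v] = \pi\sum_{|n|\ge 2}(n^2-1)|v_n|^2 \ge 0,
\]
which is exactly the expression the paper arrives at. Your description of the cancellation in prose is correct, so the conclusion stands, but the displayed ``more transparent'' form should be corrected to the single sum over $|n|\ge 2$.
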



\begin{proof}
The statement follows from the Fourier series representation of $v$ and Plancherel's theorem. Indeed, we find that
\begin{align*}
	E[v](t)
	&=
	\pi \bar{v}^2 + \frac{1}{2}\int_{S^1} \left(|\partial_\theta v|^2 - |v|^2\right)\, d\theta
	\\
	&=
	\pi \bar{v}^2 +
	\frac{1}{2} \|\partial_\theta v\|_{L_2(S^1)}^2 - 
	\frac{1}{2} \|v\|_{L_2(S^1)}^2
	\\
	&=
	\pi \bar{v}^2 +
	\pi 
	\left(\sum_{n\in \Z, n \neq 0} (n^2-1) |v_n|^2
	-
	|\bar{v}|^2\right)
	\\
	&=
	\pi 
	\sum_{n\in\Z, n \neq 0} (n^2-1) |v_n|^2 \geq 0.
\end{align*}
\end{proof}


In addition to its non-negativity property the functional $E$ dissipates energy along solutions to \eqref{eq:PDE_regularised} in the following sense.


\begin{lemma}[Energy-dissipation inequality] \label{lem:energy_dissipation}
Given $h_0 \in H^1(S^1)$, let $\sigma \in (0,1)$ be fixed and let 
\begin{equation*}
    h \in L_{\alpha+1}\bigl((0,T);W^3_{\alpha+1}(S^1)\bigr) \cap C\bigl([0,T);H^1(S^1)\bigr)
\end{equation*}
be a corresponding weak solution to \eqref{eq:PDE_regularised} on $[0,T]$. Then the functional $E$ defined in \eqref{eq:definition_E} satisfies the \textbf{energy (dissipation) equality}
\begin{equation*}
    E[h](t) + 
    D^\sigma_t[h]
    = 
    E[h_0], \quad t \in [0,T].
\end{equation*}
where $D_t^\sigma[h]$ is a non-negative quantity given by
\begin{equation*}
    D^\sigma_t[h]
    =
    \int_0^t \int_{S^1} |h|^{\alpha+2} \left(|\partial_\theta h + \partial_\theta^3 h|^2 + \sigma^2\right)^\frac{\alpha-1}{2}
    |\partial_\theta h + \partial_\theta^3 h|^2
    \, d\theta\, ds.
\end{equation*}
\end{lemma}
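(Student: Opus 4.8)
The plan is to establish the energy-dissipation equality by differentiating $E[h](t)$ in time and using the weak formulation \eqref{eq:weak_sol_reg} with the ``natural'' test function. First I would note that, by the Fourier computation already performed, $E[h] = \pi\bar h_0^2 + e[h]$ where $e[h] = \tfrac12\int_{S^1}(|\partial_\theta h|^2 - h^2)\,d\theta$, and since $\bar h_0$ is time-independent (Lemma \ref{lem:cons_mass_reg}) it suffices to track $e[h]$. For the smooth solution $h = h^\sigma \in C^1((0,T_\sigma);H^4(S^1))$ provided by Theorem \ref{thm:existence_regularised}(i), I would compute
\begin{equation*}
    \frac{d}{dt} e[h](t)
    =
    \int_{S^1} \bigl(\partial_\theta h\,\partial_\theta\partial_t h - h\,\partial_t h\bigr)\, d\theta
    =
    -\int_{S^1} \bigl(\partial_\theta^2 h + h\bigr)\,\partial_t h\, d\theta
    =
    \int_{S^1} \bigl(\partial_\theta h + \partial_\theta^3 h\bigr)\,\partial_\theta\bigl((h)^{\alpha+2}\psi_\sigma(\partial_\theta h + \partial_\theta^3 h)\bigr)\, d\theta,
\end{equation*}
where the second equality is integration by parts (no boundary terms, by periodicity) and the third uses the PDE $\eqref{eq:PDE_regularised}_1$. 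A further integration by parts moves the outer $\partial_\theta$ onto $\partial_\theta h + \partial_\theta^3 h$, giving
\begin{equation*}
    \frac{d}{dt} e[h](t)
    =
    -\int_{S^1} h^{\alpha+2}\,\psi_\sigma(\partial_\theta h + \partial_\theta^3 h)\,(\partial_\theta h + \partial_\theta^3 h)\, d\theta
    =
    -\int_{S^1} h^{\alpha+2}\,\bigl((\partial_\theta h + \partial_\theta^3 h)^2 + \sigma^2\bigr)^{\frac{\alpha-1}{2}}\,(\partial_\theta h + \partial_\theta^3 h)^2\, d\theta,
\end{equation*}
using the definition $\psi_\sigma(s) = (s^2+\sigma^2)^{\frac{\alpha-1}{2}}s$. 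The right-hand side is exactly $-$(the integrand of $D^\sigma_t[h]$), so integrating from $0$ to $t$ yields $e[h](t) + D^\sigma_t[h] = e[h_0]$, hence $E[h](t) + D^\sigma_t[h] = E[h_0]$. Non-negativity of $D^\sigma_t[h]$ is immediate since the integrand is a product of non-negative factors (here $h>0$ is used).

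The main technical obstacle is justifying the above manipulations at the stated regularity level, i.e.\ for a \emph{weak} solution $h \in L_{\alpha+1}((0,T);W^3_{\alpha+1}(S^1)) \cap C([0,T);H^1(S^1))$ rather than for the strong solution. The chain of identities above requires $h(t) \in W^3_{\alpha+1}(S^1)$ a.e.\ (which is assumed, and makes $\partial_\theta h + \partial_\theta^3 h \in L_{\alpha+1}$, so the dissipation integrand lies in $L_1$) and requires $\partial_t h$ to pair with $\partial_\theta^2 h + h \in W^1_{\alpha+1}(S^1)$; the latter is admissible precisely because, by Definition \ref{def:weak_solution}, $\partial_t h \in L_{\frac{\alpha+1}{\alpha}}((0,T);(W^1_{\alpha+1}(S^1))')$ and $\phi = \partial_\theta^2 h + h$ is an admissible test function. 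I would therefore (i) approximate $h$ by the smooth solutions from Theorem \ref{thm:existence_regularised} (or by mollification in $\theta$ and in $t$), establish the identity for the approximants, and pass to the limit using that $e[\cdot]$ is continuous on $H^1(S^1)$ and that the dissipation functional is weakly lower semicontinuous (convexity in $\partial_\theta h + \partial_\theta^3 h$), combined with the energy equality to upgrade lower semicontinuity to convergence; or (ii) directly invoke a chain-rule lemma for the Lyapunov functional $e$ along the flow, using the duality pairing, as is standard for gradient-flow-type equations. The cleanest route is to prove the identity first for the smooth solution of Theorem \ref{thm:existence_regularised}(i) — which is what is actually needed in the rest of the paper, since $t_\ast \le T_\sigma$ — and remark that the general weak-solution statement follows by the density/approximation argument above.

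One should also record the elementary observations that make the formal computation legitimate: the integrand of $D^\sigma_t[h]$ is dominated, for $h$ bounded above (which holds on $[0,t_\ast]$), by $C\,(|\partial_\theta h + \partial_\theta^3 h|^{\alpha+1} + \sigma^{\alpha-1}|\partial_\theta h + \partial_\theta^3 h|^2)$, hence lies in $L_1((0,t)\times S^1)$ by the $W^3_{\alpha+1}$-regularity, so $D^\sigma_t[h] < \infty$; and all boundary terms in the integrations by parts vanish by $2\pi$-periodicity of $h$ and its first three derivatives. With these in place the proof is a short computation followed by the approximation argument, the latter being the only genuinely delicate point.
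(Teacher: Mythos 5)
Your proposal is correct and follows essentially the same route as the paper: test the weak formulation of \eqref{eq:PDE_regularised} with $h + \partial_\theta^2 h$, which is admissible because Theorem~\ref{thm:existence_regularised} gives $h \in C^1((0,T_\sigma);H^4(S^1))$, and the identity follows after integration by parts with periodic boundary conditions. You spell out the chain-rule/approximation justification that the paper leaves implicit, but the key step and computation are the same.
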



Note that $D^\sigma_t[h]$ is the time integral of the dissipation functional $J$. Thus, in the following we often refer to $D^\sigma_t[h]$ as the \textbf{dissipation}.


\begin{proof}
According to the regularity properties of weak solutions obtained in Theorem \ref{thm:existence_regularised}, we may use $(h +\partial_\theta^2 h)$ as a test function for \eqref{eq:PDE_regularised}. This proves the lemma.
\end{proof}


\begin{remark}
We briefly comment on the definition of the energy functional $E$ in \eqref{eq:definition_Energy}. Let us assume that we are given a smooth solution $h$ of \eqref{eq:PDE_regularised}. A natural test function for \eqref{eq:PDE_regularised} in order to observe energy dissipation is $(h +\partial_\theta^2 h)$. Testing \eqref{eq:PDE_regularised} with $(h+\partial_\theta^2 h)$ corresponds to calculating the time derivative of the functional
\begin{equation*}
    e[h] = \frac{1}{2} \int_{S^1} \bigl(|\partial_\theta h|^2 - |h|^2\bigr)\, d\theta.
\end{equation*}
However, this functional is not necessarily a non-negative quantity. In particular, if the solution has a circular shape, i.e. if it is given by
\begin{equation*}
    v(t,\theta) = \bar{v} + v_{-1}(t) e^{-i\theta} + v_1(t) e^{i\theta}, \quad t \in [0,T],\, \theta \in S^1,
\end{equation*}
then the corresponding energy is given by $e[v] = -\pi\bar{v}^2$, i.e. the only remaining term is the one coming from the constant zeroth Fourier mode. This motivates us to define the functional
\begin{equation*} 
    E[v](t) 
    = 
    \pi \bar{v}^2 + e[v](t)
    =
    \pi \bar{v}^2 +
    \frac{1}{2} \int_{S^1} \bigl(|\partial_\theta v|^2 - |v|^2\bigr)\, d\theta, \quad t \in [0,T].
\end{equation*}
\end{remark}


In addition to the fact that weak solutions to \eqref{eq:PDE_regularised} dissipate energy, we observe that also the dissipation functional decreases in time if it is small enough initially. To see this, we start by proving several auxiliary results on the dynamic behaviour of $J$.
First, clearly Lemma \ref{lem:energy_dissipation} may be reformulated as
\begin{equation} \label{eq:E_prime=J}
    \frac{d}{dt}E[h](t) =  -J[h](t), \quad t \in [0,T].
\end{equation}


We are now in a position to prove that, if the energy $E[v]$ of a certain function $v$ with positive constant average $\bar{v}>0$ is small, then this function is close to a circle with respect to the $H^1(S^1)$-norm. 


\begin{proposition}\label{lem:H^1-bound_phi}
Let $\delta > 0$ be a small given number. Then, for every function $v \in H^1(S^1)$ satisfying
\begin{equation*}
    \frac{1}{2 \pi}\int_{S^1} v\, d\theta = \bar{v} > 0
    \quad
    \text{and}
    \quad
    E[v] \leq \frac{\delta}{4}
\end{equation*}
it follows that
\begin{equation*}
    \left\|v(\theta) - \bigl(\bar{v} + v_{-1} e^{-i\theta} + v_1 e^{i\theta}\bigr)\right\|_{H^1(S^1)} 
    \leq 
    \sqrt{\delta}.
\end{equation*}
\end{proposition}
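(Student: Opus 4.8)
The plan is to compute everything in terms of Fourier coefficients, exactly as in the proof that $E[v] \geq 0$. Write $v(\theta) = \bar v + \sum_{n \neq 0} v_n e^{in\theta}$ and set $w(\theta) = v(\theta) - (\bar v + v_{-1} e^{-i\theta} + v_1 e^{i\theta}) = \sum_{|n| \geq 2} v_n e^{in\theta}$, so that $w$ has no Fourier modes with $|n| \leq 1$. By Plancherel, $\|w\|_{H^1(S^1)}^2 = 2\pi \sum_{|n|\geq 2}(1 + n^2)|v_n|^2$. The point is that for $|n| \geq 2$ one has the elementary inequality $1 + n^2 \leq 2(n^2 - 1)$, since this rearranges to $n^2 \geq 3$, which holds for $|n| \geq 2$. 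Hence
\begin{equation*}
    \|w\|_{H^1(S^1)}^2 = 2\pi \sum_{|n| \geq 2} (1 + n^2)|v_n|^2 \leq 4\pi \sum_{|n| \geq 2}(n^2 - 1)|v_n|^2 \leq 4\pi \sum_{n \neq 0}(n^2 - 1)|v_n|^2.
\end{equation*}

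Next I would recognise the right-hand side. From the computation in the lemma showing $E[v] \geq 0$, we have $E[v] = \pi \sum_{n \neq 0}(n^2 - 1)|v_n|^2$; note the $n = \pm 1$ terms vanish in this sum, so it is genuinely a sum over $|n| \geq 2$ with nonnegative coefficients. Therefore $\sum_{n\neq 0}(n^2-1)|v_n|^2 = E[v]/\pi$, and combining with the previous display gives $\|w\|_{H^1(S^1)}^2 \leq 4 E[v] \leq 4 \cdot \delta/4 = \delta$. Taking square roots yields $\|w\|_{H^1(S^1)} \leq \sqrt\delta$, which is precisely the claimed estimate.

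There is essentially no obstacle here: the proof is a two-line Fourier computation once one spots the inequality $1 + n^2 \leq 2(n^2-1)$ valid for $|n| \geq 2$ and remembers the Fourier expression for $E$ established earlier in the paper. The only mild subtlety worth a sentence is that the hypothesis $\bar v > 0$ and $v \geq 0$ play no role in this particular estimate — they are inherited from the ambient setup and guarantee that $E[v]$ and the modified energy behave as expected elsewhere — so the argument really only uses $v \in H^1(S^1)$ and $E[v] \leq \delta/4$. I would state the Fourier reduction, invoke the already-proved identity $E[v] = \pi\sum_{n\neq 0}(n^2-1)|v_n|^2$, apply $1+n^2 \leq 2(n^2-1)$ termwise for $|n|\geq 2$, and conclude.
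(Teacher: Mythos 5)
Your proof is correct and is essentially the paper's own argument: both reduce to Plancherel, observe that the remainder has Fourier coefficients $v_n$ only for $|n|\geq 2$, and use the elementary inequality $n^2+1\leq 2(n^2-1)$ (the paper writes this as $(n^2-1)+2\leq 2(n^2-1)$) together with $E[v]=\pi\sum_{n\neq 0}(n^2-1)|v_n|^2$ to conclude $\|w\|_{H^1}^2\leq 4E[v]\leq\delta$. Your side remark that the positivity hypothesis on $\bar v$ plays no role in this particular estimate is accurate.
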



\begin{proof}
Assume that $E[v] \leq \delta/4$. In terms of the energy's Fourier series, this reads
\begin{equation*}
    E[v]
    =
    \pi \sum_{n\in\Z} (n^2 - 1) |v_n|^2 + \pi \bar{v}^2
    =
    \pi \sum_{n\in\Z, n\neq 0,\pm 1} (n^2 - 1) |v_n|^2 \leq \frac{\delta}{4},
\end{equation*}
Now, for $\theta \in S^1$, we define the function
\begin{equation*}
    \phi(\theta) = v(\theta) - \bigl(\bar{v} + v_{-1} e^{-i\theta} + v_1 e^{i\theta}\bigr) 
    = 
    \sum_{n\in\Z} \phi_n e^{in\theta}
\end{equation*}
as the pointwise difference of the function $v$ to a circle.
Then, we clearly have
\begin{equation*}
    \phi_0 = \phi_{\pm 1} = 0 \quad \text{and} \quad \phi_n = v_n \quad \text{for}\quad 
    n \neq 0, \pm 1.
\end{equation*}
Moreover, using Plancherel's theorem, we obtain
\begin{equation*}
    \begin{split}
        \left\|\phi\right\|_{H^1(S^1)}^2
        &=
        2\pi \sum_{n\in\Z, n \neq 0,\pm 1} (n^2+1) |v_n|^2
        \\
        &=
        2\pi \sum_{n\in\Z, n \neq 0,\pm 1} \bigl[(n^2-1)+2\bigr] |v_n|^2
        \\
        &\leq
        4\pi \sum_{n\in\Z, n \neq 0,\pm 1} (n^2-1) |v_n|^2
        \\
        &=
        4E[v] \leq \delta.
    \end{split}
\end{equation*}
This is the desired estimate.
\end{proof}


Note that if $J[h](t)=0$, then also $\frac{d}{dt}E[h](t)=-J[h](t)=0$, i.e. the system does not dissipate energy and the solution is already contained in the manifold of steady states.

In the next lemma we derive an estimate for the energy $E$ in terms of the dissipation $J$. Note that this in turn gives a sense to the smallness assumptions in the previous lemma.


\begin{lemma} \label{lem:E-E_min_J}
There exists an $\eps_0 > 0$ such that for all $\eps \in (0,\eps_0)$ and all initial values $h_0 \in H^1(S^1)$ with
\begin{equation*}
    \frac{1}{2\pi} \int_{S^1} h_0\,d\theta = \bar{h}_0 > 0
    \quad \text{and} \quad
    \|h_0 - \bar{h}_0\|_{H^1(S^1)} \leq \eps
\end{equation*}
the corresponding local solution $h$ to \eqref{eq:PDE_regularised} satisfies 
\begin{equation*}
    E[h](t) \leq C \bigl(J[h](t)\bigr)^\frac{2}{\alpha+1},
    \quad
    t \in [0,t_\ast].
\end{equation*}
\end{lemma}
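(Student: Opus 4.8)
The plan is to bound the energy $E[h]$ from above by the dissipation $J[h]$ using the Fourier representations of both quantities together with an interpolation inequality. Recall from the earlier computation that
\[
    E[h] = \pi \sum_{n \neq 0, \pm 1} (n^2-1)|h_n|^2,
\]
so $E[h]$ is essentially controlled by $\sum_{n\neq 0,\pm1} n^2 |h_n|^2 \sim \|\phi'\|_{L_2}^2$ where $\phi = h - (\bar h_0 + h_{-1}e^{-i\theta}+h_1 e^{i\theta})$. On the other hand, since $\tfrac12\bar h_0 \le h \le 2\bar h_0$ on $[0,t_\ast]$, the dissipation obeys
\[
    J[h] \ge c\,(\bar h_0)^{\alpha+2}\int_{S^1} |\partial_\theta h + \partial_\theta^3 h|^{\alpha+1}\, d\theta
    = c\,(\bar h_0)^{\alpha+2}\int_{S^1} |\partial_\theta \phi + \partial_\theta^3 \phi|^{\alpha+1}\, d\theta,
\]
using $(s^2+\sigma^2)^{(\alpha-1)/2}|s|^2 \ge |s|^{\alpha+1}$ and the fact that the operator $\partial_\theta + \partial_\theta^3$ annihilates the modes $n = 0, \pm1$ (so replacing $h$ by $\phi$ changes nothing). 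Thus it suffices to prove an inequality of the form $\|\phi'\|_{L_2(S^1)}^2 \le C \|\partial_\theta\phi + \partial_\theta^3\phi\|_{L_{\alpha+1}(S^1)}^{2}$ for $2\pi$-periodic $\phi$ whose Fourier modes $n=0,\pm1$ vanish.

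First I would establish the linear-operator coercivity in $L_2$: for such $\phi$, Plancherel gives $\|\partial_\theta\phi+\partial_\theta^3\phi\|_{L_2}^2 = 2\pi\sum_{|n|\ge2}(n^3-n)^2|\phi_n|^2$, and since $(n^3-n)^2 = n^2(n^2-1)^2 \ge c\, n^2$ for $|n|\ge 2$ (indeed $(n^2-1)^2\ge 9$), we get $\|\phi'\|_{L_2}^2 = 2\pi\sum n^2|\phi_n|^2 \le C\|\partial_\theta\phi+\partial_\theta^3\phi\|_{L_2}^2$. Next I would upgrade the right-hand side from $L_2$ to $L_{\alpha+1}$: since $\alpha>1$ we have $\alpha+1>2$, and on the bounded domain $S^1$ the embedding $L_{\alpha+1}(S^1)\hookrightarrow L_2(S^1)$ gives $\|\partial_\theta\phi+\partial_\theta^3\phi\|_{L_2}\le (2\pi)^{\frac{\alpha-1}{2(\alpha+1)}}\|\partial_\theta\phi+\partial_\theta^3\phi\|_{L_{\alpha+1}}$. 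Chaining these,
\[
    E[h] \le C\|\phi'\|_{L_2}^2 \le C\|\partial_\theta\phi+\partial_\theta^3\phi\|_{L_{\alpha+1}}^2
    \le C\Bigl(\int_{S^1}|\partial_\theta h+\partial_\theta^3 h|^{\alpha+1}d\theta\Bigr)^{\frac{2}{\alpha+1}}
    \le C\bigl(J[h]\bigr)^{\frac{2}{\alpha+1}},
\]
where the last step absorbs the constant $c(\bar h_0)^{\alpha+2}$ into $C$ (legitimate since $\bar h_0$ is fixed and bounded below on the relevant regime, using $\|h_0-\bar h_0\|_{H^1}\le\eps$ and $\bar h_0>0$; one may also invoke the $\eps_0$-smallness to keep $\bar h_0$ in a fixed range).

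The main subtlety — what plays the role of the ``hard part'' here, though it is not deep — is making sure the smallness/boundedness hypotheses are actually used where needed and nowhere spuriously. Specifically: (a) the bound $h\le 2\bar h_0$ is what converts the $h^{\alpha+2}$ weight in $J$ into a harmless constant, and $h\ge \tfrac12\bar h_0 > 0$ guarantees the weight does not degenerate; this is exactly the content of ``$t\in[0,t_\ast]$''. (b) The passage from $E[h]$ (which only sees modes $|n|\ge 2$) to $\|\phi'\|_{L_2}^2$ is an identity, not an inequality requiring smallness — so the estimate is in fact uniform and does not deteriorate. (c) The constant $C$ depends on $\alpha$ and on $\bar h_0$ but not on $\sigma$ nor on $t$, which is what one needs downstream; I would state this explicitly. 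With these points in place the proof is a short chain of Fourier identities plus one elementary embedding, and I would present it in that order: reduce to $\phi$, lower-bound $J$ by an $L_{\alpha+1}$ norm, apply the spectral-gap estimate $(n^2-1)^2\ge 9$ in $L_2$, interpolate $L_2\subset L_{\alpha+1}$ backwards via Hölder on the bounded domain, and conclude.
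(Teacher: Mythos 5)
Your proof is correct and follows essentially the same route as the paper: express $E[h]$ via the Fourier modes $|n|\ge 2$, exploit the spectral gap of the multiplier $n\mapsto n(n^2-1)$ away from $n=0,\pm1$, pass from $L_2$ to $L_{\alpha+1}$ on the bounded domain, and use $\tfrac12\bar h_0\le h\le 2\bar h_0$ to strip the weight $h^{\alpha+2}$ from $J[h]$. The paper packages this through the auxiliary quantity $w=h+\partial_\theta^2 h$ and invokes Poincar\'e plus Jensen's inequality, while you work directly with $\phi$ and $\partial_\theta\phi+\partial_\theta^3\phi$ and compare Fourier symbols explicitly; the bookkeeping differs but the argument is the same.
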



\begin{proof}
We apply again Fourier analysis and write the energy $E[h](t)$ for $t \in [0,t_\ast]$ as
\begin{equation*}
    E[h](t)
    =
    \pi \sum_{n\in\Z} (n^2 - 1) |h_n|^2 + \pi \bar{h}_0^2
    =
    \pi \sum_{n\in\Z, n\neq 0,\pm 1} (n^2 - 1) |h_n|^2
    =
    \pi \sum_{n\in\Z, n\neq 0,\pm 1} (n^2 - 1) |(h - \bar{h})_n|^2,
\end{equation*}
where we use that $h_n = (h - \bar{h})_n,\ n \neq 0$. Setting $w = h + \partial_\theta^2 h$, the Fourier coefficients of $(w-\bar{w})$ are given by $(w - \bar{w})_n = (1 - n^2) (h-\bar{h})_n$ for $n\in\Z$. Thus, we find that
\begin{equation*}
    E[h](t)
    =
    \pi \sum_{n\in\Z, n\neq 0,\pm 1} \frac{1}{n^2 - 1} |(w - \bar{w})_n|^2,
    \quad t \in [0,t_\ast].
\end{equation*}
Using that $\frac{1}{n^2-1} < 1$ for all $n \neq \pm 1$ and invoking Plancherel's theorem, we obtain
\begin{equation*}
    E[h](t)
    \leq
    \pi \sum_{n\in\Z, n\neq 0,\pm 1} |(w - \bar{w})_n|^2
    \leq
    \frac{1}{2} \left\|w - \bar{w}\right\|_{L_2(S^1)}^2.
\end{equation*}
Finally, Jensen's inequality (for concave functions) and the assumption $\frac{\bar{h}}{2} \leq h(t,\theta),\, t \in [0,t_\ast],\, \theta \in S^1$, imply that
\begin{equation*}
        E[h](t)
        \leq
        \frac{1}{2} \left\|w - \bar{w}\right\|_{L_2(S^1)}^2
        \leq
        C \left(\int_{S^1} |\partial_\theta (w - \bar{w})|^{\alpha+1}\right)^\frac{2}{\alpha+1}
        \leq
        C \bigl(J[h](t)\bigr)^\frac{2}{\alpha+1},
        \quad t \in [0,t_\ast].
\end{equation*}
This is the desired inequality.
\end{proof}


Using Lemma \ref{lem:energy_dissipation} and the previous Lemma \ref{lem:E-E_min_J} we are able to prove the following decay estimate for the energy functional $E$. Recall that $E[h](0) \leq \|h_0 - \bar{h}_0\|_{H^1(S^1)}^2$.


\begin{corollary} \label{cor:energy_decay}
There exists an $\eps_0 > 0$ such that for all $\eps \in (0,\eps_0)$ and all initial values $h_0 \in H^1(S^1)$ with
\begin{equation*}
    \frac{1}{2\pi} \int_{S^1} h_0\,d\theta = \bar{h}_0 > 0
    \quad \text{and} \quad
    \|h_0 - \bar{h}_0\|_{H^1(S^1)} \leq \eps
\end{equation*}
the corresponding local solution $h$ to \eqref{eq:PDE_regularised} satisfies 
\begin{equation*}
    E[h](t) 
    \leq 
    (\Lambda_\eps(t))^2,
    \quad
    0\leq t \leq t_\ast, 
\end{equation*}
with some positive constant $C > 0$ in the definition of $\Lambda_\eps$, cf. \eqref{eq:def_Lambda}.
\end{corollary}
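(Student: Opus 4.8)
The plan is to combine the energy-dissipation identity of Lemma~\ref{lem:energy_dissipation}, rewritten as the differential relation \eqref{eq:E_prime=J}, with the interpolation-type estimate of Lemma~\ref{lem:E-E_min_J} to obtain a closed differential inequality for the scalar function $t \mapsto E[h](t)$, and then to solve this ODE inequality explicitly. Concretely, Lemma~\ref{lem:E-E_min_J} gives, for $t \in [0,t_\ast]$, a constant $C>0$ with $E[h](t) \leq C\, J[h](t)^{2/(\alpha+1)}$, which, since $J[h]\geq 0$, rearranges to $J[h](t) \geq \bar{C}\, E[h](t)^{(\alpha+1)/2}$ for $\bar{C} = C^{-(\alpha+1)/2}$. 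Plugging this into $\frac{d}{dt} E[h](t) = -J[h](t)$ yields the autonomous differential inequality
\begin{equation*}
    \frac{d}{dt} E[h](t) \leq -\bar{C}\, \bigl(E[h](t)\bigr)^{\frac{\alpha+1}{2}},
    \quad t \in [0,t_\ast].
\end{equation*}
Note that the exponent satisfies $\frac{\alpha+1}{2} > 1$ precisely because $\alpha > 1$, which is exactly the regime in which this super-linear dissipation produces algebraic (rather than exponential) decay.

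Next I would integrate this inequality. Writing $y(t) = E[h](t) \geq 0$ and $p = \frac{\alpha+1}{2} > 1$, one has $\frac{d}{dt}\bigl(y^{1-p}\bigr) = (1-p) y^{-p} y' \geq (1-p)(-\bar C) = (p-1)\bar{C}$, so $y(t)^{1-p} \geq y(0)^{1-p} + (p-1)\bar{C}\, t$, i.e.
\begin{equation*}
    E[h](t) \leq \frac{E[h](0)}{\bigl(1 + (p-1)\bar{C}\, E[h](0)^{p-1}\, t\bigr)^{\frac{1}{p-1}}},
\end{equation*}
with $p-1 = \frac{\alpha-1}{2}$ so that $\frac{1}{p-1} = \frac{2}{\alpha-1}$. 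Finally, I would use the initial bound $E[h](0) \leq \|h_0 - \bar h_0\|_{H^1(S^1)}^2 \leq \eps^2$ (recorded just before the statement of the corollary, and following from the Fourier/Plancherel computation in the proof that $E\geq 0$, since $E[v] \leq \pi\sum_{n\neq 0}(n^2+1)|v_n|^2 \le \tfrac{1}{2}\|v-\bar v\|_{H^1}^2$). Substituting $E[h](0) = \eps^2$ into the displayed bound and using monotonicity of $x \mapsto x/(1+cx)^{1/(p-1)}$-type expressions gives
\begin{equation*}
    E[h](t) \leq \frac{\eps^2}{\bigl(1 + c\, \eps^{\alpha-1}\, t\bigr)^{\frac{2}{\alpha-1}}} = \bigl(\Lambda_\eps(t)\bigr)^2,
\end{equation*}
after absorbing the harmless combinatorial constant $(p-1)\bar C = \tfrac{\alpha-1}{2}\bar C$ into the generic constant $C$ appearing in the definition \eqref{eq:def_Lambda} of $\Lambda_\eps$ (one should remark that the constant in $\Lambda_\eps$ is thereby fixed in terms of $\alpha$ only, consistent with the statement of Theorem~\ref{thm:main_results}).

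The only genuinely delicate points are bookkeeping rather than conceptual. First, one must make sure the differential inequality is legitimate on the full interval $[0,t_\ast]$: the identity \eqref{eq:E_prime=J} holds for the weak solution with the stated regularity, and the estimate of Lemma~\ref{lem:E-E_min_J} is valid on $[0,t_\ast]$ precisely because on that interval the a priori bound $\tfrac12\bar h_0 \le h \le 2\bar h_0$ holds (this is how $t_\ast$ was defined), which is what licenses the Jensen step with mobility $h^{\alpha+2}$. Second, if $E[h](t_0) = 0$ at some $t_0$, then $h(t_0,\cdot)$ is already a steady state (a circle), $E$ stays $0$ afterwards by uniqueness, and the claimed bound holds trivially; so one may assume $E[h]>0$ where needed to divide. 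Since the decay bound only uses $E[h](0)\le\eps^2$ and the differential inequality, no further smallness of $\eps$ is required beyond the $\eps_0$ furnished by Lemma~\ref{lem:E-E_min_J}, so the corollary follows with the same $\eps_0$.
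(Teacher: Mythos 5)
Your proposal is correct and follows essentially the same route as the paper: combine Lemma~\ref{lem:E-E_min_J} with the energy-dissipation relation \eqref{eq:E_prime=J} to get the super-linear ODE inequality $\tfrac{d}{dt}E \leq -\bar C E^{(\alpha+1)/2}$ and integrate it. Your bookkeeping of the prefactor is in fact slightly cleaner than the paper's final display (which writes $\eps$ where $\eps^2$ is meant), and the remark about the degenerate case $E[h]=0$ and the monotonicity in $E[h_0]$ makes explicit two small steps that the paper leaves implicit.
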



\begin{proof}
In view of Lemma \ref{lem:E-E_min_J} we have the estimate 
\begin{equation*}
    E[h](t) \leq C \bigl(J[h](t)\bigr)^\frac{2}{\alpha+1}, \quad 0 \leq t \leq t_\ast.
\end{equation*}
This implies in particular that
\begin{equation*}
    \frac{d}{dt}E[h](t) 
    =
    - J[h](t) 
    \leq 
    - C \bigl(E[h](t)\bigr)^\frac{\alpha+1}{2}, 
    \quad 0\leq t \leq t_\ast.
\end{equation*}
and consequently,
\begin{equation*}
    \frac{d}{dt}E[h](t) \bigl(E[h](t)\bigr)^{-\frac{\alpha+1}{2}}
    = \frac{2}{1-\alpha} \frac{d}{dt}\bigl(E[h](t)\bigr)^\frac{1-\alpha}{2} \leq -C,
    \quad 0\leq t \leq t_\ast.
\end{equation*}
Integration with respect to time yields (recall that $\alpha > 1$)
\begin{equation*}
    \frac{2}{1-\alpha} \left[\bigl(E[h](t)\bigr)^{-\frac{(\alpha-1)}{2}} - E[h_0]^{-\frac{(\alpha-1)}{2}}\right] \leq - Ct, \quad
    0\leq t \leq t_\ast,
\end{equation*}
and thus,
\begin{equation*}
    \bigl(E[h](t)\bigr)^\frac{-(\alpha-1)}{2} \geq
    E[h_0]^{-\frac{(\alpha-1)}{2}} + \frac{C (\alpha-1)}{2} t, 
    \quad 
    0\leq t \leq t_\ast.
\end{equation*}
We eventually arrive at the estimate 
\begin{equation*}
    E[h](t)
    \leq
    \left(E[h_0]^{-\frac{(\alpha-1)}{2}} + \frac{C (\alpha-1)}{2} t\right)^{-\frac{2}{\alpha-1}}
    \leq
    \left(\eps^{1-\alpha} + \frac{C (\alpha-1)}{2} t\right)^{-\frac{2}{\alpha-1}}
    =
    \eps \left(1 + \frac{C (\alpha-1)}{2} \eps^{\alpha-1}  t \right)^{-\frac{2}{\alpha-1}}
\end{equation*}
for all $0\leq t \leq t_\ast$.
\end{proof}
\medskip


\begin{proof}[\textbf{Proof of Theorem \ref{thm:power-law_decay}}]
The proof of Theorem \ref{thm:power-law_decay} is now a simple consequence of the energy decay estimate in Corollary \ref{cor:energy_decay}. Indeed, as above we write
\begin{equation*}
    \phi(t,\theta) = h(t,\theta) - \bigl(\bar{h}_0 + h_{- 1}(t) e^{-i \theta} + h_1(t) e^{i\theta}\bigr),
    \quad
    t \in [0,t_\ast], \theta \in S^1,
\end{equation*}
for the solution $h$ to \eqref{eq:PDE_regularised}, corresponding to an initial value $h_0 \in H^1(S^1)$ with
\begin{equation*}
    \frac{1}{2\pi} \int_{S^1} h_0\,d\theta = \bar{h}_0 > 0
    \quad \text{and} \quad
    \|h_0 - \bar{h}_0\|_{H^1(S^1)} \leq \eps.
\end{equation*}
In the proof of Lemma \ref{lem:H^1-bound_phi} we have seen that $\|\phi(t)\|_{H^1(S^1)} \leq 4 E[h](t)$ for $t \in [0,t_\ast]$. Thus, Corollary \ref{cor:energy_decay} yields
\begin{equation*}
    \|\phi(t)\|_{H^1(S^1)}^2
    \leq 
    4
    E[h](t) 
    \leq 
    C
    (\Lambda_\eps(t))^2,
    \quad
    t \in [0,t_\ast],
\end{equation*}
with a constant $C > 0$ that does not depend on $\sigma$. 
\end{proof}


It is worthwhile to briefly comment on the estimate obtained in Theorem \ref{thm:power-law_decay}. The estimate implies in particular that, for small times, the function
\begin{equation*}
    \phi(t,\theta) = h(t,\theta) - \bigl(\bar{h}_0 + h_{- 1}(t) e^{-i \theta} + h_1(t) e^{i\theta}\bigr),
    \quad
    t \in [0,t_\ast], \theta \in S^1,
\end{equation*}
satisfies
\begin{equation*}
    \begin{cases}
        \|\phi(t)\|_{C([0,1];H^1(S^1))} \leq \sqrt{E[h_0]}
        \leq C \eps
        &
        \\
        \|\phi(t)\|_{C([\bar{t}/4,\bar{t}];H^1(S^1))} \leq C
        \sqrt{E[h](\bar{t}/4)} \leq C \eps, 
        \quad 1 \leq \bar{t} \leq \eps^{1-\alpha},
    \end{cases}
\end{equation*}
and, for large times $\bar{t} \geq \eps^{1-\alpha}$,
\begin{equation*}
    \|\phi(t)\|_{C([\bar{t}/4,\bar{t}];H^1(S^1))} \leq C \sqrt{E[h](\bar{t}/4)} \leq C (\bar{t})^{-\frac{1}{\alpha-1}}, \quad \bar{t} \geq \eps^{1-\alpha}.
\end{equation*}
That is, if the shape of the interface is initially close to a circle w.r.t. the $H^1(S^1)$-norm, then it remains close to a circle for small times. Moreover, for large times, the solution converges with a power-law decay to the manifold of steady states. 

However, so far we do not have any control on the position of the circle's center. In order to obtain more information, we have to prove that the Fourier modes $h_{\pm 1}(t)$ are bounded in the sense that $|h_{\pm 1}(t)| \leq C t^{-\frac{1}{\alpha-1}}$ for large times, since this implies that the center of the circle does not move too much.

\bigskip

\section{Regularity estimates for quasilinear problems of fourth order} \label{sec:regularity_estimates}
In this section we derive suitable regularity estimates for general nonlinear fourth-order degenerate parabolic equations.
More precisely, we consider problems of the form
\begin{equation} \label{eq:general_parabolic}
    \partial_\tau v + \partial_x\bigl( \Phi(v,\partial_x v,\partial_x^2 v, \partial_x^3 v)\bigr) + S(\tau,x) = 0,
    \quad
    \tau \in (1/4,1),\ x\in [a,b],
\end{equation}
where $[a,b] \subset \R$ is some compact interval, $S\in L_1\bigl((1/4,1);H^2([a,b])\bigr)$ and the function $\Phi$ satisfies the inequalities
\begin{equation} \label{eq:conditions_Phi}
    \begin{cases}
        |\Phi(v,\partial_x v,\partial_x^2 v, \partial_x^3 v)| \leq
        \sum_{k=0}^3 C_k |\partial_x^k v|^\alpha + C, 
        \\
        \Phi(v,\partial_x v,\partial_x^2 v, \partial_x^3 v) \partial_x^3 v \geq C_3 |\partial_x^3 v|^{\alpha+1} - \sum_{k=0}^2 C_k |\Phi|\cdot |\partial_x^k v|
    \end{cases}
\end{equation}
for positive constants $C, C_k > 0,\, k \in \{0,1,2,3\}$.


\begin{definition}\label{def:sol_general_parabolic}
By a weak solution to \eqref{eq:general_parabolic} we mean a function
\begin{equation*}
    v \in L_{\alpha+1}\bigl([1/4,1];W^3_{\alpha+1,\text{loc}}([a,b])\bigr)
    \quad \text{with} \quad
    \partial_\tau v \in L_\frac{\alpha+1}{\alpha}\bigl([1/4,1];(W^1_{\alpha+1,\text{loc}}([a,b]))'\bigr)
\end{equation*}
that satisfies the weak formulation
\begin{equation*}
    \int_{1/4}^{1} \langle \partial_\tau v,\xi\rangle_{W^1_{\alpha+1}([a,b])}\, d\tau
    =
    \int_{1/4}^{1} \int_a^b \Phi(v,\partial_x v,\partial_x^2 v, \partial_x^3 v) \xi\, dx\, d\tau
    +
    \int_{1/4}^{1} \int_a^b S\, \xi\, dx\, d\tau
\end{equation*}
for all test functions $\xi \in L_{\alpha+1}\bigl([1/4,1];W^1_{\alpha+1}([a,b])\bigr)$ with $\text{supp}\, \xi \subset [1/4,1]\times K$ for some compact subinterval $K \subset (a,b)$.
\end{definition}

\bigskip

\subsection{Weighted interpolation inequalities.}
In this paragraph we derive a weighted interpolation inequality for $L_p(\R)$-spaces. More precisely, we prove local estimates for the $L_p(\R)$-norm of functions $v \in L_\infty(\R) \cap W^3_{p,\text{loc}}(\R)$ and its derivatives up to order two in terms of the $L_p(\R)$-norm of its third derivative and the $L_\infty(\R)$-norm of the function itself.
\medskip


\begin{proposition} \label{prop:weighted_Sobolev}
Let $1 \leq p,\ m > n > 0,\ j \in \{0,1,2\}$ and let $n < m-jp+1$ as well as $n \leq (3-j)p$. Then, for all $\delta \in (0,1)$ there exists a constant $C_\delta = C \delta^{-\frac{j}{3-j}} > 0$, $C >0$ depending only on $\alpha$, such that the weighted interpolation inequality 
\begin{equation} \label{eq:weigthed_int_est}
    \int_{\R} \zeta^{m-n} |\partial_x^j v|^{p}\, dx
    \leq 
    \delta \int_{\R} \zeta^m |\partial_x^3 v|^p\, dx
    + 
    C_\delta \|v\|_{L_\infty(\R)}^{p}
\end{equation}
is satisfied for all $v \in L_\infty(\R) \cap W^3_{p,\text{loc}}(\R)$, where $\zeta \in C^2\bigl((x_0-1/4,x_0+1/4)\bigr)\cap C(\R)$ is a cut-off function such that 
\begin{equation*}
    \zeta(x) =
    \begin{cases}
        8(x-x_0) + 2, & x \in [x_0-1/4,x_0 - 3/16]
        \\
        2 - 8(x-x_0), & x \in [x_0+3/16,x_0+1/4]
        \\
        1, & x \in [x_0-1/8,x_0+1/8]
        \\
        0, & x \in \R\setminus [x_0-1/4,x_0+1/4]
    \end{cases}
\end{equation*}
and $1/2 \leq \zeta(x) \leq 1$ for $x \in [x_0-3/16,x_0-1/8]$ and $[x_0+1/8,x_0+3/16]$.
Moreover, if $1\leq q < p$, then for all $\delta \in (0,1)$ there exists $\tilde{C}_\delta = C \delta^{-\frac{qj}{3p-qj}} > 0$ such that
\begin{equation} \label{eq:weigthed_int_est_q}
    \int_\R \zeta^{m-n} |\partial_x^jv|^q\, dx
    \leq
    \delta \int_\R \zeta^{m} |\partial_x^3 v|^p\, dx
    +
    \tilde{C}_\delta \|v\|_{L_\infty(\R)}^\frac{qp(3-j)}{3p-qj}.
\end{equation}
\end{proposition}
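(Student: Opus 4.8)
The plan is the following. The case $j=0$ is immediate: pointwise $|v|^p\le\|v\|_{L_\infty(\R)}^p$ and $\int_\R\zeta^{m-n}\,dx\le|\supp\zeta|\le\tfrac12$, so the left-hand side of \eqref{eq:weigthed_int_est} is at most $\tfrac12\|v\|_{L_\infty(\R)}^p$, which is \eqref{eq:weigthed_int_est} with $C_\delta=\tfrac12$ (consistent with $C\delta^{-j/(3-j)}$ at $j=0$), and \eqref{eq:weigthed_int_est_q} follows the same way. So assume $j\in\{1,2\}$. Since every integral below is supported in the compact interval $[x_0-\tfrac14,x_0+\tfrac14]$, on which $v\in W^3_p$, a standard mollification reduces us to $v\in C^\infty$; for $1\le p<2$ one first replaces $|\partial_x^2 v|^{p-2}$ by $(|\partial_x^2 v|^2+\eta^2)^{(p-2)/2}$ and lets $\eta\to 0^+$ at the end.

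The heart of the matter is the \emph{scale-invariant} weighted interpolation inequality
\begin{equation}\label{eq:plan_pure}
    \int_\R \zeta^{m-n}|\partial_x^j v|^p\,dx
    \le
    C\left(\int_\R \zeta^m|\partial_x^3 v|^p\,dx\right)^{j/3}\|v\|_{L_\infty(\R)}^{p(1-j/3)}
    +C\|v\|_{L_\infty(\R)}^p .
\end{equation}
Granting \eqref{eq:plan_pure}, the inequality \eqref{eq:weigthed_int_est} follows by applying Young's inequality to the first term: with $I:=\int_\R\zeta^m|\partial_x^3 v|^p\,dx$ one has $I^{j/3}\|v\|_{L_\infty(\R)}^{p(1-j/3)}\le\epsilon I+C\epsilon^{-j/(3-j)}\|v\|_{L_\infty(\R)}^p$ for every $\epsilon\in(0,1)$, and choosing $\epsilon\sim\delta$ yields $C_\delta\sim\delta^{-j/(3-j)}$, as claimed.

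To prove \eqref{eq:plan_pure} I would run the classical convexity/iteration scheme for weighted Gagliardo--Nirenberg inequalities. Introduce weights in arithmetic progression $\beta_k=\beta_0+kd$, $k=0,1,2,3$, pinned down by $\beta_3=m$ and $\beta_j=m-n$; this forces $d=n/(3-j)$ and $\beta_0=m-\tfrac{3n}{3-j}$. For $k\in\{1,2\}$, writing $|\partial_x^k v|^p=|\partial_x^k v|^{p-2}\partial_x^k v\,\partial_x(\partial_x^{k-1}v)$, integrating by parts, distributing the weight $\zeta^{\beta_k}$ by Hölder and Cauchy--Schwarz (using $\beta_{k-1}+\beta_{k+1}=2\beta_k$) and absorbing the self-term $\epsilon\int_\R\zeta^{\beta_k}|\partial_x^k v|^p$, one obtains
\begin{equation*}
    \int_\R\zeta^{\beta_k}|\partial_x^k v|^p\,dx
    \le
    C\left(\int_\R\zeta^{\beta_{k+1}}|\partial_x^{k+1}v|^p\,dx\right)^{1/2}\left(\int_\R\zeta^{\beta_{k-1}}|\partial_x^{k-1}v|^p\,dx\right)^{1/2}+(\text{l.o.t.}),
\end{equation*}
so that $a_k:=\log\int_\R\zeta^{\beta_k}|\partial_x^k v|^p\,dx$ is, up to the lower-order terms, a convex function of $k$ and hence $a_j\le\tfrac{3-j}{3}a_0+\tfrac{j}{3}a_3$. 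Exponentiating and bounding the endpoint $\int_\R\zeta^{\beta_0}|v|^p\,dx\le\|v\|_{L_\infty(\R)}^p\int_\R\zeta^{\beta_0}\,dx\le C\|v\|_{L_\infty(\R)}^p$ — legitimate precisely because $\beta_0>-1$ — yields \eqref{eq:plan_pure}. The lower-order terms arise when $\partial_x$ falls on $\zeta$; after Hölder and Young they reduce to weighted norms of strictly lower derivatives, with the weight exponent dropping by $p$ at each descent, so the chain terminates at $\int_\R\zeta^{m-n-jp}|v|^p\,dx\le C\|v\|_{L_\infty(\R)}^p$. This is where the two hypotheses enter: $n<m-jp+1$ makes $m-n-jp>-1$, so every such boundary integral of $\zeta$ converges, and together with $n\le(3-j)p$ it guarantees $\beta_0>-1$; the remaining same-order contributions are absorbed, and the scheme closes by treating $j=1$ before $j=2$, the case $j=0$ being the trivial one above.

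Finally, for \eqref{eq:weigthed_int_est_q} with $1\le q<p$, Hölder's inequality together with $\int_\R\zeta^{m-n}\,dx\le C$ (valid since $m-n>0$) gives $\int_\R\zeta^{m-n}|\partial_x^j v|^q\,dx\le C\bigl(\int_\R\zeta^{m-n}|\partial_x^j v|^p\,dx\bigr)^{q/p}$; inserting \eqref{eq:plan_pure}, using $(A+B)^{q/p}\le A^{q/p}+B^{q/p}$ and applying Young's inequality to $I^{jq/(3p)}\|v\|_{L_\infty(\R)}^{q(3-j)/3}\le\delta I+\tilde C_\delta\|v\|_{L_\infty(\R)}^{qp(3-j)/(3p-qj)}$ with $\tilde C_\delta\sim\delta^{-qj/(3p-qj)}$ produces the stated estimate, all exponents being forced by the homogeneity of Young's inequality. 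I expect the main obstacle to be the bookkeeping in the iteration for \eqref{eq:plan_pure} — keeping track of all the auxiliary weighted quantities that appear when $\partial_x$ hits the cut-off, checking that every intermediate integral of a power of $\zeta$ converges, and verifying that the scheme genuinely closes; this is exactly the place where both numerical hypotheses $n<m-jp+1$ and $n\le(3-j)p$ are indispensable. The low-regularity case $1\le p<2$, where $|\partial_x^2 v|^{p-2}$ is singular, is a secondary technical point handled by the regularisation mentioned above.
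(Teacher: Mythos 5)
Your proposal takes a genuinely different route from the paper's. The paper decomposes each linear flank of $\supp\zeta$ dyadically into intervals $I_k=[2^{-k},2^{-k+1})$ on which $\zeta\simeq 2^{-k}$, rescales $v_k(x)=v(2^kx)$ so as to invoke the \emph{unweighted} Gagliardo--Nirenberg inequality on a unit interval, and then sums over $k$; the hypothesis $n\le(3-j)p$ ensures $\zeta^{m-n+(3-j)p}\le\zeta^m$ so the rescaled $\partial_x^3v$ contribution is dominated by $\int\zeta^m|\partial_x^3v|^p$, while $n<m-jp+1$ makes the geometric series $\sum_k(2^{-k})^{m-n-jp+1}$ converge. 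You instead aim for the scale-invariant inequality
\begin{equation*}
    \int_\R \zeta^{m-n}|\partial_x^j v|^p\,dx
    \le
    C\Bigl(\int_\R \zeta^m|\partial_x^3 v|^p\,dx\Bigr)^{j/3}\|v\|_{L_\infty(\R)}^{p(1-j/3)}
    +C\|v\|_{L_\infty(\R)}^p
\end{equation*}
(which is indeed equivalent to \eqref{eq:weigthed_int_est} after optimising over $\delta$) and propose proving it by an integration-by-parts convexity chain on the quantities $\int\zeta^{\beta_k}|\partial_x^kv|^p\,dx$ with $\beta_k$ in arithmetic progression of step $d=n/(3-j)$.

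That iteration, however, does not close. When $\partial_x$ falls on $\zeta^{\beta_k}$, the lower-order term --- after the H\"older split that is forced by absorbing the same-order integral $\int\zeta^{\beta_k}|\partial_x^k v|^p$ back into the left-hand side --- is $\int\zeta^{\beta_k-p}|\partial_x^{k-1}v|^p\,dx$: the weight exponent drops by $p$, not by $d$. Since the hypotheses give $d\le p$ with equality only when $n=(3-j)p$, each such subsidiary quantity carries a strictly more singular weight than the corresponding member of the main chain. Your claim that ``the chain terminates at $\int\zeta^{m-n-jp}|v|^p$'' tracks only the branch seeded at level $k=j$; the branches seeded at the higher levels, which the convexity argument also requires, are the ones that fail. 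Concretely, for $j=1$ the bound $a_1\le\tfrac23a_0+\tfrac13a_3$ needs integration by parts at $k=1$ \emph{and} $k=2$; the latter produces $\int\zeta^{\beta_2-p}|\partial_xv|^p\,dx$ with $\beta_2=m-n/2$, whose further descent lands on $\int\zeta^{m-n/2-2p}|v|^p\,dx$, and convergence of that integral is \emph{not} guaranteed by the hypotheses: with $j=1$, $p=2$, $n=1$, $m=5/2$ both $n<m-jp+1=3/2$ and $n\le(3-j)p=4$ hold, yet $m-n/2-2p=-2<-1$, so the weight fails to be integrable across the zero of $\zeta$. The paper's dyadic rescaling avoids this entirely because Gagliardo--Nirenberg is only ever invoked unweighted at unit scale, with the weight re-entering only through the convergent series above. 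The parts of your argument that do go through --- the trivial $j=0$ case, the $\eta$-regularisation for $1\le p<2$, and the passage to \eqref{eq:weigthed_int_est_q} via H\"older and Young --- are fine; it is the proof of the scale-invariant intermediate inequality itself that needs a different mechanism, and the dyadic rescaling is the cleaner route.
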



\begin{proof}
Due to the invariance of the result under translations we can assume without loss of generality that $x_0=1/4$ in order to simplify notation.
We consider the different subintervals of definition of $\zeta$.

\noindent(i) On $\R\setminus [0,1/2]$ we have $\zeta=0$ and there is nothing to show.

\noindent(ii) On $[1/8,3/8]$ we have $\zeta=1$. In this case the inequality \eqref{eq:weigthed_int_est} may be derived as 
\begin{equation*}
        \int_{1/8}^{3/8} |\partial_x^j v|^p\, dx 
        \leq
        \left(\int_{1/8}^{3/8} |\partial_x^3 v|^p\, dx\right)^{j/3} \left(\int_{1/8}^{3/8} |v|^p\, dx\right)^\frac{3-j}{3} 
        \leq
        \delta \int_{1/8}^{3/8} |\partial_x^3 v|^p\, dx + C_\delta \|v\|_{L_\infty(\R)}^p,
\end{equation*}
where $C_\delta = C \delta^{-\frac{j}{3-j}}$.
Here we used the Gagliardo--Nirenberg inequality, Young's inequality and the fact that $L_\infty([1/8,3/8]) \hookrightarrow L_p([1/8,3/8])$ for $0 < p < \infty$.

\noindent(iii) On $(0,1/16)$ we have $\zeta(x) = 8x \in (0,1/2)$. In this case we decompose the interval $[0,1/16]$ as follows. For $k \geq 5$ let $I_k = [2^{-k},2^{-k+1})$. Then the length of each subinterval $I_k$ is given by $|I_k| = 2^{-k}$. Moreover, we have that
\begin{equation*}
    8\cdot 2^{-k} \leq \zeta(x) \leq 8\cdot 2^{-k+1}, \quad x \in I_k,\ k \geq 5.
\end{equation*}
Thus, with $v_k(x) = v(2^k x)$ we obtain
\begin{equation} \label{eq:alternative_1}
    \begin{split}
        \int_{I_k} \zeta^{m-n} |\partial_x^j v|^p\, dx
        &\leq
        (2^{-k+1})^{m-n} \int_{I_k} |\partial_x^j v|^p\, dx
        \leq
        C_{m,n}(2^{-k})^{m-n} |I_k|^{-jp+1} \int_{0}^{1/16} |\partial_x^j v_k|^p\, dx.
    \end{split}
\end{equation}
As in part (ii) we may now conclude that for all $\delta > 0$ there exists a constant $C_\delta = C \delta^{-\frac{j}{3-j}} > 0$ such that
\begin{equation} \label{eq:alternative_2}
        \int_{0}^{1/16} |\partial_x^j v_k|^p\, dx
        \leq
        \delta \int_{0}^{1/16} |\partial_x^3 v_k|^p\, dx + C_\delta \|v\|_{L_\infty(\R)}^p
        \leq
        \delta |I_k|^{3p-1} \int_{I_k} |\partial_x^3 v|^p\, dx + C_\delta \|v\|_{L_\infty(\R)}^p.
\end{equation}
Combining \eqref{eq:alternative_1} and \eqref{eq:alternative_2} and recalling that $|I_k|=2^{-k}$, we find that
\begin{equation*}
    \begin{split}
        \int_{I_k} \zeta^{m-n} |\partial_x^j v|^p\, dx
        &\leq
        C_{m,n}\ \delta\ (2^{-k})^{m-n} |I_k|^{(3-j)p} \int_{I_k} |\partial_x^3 v|^p\, dx
        +
        C_{m,n}\ C_\delta\ (2^{-k})^{m-n} |I_k|^{-jp+1} \|v\|_{L_\infty(\R)}^p
        \\
        &\leq
        C_{m,n}\ \delta\ \int_{I_k} \zeta^{m-n+(3-j)p} |\partial_x^3 v|^p\, dx
        +
        C_{m,n}\ C_\delta\ (2^{-k})^{m-n-jp+1} \|v\|_{L_\infty(\R)}^p.
    \end{split}
\end{equation*}
Since $\zeta(x) \in (0,1)$ on $I_k$ and $n \leq (3-j)p$ by assumption, we deduce that
\begin{equation*}
    \int_{I_k} \zeta^{m-n+(3-j)p} |\partial_x^3 v|^p\, dx
    \leq
    \int_{I_k} \zeta^{m} |\partial_x^3 v|^p\, dx.
\end{equation*}
Moreover, using that also $n+jp \leq m+1$ by assumption, we find that
\begin{equation*}
    \sum_{k \in \N} (2^{-k})^{m-n-jp+1} \leq C < \infty,
\end{equation*}
and, consequently, summing up over all $k \in \N$ yields the desired estimate
\begin{equation*}
    \int_{0}^{1/6}  \zeta^{m-n} |\partial_x^j v|^p\, dx
    =
    \sum_{k \in \N} \int_{I_k} \zeta^{m-n} |\partial_x^j v|^p\, dx
    \leq
    \delta \int_{0}^{1/16} \zeta^m |\partial_x^3 v|^p\, dx 
    +
    C_\delta \|v\|_{L_\infty(\R)}^p.
\end{equation*}

\noindent(iv) The same argument as in (iii) applies on the subinterval $[7/16,1/2]$, where $\zeta$ is also linear.

\noindent(v) It remains to prove \eqref{eq:weigthed_int_est_q}. As in (iii) we set $v_k(x)=v(2^k x)$ and obtain
\begin{equation}\label{eq:est_zeta_q}
    \int_{I_k} \zeta^{m-n} |\partial_x^j v|^p\, dx
    \leq
    C_{m,n}(2^{-k})^{m-n} |I_k|^{-jq+1} \int_{0}^{1/16} |\partial_x^j v_k|^q\, dx.
\end{equation}
Moreover, we can derive the estimate
\begin{equation} \label{eq:GN_Young_q}
    \begin{split}
        \int_{0}^{1/16} |\partial_x^j v_k|^q\, dx 
        &\leq
        \left(\int_{0}^{1/16} |\partial_x^3 v_k|^q\, dx\right)^{j/3} \left(\int_{0}^{1/16} |v_k|^q\, dx\right)^\frac{3-j}{3} 
        \\
        &\leq
        C
        \left(\int_{0}^{1/16} |\partial_x^3 v_k|^p\, dx\right)^\frac{qj}{3p} \left(\int_{0}^{1/16} |v_k|^p\, dx\right)^\frac{q(3-j)}{3} 
        \\
        &\leq
        \delta \int_{0}^{1/16} |\partial_x^3 v_k|^p\, dx + \tilde{C}_\delta \|v\|_{L_\infty(\R)}^\frac{qp(3-j)}{3p-qj}
        \\
        &\leq
        \delta |I_k|^{3p-1} \int_{I_k} |\partial_x^3 v_k|^p\, dx + \tilde{C}_\delta \|v\|_{L_\infty(\R)}^\frac{qp(3-j)}{3p-qj},
    \end{split}
\end{equation}
where $\tilde{C}_\delta = C \delta^{-\frac{qj}{3p-qj}} > 0$. Here we used H\"older's inequality and then Young's inequality with exponents $\frac{3p}{qj}$ and $\frac{3p}{3p-qj}$. Combining \eqref{eq:GN_Young_q} and \eqref{eq:est_zeta_q} leads to the inequality 
\begin{equation*}
    \int_{I_k} \zeta^{m-n} |\partial_x^j v|^p\, dx
    \leq
    C_{m,n}\ \delta\ \int_{I_k} \zeta^{m-n+(3-j)p} |\partial_x^3 v|^p\, dx
    +
    C_{m,n}\ \tilde{C}_\delta\ (2^{-k})^{m-n-jp+1} \|v\|_{L_\infty(\R)}^\frac{qp(3-j)}{3p-qj}.
\end{equation*}
This completes the proof.
\end{proof}

\bigskip

\subsection{Regularity estimates}
With the weighted interpolation estimates of Proposition \ref{prop:weighted_Sobolev} we are able to prove an interior regularity estimate for general nonlinear fourth-order degenerate parabolic equations of the form \eqref{eq:general_parabolic} under certain structural conditions (cf. \eqref{eq:conditions_Phi}) on the nonlinearity. 


\begin{theorem}[Interior regularity estimate] \label{thm:reg_estimate_general}
Let $v \in L_\infty\bigl([1/4,1]\times [a,b]\bigr)$ with $\|v\|_{L_\infty([1/4,1]\times [a,b])} \leq 1$ be a weak solution to the fourth-order degenerate parabolic problem \eqref{eq:general_parabolic}
in the sense of Definition \ref{def:sol_general_parabolic}, where $S\in L_1\bigl((1/4,1);H^2([a,b])\bigr)$ and the function $\Phi$ satisfies \eqref{eq:conditions_Phi}.
Then, for all $x_0 \in [a,b]$ and all $\kappa > 0$ such that $a < x_0-2\kappa < x_0 + 2\kappa < b$, we have the estimate
\begin{equation*}
    \int_{1/2}^{1} \int_{x_0-\kappa}^{x_0+\kappa} |\partial_x^3 v|^{\alpha+1}\, dx\, d\tau
    \leq
    C + C \|S\|_{L_1((1/4,1);H^2([a,b]))}
\end{equation*}
with a positive constant $C > 0$.
\end{theorem}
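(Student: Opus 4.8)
The strategy is a localized energy estimate. The plan is to test equation \eqref{eq:general_parabolic} against $-\partial_x^2(\zeta^m v)$ (more precisely a suitably time-cut-off version), where $\zeta$ is the cut-off function from Proposition \ref{prop:weighted_Sobolev} rescaled so that $\zeta \equiv 1$ on $[x_0-\kappa,x_0+\kappa]$ and $\supp \zeta \subset (x_0-2\kappa, x_0+2\kappa)$, and $m$ is a large even integer chosen at the end so that all the interpolation inequalities in Proposition \ref{prop:weighted_Sobolev} apply. After integration by parts in $x$, the leading term on the right-hand side will be
\begin{equation*}
    \int \zeta^m \Phi(v,\partial_x v,\partial_x^2 v,\partial_x^3 v)\, \partial_x^3 v\, dx,
\end{equation*}
and the second structural condition in \eqref{eq:conditions_Phi} turns this into a good term $C_3 \int \zeta^m |\partial_x^3 v|^{\alpha+1}\,dx$ minus lower-order error terms of the form $\int \zeta^m |\Phi|\,|\partial_x^k v|\,dx$ for $k\le 2$. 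First I would collect these error terms together with all the commutator terms produced when $\partial_x^2$ hits the weight $\zeta^m$ rather than $v$; every such term carries at least one factor of $\partial_x\zeta$ (or higher derivatives of $\zeta$, which are bounded since $\zeta$ is piecewise linear with bounded slopes except at finitely many points — one must be a bit careful that $\zeta\in C^2$ only on subintervals, so in practice one replaces $\zeta$ by a genuinely smooth cut-off dominated by it, which changes nothing), and hence can be absorbed into $\delta\int\zeta^m|\partial_x^3 v|^{\alpha+1}$ plus $C_\delta\|v\|_{L_\infty}^{\alpha+1}$ by the weighted interpolation inequality \eqref{eq:weigthed_int_est} with $p=\alpha+1$ and then \eqref{eq:weigthed_int_est_q} for the sub-$(\alpha+1)$ powers coming from the first bound on $|\Phi|$. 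Using $\|v\|_{L_\infty}\le 1$ all these constants are harmless.

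The time-derivative term must be handled next. Testing against $-\partial_x^2(\zeta^m v)$ means the parabolic term contributes $\int \partial_\tau v\cdot(-\partial_x^2(\zeta^m v))\,dx$; integrating by parts in $x$ once gives $\int \partial_x\partial_\tau v\cdot \partial_x(\zeta^m v)\,dx$, and the part where $\partial_x$ falls on $v$ yields $\tfrac12\frac{d}{d\tau}\int \zeta^m|\partial_x v|^2\,dx$ up to a commutator, while the part where $\partial_x$ falls on $\zeta^m$ yields, after a further integration by parts in $x$, $-\tfrac12\frac{d}{d\tau}\int \partial_x^2(\zeta^m)\,|v|^2\,dx$ plus controllable terms. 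So one obtains the time derivative of a localized $H^1$-type energy $\mathcal E(\tau) = \tfrac12\int \zeta^m|\partial_x v|^2 - \tfrac12\int\partial_x^2(\zeta^m)|v|^2$, which is bounded above and below (up to additive constants depending on $\|v\|_{L_\infty}\le 1$ and $\|\zeta\|_{C^2}$) by $\int\zeta^m|\partial_x v|^2 + C$; by \eqref{eq:weigthed_int_est} this $H^1$ quantity is itself absorbable. Then I would insert a temporal cut-off $\chi(\tau)$ with $\chi\equiv 0$ near $\tau=1/4$, $\chi\equiv 1$ on $[1/2,1]$, multiply the whole identity by $\chi$ and integrate over $\tau\in(1/4,1)$; the term $\int_{1/4}^1 \chi\,\frac{d}{d\tau}\mathcal E\,d\tau = -\int_{1/4}^1 \chi'\mathcal E\,d\tau$ is then bounded by $\sup_\tau|\mathcal E(\tau)|$, and $\mathcal E$ is controlled uniformly in $\tau$ by the same energy estimate run without the temporal cut-off (or, more cleanly, one estimates $\sup_\tau\int\zeta^m|\partial_x v|^2$ as part of the same bootstrap). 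The source term contributes $\int_{1/4}^1\int S\cdot(-\partial_x^2(\zeta^m v))\,dx\,d\tau$, which after moving derivatives onto $S$ is bounded by $\|S\|_{L_1((1/4,1);H^2([a,b]))}\cdot\sup_\tau\|\zeta^m v\|_{L_\infty}\lesssim \|S\|_{L_1((1/4,1);H^2)}$, using $\|v\|_{L_\infty}\le 1$; this is exactly the term that produces the $C\|S\|_{L_1(\,\cdot\,;H^2)}$ on the right-hand side and explains why two spatial derivatives of $S$ are assumed.

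Assembling: after choosing $\delta$ small enough to absorb all the $\delta\int\zeta^m|\partial_x^3 v|^{\alpha+1}$ error terms into the left-hand side, one is left with
\begin{equation*}
    c\int_{1/2}^1\int_{x_0-\kappa}^{x_0+\kappa}|\partial_x^3 v|^{\alpha+1}\,dx\,d\tau \le C + C\|S\|_{L_1((1/4,1);H^2([a,b]))},
\end{equation*}
which is the claim. The technical subtleties to watch are: (a) justifying the choice of test function — $v$ is only a weak solution with $\partial_\tau v$ in a negative-order space, so one should either work with a mollification/Galerkin approximation of $v$ and pass to the limit, or invoke that the regularized solutions (Theorem \ref{thm:existence_regularised}) to which this will eventually be applied are smooth enough to make the computation rigorous and the estimate uniform; (b) the smoothness of the weight, handled as noted above by dominating $\zeta$ by a genuine $C^\infty_c$ bump; and (c) bookkeeping the exponent $m$: one must check that for $j\in\{0,1,2\}$, $p=\alpha+1$, and $n$ equal to the number of $\zeta$-derivatives in each error term (so $n\le 2m$), the hypotheses $n<m-jp+1$ and $n\le(3-j)p$ of Proposition \ref{prop:weighted_Sobolev} hold — this is arranged by taking $m$ large, since the binding constraint $n<m-jp+1$ is linear in $m$ while $n$ is bounded in terms of $m$ and the number of integration-by-parts steps, which is fixed. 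The main obstacle, and the part deserving the most care, is the systematic absorption in step one: correctly identifying that every error term genuinely carries a surplus power of $\zeta$ (equivalently a factor that vanishes where $\partial_x^3 v$ is being estimated) so that the weighted interpolation inequalities of Proposition \ref{prop:weighted_Sobolev}, rather than naive unweighted ones, can be applied — this is precisely why the cut-off $\zeta$ is taken to a high power $\zeta^m$ rather than $\zeta$ itself.
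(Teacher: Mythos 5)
Your overall strategy matches the paper's: test against a second-order spatial differential operator applied to a weighted version of $v$, use the structural conditions \eqref{eq:conditions_Phi} to make the leading term coercive in $|\partial_x^3 v|^{\alpha+1}$, and absorb the lower-order errors via the weighted interpolation inequalities of Proposition~\ref{prop:weighted_Sobolev}. The paper tests against $\partial_x(\xi^m\partial_x v)$ with a \emph{spacetime} weight $\xi(\tau,x)=\chi(\tau)\zeta(x)$, whereas you test against $-\partial_x^2(\zeta^m v)$ and then multiply the identity by $\chi(\tau)$ afterward; this changes the localized energy (yours is $\mathcal E(\tau)=\tfrac12\int\zeta^m|\partial_x v|^2-\tfrac12\int\partial_x^2(\zeta^m)|v|^2$ rather than the paper's $\tfrac12\int\xi^m|\partial_x v|^2$) and also produces an extra commutator $-\int\partial_\tau v\,\partial_x(\zeta^m)\,\partial_x v\,dx$ that the paper's test function does not generate. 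That commutator can in fact be tamed by substituting the equation for $\partial_\tau v$ and integrating by parts once more, so it is not fatal, but it does add work that you gloss over with ``plus controllable terms.''

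The genuine gap is in how you close the time-localization. You propose to bound $\int_{1/4}^1\chi'\,\mathcal E\,d\tau$ by $\sup_\tau|\mathcal E(\tau)|$, and then control $\sup_\tau|\mathcal E(\tau)|$ ``by the same energy estimate run without the temporal cut-off (or \dots as part of the same bootstrap).'' This is circular: running the estimate without a temporal cut-off requires control of $\mathcal E(1/4)$, which contains $\int\zeta^m|\partial_x v(1/4,\cdot)|^2$, and there is no hypothesis giving that. Running it with a cut-off that turns on earlier controls $\sup_\tau\mathcal E$ only on the region where that cut-off is already $1$, never on the interval where $\chi'\neq 0$, so the bootstrap never reaches the quantity you actually need. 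More quantitatively, the weighted interpolation inequality at each time gives $\int\zeta^m|\partial_x v|^2\le\delta(\tau)\int\zeta^m|\partial_x^3 v|^{\alpha+1}+C\delta(\tau)^{-\beta}$ with $\beta=\tfrac{2}{3\alpha+1}$; to absorb $\int\chi'\delta(\tau)\int\zeta^m|\partial_x^3 v|^{\alpha+1}\,d\tau$ into a left-hand side weighted by $\chi$ forces $\delta(\tau)\lesssim\chi/\chi'$, and then $\int\chi'\delta(\tau)^{-\beta}\,d\tau$ is generically divergent as $\chi\to 0$ near $\tau=1/4$. The paper's fix is precisely that the weight is $\xi^m$, so the time-derivative error term is $\tfrac m2\int\xi^{m-1}\partial_\tau\xi|\partial_x v|^2=\tfrac m2\int\chi^{m-1}\chi'\zeta^m|\partial_x v|^2$, carrying the factor $\chi^{m-1}$ in addition to $\chi'$; choosing $\delta(\tau)=\delta_0\chi(\tau)$ in the interpolation inequality then produces exactly the weight $\chi^m\zeta^m=\xi^m$ on the dissipation, matching the left-hand side, while the remainder $\int\chi^{m-1}\delta(\tau)^{-\beta}\,d\tau\sim\int\chi^{m-1-\beta}\,d\tau$ is finite because $m$ is large. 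In your scheme the single power $\chi'$ lacks the surplus $\chi^{m-1}$ needed for this absorption. The simplest repair is to replace your ex post multiplication by $\chi$ with an ex ante high power $\chi^m$ built into the test function, e.g.\ test against $-\partial_x^2(\chi^m\zeta^m v)$, or simply adopt the paper's test function $\partial_x(\xi^m\partial_x v)$; then $\partial_\tau(\chi^m)=m\chi^{m-1}\chi'$ automatically delivers the required surplus power.
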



\begin{remark}
We use the interior regularity result in Theorem \ref{thm:reg_estimate_general} in order to study the long-time behaviour of solutions to the fluid dynamical problem \eqref{eq:PDE_alpha>1}, cf. Theorem \ref{thm:global-ex_stability} (i) below. In particular, the estimate provides control of the Fourier modes $h_{\pm 1}$ and therewith control of the motion of the circle's center.
Since (by testing the differential equation with $e^{\pm i \theta)}$) controlling the Fourier modes $h_{\pm 1}$ corresponds to controlling integrals of $|\partial_\theta^3 h|^\alpha$, this requires to derive a similar estimate as in Theorem \ref{thm:reg_estimate_general} with exponent $\alpha$ instead of $\alpha+1$, cf. Theorem \ref{thm:improved_estimate_alpha} below. This is achieved by an iterative argument that combines Theorem \ref{thm:reg_estimate_general} with the energy-dissipation formula.
\end{remark}

\begin{remark}
Due to the absence of boundary conditions in the fluid dynamical problem \eqref{eq:PDE_alpha>1}, the interior regularity estimate derived in Theorem \ref{thm:reg_estimate_general} could in fact be replaced by a global estimate. However, we decide to keep the general interior estimate in Theorem \ref{thm:reg_estimate_general} because it is of general independent interest.
\end{remark}


\begin{proof}
Without loss of generality we assume that $\kappa = 1/8$ and that $a, b$ and $x_0$ are such that $a < x_0 - 1/4 < x_0 + 1/4 < b$.

\noindent(i) Let $\xi \in C\bigl([0,1]\times [a,b]\bigr)$ with $\text{supp}(\xi) = [1/4,1]\times[x_0-1/4,x_0+1/4]$ be defined such that
\begin{equation*}
    \xi(\tau,\theta) = \chi(\tau) \zeta(\theta), \quad \tau \in [0,1],\ x \in [a,b],
\end{equation*}
with  
\begin{equation*}
    \chi \in C^\infty\bigl([0,1]\bigr), \quad \text{supp}(\chi) = [1/4,1]
    \quad \text{and} \quad
    \chi(\tau) \equiv 1, \quad \tau \in [1/2,1],
\end{equation*}
and $\zeta$ as in Proposition \ref{prop:weighted_Sobolev}.
Moreover, let $m > 3(\alpha+1)$.
\begin{center}
	\begin{tikzpicture}[domain=0:10, scale=0.75] 
		\draw[very thick,<->] (16,0) 
		node[right] {$x$} -- (0,0) -- (0,5) node[above] {$\tau$};
		\draw (0,4)
		node[left] {\tiny{$1$}} -- (14,4) -- (14,0) node[below] {\tiny{$2\pi$}};
		\draw (6.25,4) -- (6.25,2.5) -- (7.75,2.5) -- (7.75,4);
		\draw (5.75,4) -- (5.75,1.25) -- (8.25,1.25) -- (8.25,4);
		\node[below] at (7,0) {\tiny{$x_0$}};
		\node[below] at (5.75,0) {\tiny{$x_0-1/4$}};
		\node[below] at (8.25,0) {\tiny{$x_0+1/4$}};
		\node[left] at (0,2.5) {\tiny{$1/2$}};
		\node[left] at (0,1.25) {\tiny{$1/4$}};
		\node at (7,3.25) {\tiny{$\xi \equiv 1$}};
	\end{tikzpicture} 
\end{center}

\noindent(ii) In order to prove the theorem, we test the partial differential equation \eqref{eq:general_parabolic} with $\partial_x(\xi^m \partial_x v)$. 
This corresponds to calculating the derivative
\begin{equation*}
    \begin{split}
        &
        \frac{d}{d\tau} \Bigl(\frac{1}{2} \int_a^b \xi^m |\partial_x v|^2\, dx\Bigr)
        =
        \int_a^b \xi^m \partial_x v \partial_x \partial_\tau v\, dx
        +
        \frac{m}{2} \int_a^b \xi^{m-1} \partial_\tau \xi |\partial_x v|^2\, dx
        \\
        &=
        -\int_a^b \partial_\tau v \partial_x(\xi^m \partial_x v)\, dx
        +
        \frac{m}{2} \int_a^b \xi^{m-1} \partial_\tau \xi |\partial_x v|^2\, dx
        \\
        &=
        -\int_a^b \Phi\bigl(v,\partial_x v,\partial_x^2 v, \partial_x^3 v\bigr)\, \partial_x^2(\xi^m \partial_x v)\, dx
        -
        \int_a^b S(\tau,x)\, \partial_x(\xi^m \partial_x v)\, dx
        +
        \frac{m}{2} \int_a^b \xi^{m-1} \partial_\tau \xi |\partial_x v|^2\, dx.
    \end{split}
\end{equation*}
Since $m > 2$ by assumption, we may calculate the derivatives $\partial_x^2(\xi^m) 
=m (m-1) \xi^{m-2} |\partial_x \xi|^2 + m \xi^{m-1} \partial_x^2 \xi$ 
and find that
\begin{equation*}
    \begin{split}
        \frac{d}{d\tau} \Bigl(\frac{1}{2} & \int_a^b \xi^m |\partial_x v|^2\, dx\Bigr)
        \\
        &=
        \frac{m}{2} \int_a^b \xi^{m-1} \partial_\tau \xi |\partial_x v|^2\, dx
        -
        \int_a^b S(\tau,x)\, \partial_x(\xi^m \partial_x v)\, dx
        \\
        &\quad
        -
        \int_a^b \xi^m \Phi\bigl(v,\partial_x v,\partial_x^2 v, \partial_x^3 v\bigr)\, \partial_x^3 v\, dx \\
        &\quad
        - 2m
        \int_a^b
        \xi^{m-1} \partial_x \xi\, \Phi\bigl(v,\partial_x v,\partial_x^2 v, \partial_x^3 v\bigr)\, \partial_x^2 v\, dx 
        \\
        &\quad
        -m \int_a^b
        \left((m-1) \xi^{m-2} |\partial_x \xi|^2 + \xi^{m-1} \partial_x^2 \xi\right)\,\Phi\bigl(v,\partial_x v,\partial_x^2 v, \partial_x^3 v\bigr)\, \partial_x v\, dx.
    \end{split}
\end{equation*}
Integrating with respect to time and using that $\xi$ is compactly supported in time on $[1/4,1]$, we obtain
\begin{equation*}
    \int_{1/4}^{1}\frac{d}{d\tau} \left(\frac{1}{2} \int_a^b \xi^m |\partial_x v|^2\, dx\right)\, d\tau 
    =
    \frac{1}{2} \int_a^b \xi^m(1,x)\, |\partial_x v(1,x)|^2\, dx 
    \geq 0
\end{equation*}
and hence
\begin{equation} \label{eq:est_diss_reg}
    \begin{split}
        \int_{1/4}^{1} & \int_a^b \xi^m \Phi\bigl(v,\partial_x v,\partial_x^2 v, \partial_x^3 v\bigr)\, \partial_x^3 v\, dx\, d\tau
        \\
        &\leq
        \frac{m}{2}
        \int_{1/4}^{1} \int_a^b \xi^{m-1} \partial_\tau \xi |\partial_x v|^2\, dx\, d\tau
        -
        \int_{1/4}^{1} \int_a^b S(\tau,x)\, \partial_x(\xi^m \partial_x v)\, dx\, d\tau
        \\
        &\quad
        -m \int_{1/4}^{1} \int_a^b
        \left((m-1) \xi^{m-2} |\partial_x \xi|^2 + \xi^{m-1} \partial_x^2 \xi\right)\,\Phi\bigl(v,\partial_x v,\partial_x^2 v, \partial_x^3 v\bigr)\, \partial_x v\, dx\,
        d\tau
        \\
        &\quad
        - 2m \int_{1/4}^{1}
        \int_a^b
        \xi^{m-1} \partial_x \xi\, \Phi\bigl(v,\partial_x v,\partial_x^2 v, \partial_x^3 v\bigr)\, \partial_x^2 v\, dx\, d\tau.
    \end{split}
\end{equation}

\noindent(iii) The next issue is to estimate the terms on the right-hand side of \eqref{eq:est_diss_reg} in a way that allows us to absorb parts of it in the dissipative term on the left-hand side and to keep only terms with lower-order derivatives on the right-hand side. For this purpose, observe that the first term  on the right-hand side of \eqref{eq:est_diss_reg} satisfies
\begin{equation*}
        \frac{m}{2} \int_{1/4}^{1} \int_a^b \xi^{m-1} \partial_\tau \xi\, |\partial_x v|^2\, dx\, d\tau
        \leq
        C(m,\partial_\tau \xi)
        \int_{1/4}^{1} \int_a^b \xi^{m-1} |\partial_x v|^2\, dx\, d\tau.
\end{equation*}
Applying integration by parts in space twice 
on the second term on the right-hand side of \eqref{eq:est_diss_reg} yields
\begin{equation*}
    \begin{split}
        &
        \int_{1/4}^{1} \int_a^b S(\tau,x)\, \partial_x(\xi^m \partial_x v)\, dx\, d\tau
        \\
        &\quad 
        \leq
        \int_{1/4}^{1} \int_a^b
        |\xi^m \partial_x^2 S(\tau,x) + m \xi^{m-1} \partial_x \xi \partial_x S(\tau,x)|\cdot |v|\, dx\, d\tau 
        \\
        &\quad 
        \leq
        \left(
        \int_{1/4}^{1} \int_a^b
        |\xi^m \partial_x^2 S(\tau,x) + m \xi^{m-1} \partial_x \xi \partial_x S(\tau,x)|\, dx\, d\tau\right)
        \|v\|_{L_\infty([1/4,1]\times [a,b])}
        \\
        &\quad 
        \leq
        C(m,\xi,\partial_x \xi) \|S\|_{L_1((1/4,1);H^2([a,b])} \|v\|_{L_\infty([1/4,1]\times [a,b])}.
    \end{split}
\end{equation*}
Since we have a sum with different powers of $\xi$ in the third term on the right-hand side of \eqref{eq:est_diss_reg}, we consider both summands separately. For the first summand we proceed as follows. Using the upper bound $\eqref{eq:conditions_Phi}_1$ on $|\Phi|$, we find that
\begin{equation*}
    \begin{split}
        &
        \int_{1/4}^{1} \int_a^b
        \xi^{m-2} |\partial_x \xi|^2\, \Phi\bigl(v,\partial_x v,\partial_x^2,\partial_x^3 v\bigr) \partial_x v\, dx\,d\tau
        \\
        &\quad 
        \leq
        C(\partial_x \xi) \int_{1/4}^{1} \int_a^b \xi^{m-2}  |\partial_x^3 v|^\alpha\, |\partial_x v|\, dx\,d\tau
        +
        C(\partial_x \xi) \int_{1/4}^{1} \int_a^b \xi^{m-2}  \sum_{k=0}^2 |\partial_x^k v|^\alpha\, |\partial_x v|\, dx\,d\tau.
    \end{split}
\end{equation*}
Furthermore, applying first H\"older's and then Young's inequality (both with exponents $p=(\alpha+1)/\alpha$ and $q=\alpha+1$), we obtain that for all $\eps_0 > 0$ there exists a constant $C_{\eps_0} > 0$ such that the estimate
\begin{equation*} 
    \begin{split}
        &
        \int_{1/4}^{1} \int_a^b \xi^{m-2}  |\partial_x^3 v|^\alpha\, |\partial_x v|\, dx\,d\tau
        \\
        &\quad
        \leq
        C(m,\partial_x \xi)
        \left(\int_{1/4}^{1} \int_a^b \xi^m  |\partial_x^3 v|^{\alpha+1}\, dx\,d\tau\right)^\frac{\alpha}{\alpha+1}
        \left(\int_{1/4}^{1} \int_a^b \xi^{m-2(\alpha+1)} |\partial_x v|^{\alpha+1}\, dx\,d\tau\right)^\frac{1}{\alpha+1}
        \\
        &\quad
        \leq
        \eps_0 \int_{1/4}^{1} \int_a^b \xi^m  |\partial_x^3 v|^{\alpha+1}\, dx\,d\tau 
        + 
        C_{\eps_0} \int_{1/4}^{1} \int_a^b \xi^{m-2(\alpha+1)} |\partial_x v|^{\alpha+1}\, dx\,d\tau
    \end{split}
\end{equation*}
holds true. 
For the integral containing the lower-order derivatives, we obtain in the same way
\begin{equation*}
    \int_{1/4}^{1} \int_a^b \xi^{m-2}  \sum_{k=0}^2 |\partial_x^k v|^\alpha\, |\partial_x v|\, dx\,d\tau
    \leq
    \sum_{k=0}^2 \bar{C}_k \int_{1/4}^{1} \int_a^b \xi^{m-2}\, |\partial_x^k v|^{\alpha+1}\, dx\, d\tau.
\end{equation*}
Thus, we have
\begin{equation*}
    \begin{split}
        &
        \int_{1/4}^{1} \int_a^b
        \xi^{m-2} |\partial_x \xi|^2\, \Phi\bigl(v,\partial_x v,\partial_x^2,\partial_x^3 v\bigr) \partial_x v\, dx\,d\tau
        \\
        &\quad
        \leq
        \eps_0 \int_{1/4}^{1} \int_a^b \xi^m  |\partial_x^3 v|^{\alpha+1}\, dx\,d\tau 
        + 
        C_{\eps_0} \int_{1/4}^{1} \int_a^b \xi^{m-2(\alpha+1)} |\partial_x v|^{\alpha+1}\, dx\,d\tau
        \\
        &\quad\quad
        +
        \sum_{k=0}^2 \bar{C}_k \int_{1/4}^{1} \int_a^b \xi^{m-2}\, |\partial_x^k v|^{\alpha+1}\, dx\, d\tau.
    \end{split}
\end{equation*}
Proceeding similarly with the second summand yields for all $\eps_1 > 0$ the existence of a constant $C_{\eps_1} > 0$ such that
\begin{equation*}
    \begin{split}
        & 
        \int_{1/4}^{1} \int_a^b
        \xi^{m-1}\, |\partial_x^2 \xi|^2\, \Phi\bigl(v,\partial_x v,\partial_x^2 v,\partial_x^3 v\bigr) \partial_x v\, dx\,d\tau
        \\
        &\quad 
        \leq
        \eps_1 \int_{1/4}^{1} \int_a^b \xi^m\, |\partial_x^3 v|^{\alpha+1}\, dx\,d\tau 
        + C_{\eps_1} \int_{1/4}^{1} \int_a^b \xi^{m-(\alpha+1)}\, |\partial_x v|^{\alpha+1}\, dx\,d\tau
        \\
        &\quad\quad
        +
        \sum_{k=0}^2 \bar{C}_k \int_{1/4}^{1} \int_a^b \xi^{m-1}\, |\partial_x^k v|^{\alpha+1}\, dx\, d\tau.
    \end{split}
\end{equation*}
Analogously, we obtain the following for the fourth integral on the right-hand side of \eqref{eq:est_diss_reg}. For all $\eps_2 > 0$ there exists a constant $C_{\eps_2} > 0$ such that
\begin{equation*}
    \begin{split}
        & \int_{1/4}^{1} \int_a^b \xi^{m-1} \partial_x \xi\, \Phi\bigl(v,\partial_x v,\partial_x^2 v,\partial_x^3 v\bigr)\,\partial_x^2 v\, dx\, d\tau
        \\
        &\quad
        \leq
        \eps_2 \int_{1/4}^{1} \int_a^b \xi^m  |\partial_x^3 v|^{\alpha+1}\, dx\,d\tau
        + 
        C_{\eps_2} \int_{1/4}^{1} \int_a^b \xi^{m-(\alpha+1)} |\partial_x^2 v|^{\alpha+1}\, dx\,d\tau
        \\
        &\quad\quad
        +
        \sum_{k=0}^2 \bar{C}_k \int_{1/4}^{1} \int_a^b \xi^{m-1}\, |\partial_x^k v|^{\alpha+1}\, dx\, d\tau.
    \end{split}
\end{equation*}
We now use both the upper bound $\eqref{eq:conditions_Phi}_1$ on $|\Phi(\cdot)|$ and the lower bound $\eqref{eq:conditions_Phi}_2$ on $\Phi(\cdot)\partial_x^3 v$ to obtain
\begin{equation*}
    \begin{split}
        &
        \int_{1/4}^{1} \int_a^b \xi^m \Phi\bigl(v,\partial_x v,\partial_x^2 v,\partial_x^3 v\bigr)\, \partial_x^3 v\, dx\,d\tau
        \\
        &\quad
        \geq
        \int_{1/4}^{1} \int_a^b \xi^m  |\partial_x^3 v|^{\alpha+1}\,  dx\,d\tau
        -
        \sum_{k=0}^2 C_k \int_{1/4}^{1} \int_a^b \xi^m |\Phi(v,\partial_x v,\partial_x^2 v,\partial_x^3 v)|\cdot |\partial_x^k v|\, dx\,d\tau
        \\
        &\quad
        \geq 
        \int_{1/4}^{1} \int_a^b \xi^m  |\partial_x^3 v|^{\alpha+1}\,  dx\,d\tau
        -
        \sum_{k=0}^3 \sum_{j=0}^2 C_k \int_{1/4}^{1} \int_a^b \xi^m |\partial_x^k v|^\alpha |\partial_x^j v|\, dx\,d\tau.
    \end{split}
\end{equation*}
Recalling that $\xi \in C\bigl([0,1]\times [a,b]\bigr)$ with $0 \leq \xi \leq 1$ we find that
$\xi^{m-1} \leq \xi^{m-2} \leq \xi^{m-(\alpha+1)} \leq \xi^{m-2(\alpha+1)}$. Thus, choosing $\eps_i$ small enough for $i \in \{0,1,2\}$, we arrive at the estimate
\begin{equation*}
    \begin{split}
        &
        \int_{1/4}^{1} \int_a^b \xi^m\,  |\partial_x^3 v|^{\alpha+1}\, dx\,d\tau
        \\
        &\quad 
        \leq
        C \int_{1/4}^{1} \int_a^b
        \xi^{m-1} |\partial_x v|^2\, dx\, d\tau
        +
        C 
        \|S\|_{L_1([1/4,1];H^2([a,b]))}
        \|v\|_{L_\infty([1/4,1]\times [a,b])}
        \\
        &\quad\quad
        +
        C \int_{1/4}^{1} \int_a^b \xi^{m-2(\alpha+1)}
        |\partial_x v|^{\alpha+1}\, dx\, d\tau
        +
        C \int_{1/4}^{1} \int_a^b
        \xi^{m-(\alpha+1)} \Bigl(|\partial_x v|^{\alpha+1} + |\partial_x^2 v|^{\alpha+1}\Bigr)\, dx\,d\tau
        \\
        &\quad\quad
        +
        \sum_{k=0}^2 \bar{C}_k \int_{1/4}^{1} \int_a^b \Bigl(\xi^{m-2} + \xi^{m-1}\Bigr)\, |\partial_x^k v|^{\alpha+1} 
        + \sum_{k=0}^3 \sum_{j=0}^2 C_k \int_{1/4}^{1} \int_a^b \xi^m |\partial_x^k v|^\alpha |\partial_x^j v|\, dx\,d\tau
        \\
        &\leq
        C \int_{1/4}^{1} \int_a^b
        \xi^{m-1} |\partial_x v|^2\, dx\, d\tau
        +
        C 
        \|S\|_{L_1([1/4,1];H^2([a,b]))}
        \|v\|_{L_\infty([1/4,1]\times [a,b])}
        \\
        &\quad\quad
        +
        C \int_{1/4}^{1} \int_a^b \xi^{m-2(\alpha+1)}
        |\partial_x v|^{\alpha+1}\, dx\, d\tau
        +
        C \int_{1/4}^{1} \int_a^b
        \xi^{m-(\alpha+1)} |\partial_x^2 v|^{\alpha+1}\, dx\,d\tau
        \\
        &\quad\quad
        +
        C \|v\|_{L_\infty([1/4,1]\times [a,b])}^{\alpha+1} 
        + 
        \sum_{k=0}^3 \sum_{j=0}^2 C_k \int_{1/4}^{1} \int_a^b \xi^m |\partial_x^k v|^\alpha |\partial_x^j v|\, dx\,d\tau,
    \end{split}
\end{equation*}
where $C = C(m,\partial_x \xi,\partial_x^2 \xi,\eps_i) > 0$. 

Next, using Young's inequality for $k=3$ and all $j \in \{0,1,2\}$ in the last term on the right-hand side, we observe that for all $\eps_4 > 0$, there exists $C_{\eps_4} > 0$ such that for all 
\begin{equation*}
    \begin{split}
        \int_{1/4}^{1} \int_a^b & \xi^m |\partial_x^3 v|^\alpha |\partial_x^j v|\, dx\, d\tau
        \\
        &\leq
        \eps_4 \int_{1/4}^{1} \int_a^b \xi^m |\partial_x^3 v|^{\alpha+1}\, dx\,d\tau
        +
        C_{\eps_4}\int_{1/4}^{1} \int_a^b \xi^m |\partial_x^j v|^{\alpha+1}\, dx\,d\tau
        \\
        &\leq
        \eps_4 \int_{1/4}^{1} \int_a^b \xi^m |\partial_x^3 v|^{\alpha+1}\, dx\,d\tau
        +
        C_{\eps_4}\int_{1/4}^{1} \int_a^b \xi^{m-2(\alpha+1)} |\partial_x^j v|^{\alpha+1}\, dx\,d\tau.
    \end{split}
\end{equation*}
For the remaining lower-order terms we have, as above,
\begin{equation*}
    \begin{split}
        &
        \sum_{k=0}^2 \sum_{j=0}^2 C_k \int_{1/4}^{1} \int_a^b \xi^m |\partial_x^k v|^\alpha |\partial_x^j v|\, dx\,d\tau
        \\
        &\quad 
        \leq
        \sum_{k=0}^2 \bar{C}_k \int_{1/4}^{1} \int_a^b \xi^m\, |\partial_x^k v|^{\alpha+1}\, dx\, d\tau   
        \\
        &\quad
        \leq
        C \|v\|_{L_\infty([1/4,1]\times [a,b])}^{\alpha+1}
        +
        C \int_{1/4}^{1} \int_a^b \xi^{m-2(\alpha+1)}
        |\partial_x v|^{\alpha+1}\, dx\, d\tau
        +
        C \int_{1/4}^{1} \int_a^b
        \xi^{m-(\alpha+1)} |\partial_x^2 v|^{\alpha+1}\, dx\,d\tau.
    \end{split}
\end{equation*}
Consequently, we arrive at the estimate
\begin{equation*}
    \begin{split}
        &
        \int_{1/4}^{1} \int_a^b \xi^m |\partial_x^3 v|^{\alpha+1}\, dx\,d\tau
        \\
        &\quad 
        \leq
        C \int_{1/4}^{1} \int_a^b
        \xi^{m-1} |\partial_x v|^2\, dx\, d\tau
        +
        C 
        \|S\|_{L_1([1/4,1];H^2([a,b]))}
        \|v\|_{L_\infty([1/4,1]\times [a,b])}
        \\
        &\quad\quad
        +
        C \int_{1/4}^{1} \int_a^b \xi^{m-2(\alpha+1)}
        |\partial_x v|^{\alpha+1}\, dx\, d\tau
        +
        C \int_{1/4}^{1} \int_a^b
        \xi^{m-(\alpha+1)} |\partial_x^2 v|^{\alpha+1}\, dx\,d\tau
        \\
        &\quad\quad
        +
        C \|v\|_{L_\infty([1/4,1]\times [a,b])}^{\alpha+1}.
    \end{split}
\end{equation*}

\noindent(iv) Now we apply the weighted interpolation estimates from Proposition \ref{prop:weighted_Sobolev} as follows. First, choosing $p=\alpha+1, n = 2(\alpha+1)$ and $j=1$, we obtain
\begin{equation*}
    \begin{split}
        \int_{1/4}^{1}\int_a^b \xi^{m-2(\alpha+1)} |\partial_x v|^{\alpha+1}\, dx\, d\tau 
        &=
        \int_{1/4}^{1} \chi^{m-2(\alpha+1)} \int_a^b \zeta^{m-2(\alpha+1)} |\partial_x v|^{\alpha+1}\, dx\, d\tau 
        \\
        &\leq
        \int_{1/4}^{1} \chi^{m-2(\alpha+1)} \left(
        \delta(\tau) \int_a^b \zeta^m |\partial_x^3 v|^{\alpha+1}\, dx
        + 
        C \delta(\tau)^{-\frac{1}{2}} \|v\|_{L_\infty([a,b])}^{\alpha+1}
        \right)\, d\tau.
    \end{split}
\end{equation*}
Choosing $\delta(\tau) = \delta_0 \chi(\tau)^{2(\alpha+1)}$ for some fixed $\delta_0 > 0$ and each $\tau \in [1/4,1]$ we find that
\begin{equation} \label{eq:weight_est_1}
    \int_{1/4}^{1} \int_a^b \xi^{m-2(\alpha+1)} |\partial_x v|^{\alpha+1}\, dx\, d\tau 
    \leq
    \delta_0
    \int_{1/4}^1 \int_a^b \xi^m |\partial_x^3 v|^{\alpha+1}\, dx\, d\tau + 
    C_{\delta_0,\chi} \|v\|_{L_\infty((1/4,1)\times [a,b])}^{\alpha+1}
\end{equation}
for $m > 3(\alpha+1)$ such that $\chi^{m-3(\alpha+1)}$ is integrable.

Similarly, we apply Proposition \ref{prop:weighted_Sobolev} with $p=n=\alpha+1$ and $j=1$ to derive the estimate
\begin{equation*}
    \begin{split}
        \int_{1/4}^{1} v \xi^{m-(\alpha+1)} |\partial_x v|^{\alpha+1}\, dx\, d\tau 
        &\leq
        \int_{1/4}^{1} \chi^{m-(\alpha+1)} \left(
        \delta(\tau) \int_a^b \zeta^m |\partial_x^3 v|^{\alpha+1}\, dx
        + 
        C \delta(\tau)^{-\frac{1}{2}} \|v\|_{L_\infty([a,b])}^{\alpha+1}
        \right)\, d\tau
    \end{split}
\end{equation*}
and then, choosing $\delta(\tau) = \delta_0 \chi(\tau)^{\alpha+1}$,
\begin{equation} \label{eq:weight_est_2}
    \int_{1/4}^{1} \int_a^b \xi^{m-(\alpha+1)} |\partial_x v|^{\alpha+1}\, dx\, d\tau 
    \leq
    \delta_0
    \int_{1/4}^1 \int_a^b \xi^m |\partial_x^3 v|^{\alpha+1}\, dx\, d\tau + 
    C_{\delta_0,\chi} \|v\|_{L_\infty([1/4,1]\times [a,b])}^{\alpha+1}
\end{equation}
for $m > \tfrac{\alpha+1}{2}$ such that $\chi^{m-3(\alpha+1)}$ is integrable.

Finally, applying the second inequality of Proposition \ref{prop:weighted_Sobolev} with $p=\alpha+1, q = 2$ and $n = j = 1$, we find that
\begin{equation*}
    \begin{split}
        \int_{1/4}^{1} \int_a^b \xi^{m-1} |\partial_x v|^{2}\, dx\, d\tau 
        &\leq
        \int_{1/4}^{1} \chi^{m-1} \left(
        \delta(\tau) \int_a^b \zeta^m |\partial_x^3 v|^{\alpha+1}\, dx
        + 
        C \delta(\tau)^{-\frac{2}{3\alpha+1}} \|v\|_{L_\infty([a,b])}^\frac{4(\alpha+1)}{3\alpha+1}
        \right)\, d\tau
    \end{split}
\end{equation*}
and then, choosing $\delta(\tau) = \delta_0 \chi(\tau)$,
\begin{equation} \label{eq:weight_est_3}
    \int_{1/4}^{1} \int_a^b \xi^{m-1} |\partial_x v|^{2}\, dx\, d\tau 
    \leq
    \delta_0
    \int_{1/4}^1 \int_a^b \xi^m |\partial_x^3 v|^{\alpha+1}\, dx\, d\tau + 
    C_{\delta_0,\chi} \|v\|_{L_\infty((1/4,1)\times [a,b])}^{\alpha+1}
\end{equation}
for $m > \tfrac{3(\alpha+1)}{3\alpha+1}$ such that $\chi^{m-\tfrac{3(\alpha+1)}{3\alpha+1}}$ is integrable.

Combining \eqref{eq:weight_est_1}--\eqref{eq:weight_est_3}, we observe that there exists a constant $C > 0$ such that
\begin{equation*}
    \begin{split}
        \int_{1/4}^{1} \int_a^b \xi^m |\partial_x^3 v|^{\alpha+1}\, dx\,d\tau
        &\leq
        C \left(\|v\|_{L_\infty([1/4,1]\times [a,b])}^{\alpha+1} 
        +
        \|v\|_{L_\infty([1/4,1]\times [a,b])}^\frac{4(\alpha+1)}{3\alpha+1}\right)
        \\
        &\quad 
        +
        C \|S\|_{L_1([1/4,1]\times [a,b])} \|v\|_{L_\infty([1/4,1]\times [a,b])}.
    \end{split}
\end{equation*}
Since $\|v\|_{L_\infty([1/4,1]\times [a,b])} \leq 1$ by assumption, we obtain
\begin{equation*}
    \int_{1/4}^{1} \int_a^b \xi^m |\partial_x^3 v|^{\alpha+1}\, dx\,d\tau
    \leq
    C +
    C \|S\|_{L_1([1/4,1]\times [a,b])}.
\end{equation*}
and by definition of $\xi$ we finally arrive at the desired estimate
\begin{equation*}
    \int_{1/2}^{1} \int_{x_0-\kappa}^{x_0+\kappa}  |\partial_x^3 v|^{\alpha+1}\, dx\,d\tau
    \leq
    C +
    C \|S\|_{L_1([1/4,1]\times [a,b])}.
\end{equation*}
\end{proof}

\bigskip

\section{Optimal decay estimates for the derivatives of the Fourier modes $h_{\pm 1}$ of  solutions to \eqref{eq:PDE_regularised}} \label{ssec:Fourier_modes}

The content of this subsection is twofold. First, we introduce some suitable rescaling that allows us a simple application of the general interior regularity estimate, derived in Theorem \ref{thm:reg_estimate_general}, to the regularised problem \eqref{eq:PDE_regularised}. This provides us with an $L_1\bigl((0,t_\ast);L_{\alpha+1}(S^1)\bigr)$-bound for the third derivative of the remainder $\phi$ of the local solution
\begin{equation*}
    h(t,\theta) 
    =
    \bar{h}_0 + h_{\pm 1}(t) e^{\pm i\theta} + \phi(t,\theta), 
    \quad t \in [0,t_\ast],\ \theta \in S^1.
\end{equation*}
to \eqref{eq:PDE_regularised}. However, recall that we have seen in Section \ref{sec:conv_to_circle} that, if this solution corresponds to an initial value $h_0 \in C^\infty(S^1)$ and an $\eps_0>0$ such that
\begin{equation*}
    \frac{1}{2\pi}\int_{S^1} h_0\, d\theta = \bar{h}_0
    \quad \text{and} \quad
    \|h_0 - \bar{h}_0\|_{H^1(S^1)} \leq \eps,
    \quad 
    \eps \in (0,\eps_0),
\end{equation*}
then $\|\phi(t)\|_{C([\bar{t}/4,\bar{t}];H^1(S^1))} \leq C \Lambda_\eps(\bar{t})$ for all $0 \leq \bar{t} \leq t_\ast$, where $t_\ast$ denotes, as before, the maximal time of existence until which the solution remains bounded from below and from above by $\tfrac{1}{2}\bar{h}_0 \leq h(t,\theta) \leq 2 \bar{h}_0$. This tells us that the solution remains close to a circle in the $H^1(S^1)$-norm for small times and it converges at rate $(\bar{t})^{-\frac{1}{\alpha-1}}$ for times $\bar{t} \geq \eps^{-\frac{\alpha-1}{2}}$. The second part of this section is devoted to the issue of controlling the position of the circle's center. This corresponds to deriving estimates for the Fourier modes $h_{\pm 1}(t)$ and hence for the $L_1\bigl((0,t_\ast);L_{\alpha}(S^1)\bigr)$-norm of the third derivatives of $\phi$. Note that arriving from the $L_{\alpha+1}(S^1)$-estimate to the $L_\alpha(S^1)$-estimate provides an improved inequality for small third derivatives. The proof is based on an iterative scheme that, in the end, provides us with an estimate of the form
\begin{equation*}
    \int_{0}^{\bar{t}} |h_{\pm 1}^\prime(t)|\, dt
    \leq 
    C \eps\left(1 + \log\bigl(\eps^{1-\alpha}\bigr)\right),
    \quad
    0 \leq \bar{t} \leq t_\ast.
\end{equation*}
It turns out that with the estimates we can arrive to any finite time $T$ as long as the regularisation parameter satisfies $0 < \sigma \leq \bar{\sigma}$ for some sufficiently small $\bar{\sigma}$.

\bigskip

\noindent\textbf{\textsc{Rescaling. }} 
Let $t_\ast > 0$ and $\eps \in (0,\eps_0)$ be as above. The evolution equation $\eqref{eq:PDE_regularised}_1$ may then be rewritten as an equation for $\phi$ as follows
\begin{equation} \label{eq:PDE_phi}
    \partial_t \phi + \partial_\theta\left(|\bar{h} + h_{\pm 1} e^{\pm i\theta} + \phi|^{\alpha+2} \psi_\sigma\bigl(\partial_\theta \phi + \partial_\theta^3 \phi\bigr)\right) + h^\prime_{\pm 1} e^{\pm i\theta} 
    = 0, 
    \quad t \in (0,t_\ast),\ \theta \in S^1.
\end{equation}
Moreover, we introduce the notation
\begin{equation*}
    \delta 
    =
    \|\phi\|_{L_\infty([\bar{t}/4,\bar{t}]\times S^1)} 
\end{equation*}
for the $L_\infty$-norm of $\phi$.
As a consequence of Theorem \ref{thm:power-law_decay} we have that
\begin{equation}\label{eq:delta}
    \delta
    \leq
    C \Lambda_\eps(\bar{t}),
    \quad 
    0 \leq \bar{t} \leq t_\ast.
\end{equation}
In the following we assume that $\delta > 0$, otherwise there is nothing to prove.
Since we are interested in the behaviour of solutions for large times, we rescale $\phi$ and the time variable $t$ via the transform
\begin{equation} \label{eq:transform_delta}
    \phi = \delta v 
    \quad \text{and} \quad
    t = \bar{t} \tau. 
\end{equation}
That is, we have
\begin{equation*}
    \phi(t,\theta) = \delta v\bigl(\tau/\bar{t},\theta),
    \quad 
    0\leq t \leq t_\ast,\ \theta \in S^1, 
    \quad \text{and} \quad
    \|v\|_{L_\infty([1/4,1]\times S^1)} = 1.
\end{equation*}
In these new variables the equation \eqref{eq:PDE_phi} reads
\begin{equation} \label{eq:PDE_rescaled}
    \partial_\tau v + \partial_\theta\bigl(|\bar{h} + h_{\pm 1}(\bar{t}\tau) e^{\pm i\theta} + \delta v|^{\alpha+2} \psi_\sigma\bigl(\partial_\theta v + \partial_\theta^3 v\bigr)\bigr) + \tfrac{1}{\delta} h^\prime_{\pm 1}(\bar{t} \tau) e^{\pm i\theta} = 0, \quad \tau \in (1/4,1),\ \theta \in S^1.
\end{equation}

\bigskip

\noindent\textbf{\textsc{Regularity estimates. }}
A direct application of Theorem \ref{thm:reg_estimate_general} to the rescaled version \eqref{eq:PDE_rescaled} of \eqref{eq:PDE_regularised} yields the following result.
Recall that we identify functions $v$ defined on $S^1$ by functions that are defined on the whole real line $\R$ that are periodic with period $2\pi$. In particular, we consider $v(\theta)$ being well-defined for $\theta \not\in [0,2\pi]$.


\begin{proposition}\label{prop:local_estimates}
Let $v$ be a weak solution to \eqref{eq:PDE_rescaled} with $v \in L_\infty\bigl([1/4,1]\times S^1\bigr)$ such that $\|v\|_{L_\infty([1/4,1]\times S^1)} \leq 1$. Let $[\theta_0-1/4,\theta_0+1/4] \subset S^1$. Then, $v$ satisfies the estimate
\begin{equation*}
    \int_{1/2}^{1} \int_{\theta_0-1/8}^{\theta_0+1/8} |\partial_\theta^3 v|^{\alpha+1}\, d\theta\, d\tau
    \leq
    C + \frac{C}{\delta} \int_{1/4}^1 |h_{\pm 1}^\prime(\bar{t}\tau)|\, d\tau
\end{equation*}
with a positive constant $C > 0$.
\end{proposition}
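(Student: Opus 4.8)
The plan is to read the rescaled equation \eqref{eq:PDE_rescaled} as a special case of the abstract problem \eqref{eq:general_parabolic} and to invoke Theorem \ref{thm:reg_estimate_general}. Comparing the two equations, I set
\[
  \Phi(\tau,\theta,p_0,p_1,p_2,p_3)
  =
  |\bar{h} + h_{\pm 1}(\bar{t}\tau)\, e^{\pm i\theta} + \delta p_0|^{\alpha+2}\, \psi_\sigma(p_1 + p_3),
  \qquad
  S(\tau,\theta) = \frac{1}{\delta}\, h^\prime_{\pm 1}(\bar{t}\tau)\, e^{\pm i\theta},
\]
so that \eqref{eq:PDE_rescaled} is exactly $\partial_\tau v + \partial_\theta\bigl(\Phi(\tau,\theta,v,\partial_\theta v,\partial_\theta^2 v,\partial_\theta^3 v)\bigr) + S(\tau,\theta) = 0$. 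The prefactor $|\bar{h}+h_{\pm 1}(\bar{t}\tau)e^{\pm i\theta}+\delta v|^{\alpha+2}$ coincides with $h(\bar{t}\tau,\theta)^{\alpha+2}$, where $h$ is the underlying solution of \eqref{eq:PDE_regularised}; on the interval $[0,t_\ast]$ the regime $\tfrac{1}{2}\bar{h}_0 \le h \le 2\bar{h}_0$ bounds this prefactor above by $(2\bar{h}_0)^{\alpha+2}$ and below by $c_0 := (\tfrac{1}{2}\bar{h}_0)^{\alpha+2} > 0$. Although $\Phi$ carries this explicit but bounded and uniformly positive dependence on $(\tau,\theta)$, the proof of Theorem \ref{thm:reg_estimate_general} uses $\Phi$ only through the pointwise bounds \eqref{eq:conditions_Phi}, so it goes through verbatim once those bounds are established with constants uniform in $(\tau,\theta)$. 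Finally, to make Theorem \ref{thm:reg_estimate_general} literally applicable (it is stated on a genuine interval $[a,b]$), I extend $v$ — which inherits from Theorem \ref{thm:existence_regularised}, after the rescaling, the regularity $v\in L_{\alpha+1}\bigl([1/4,1];W^3_{\alpha+1}(S^1)\bigr)$ together with the requisite integrability of $\partial_\tau v$ — periodically to $\R$, so that $v$ solves the extended equation weakly on $[1/4,1]\times[a,b]$ with, say, $[a,b]=[\theta_0-1/2,\theta_0+1/2]$.

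The only nontrivial step is the verification of \eqref{eq:conditions_Phi}. The $L_\infty$-bound $\|v\|_{L_\infty([1/4,1]\times S^1)}\le1$ is assumed. For the upper bound $\eqref{eq:conditions_Phi}_1$ one combines $|\psi_\sigma(s)| = (s^2+\sigma^2)^{\frac{\alpha-1}{2}}|s| \le C(|s|^\alpha+1)$, valid for $\sigma\in(0,1)$, with $|p_1+p_3|^\alpha \le 2^{\alpha-1}(|p_1|^\alpha+|p_3|^\alpha)$ and the upper bound on the prefactor. For the coercivity $\eqref{eq:conditions_Phi}_2$ I set $w = \partial_\theta v + \partial_\theta^3 v$, so that $\partial_\theta^3 v = w - \partial_\theta v$ and
\[
  \Phi\,\partial_\theta^3 v
  =
  h^{\alpha+2}(w^2+\sigma^2)^{\frac{\alpha-1}{2}}w^2
  -
  h^{\alpha+2}(w^2+\sigma^2)^{\frac{\alpha-1}{2}}w\,\partial_\theta v
  \ge
  c_0\,|w|^{\alpha+1} - |\Phi|\,|\partial_\theta v|;
\]
combining this with $|w|^{\alpha+1} \ge 2^{-\alpha}|\partial_\theta^3 v|^{\alpha+1} - |\partial_\theta v|^{\alpha+1}$ (a consequence of $|\partial_\theta^3 v|\le|w|+|\partial_\theta v|$ and convexity) yields
\[
  \Phi\,\partial_\theta^3 v
  \ge
  c_0\,2^{-\alpha}\,|\partial_\theta^3 v|^{\alpha+1}
  - c_0\,|\partial_\theta v|^{\alpha+1}
  - |\Phi|\,|\partial_\theta v|.
\]
This is $\eqref{eq:conditions_Phi}_2$ (with $C_3 = c_0 2^{-\alpha}$) up to the additional term $-c_0|\partial_\theta v|^{\alpha+1}$; since $0\le\xi\le1$, the contribution $-c_0\int\xi^m|\partial_\theta v|^{\alpha+1}$ is of exactly the same kind as the lower-order terms $\int\xi^{m-(\alpha+1)}|\partial_\theta v|^{\alpha+1}$ already appearing in the proof of Theorem \ref{thm:reg_estimate_general} and is absorbed into the dissipative term $\int\xi^m|\partial_\theta^3 v|^{\alpha+1}$ through the weighted interpolation estimate of Proposition \ref{prop:weighted_Sobolev} in exactly the same manner, so the conclusion of the theorem is unaffected.

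It remains to record the source term and read off the estimate. On $[a,b]$ the function $e^{\pm i\theta}$ has a fixed finite $H^2$-norm, and by Theorem \ref{thm:existence_regularised} the map $t\mapsto h_{\pm 1}(t)$ is $C^1$ on $(0,T_\sigma)\supset(0,t_\ast]$; hence $\tau\mapsto h^\prime_{\pm 1}(\bar{t}\tau)$ is continuous on $[1/4,1]$, so $S\in L_1\bigl((1/4,1);H^2([a,b])\bigr)$ with $\|S\|_{L_1((1/4,1);H^2([a,b]))} \le \tfrac{C}{\delta}\int_{1/4}^1|h^\prime_{\pm 1}(\bar{t}\tau)|\,d\tau$. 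Applying Theorem \ref{thm:reg_estimate_general} with $x_0 = \theta_0$ and $\kappa = 1/8$ (for which $a < \theta_0-1/4 < \theta_0+1/4 < b$) gives precisely
\[
  \int_{1/2}^1 \int_{\theta_0-1/8}^{\theta_0+1/8} |\partial_\theta^3 v|^{\alpha+1}\,d\theta\,d\tau
  \le
  C + C\|S\|_{L_1((1/4,1);H^2([a,b]))}
  \le
  C + \frac{C}{\delta}\int_{1/4}^1 |h^\prime_{\pm 1}(\bar{t}\tau)|\,d\tau,
\]
which is the assertion. The step requiring real care is the coercivity check $\eqref{eq:conditions_Phi}_2$ together with the observation that the extra lower-order term it produces is harmless; boundedness and positivity of the mobility (via $\tfrac{1}{2}\bar{h}_0 \le h \le 2\bar{h}_0$), the periodic extension, and the $C^1$-regularity of the Fourier modes $h_{\pm 1}$ are routine.
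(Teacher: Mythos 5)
Your proof is correct and follows the same route as the paper's (very terse) proof: identify the source $S=\tfrac{1}{\delta}h_{\pm1}'(\bar t\tau)e^{\pm i\theta}$ and the nonlinearity, use the a-priori bounds $\tfrac{1}{2}\bar h_0\le h\le 2\bar h_0$ to control the mobility factor uniformly above and below, and invoke Theorem \ref{thm:reg_estimate_general}. Your verification of the structural hypotheses is actually more careful than what the paper records — in particular you correctly observe that the coercivity check, rewritten via $\partial_\theta^3 v = w-\partial_\theta v$, produces an extra $|\partial_\theta v|^{\alpha+1}$ term not literally of the form appearing in $\eqref{eq:conditions_Phi}_2$, and you rightly point out that this term is of exactly the same type as those already absorbed by the weighted interpolation machinery in the proof of Theorem \ref{thm:reg_estimate_general}, so the conclusion is unaffected.
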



\begin{proof}
We apply Theorem \ref{thm:reg_estimate_general} with
\begin{equation*}
    S = \frac{1}{\delta} h_{\pm 1}^\prime(\bar{t}\tau) e^{\pm i \theta}
    \quad \text{and} \quad
    \Phi(v,\partial_\theta v,\partial_\theta^2 v,\partial_\theta^3 v)
    =
    \psi_\sigma\bigl(\partial_\theta v + \partial_\theta^3 v\bigr)
    =
    \bigl(|\partial_\theta v + \partial_\theta^3 v|^2 + \sigma^2\bigr)^\frac{\alpha-1}{2} (\partial_\theta v + \partial_\theta^3 v),
\end{equation*}
and use that, up to the time $t_\ast > 0$, we have that $\tfrac{\bar{h}_0}{2} \leq h(t,\theta) \leq 2\bar{h}_0$ such that the term $h^{\alpha+2}$ is always estimated from below and above by a positive constant that does not depend on $\sigma$.
\end{proof}


\begin{remark}
Theorem \ref{thm:reg_estimate_general} does not only apply to the regularised problem \eqref{eq:PDE_regularised}.
Although it is not needed to analyse the long-time asymptotics of solutions to the original problem \eqref{eq:PDE_alpha>1}, it might be of general interest to mention that the regularity estimate of Theorem \ref{thm:reg_estimate_general} also applies to the non-regularised doubly nonlinear problem
\begin{equation*}
    \begin{cases}
        \partial_t v 
        + 
        \partial_\theta\bigl(v^{\alpha+2} |\partial_\theta^3 v|^{\alpha-1} \partial_\theta^3 v\bigr)
        =
        0, \quad t > 0,\, \theta \in S^1,
        \\
        v(0,\theta) = v_0(\theta), \quad \theta \in S^1,
    \end{cases}
\end{equation*}
with periodic boundary conditions,
as long as $v$ remains bounded from below and above.
\end{remark}


\begin{remark}\label{rem:estimate_v_S^1}
Covering $S^1$ by finitely many intervals of the form $[\theta_0-1/4,\theta_0+1/4]$, we obtain the estimate
\begin{equation} \label{eq:est_D3_scaled}
    \int_{1/2}^1 \int_{S^1} |\partial_\theta^3 v|^{\alpha+1} d\theta\, d\tau \leq C + \frac{C}{\delta} \int_{1/4}^1 |h_{\pm 1}(\bar{t}\tau)|\, d\tau. 
\end{equation}
\end{remark}


We finally go back to the original variables $\phi$ and $t$ for which the estimate \eqref{eq:est_D3_scaled} reads as follows.


\begin{corollary} \label{cor:estimate_D3}
Let $\sigma \in (0,1)$ and let $h$ be a local weak solution to \eqref{eq:PDE_regularised}, corresponding to an $\eps_0 > 0$ and an initial value $h_0 \in C^\infty(S^1)$ satisfying 
\begin{equation*}
    \frac{1}{2\pi}\int_{S^1} h_0\, d\theta = \bar{h}_0
    \quad \text{and} \quad
    \|h_0 - \bar{h}_0\|_{H^1(S^1)} \leq \eps,
    \quad \eps \in (0,\eps_0).
\end{equation*}
Then we have
\begin{equation*}
    \int_{\bar{t}/2}^{\bar{t}} \int_{S^1} |\partial_\theta^3 \phi|^{\alpha+1}\, d\theta\, dt
    \leq
    C \|\phi\|_{L_\infty([\bar{t}/4,\bar{t}]\times S^1)}^2
    +
    \|\phi\|_{L_\infty([\bar{t}/4,\bar{t}]\times S^1)}^{\alpha} \int_{\bar{t}/4}^{\bar{t}} |h_{\pm 1}^\prime(t)|\, dt
\end{equation*}
for all $0 \leq \bar{t} \leq t_\ast$.
\end{corollary}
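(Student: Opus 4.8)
The plan is to obtain Corollary \ref{cor:estimate_D3} by simply undoing the rescaling \eqref{eq:transform_delta} in the already-established estimate \eqref{eq:est_D3_scaled} of Remark \ref{rem:estimate_v_S^1}, and then to tame the resulting $\bar{t}$-dependent prefactor by means of the polynomial decay \eqref{eq:delta} of $\delta = \|\phi\|_{L_\infty([\bar{t}/4,\bar{t}]\times S^1)}$.

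First I would fix $0 \le \bar{t} \le t_\ast$ and, assuming $\delta > 0$ (otherwise $\phi \equiv 0$ on $[\bar{t}/4,\bar{t}]\times S^1$ and the claim is trivial), recall that the transformation \eqref{eq:transform_delta}, i.e.\ $v(\tau,\theta) = \tfrac{1}{\delta}\phi(\bar{t}\tau,\theta)$, turns $\phi$ into a weak solution $v$ of \eqref{eq:PDE_rescaled} with $\|v\|_{L_\infty([1/4,1]\times S^1)} = 1$, so that Remark \ref{rem:estimate_v_S^1} applies. Since $\partial_\theta$ does not act on the time variable, $\partial_\theta^3\phi(t,\theta) = \delta\,\partial_\theta^3 v(t/\bar{t},\theta)$, and the change of variables $\tau = t/\bar{t}$ (whose Jacobian in time is $\bar{t}$) gives
\begin{equation*}
    \int_{\bar{t}/2}^{\bar{t}} \int_{S^1} |\partial_\theta^3 \phi|^{\alpha+1}\, d\theta\, dt
    =
    \delta^{\alpha+1}\,\bar{t} \int_{1/2}^{1} \int_{S^1} |\partial_\theta^3 v|^{\alpha+1}\, d\theta\, d\tau,
\end{equation*}
whereas the same substitution in the source integral yields $\int_{1/4}^{1} |h_{\pm 1}^\prime(\bar{t}\tau)|\, d\tau = \tfrac{1}{\bar{t}} \int_{\bar{t}/4}^{\bar{t}} |h_{\pm 1}^\prime(t)|\, dt$. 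Inserting \eqref{eq:est_D3_scaled} and collecting the powers of $\delta$ and $\bar{t}$ then produces
\begin{equation*}
    \int_{\bar{t}/2}^{\bar{t}} \int_{S^1} |\partial_\theta^3 \phi|^{\alpha+1}\, d\theta\, dt
    \leq
    C\,\delta^{\alpha+1}\,\bar{t}
    +
    C\,\delta^{\alpha} \int_{\bar{t}/4}^{\bar{t}} |h_{\pm 1}^\prime(t)|\, dt .
\end{equation*}

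The only non-mechanical point is to check that the first term is absorbed into $C\delta^2 = C\|\phi\|_{L_\infty([\bar{t}/4,\bar{t}]\times S^1)}^2$; writing $C\delta^{\alpha+1}\bar{t} = (C\,\delta^{\alpha-1}\bar{t})\,\delta^2$, this amounts to showing that $\delta^{\alpha-1}\bar{t}$ is bounded uniformly in $\bar{t}\in[0,t_\ast]$. This is precisely where the decay estimate of Theorem \ref{thm:power-law_decay} enters: by \eqref{eq:delta} and the explicit form \eqref{eq:def_Lambda} of $\Lambda_\eps$,
\begin{equation*}
    \delta^{\alpha-1}\,\bar{t}
    \leq
    C\,\Lambda_\eps(\bar{t})^{\alpha-1}\,\bar{t}
    =
    C\,\frac{\eps^{\alpha-1}\,\bar{t}}{1 + C\,\eps^{\alpha-1}\,\bar{t}}
    \leq
    C ,
\end{equation*}
since $x \mapsto x/(1+Cx)$ stays bounded on $[0,\infty)$. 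After renaming constants this gives the asserted inequality. The main thing to be careful about is therefore only the exponent bookkeeping in the change of variables — the factor $\delta^{\alpha+1}\bar{t}$ produced by the $L_{\alpha+1}$-norm together with the time Jacobian, against the $\delta^{-1}\bar{t}^{-1}$ produced by the rescaled source term — combined with the observation that the a priori dangerous growth factor $\bar{t}$ is compensated by the decay of $\delta$ already established in Section \ref{sec:conv_to_circle}.
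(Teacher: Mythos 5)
Your proof is correct and follows essentially the same route as the paper: undo the rescaling \eqref{eq:transform_delta} in \eqref{eq:est_D3_scaled}, collect the powers of $\delta$ and $\bar{t}$, and absorb the factor $\delta^{\alpha-1}\bar t$ by the decay bound $\delta \leq C\Lambda_\eps(\bar t)$. The paper condenses the last step to the remark that $\bar t \leq C\delta^{-(\alpha-1)}$, whereas you verify it explicitly via $\Lambda_\eps(\bar t)^{\alpha-1}\bar t = \eps^{\alpha-1}\bar t/(1+C\eps^{\alpha-1}\bar t) \leq C$; this is a cosmetic difference only.
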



\begin{proof}
In Proposition \ref{prop:local_estimates} and Remark \ref{rem:estimate_v_S^1} we have proved that
\begin{equation*}
    \int_{1/2}^1 \int_{S^1} |\partial_\theta^3 v|^{\alpha+1} d\theta\, d\tau \leq C + \frac{C}{\delta} \int_{1/4}^1 |h_{\pm 1}(\bar{t}\tau)|\, dt.
\end{equation*}
Going back to the original variables, cf. \eqref{eq:transform_delta}, we find that
\begin{equation*}
    \int_{\bar{t}/2}^{\bar{t}} \int_{S^1} |\partial_\theta^3 \phi|^{\alpha+1} \frac{1}{\delta^{\alpha+1}}\, d\theta\, \frac{1}{\bar{t}} dt
    \leq
    C + \frac{C}{\delta} \int_{\bar{t}/4}^{\bar{t}} |h_{\pm 1}(t)|\, \frac{1}{\bar{t}} d\tau
\end{equation*}
and hence
\begin{equation}
    \begin{split}
        \int_{\bar{t}/2}^{\bar{t}} \int_{S^1} |\partial_\theta^3 \phi|^{\alpha+1}\, d\theta\, dt
        &\leq
        C \bar{t} \delta^{\alpha+1} + \frac{C}{\delta} \bar{t} \delta^{\alpha+1} \frac{1}{\bar{t}}   \int_{\bar{t}/4}^{\bar{t}} |h_{\pm 1}^\prime(t)|\, dt
        \\
        &\leq
        C \delta^2 + C \delta^{\alpha}  \int_{\bar{t}/4}^{\bar{t}} |h_{\pm 1}^\prime(t)|\, dt
        \\
        &\leq
        C \|\phi\|_{L_\infty([\bar{t}/4,\bar{t}]\times S^1)}^2 + C \|\phi\|_{L_\infty([\bar{t}/4,\bar{t}]\times S^1)}^\alpha \int_{\bar{t}/4}^{\bar{t}} |h_{\pm 1}^\prime(t)|\, dt,
    \end{split}
\end{equation}
where we use that  $\|\phi\|_{L_\infty([\bar{t}/4,\bar{t}]\times S^1)} = \delta \leq C \Lambda_\eps(\bar{t})$ and hence $\bar{t} \leq C \delta^{-(\alpha-1)}$.
\end{proof}


As a direct consequence of Corollary \ref{cor:estimate_D3} and the decay estimate for $\|\phi\|_{L_\infty([\bar{t}/4,\bar{t}]\times S^1)}$ in terms of $\Lambda_\eps(\bar{t})$ we obtain the following.


\begin{corollary} \label{cor:estimate_D3_time}
Let $h$ be a local weak solution to \eqref{eq:PDE_regularised}, corresponding to some $\eps_0 > 0$ and an initial value $h_0 \in C^\infty(S^1)$ satisfying 
\begin{equation*}
    \frac{1}{2\pi}\int_{S^1} h_0\, d\theta = \bar{h}_0
    \quad \text{and} \quad
    \|h_0 - \bar{h}_0\|_{H^1(S^1)} \leq \eps,
    \quad \eps \in (0,\eps_0).
\end{equation*}
Then, for all $0 \leq \bar{t} \leq t_\ast$, we have
\begin{equation*}
    \int_{\bar{t}/2}^{\bar{t}} \int_{S^1} |\partial_\theta^3 \phi|^{\alpha+1}\, d\theta\, dt
    \leq
    C \left( 
    \bigl(\Lambda_\eps(\bar{t})\bigr)^2
    +
    \bigl(\Lambda_\eps(\bar{t})\bigr)^{\alpha}
    \int_{\bar{t}/4}^{\bar{t}} |h_{\pm 1}^\prime(t)|\, dt
    \right).
\end{equation*}
\end{corollary}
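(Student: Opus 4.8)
The plan is that this corollary is nothing but the combination of Corollary~\ref{cor:estimate_D3} with the decay estimate of Theorem~\ref{thm:power-law_decay}. Recall that, by its very definition, the remainder $\phi$ is precisely
\[
    \phi(t,\theta) = h(t,\theta) - \bigl(\bar h_0 + h_{-1}(t) e^{-i\theta} + h_1(t) e^{i\theta}\bigr),
\]
so the quantity controlled on the left-hand side of \eqref{eq:decay_est} is exactly $\|\phi\|_{C([\bar t/4,\bar t];H^1(S^1))}$.

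First I would invoke Theorem~\ref{thm:power-law_decay}: under the stated hypotheses on $h_0$ and for $\eps \in (0,\eps_0)$ small enough, one has $\|\phi(t)\|_{C([\bar t/4,\bar t];H^1(S^1))} \leq C\,\Lambda_\eps(\bar t)$ for all $0 \leq \bar t \leq t_\ast$. Since we work in one space dimension, the Sobolev embedding $H^1(S^1) \hookrightarrow L_\infty(S^1)$ then yields
\[
    \|\phi\|_{L_\infty([\bar t/4,\bar t]\times S^1)} \leq C\,\|\phi\|_{C([\bar t/4,\bar t];H^1(S^1))} \leq C\,\Lambda_\eps(\bar t), \qquad 0 \leq \bar t \leq t_\ast .
\]
Feeding this bound into the right-hand side of the estimate of Corollary~\ref{cor:estimate_D3} immediately gives
\[
    \int_{\bar t/2}^{\bar t}\int_{S^1} |\partial_\theta^3 \phi|^{\alpha+1}\,d\theta\,dt
    \leq
    C\bigl(\Lambda_\eps(\bar t)\bigr)^2
    +
    C\bigl(\Lambda_\eps(\bar t)\bigr)^{\alpha}\int_{\bar t/4}^{\bar t} |h_{\pm 1}^\prime(t)|\,dt ,
\]
which is precisely the claim.

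There is no genuine obstacle here; the only points worth a remark are, first, that the constant $C$ produced by this chain of inequalities is independent of the regularisation parameter $\sigma$ — which holds because the constants in Corollary~\ref{cor:estimate_D3} and in Theorem~\ref{thm:power-law_decay} are already $\sigma$-uniform, both ultimately relying only on the two-sided bound $\tfrac12\bar h_0 \leq h \leq 2\bar h_0$ valid up to $t_\ast$ together with the energy-dissipation identity of Lemma~\ref{lem:energy_dissipation} — and, second, that one must keep $\bar t \leq t_\ast$ so that Theorem~\ref{thm:power-law_decay} applies on the whole interval $[\bar t/4,\bar t]$, which is exactly the range of $\bar t$ in the statement.
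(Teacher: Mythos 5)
Your proof is correct and takes essentially the same route as the paper: the paper explicitly presents Corollary~\ref{cor:estimate_D3_time} as "a direct consequence of Corollary \ref{cor:estimate_D3} and the decay estimate for $\|\phi\|_{L_\infty([\bar{t}/4,\bar{t}]\times S^1)}$ in terms of $\Lambda_\eps(\bar{t})$," which is precisely the chain Theorem~\ref{thm:power-law_decay} $\Rightarrow$ $H^1$-bound $\Rightarrow$ Sobolev embedding $\Rightarrow$ $L_\infty$-bound $\Rightarrow$ substitute into Corollary~\ref{cor:estimate_D3} that you carried out. Your additional remarks on $\sigma$-uniformity of the constants and the constraint $\bar t \leq t_\ast$ are correct and consistent with the paper's setup.
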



By an iterative argument, the estimates obtained in Corollary \ref{cor:estimate_D3}, respectively Corollary \ref{cor:estimate_D3_time}, may be improved (for `small' third derivatives) to an estimate with the power $\alpha$ instead of $\alpha+1$ on the left-hand side. This in turn leads to an estimate for the change of the Fourier coefficients $h_{\pm 1}$ in time. In order to simplify the proof of this result, we first state some auxiliary technical estimate for powers of the function $\Lambda_\eps$ which is needed frequently.


\begin{lemma} \label{lem:estimate_Lambda}
Given $\eps \in (0,1)$, for any constant $C \leq 1$ in the definition of $\Lambda_\eps$, cf. \eqref{eq:def_Lambda}, we have the inequality
\begin{equation*}
    \bar{t}
    \leq
    \bar{C}
    \bigl(\Lambda_\eps(\bar{t})\bigr)^{1-\alpha},
    \quad
    0 \leq \bar{t} \leq t_\ast,
\end{equation*}
for a positive constant $\bar{C} > 0$.
\end{lemma}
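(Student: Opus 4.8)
The plan is to simply unfold the definition \eqref{eq:def_Lambda} of $\Lambda_\eps$ and compute the quantity $\bigl(\Lambda_\eps(\bar t)\bigr)^{1-\alpha}$ explicitly; the asserted inequality then becomes an algebraic identity plus a sign observation. Recall that
\begin{equation*}
    \Lambda_\eps(\bar t)
    =
    \frac{\eps}{\bigl(1 + C \eps^{\alpha-1} \bar t\bigr)^{\frac{1}{\alpha-1}}}.
\end{equation*}
Raising both sides to the power $1-\alpha$, which is admissible since $\Lambda_\eps(\bar t) > 0$, and cancelling the exponents $\tfrac{1}{\alpha-1}$ and $1-\alpha$ gives
\begin{equation*}
    \bigl(\Lambda_\eps(\bar t)\bigr)^{1-\alpha}
    =
    \eps^{1-\alpha}\bigl(1 + C \eps^{\alpha-1} \bar t\bigr)
    =
    \eps^{1-\alpha} + C \bar t,
\end{equation*}
using $\eps^{1-\alpha}\eps^{\alpha-1} = 1$ in the last step.

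Next I would observe that since $0 < \eps < 1$ and $1-\alpha < 0$ (because $\alpha > 1$), the term $\eps^{1-\alpha} = \eps^{-(\alpha-1)}$ is strictly positive (in fact $\geq 1$). Hence from the identity above we read off the pointwise lower bound
\begin{equation*}
    \bigl(\Lambda_\eps(\bar t)\bigr)^{1-\alpha} \geq C \bar t,
\end{equation*}
valid for every $\bar t \geq 0$. Dividing by the positive constant $C$ yields $\bar t \leq \tfrac{1}{C}\bigl(\Lambda_\eps(\bar t)\bigr)^{1-\alpha}$, so the claim holds with $\bar C = 1/C$, which is positive and, by the hypothesis $C \leq 1$, satisfies $\bar C \geq 1$. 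In particular the estimate is valid for all $\bar t \geq 0$, a fortiori on the range $0 \leq \bar t \leq t_\ast$ claimed in the statement, and the restriction to $t_\ast$ is not needed in the argument.

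There is essentially no genuine obstacle here: the only points that require a word of care are tracking the sign of the exponent $1-\alpha$ when raising $\Lambda_\eps(\bar t)$ to that power, and noting that $\eps^{1-\alpha} > 0$ so that it may be discarded to obtain a clean lower bound. The lemma is stated separately only because the identity $\bigl(\Lambda_\eps(\bar t)\bigr)^{1-\alpha} = \eps^{1-\alpha} + C\bar t$ (and the resulting bound $\bar t \lesssim \bigl(\Lambda_\eps(\bar t)\bigr)^{1-\alpha}$) will be invoked repeatedly in the iterative scheme of this section.
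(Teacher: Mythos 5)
Your proof is correct and follows essentially the same route as the paper: both reduce to the observation that $\bigl(\Lambda_\eps(\bar t)\bigr)^{1-\alpha}=\eps^{1-\alpha}\bigl(1+C\eps^{\alpha-1}\bar t\bigr)=\eps^{1-\alpha}+C\bar t$ and then discard the nonnegative term $\eps^{1-\alpha}$ to obtain $\bar t\leq \tfrac1C\bigl(\Lambda_\eps(\bar t)\bigr)^{1-\alpha}$. Your explicit identity makes the step slightly more transparent than the paper's chain of estimates, and your remark that the restriction to $[0,t_\ast]$ (and, for that matter, the hypothesis $C\leq 1$) is not actually needed is also accurate.
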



\begin{proof}
We may estimate $0 \leq \bar{t} \leq t_\ast$ against $\Lambda_\eps(\bar{t})$ as follows. There exists a constant $C > 0$ such that
\begin{equation*}
    \bar{t}
    =
    \eps^{1-\alpha}
    \left(\eps^{\alpha-1} \bar{t}\right) 
    \leq
    \bar{C} \left(\eps^{-1} 
    \left(1 + C \eps^{\alpha-1} \bar{t}\right)^\frac{1}{\alpha-1}
    \right)^{\alpha-1}
    =
    \bar{C}
    \bigl(\Lambda_\eps(\bar{t})\bigr)^{1-\alpha}.
\end{equation*}
\end{proof}


\begin{lemma}\label{lem:estimate_flux_dissipation}
Let $\sigma \in (0,1)$ be fixed and let $h$ be a local weak solution to \eqref{eq:PDE_regularised}, corresponding to some $\eps_0 > 0$ and an initial value $h_0 \in C^\infty(S^1)$ satisfying 
\begin{equation*}
    \frac{1}{2\pi}\int_{S^1} h_0\, d\theta = \bar{h}_0
    \quad \text{and} \quad
    \|h_0 - \bar{h}_0\|_{H^1(S^1)} \leq \eps,
    \quad \eps \in (0,\eps_0).
\end{equation*}
Then we have
\begin{equation*}
    \int_{\bar{t}/4}^{\bar{t}} \int_{S^1} h^{\alpha+2} |\psi_\sigma(\partial_\theta h + \partial_\theta^3 h)|\, d\theta\, dt
    \leq
    C (\bar{t})^\frac{1}{\alpha+1} 
    \left(\bar{t}\, \sigma^{\alpha+1} + D_{t_\ast}^\sigma[h]\right)^\frac{\alpha}{\alpha+1},
    \quad
    0 \leq \bar{t} \leq t_\ast.
\end{equation*}
\end{lemma}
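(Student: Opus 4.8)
The plan is to estimate the integrand pointwise by a pure power of $|\partial_\theta h + \partial_\theta^3 h|^2 + \sigma^2$, and then to peel off one factor by H\"older's inequality in the $(t,\theta)$-variables so that one factor reproduces (a minor variant of) the dissipation density occurring in $D_{t_\ast}^\sigma[h]$, while the other factor is absorbed by the uniform upper bound $h \leq 2\bar{h}_0$, which is available on $[0,t_\ast]$ by the very definition of $t_\ast$.

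First I would abbreviate $w = \partial_\theta h + \partial_\theta^3 h$ and use the elementary inequality $|\psi_\sigma(w)| = (w^2+\sigma^2)^{\frac{\alpha-1}{2}}|w| \leq (w^2+\sigma^2)^{\frac{\alpha}{2}}$, so that it suffices to bound $\int_{\bar{t}/4}^{\bar{t}}\int_{S^1} h^{\alpha+2}(w^2+\sigma^2)^{\alpha/2}\, d\theta\, dt$. Writing
\[
    h^{\alpha+2}(w^2+\sigma^2)^{\frac{\alpha}{2}}
    =
    \bigl(h^{\alpha+2}(w^2+\sigma^2)^{\frac{\alpha+1}{2}}\bigr)^{\frac{\alpha}{\alpha+1}}
    \bigl(h^{\alpha+2}\bigr)^{\frac{1}{\alpha+1}}
\]
and applying H\"older's inequality in space and time with conjugate exponents $\frac{\alpha+1}{\alpha}$ and $\alpha+1$ yields
\[
    \int_{\bar{t}/4}^{\bar{t}}\!\int_{S^1} h^{\alpha+2}(w^2+\sigma^2)^{\frac{\alpha}{2}}\, d\theta\, dt
    \leq
    \Bigl(\int_{\bar{t}/4}^{\bar{t}}\!\int_{S^1} h^{\alpha+2}(w^2+\sigma^2)^{\frac{\alpha+1}{2}}\, d\theta\, dt\Bigr)^{\frac{\alpha}{\alpha+1}}
    \Bigl(\int_{\bar{t}/4}^{\bar{t}}\!\int_{S^1} h^{\alpha+2}\, d\theta\, dt\Bigr)^{\frac{1}{\alpha+1}}.
\]
For the second factor I would invoke $h \leq 2\bar{h}_0$ on $[0,t_\ast]\times S^1$ (admissible since $0 \leq \bar{t} \leq t_\ast$), giving $\int_{\bar{t}/4}^{\bar{t}}\int_{S^1} h^{\alpha+2}\, d\theta\, dt \leq (2\bar{h}_0)^{\alpha+2}\,2\pi\,\bar{t} \leq C\bar{t}$. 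For the first factor the key observation is that, since $\alpha > 1$, one has $(w^2+\sigma^2)^{\frac{\alpha-1}{2}} \geq |w|^{\alpha-1}$, whence $h^{\alpha+2}|w|^{\alpha+1} = h^{\alpha+2}|w|^{\alpha-1}w^2 \leq h^{\alpha+2}(w^2+\sigma^2)^{\frac{\alpha-1}{2}}w^2$ is precisely the dissipation density; together with the trivial inequality $(w^2+\sigma^2)^{\frac{\alpha+1}{2}} \leq 2^{\frac{\alpha+1}{2}}\bigl(|w|^{\alpha+1} + \sigma^{\alpha+1}\bigr)$ and once more $h \leq 2\bar{h}_0$, this gives
\[
    \int_{\bar{t}/4}^{\bar{t}}\!\int_{S^1} h^{\alpha+2}(w^2+\sigma^2)^{\frac{\alpha+1}{2}}\, d\theta\, dt
    \leq
    C\, D_{t_\ast}^\sigma[h] + C\,\sigma^{\alpha+1}\bar{t},
\]
where the dissipation term is obtained by enlarging the time interval from $[\bar{t}/4,\bar{t}]$ to $[0,t_\ast]$ and using non-negativity of the integrand, i.e. $D_s^\sigma[h] \leq D_{t_\ast}^\sigma[h]$ for $s \leq t_\ast$. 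Substituting these two bounds into the H\"older estimate produces exactly $C\bar{t}^{\frac{1}{\alpha+1}}\bigl(\bar{t}\,\sigma^{\alpha+1} + D_{t_\ast}^\sigma[h]\bigr)^{\frac{\alpha}{\alpha+1}}$.

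I do not expect a genuine obstacle here; the argument is a single H\"older splitting followed by elementary pointwise manipulations. The only points needing care are, first, to choose the H\"older exponents so that the surviving power of $(w^2+\sigma^2)$ is exactly $\tfrac{\alpha+1}{2}$ --- this is the unique exponent that splits cleanly into the dissipation density plus a pure $\sigma^{\alpha+1}$ remainder, which is why the conjugate pair $\bigl(\tfrac{\alpha+1}{\alpha},\alpha+1\bigr)$ is forced --- and, second, to verify that $[\bar{t}/4,\bar{t}] \subseteq [0,t_\ast]$ so that the uniform bound on $h$ and the monotonicity of $D_\cdot^\sigma[h]$ may be invoked; both hold by the hypothesis $0 \leq \bar{t} \leq t_\ast$ together with the regularity $h \in L_{\alpha+1}\bigl((0,T_\sigma);W^3_{\alpha+1}(S^1)\bigr)$ with $T_\sigma > t_\ast$, furnished by Theorem~\ref{thm:existence_regularised}.
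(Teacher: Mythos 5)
Your proof is correct and follows essentially the same route as the paper: H\"older's inequality with conjugate exponents $\tfrac{\alpha+1}{\alpha}$ and $\alpha+1$, the uniform bounds $\tfrac12\bar h_0\le h\le 2\bar h_0$ on $[0,t_\ast]$, and the monotonicity $D^\sigma_s[h]\le D^\sigma_{t_\ast}[h]$ for $s\le t_\ast$. The only cosmetic difference is that the paper handles the $\sigma$-contribution by splitting the domain into $\{|w|\le\sigma\}$ and $\{|w|>\sigma\}$, whereas you use the pointwise inequalities $|\psi_\sigma(w)|\le(w^2+\sigma^2)^{\alpha/2}$, $(w^2+\sigma^2)^{(\alpha+1)/2}\le C\bigl(|w|^{\alpha+1}+\sigma^{\alpha+1}\bigr)$ and $|w|^{\alpha-1}\le(w^2+\sigma^2)^{(\alpha-1)/2}$, which avoids the set decomposition and is a touch cleaner while landing on the identical bound.
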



\begin{proof}
Applying H\"older's inequality with exponents $p=\alpha+1$ and $q=(\alpha+1)/\alpha$ to the left-hand side of the desired inequality yields
\begin{equation*}
    \begin{split}
        &
        \int_{\bar{t}/4}^{\bar{t}} \int_{S^1} h^{\alpha+2} |\psi_\sigma(\partial_\theta h + \partial_\theta^3 h)|\, d\theta\, dt
        \\
        &\quad
        \leq
        C\left(\int_{\bar{t}/4}^{\bar{t}} \int_{S^1} 1\, d\theta\, dt\right)^\frac{1}{\alpha+1}
        \left(\int_{\bar{t}/4}^{\bar{t}} \int_{S^1} 
        \bigl(|\partial_\theta h + \partial_\theta^3 h|^2 + \sigma^2\bigr)^{\frac{\alpha-1}{2}\frac{\alpha+1}{\alpha}} |\partial_\theta h + \partial_\theta^3 h|^\frac{\alpha+1}{\alpha}\, d\theta\, dt\right)^\frac{\alpha}{\alpha+1}
        \\
        &\quad 
        \leq 
        C (\bar{t})^\frac{1}{\alpha+1}
        \left(\int_{\bar{t}/4}^{\bar{t}} \int_{S^1} 
        \bigl(|\partial_\theta h + \partial_\theta^3 h|^2 + \sigma^2\bigr)^\frac{\alpha-1}{2}
        \bigl(|\partial_\theta h + \partial_\theta^3 h|^2 + \sigma^2\bigr)^{\frac{1}{\alpha}\frac{\alpha-1}{2}}|\partial_\theta h + \partial_\theta^3 h|^\frac{\alpha+1}{\alpha}\, d\theta\, dt\right)^\frac{\alpha}{\alpha+1}.
    \end{split}
\end{equation*}
On the set $I_{\leq \sigma}=\left\{(t,\theta) \in [\bar{t}/4,\bar{t}]\times S^1;\ |\partial_\theta h + \partial_\theta^3 h| \leq \sigma\right\}$ we obtain
\begin{equation*}
    \int_{I_{\leq \sigma}} \bigl(|\partial_\theta h + \partial_\theta^3 h|^2 + \sigma^2\bigr)^\frac{\alpha-1}{2}
    \bigl(|\partial_\theta h + \partial_\theta^3 h|^2 + \sigma^2\bigr)^{\frac{1}{\alpha}\frac{\alpha-1}{2}}|\partial_\theta h + \partial_\theta^3 h|^\frac{\alpha+1}{\alpha}\, d\theta\, dt
    \leq
    C \bar{t} \sigma^{\alpha+1},
    \quad
    0 \leq \bar{t} \leq t_\ast,
\end{equation*}
and on $I_{> \sigma}=\left\{(t,\theta) \in [\bar{t}/4,\bar{t}]\times S^1;\ |\partial_\theta h + \partial_\theta^3 h| > \sigma\right\}$
\begin{equation*}
    \begin{split}
        &
        \int_{I_{> \sigma}} \bigl(|\partial_\theta h + \partial_\theta^3 h|^2 + \sigma^2\bigr)^\frac{\alpha-1}{2}
        \bigl(|\partial_\theta h + \partial_\theta^3 h|^2 + \sigma^2\bigr)^{\frac{1}{\alpha}\frac{\alpha-1}{2}}|\partial_\theta h + \partial_\theta^3 h|^\frac{\alpha+1}{\alpha}\, d\theta\, dt
        \\
        &\quad
        \leq
        \int_{I_{> \sigma}} \bigl(|\partial_\theta h + \partial_\theta^3 h|^2 + \sigma^2\bigr)^\frac{\alpha-1}{2}
        |\partial_\theta h + \partial_\theta^3 h|^2\, d\theta\, dt
        \\
        &\quad 
        \leq
        D_{t_\ast}^\sigma[h],
        \quad 0 \leq \bar{t} \leq t_\ast.
    \end{split}
\end{equation*}
This completes the proof.
\end{proof}


We are finally in a position to prove an estimate for the $L_1([\bar{t}/2,\bar{t}];L_\alpha(S^1))$-norm of $\partial_\theta^3 \phi$ in terms of $\Lambda_\eps(\bar{t})$. Note that at this point the smallness of the regularisation parameter $\sigma$ becomes crucial.


\begin{theorem}[Interior regularity estimate II] \label{thm:improved_estimate_alpha}
Let $h$ be a local weak solution to \eqref{eq:PDE_regularised} for a fixed $\sigma >0$, corresponding to some $\eps_0 > 0$ and an initial value $h_0 \in C^\infty(S^1)$ satisfying 
\begin{equation*}
    \frac{1}{2\pi}\int_{S^1} h_0\, d\theta = \bar{h}_0
    \quad \text{and} \quad
    \|h_0 - \bar{h}_0\|_{H^1(S^1)} \leq \eps,
    \quad \eps \in (0,\eps_0).
\end{equation*}
Then the following holds true. For every given finite  $T > 0$ there exists $\bar{\sigma} = \bar{\sigma}(\eps_0,T)$ such that, if $0 < \sigma \leq \bar{\sigma}$, then
\begin{equation*}
    \int_{\bar{t}/2}^{\bar{t}} \int_{S^1} |\partial_\theta^3 \phi|^{\alpha}\, d\theta\, dt
    \leq
    C 
    \Lambda_\eps(\bar{t}),
    \quad 
    0\leq \bar{t} \leq \min\{t_\ast,T\}.
\end{equation*}
\end{theorem}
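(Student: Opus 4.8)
The plan is to combine the space-time $L_{\alpha+1}$ estimate of Corollary~\ref{cor:estimate_D3_time} with a sufficiently sharp bound on $\int_{\bar t/4}^{\bar t}|h_{\pm1}'(t)|\,dt$, and then to pass from the exponent $\alpha+1$ to the exponent $\alpha$ by Hölder's inequality, paying only a factor $\bar t^{1/(\alpha+1)}$ which is reabsorbed via Lemma~\ref{lem:estimate_Lambda}. As in Section~\ref{ssec:Fourier_modes} I write $h(t,\theta)=\bar h_0+h_{\pm1}(t)e^{\pm i\theta}+\phi(t,\theta)$, so that $\phi(t,\cdot)$ has vanishing Fourier modes $n\in\{0,\pm1\}$, $\partial_\theta h+\partial_\theta^3 h=\partial_\theta\phi+\partial_\theta^3\phi$ (the modes $n\in\{0,\pm1\}$ lie in the kernel of $1+\partial_\theta^2$), and, by Theorem~\ref{thm:power-law_decay}, $\|\phi\|_{C([\bar t/4,\bar t];H^1(S^1))}\le C\Lambda_\eps(\bar t)$ for $0\le\bar t\le t_\ast$.

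\proofstep{Step 1 (control of $h_{\pm1}'$ by the local dissipation).} Testing the differential equation in \eqref{eq:PDE_regularised} with $e^{\mp i\theta}$ and integrating by parts in $\theta$ yields, for a.e. $t$, $|h_{\pm1}'(t)|\le\frac{1}{2\pi}\int_{S^1}h^{\alpha+2}\,|\psi_\sigma(\partial_\theta h+\partial_\theta^3 h)|\,d\theta$. Integrating over $[\bar t/4,\bar t]$ and applying Lemma~\ref{lem:estimate_flux_dissipation} in the sharper form that its proof actually produces — in which the dissipation over the window $[\bar t/4,\bar t]$, i.e.\ $D_{\bar t}^\sigma[h]-D_{\bar t/4}^\sigma[h]$, appears in place of the full $D_{t_\ast}^\sigma[h]$ — and using the energy--dissipation identity of Lemma~\ref{lem:energy_dissipation} together with Corollary~\ref{cor:energy_decay},
\begin{equation*}
    D_{\bar t}^\sigma[h]-D_{\bar t/4}^\sigma[h]=E[h](\bar t/4)-E[h](\bar t)\le E[h](\bar t/4)\le\Lambda_\eps(\bar t/4)^2\le C\,\Lambda_\eps(\bar t)^2 ,
\end{equation*}
I obtain $\int_{\bar t/4}^{\bar t}|h_{\pm1}'(t)|\,dt\le C\,\bar t^{1/(\alpha+1)}\bigl(\bar t\,\sigma^{\alpha+1}+C\Lambda_\eps(\bar t)^2\bigr)^{\alpha/(\alpha+1)}$. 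Since $\bar t\le T$, choosing $\bar\sigma=\bar\sigma(\eps_0,T)$ small enough that the $\bar t\,\sigma^{\alpha+1}$-contribution is dominated, and invoking Lemma~\ref{lem:estimate_Lambda} in the form $\bar t^{1/(\alpha+1)}\le C\Lambda_\eps(\bar t)^{(1-\alpha)/(\alpha+1)}$, this reduces to $\int_{\bar t/4}^{\bar t}|h_{\pm1}'(t)|\,dt\le C\Lambda_\eps(\bar t)$ for $0\le\bar t\le\min\{t_\ast,T\}$.

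\proofstep{Step 2 (upgrade the $L_{\alpha+1}$-estimate and descend to exponent $\alpha$).} Feeding the bound of Step~1 into Corollary~\ref{cor:estimate_D3_time} and using that $\Lambda_\eps(\bar t)\le\eps<1$ together with $\alpha+1>2$ gives $\int_{\bar t/2}^{\bar t}\int_{S^1}|\partial_\theta^3\phi|^{\alpha+1}\,d\theta\,dt\le C\Lambda_\eps(\bar t)^2+C\Lambda_\eps(\bar t)^{\alpha+1}\le C\Lambda_\eps(\bar t)^2$. Then Hölder's inequality on the slab $[\bar t/2,\bar t]\times S^1$ (of measure $\le\pi\bar t$) with exponents $\tfrac{\alpha+1}{\alpha}$ and $\alpha+1$, followed once more by Lemma~\ref{lem:estimate_Lambda}, yields
\begin{equation*}
    \int_{\bar t/2}^{\bar t}\int_{S^1}|\partial_\theta^3\phi|^{\alpha}\,d\theta\,dt\le C\,\bar t^{1/(\alpha+1)}\Bigl(\int_{\bar t/2}^{\bar t}\int_{S^1}|\partial_\theta^3\phi|^{\alpha+1}\,d\theta\,dt\Bigr)^{\alpha/(\alpha+1)}\le C\,\bar t^{1/(\alpha+1)}\Lambda_\eps(\bar t)^{2\alpha/(\alpha+1)}\le C\,\Lambda_\eps(\bar t),
\end{equation*}
which is the assertion.

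\proofstep{Main obstacle.} The decisive point is Step~1: one must produce a bound on $\int_{\bar t/4}^{\bar t}|h_{\pm1}'|$ of the correct order $\Lambda_\eps(\bar t)$ rather than merely of order $\eps$. Using Lemma~\ref{lem:estimate_flux_dissipation} with the global dissipation $D_{t_\ast}^\sigma[h]\le E[h_0]\le\eps^2$ only gives $\int_{\bar t/4}^{\bar t}|h_{\pm1}'|\lesssim\bar t^{1/(\alpha+1)}\eps^{2\alpha/(\alpha+1)}$, which is far too weak once $\bar t$ is large enough that $\Lambda_\eps(\bar t)\ll\eps$; the gain comes precisely from localising the dissipation to the interval $[\bar t/4,\bar t]$ and exploiting that the energy has already decayed to $O(\Lambda_\eps(\bar t)^2)$ by time $\bar t/4$. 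Equally delicate is the bookkeeping of the regularisation: the error $\bar t\,\sigma^{\alpha+1}$ coming from Lemma~\ref{lem:estimate_flux_dissipation}, and likewise the $\sigma^{\alpha-1}$-weighted lower-order terms one meets if one instead routes the argument through the splitting $|\psi_\sigma(s)|\lesssim|s|^\alpha+\sigma^{\alpha-1}|s|$, must be dominated uniformly for $\bar t\le T$, and it is this requirement that dictates the threshold $\bar\sigma=\bar\sigma(\eps_0,T)$. Organising the whole estimate over dyadic time intervals $[2^{-k-1}\bar t,2^{-k}\bar t]$ — the iterative scheme alluded to before the statement — keeps track of these errors and reconciles the window $[\bar t/4,\bar t]$ on which $h_{\pm1}'$ is controlled with the window $[\bar t/2,\bar t]$ on which $\partial_\theta^3\phi$ is estimated.
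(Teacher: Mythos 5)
Your proof is correct, but it follows a genuinely different route from the paper's, and the comparison is instructive. The paper establishes the endpoint bound $\int_{\bar t/4}^{\bar t}|h_{\pm1}'|\,dt\le C\bar t^{1/(\alpha+1)}\Lambda_\eps(\bar t)^{2\alpha/(\alpha+1)}$ by an iterative scheme: starting from the crude bound $\int_{\bar t/4}^{\bar t}|h_{\pm1}'|\,dt\le C_0\bar t^{1/(\alpha+1)}$ (which uses the global dissipation $D^\sigma_{t_\ast}[h]\le E[h_0]$), the recursion
\begin{equation*}
\beta_0=0,\qquad \beta_{n+1}=\min\Bigl\{\tfrac{2\alpha}{\alpha+1},\,\tfrac{\alpha(\alpha^2+1)}{(\alpha+1)^2}+\beta_n\tfrac{\alpha}{\alpha+1}\Bigr\}
\end{equation*}
is shown to stabilise at $\beta=\tfrac{2\alpha}{\alpha+1}$ after finitely many steps; each iteration step feeds the previous bound on $\int|h_{\pm1}'|$ into the dissipation bound \eqref{eq:iteration_dissipation}, which itself comes from Corollary~\ref{cor:estimate_D3_time} and hence the interior regularity estimate of Section~\ref{sec:regularity_estimates}. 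Your Step~1 reaches the same endpoint in a single stroke by using the energy--dissipation identity of Lemma~\ref{lem:energy_dissipation} at the level of the time window $[\bar t/4,\bar t]$: the dissipation on that window equals the energy drop, $D^\sigma_{\bar t}[h]-D^\sigma_{\bar t/4}[h]=E[h](\bar t/4)-E[h](\bar t)\le E[h](\bar t/4)\le C\Lambda_\eps(\bar t)^2$ by Corollary~\ref{cor:energy_decay} (together with $\Lambda_\eps(\bar t/4)\le 4^{1/(\alpha-1)}\Lambda_\eps(\bar t)$), and you correctly observe that the proof of Lemma~\ref{lem:estimate_flux_dissipation} localises the dissipation integral to $[\bar t/4,\bar t]$ throughout and only enlarges to $D^\sigma_{t_\ast}[h]$ in its very last line. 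Feeding the windowed bound into that proof gives $\int_{\bar t/4}^{\bar t}|h_{\pm1}'|\,dt\le C\bar t^{1/(\alpha+1)}(\bar t\sigma^{\alpha+1}+C\Lambda_\eps(\bar t)^2)^{\alpha/(\alpha+1)}$, and after choosing $\bar\sigma$ small enough and applying Lemma~\ref{lem:estimate_Lambda} the exponents close up to $\frac{1-\alpha}{\alpha+1}+\frac{2\alpha}{\alpha+1}=1$. This makes the entire iteration scheme unnecessary. Your Step~2 (feeding the Step~1 bound into Corollary~\ref{cor:estimate_D3_time}, then descending from exponent $\alpha+1$ to $\alpha$ by Hölder on the slab of measure $\pi\bar t$ and reabsorbing $\bar t^{1/(\alpha+1)}$ by Lemma~\ref{lem:estimate_Lambda}) is implicit in the paper's final lines but you make it explicit and cleaner; note that the paper's conclusion is stated for $\int|\partial_\theta\phi+\partial_\theta^3\phi|^\alpha$ whereas the theorem asks for $\int|\partial_\theta^3\phi|^\alpha$, a gap your route via Corollary~\ref{cor:estimate_D3_time} avoids by producing the bound on $|\partial_\theta^3\phi|$ directly. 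What the paper's iteration buys is robustness in situations where an exact energy--dissipation balance is unavailable and one only has the regularity estimate of Theorem~\ref{thm:reg_estimate_general} to couple the dissipation to $h_{\pm1}'$; here, though, the identity of Lemma~\ref{lem:energy_dissipation} is exact and your shortcut is both valid and conceptually simpler.
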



The proof of this theorem is mainly based on an iterative scheme the goal of which is to get in each iteration step an improved estimate for the Fourier coefficients $h_{\pm 1}$. The underlying inequality is based on estimates of the dissipation in terms of powers of $\Lambda_\eps$ and $h_{\pm 1}$.


\begin{proof}
\noindent (i) In the first step we derive an inequality for the dissipation in terms of powers of $\Lambda_\eps$ and $h^\prime_{\pm 1}$.
Similar as in the proof of Lemma \ref{lem:estimate_flux_dissipation} we find that
\begin{equation*}
    \begin{split}
        &
        \int_{\bar{t}/2}^{\bar{t}} \int_{S^1} h^{\alpha+2} \bigl(|\partial_\theta \phi + \partial_\theta^3 \phi|^2 + \sigma^2\bigr)^\frac{\alpha-1}{2} |\partial_\theta \phi + \partial_\theta^3 \phi|^2\, d\theta\, dt
        \\
        &\quad 
        \leq
        C \int_{\bar{t}/2}^{\bar{t}} \int_{S^1} |\partial_\theta \phi + \partial_\theta^3 \phi|^{\alpha+1}\, d\theta\, dt
        +
        \bar{C}\,\bar{t}\, \sigma^{\alpha+1},
        \quad
        0 \leq \bar{t} \leq t_\ast.
    \end{split}
\end{equation*}
We estimate the integral on the right-hand side. A standard interpolation argument (cf. Proposition \ref{prop:weighted_Sobolev} for a similar argument) yields 
\begin{equation*}
    \int_{S^1} |\partial_\theta \phi(t)|^{\alpha+1}\, d\theta
    \leq
    C \int_{S^1} |\partial_\theta^3 \phi(t)|^{\alpha+1}\, d\theta
    +
    C \|\phi\|_{L_\infty([\bar{t}/4,\bar{t}]\times S^1)}^{\alpha+1}
\end{equation*}
for almost every $t \in [\bar{t}/4,\bar{t}]$ and a generic constant $C > 0$. Consequently, we find that
\begin{equation} \label{eq:iteration_0}
    \begin{split}
        \int_{\bar{t}/2}^{\bar{t}} \int_{S^1} |\partial_\theta \phi + \partial_\theta^3 \phi|^{\alpha+1}\, d\theta\, dt
        &\leq
        C \left(\int_{\bar{t}/2}^{\bar{t}} \int_{S^1} |\partial_\theta \phi|^{\alpha+1}\, d\theta\, dt 
        + 
        \int_{\bar{t}/2}^{\bar{t}} \int_{S^1}
        |\partial_\theta^3 \phi|^{\alpha+1}\, d\theta\, dt\right)
        \\
        &\leq
        C
        \left(\int_{\bar{t}/2}^{\bar{t}} \int_{S^1} |\partial_\theta^3 \phi|^{\alpha+1}\, d\theta\, dt 
        + 
        \bar{t}\, \|\phi\|_{L_\infty([\bar{t}/4,\bar{t}]\times S^1)}^{\alpha+1}\right).    
    \end{split}
\end{equation}
In view of Corollary \ref{cor:estimate_D3_time} and Theorem \ref{thm:power-law_decay} we deduce from \eqref{eq:iteration_0} that
\begin{equation} \label{eq:aux_iteration}
    \int_{\bar{t}/2}^{\bar{t}} \int_{S^1} |\partial_\theta \phi + \partial_\theta^3 \phi|^{\alpha+1}\, d\theta\, dt
    \leq
    C 
    \left(
    \bigl(\Lambda_\eps(\bar{t})\bigr)^2
    +
    \bigl( \Lambda_\eps(\bar{t})\bigr)^{\alpha}
    \int_{\bar{t}/4}^{\bar{t}} |h_{\pm 1}^\prime(t)|\, dt
    +
    \bar{t}\, \bigl( \Lambda_\eps(\bar{t})\bigr)^{\alpha+1}
    \right)
\end{equation}
for all $0\leq \bar{t} \leq t_\ast$.
Next, Lemma \ref{lem:estimate_Lambda} implies that
\begin{equation*}
    \bar{t} \bigl(\Lambda_\eps(\bar{t})\bigr)^{\alpha+1}
    \leq 
    C 
    \bigl(\Lambda_\eps(\bar{t})\bigr)^2, 
    \quad 
    0\leq \bar{t} \leq t_\ast,
\end{equation*}
which allows us to absorb the last term on the right-hand side of \eqref{eq:aux_iteration} in the first one. This leads to the estimate
\begin{equation*} 
    \int_{\bar{t}/2}^{\bar{t}} \int_{S^1} |\partial_\theta \phi + \partial_\theta^3 \phi|^{\alpha+1}\, d\theta\, dt
    \leq
    C 
    \left(
    \bigl(\Lambda_\eps(\bar{t})\bigr)^2
    +
    \bigl( \Lambda_\eps(\bar{t})\bigr)^{\alpha}
    \int_{\bar{t}/4}^{\bar{t}} |h_{\pm 1}^\prime(t)|\, dt\right), 
    \quad 
    0\leq \bar{t} \leq t_\ast.
\end{equation*}
and consequently, we obtain the estimate
\begin{equation} \label{eq:iteration_dissipation}
    \begin{split}
        &
        \int_{\bar{t}/2}^{\bar{t}} \int_{S^1} h^{\alpha+2} \bigl(|\partial_\theta \phi + \partial_\theta^3 \phi|^2 + \sigma^2\bigr)^\frac{\alpha-1}{2} |\partial_\theta \phi + \partial_\theta^3 \phi|^2\, d\theta\, dt
        \\
        &\quad
        \leq
        C 
        \left(
        \bigl(\Lambda_\eps(\bar{t})\bigr)^2
        +
        \bigl( \Lambda_\eps(\bar{t})\bigr)^{\alpha}
        \int_{\bar{t}/4}^{\bar{t}} |h_{\pm 1}^\prime(t)|\, dt\right)
        +
        \bar{C}\, \bar{t}\, \sigma^{\alpha+1}
    \end{split}
\end{equation}
for all $0\leq \bar{t} \leq t_\ast$.
\smallskip

\noindent (ii) In the second step, we use H\"older's inequality in order to derive an estimate for the integral $\int_{\bar{t}/4}^{\bar{t}} |h^\prime_{\pm 1}(t)|\, dt$  in terms of powers of $\bar{t}$ and powers of the dissipation integral $\int_{\bar{t}/2}^{\bar{t}} \int_{S^1} \bigl(|\partial_\theta \phi + \partial_\theta^3 \phi|^2 + \sigma^2\bigr)^\frac{\alpha-1}{2} |\partial_\theta \phi + \partial_\theta^3 \phi|^2\, d\theta\, dt$. Then, using the estimate \eqref{eq:iteration_dissipation} for the dissipation, derived in step (i), we obtain the  basic inequality for the iteration scheme.

More precisely, by the definition of the Fourier coefficients $h_{\pm 1}$ and recalling that $\tfrac{1}{2} \bar{h}_0 \leq h(t,\theta) \leq 2 \bar{h}_0,\ 0 \leq t \leq t_\ast,\, \theta \in S^1$, we deduce from Lemma \ref{lem:estimate_flux_dissipation} that there exists a constant $C > 0$
\begin{equation*} 
    \begin{split}
        \int_{\bar{t}/4}^{\bar{t}} |h_{\pm 1}^\prime(t)|\, dt
        &\leq
        C \int_{\bar{t}/4}^{\bar{t}} \int_{S^1} h^{\alpha+2} |\psi_\sigma(\partial_\theta \phi + \partial_\theta^3 \phi)|\, d\theta\,dt
        \\
        &\leq
        C (\bar{t})^\frac{1}{\alpha+1} 
        \left(\bar{t}\, \sigma^{\alpha+1} + D_{t_\ast}^\sigma[h]\right)^\frac{\alpha}{\alpha+1}
    \end{split}
\end{equation*}
for all $0 \leq \bar{t} \leq t_\ast$. Using Lemma \ref{lem:energy_dissipation} and defining $\sigma_0(\eps_0,T)=T^{-\frac{\alpha}{(\alpha+1)^2}}$, we obtain for all $0 < \bar{t} \leq \min\{T,t_\ast\}$ and $\sigma \leq \sigma_0$ that $\bar{t} \sigma^{\alpha+1} \leq (\bar{t})^\frac{1}{\alpha+1}$. Thus, for all $\sigma \leq \sigma_0$ there exists a constant $C_0 > 0$ such that
\begin{equation}\label{eq:step_0_idea}
    \int_{\bar{t}/4}^{\bar{t}} |h_{\pm 1}^\prime(t)|\, dt
    \leq
    C_0\, (\bar{t})^\frac{1}{\alpha+1},
    \quad
    0 \leq \bar{t} \leq \min\{T,t_\ast\}.
\end{equation}
Next, we split the time interval of the dissipation term as follows
\begin{equation*} 
    \begin{split}
        &
        \int_{\bar{t}/4}^{\bar{t}} \int_{S^1} \bigl(|\partial_\theta \phi + \partial_\theta^3 \phi|^2 + \sigma^2\bigr)^\frac{\alpha-1}{2} |\partial_\theta \phi + \partial_\theta^3 \phi|^2\, d\theta\, dt
        \\
        &
        =
        \int_{\bar{t}/2}^{\bar{t}} \int_{S^1} \bigl(|\partial_\theta \phi + \partial_\theta^3 \phi|^2 + \sigma^2\bigr)^\frac{\alpha-1}{2} |\partial_\theta \phi + \partial_\theta^3 \phi|^2\, d\theta\, dt 
        \\
        &\quad
        + 
        \int_{\bar{t}/4}^{\bar{t}/2} \int_{S^1} \bigl(|\partial_\theta \phi + \partial_\theta^3 \phi|^2 + \sigma^2\bigr)^\frac{\alpha-1}{2} |\partial_\theta \phi + \partial_\theta^3 \phi|^2\, d\theta\, dt,
    \end{split}
\end{equation*}
where $0\leq \bar{t} \leq t_\ast$.
Using \eqref{eq:iteration_dissipation}, we estimate both terms on the right-hand side similarly. Indeed, for the first integral we obtain
\begin{equation*}
    \int_{\bar{t}/2}^{\bar{t}} \int_{S^1} \bigl(|\partial_\theta \phi + \partial_\theta^3 \phi|^2 + \sigma^2\bigr)^\frac{\alpha-1}{2} |\partial_\theta \phi + \partial_\theta^3 \phi|^2\, d\theta\, dt
    \leq
    C 
    \left(
    \bigl(\Lambda_\eps(\bar{t})\bigr)^2
    +
    \bigl( \Lambda_\eps(\bar{t})\bigr)^{\alpha}
    \int_{\bar{t}/4}^{\bar{t}} |h_{\pm 1}^\prime(t)|\, dt
    \right)
    +
    \bar{C}\, \bar{t}\, \sigma^{\alpha+1}.
\end{equation*}
Now we define $\sigma_1(\eps_0,T) = \Lambda_\eps(T)$. Then, using Lemma \ref{lem:estimate_Lambda} and the monotonicity $\Lambda_\eps(T) \leq \Lambda_\eps(\bar{t})$, for all $0 < \bar{t} \leq \min\{T,t_\ast\}$ and $\sigma \leq \sigma_1$ we have $\bar{t} \sigma^{\alpha+1} \leq C \bigl(\Lambda_\eps(\bar{t})\bigr)^2$, yields
\begin{equation*}
    \int_{\bar{t}/2}^{\bar{t}} \int_{S^1} \bigl(|\partial_\theta \phi + \partial_\theta^3 \phi|^2 + \sigma^2\bigr)^\frac{\alpha-1}{2} |\partial_\theta \phi + \partial_\theta^3 \phi|^2\, d\theta\, dt
    \leq
    C 
    \left(
    \bigl(\Lambda_\eps(\bar{t})\bigr)^2
    +
    \bigl( \Lambda_\eps(\bar{t})\bigr)^{\alpha}
    \int_{\bar{t}/4}^{\bar{t}} |h_{\pm 1}^\prime(t)|\, dt
    \right).
\end{equation*}
In the same way we deduce for all $0 < \bar{t} \leq \min\{T,t_\ast\}$ the estimate
\begin{equation*}
    \int_{\bar{t}/4}^{\bar{t}/2} \int_{S^1} \bigl(|\partial_\theta \phi + \partial_\theta^3 \phi|^2 + \sigma^2\bigr)^\frac{\alpha-1}{2} |\partial_\theta \phi + \partial_\theta^3 \phi|^2\, d\theta\, dt
    \leq
    C 
    \left(
    \bigl(\Lambda_\eps(\bar{t})\bigr)^2
    +
    \bigl(\Lambda_\eps(\bar{t})\bigr)^\alpha
    \int_{\bar{t}/8}^{\bar{t}/4} |h_{\pm 1}^\prime(t)|\, dt\right)
\end{equation*}
for the second integral.
Moreover, proceeding as in the proof of Lemma \ref{lem:estimate_flux_dissipation} and using that $(a+b)^\theta \leq C(a^\theta + b^\theta)$ for $a,b \in \R_+$ and $\theta \in (0,1)$, we find that
\begin{equation} \label{eq:iteration_general}
    \begin{split}
        \int_{\bar{t}/4}^{\bar{t}} |h_{\pm 1}^\prime(t)|\, dt
        &\leq
        C (\bar{t})^\frac{1}{\alpha+1}
        \left(\int_{\bar{t}/4}^{\bar{t}} \int_{S^1}
        \bigl(|\partial_\theta \phi + \partial_\theta^3 \phi|^2 + \sigma^2\bigr)^\frac{\alpha-1}{2} |\partial_\theta \phi + \partial_\theta^3 \phi|^2\, d\theta\, dt 
        + 
        \bar{t}\, \sigma^{\alpha+1}
        \right)^\frac{\alpha}{\alpha+1}
        \\
        &\leq
        C (\bar{t})^\frac{1}{\alpha+1}
        \left(
        \bigl(\Lambda_\eps(\bar{t})\bigr)^2
        +
        \bigl(\Lambda_\eps(\bar{t})\bigr)^\alpha
        \int_{\bar{t}/8}^{\bar{t}} |h_{\pm 1}^\prime(t)|\, dt
        \right)^\frac{\alpha}{\alpha+1}
        \\
        &\leq
        C (\bar{t})^\frac{1}{\alpha+1}
        \left(
        \bigl(\Lambda_\eps(\bar{t})\bigr)^\frac{2\alpha}{\alpha+1}
        +
        \bigl(\Lambda_\eps(\bar{t})\bigr)^\frac{\alpha^2}{\alpha+1} \left(\int_{\bar{t}/8}^{\bar{t}} |h_{\pm 1}^\prime(t)|\, dt\right)^\frac{\alpha}{\alpha+1}
        \right)
    \end{split}
\end{equation}
for all $0 \leq \bar{t} \leq \min\{T,t_\ast\}$. We choose $\bar{\sigma} = \min\{\sigma_0,\sigma_1\}$.

\smallskip
\noindent(iii) Iteration. Based on \eqref{eq:iteration_general}, we set up an iteration scheme as follows. First, we rewrite \eqref{eq:step_0_idea} as
\begin{equation*}
    \int_{\bar{t}/4}^{\bar{t}} |h_{\pm 1}^\prime(t)|\, dt
    \leq
    C_0\ (\bar{t})^\frac{1}{\alpha+1} \bigl(\Lambda_\eps(\bar{t})\bigr)^{\beta_0}
    \quad
    \text{with} 
    \quad 
    \beta_0 = 0,
    \quad 0 \leq \bar{t} \leq \min\{T,t_\ast\}.
\end{equation*}
Moreover, using Lemma \ref{lem:estimate_Lambda}, we find that
\begin{equation} \label{eq:iteration_scheme}
    \begin{dcases}
        \int_{\bar{t}/4}^{\bar{t}} |h_{\pm 1}^\prime(t)|\, dt
        \leq
        C_{n+1} (\bar{t})^\frac{1}{\alpha+1} \bigl(\Lambda_\eps(\bar{t})\bigr)^{\beta_{n+1}},
        \quad 
        0 \leq \bar{t} \leq \min\{T,t_\ast\},
        & 
        \\
        \beta_0 = 0,\ 
        \beta_{n+1} = \min\left\{\frac{2\alpha}{\alpha+1},\frac{\alpha(\alpha^2 + 1)}{(\alpha+1)^2} + \beta_n \frac{\alpha}{\alpha+1}\right\},\ n \in \N, &
    \end{dcases}
\end{equation}
where the sequence $(C_n)_{n \in \N_0}$ of constants is increasing. However, we can prove that the iteration scheme \eqref{eq:iteration_scheme} yields $\beta_n = \frac{2\alpha}{\alpha+1}$ after a finite number of steps. To see this,
observe first that, if $\beta_n = \frac{2\alpha}{\alpha+1}$, then we have
\begin{equation*}
    \frac{\alpha(\alpha^2 + 1)}{(\alpha+1)^2} + \beta_n \frac{\alpha}{\alpha+1} 
    =
    \frac{\alpha(\alpha^2 + 1)}{(\alpha+1)^2} + \frac{2\alpha^2}{(\alpha+1)^2}
    =
    \alpha
    > \frac{2\alpha}{\alpha+1} 
    \quad
    \text{and thus} 
    \quad
    \beta_{n+1} = \frac{2\alpha}{\alpha+1}.
\end{equation*}
Note that we used the assumption $\alpha > 1$ here.
Moreover, the linear problem
\begin{equation} \label{eq:linear_problem_beta}
    \begin{cases}
        \beta_{n+1} = \frac{\alpha(\alpha^2 + 1)}{(\alpha+1)^2} + \beta_n \frac{\alpha}{\alpha+1}, & 
        \\
        \beta_0 = 0
    \end{cases}
\end{equation}
has the unique solution
\begin{equation*}
    \beta_n 
    =
    \frac{\alpha(\alpha^2 + 1)}{\alpha+1} \left(1 - \left(\frac{\alpha}{\alpha+1}\right)^n\right), 
    \quad n \in \N,
\end{equation*}
satisfying
\begin{equation*}
    \beta_n 
    \longrightarrow 
    \beta_\infty =
    \frac{\alpha(\alpha^2 + 1)}{\alpha+1}
    \quad
    \text{as }
    n \to \infty
    \quad \text{and} \quad \beta_\infty > \frac{2\alpha}{\alpha+1}, 
    \quad \text{for all } \alpha > 1.
\end{equation*}
Consequently, the iteration scheme \eqref{eq:iteration_scheme} leads us to the estimates
\begin{equation*}
    \int_{\bar{t}/2}^{\bar{t}} |h^\prime_{\pm 1}(t)|\, dt
    \leq
    C \int_{\bar{t}/2}^{\bar{t}} \int_{S^1} |\partial_\theta \phi + \partial_\theta^3 \phi|^\alpha\, d\theta\, dt 
    \leq
    C (\bar{t})^\frac{1}{\alpha+1} \bigl(\Lambda_\eps(\bar{t})\bigr)^\frac{2\alpha}{\alpha+1},
    \quad
    0\leq \bar{t} \leq \min\{T,t_\ast\}.
\end{equation*}
Using again Lemma \ref{lem:estimate_Lambda} and  $\frac{1-\alpha}{\alpha+1}+\frac{2\alpha}{\alpha+1}=1$, we finally arrive at the inequality
\begin{equation*}
    \int_{\bar{t}/2}^{\bar{t}} |h^\prime_{\pm 1}(t)|\, dt
    \leq
    C \int_{\bar{t}/2}^{\bar{t}} \int_{S^1} |\partial_\theta \phi + \partial_\theta^3 \phi|^\alpha\, d\theta\, dt 
    \leq
    C  \Lambda_\eps(\bar{t}),
    \quad 0\leq \bar{t} \leq \min\{T,t_\ast\}.
\end{equation*}
This completes the proof.
\end{proof}


As an immediate consequence of the previous proof we obtain the following estimate for the difference of the Fourier coefficients $h_{\pm 1}$ at two instants in time of order $\bar{t}$.


\begin{corollary} \label{cor:estimate_Fourier_time}
Let $h$ be a local weak solution to \eqref{eq:PDE_regularised} for a fixed $\sigma >0$, corresponding to some $\eps_0 > 0$ and an initial value $h_0 \in C^\infty(S^1)$ satisfying 
\begin{equation*}
    \frac{1}{2\pi}\int_{S^1} h_0\, d\theta = \bar{h}_0
    \quad \text{and} \quad
    \|h_0 - \bar{h}_0\|_{H^1(S^1)} \leq \eps,
    \quad \eps \in (0,\eps_0).
\end{equation*}
Then the following holds true. For all $T > 0$ there exists $\bar{\sigma} = \bar{\sigma}(\eps_0,T)$ such that, if $0 < \sigma \leq \bar{\sigma}$, then
the Fourier coefficients $h_{\pm 1}$ satisfy the inequality
\begin{equation*}
    \int_{\bar{t}/2}^{\bar{t}} |h^\prime_{\pm 1}(t)|\, dt 
    \leq 
    C \Lambda_\eps(\bar{t}),
    \quad 0 \leq \bar{t} \leq \min\{t_\ast,T\}.
\end{equation*}
\end{corollary}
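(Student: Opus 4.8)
The plan is to observe that the asserted inequality is, verbatim, one of the displays already produced at the end of the proof of Theorem \ref{thm:improved_estimate_alpha}, so the task reduces to isolating the two steps that lead there. First I would bound $|h'_{\pm1}(t)|$ by a dissipation-type integral, exactly as in the proof of Lemma \ref{lem:estimate_flux_dissipation}: writing $\phi = h - \bar h_0 - h_{\pm1}e^{\pm i\theta}$ and using the identity $\partial_\theta(e^{\pm i\theta}) + \partial_\theta^3(e^{\pm i\theta}) = 0$, so that $\partial_\theta h + \partial_\theta^3 h = \partial_\theta\phi + \partial_\theta^3\phi$, one tests the weak formulation \eqref{eq:weak_sol_reg} of \eqref{eq:PDE_regularised} with the first modes $e^{\mp i\theta}$ to obtain $2\pi|h'_{\pm1}(t)| \le \int_{S^1} h^{\alpha+2}|\psi_\sigma(\partial_\theta\phi + \partial_\theta^3\phi)|\,d\theta$. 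Since $\tfrac12\bar h_0 \le h \le 2\bar h_0$ on $[0,t_\ast]$ by the definition of $t_\ast$, and since $|\psi_\sigma(s)| = (s^2+\sigma^2)^{\frac{\alpha-1}{2}}|s| \le C(|s|^\alpha + \sigma^\alpha)$ for $\alpha > 1$, integrating over $t \in [\bar t/2,\bar t]$ gives $\int_{\bar t/2}^{\bar t}|h'_{\pm1}(t)|\,dt \le C\int_{\bar t/2}^{\bar t}\int_{S^1}|\partial_\theta\phi + \partial_\theta^3\phi|^\alpha\,d\theta\,dt + C\,\bar t\,\sigma^\alpha$.

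For the second step I would combine three facts already available. Theorem \ref{thm:improved_estimate_alpha} (with the threshold $\bar\sigma = \bar\sigma(\eps_0,T)$ chosen there) gives $\int_{\bar t/2}^{\bar t}\int_{S^1}|\partial_\theta^3\phi|^\alpha\,d\theta\,dt \le C\Lambda_\eps(\bar t)$ for $0 \le \bar t \le \min\{t_\ast,T\}$; a one-dimensional interpolation of the type used in Proposition \ref{prop:weighted_Sobolev} gives $\int_{S^1}|\partial_\theta\phi|^\alpha\,d\theta \le C\int_{S^1}|\partial_\theta^3\phi|^\alpha\,d\theta + C\|\phi\|_{L_\infty([\bar t/4,\bar t]\times S^1)}^\alpha$; and Theorem \ref{thm:power-law_decay} gives $\|\phi\|_{L_\infty([\bar t/4,\bar t]\times S^1)} \le C\Lambda_\eps(\bar t)$. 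Feeding these into the previous display, and using Lemma \ref{lem:estimate_Lambda} in the form $\bar t \le \bar C\,\Lambda_\eps(\bar t)^{1-\alpha}$ — which turns $\bar t\,\Lambda_\eps(\bar t)^\alpha$ into a quantity $\le \bar C\,\Lambda_\eps(\bar t)$, and, for $\sigma \le \bar\sigma(\eps_0,T)$ exactly as in the proof of Theorem \ref{thm:improved_estimate_alpha}, likewise absorbs $\bar t\,\sigma^\alpha$ into $C\Lambda_\eps(\bar t)$ — collapses the right-hand side to $C\Lambda_\eps(\bar t)$, which is the claim.

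There is no genuine obstacle here, since every ingredient has already been proved; the only point requiring care is the bookkeeping of the regularisation parameter. One must take $\bar\sigma$ no larger than the threshold produced by the iteration in the proof of Theorem \ref{thm:improved_estimate_alpha} — which itself already forced $\bar\sigma$ to depend on both $\eps_0$ and $T$, through the conditions $\bar t\,\sigma^{\alpha+1} \le (\bar t)^{1/(\alpha+1)}$ and $\bar t\,\sigma^{\alpha+1} \le C\,\Lambda_\eps(\bar t)^2$ appearing there — so that all error contributions of order $\bar t\,\sigma^\alpha$ and $\bar t\,\sigma^{\alpha+1}$ remain dominated by $\Lambda_\eps(\bar t)$ uniformly on $[0,\min\{t_\ast,T\}]$. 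With that understood, the corollary follows simply by reading off the final displayed estimate in the proof of Theorem \ref{thm:improved_estimate_alpha}.
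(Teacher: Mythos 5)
Your proposal is correct and matches the paper's approach: the paper treats the corollary as an immediate consequence of the final display in the proof of Theorem~\ref{thm:improved_estimate_alpha}, which already reads $\int_{\bar t/2}^{\bar t}|h'_{\pm1}(t)|\,dt \le C\int_{\bar t/2}^{\bar t}\int_{S^1}|\partial_\theta\phi+\partial_\theta^3\phi|^\alpha\,d\theta\,dt \le C\Lambda_\eps(\bar t)$, and you correctly identify this at the end. Your intermediate steps 1--2, which re-derive this display from the \emph{statement} of Theorem~\ref{thm:improved_estimate_alpha} via the interpolation $\int|\partial_\theta\phi|^\alpha \le C\int|\partial_\theta^3\phi|^\alpha + C\|\phi\|_\infty^\alpha$, the decay bound $\|\phi\|_\infty \le C\Lambda_\eps(\bar t)$, and Lemma~\ref{lem:estimate_Lambda}, are redundant relative to the paper (whose proof of Theorem~\ref{thm:improved_estimate_alpha} already controls $\int|\partial_\theta\phi+\partial_\theta^3\phi|^\alpha$ directly) but they are valid, and your explicit accounting for the $\bar t\,\sigma^\alpha$ error term --- absorbed since $\sigma \le \bar\sigma \le \Lambda_\eps(T) \le \Lambda_\eps(\bar t)$ forces $\bar t\,\sigma^\alpha \le \bar t\,\Lambda_\eps(\bar t)^\alpha \le \bar C\,\Lambda_\eps(\bar t)$ --- is a useful clarification of a step the paper leaves implicit.
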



From the previous results we deduce, for $0 < \sigma \leq \bar{\sigma}$ small enough, the following estimate for the Fourier coefficients $h_{\pm 1}$ as long as the solution is bounded away from zero and bounded from above in the sense that $\tfrac{1}{2} \bar{h}_0 \leq h(t,\theta) \leq 2\bar{h}_0$.


\begin{proposition} \label{prop:estimate_Fourier}
Let $h$ be a local weak solution to \eqref{eq:PDE_regularised} for a fixed $\sigma >0$, corresponding to some $\eps_0 > 0$ and an initial value $h_0 \in C^\infty(S^1)$ satisfying 
\begin{equation*}
    \frac{1}{2\pi}\int_{S^1} h_0\, d\theta = \bar{h}_0
    \quad \text{and} \quad
    \|h_0 - \bar{h}_0\|_{H^1(S^1)} \leq \eps,
    \quad \eps \in (0,\eps_0).
\end{equation*}
Then the following holds true. For each fixed $0 < T < \infty$ there exists $\bar{\sigma} = \bar{\sigma}(\eps_0,T)$ such that, if $0 < \sigma \leq \bar{\sigma}$, then
the Fourier coefficients $h_{\pm 1}$ satisfy the estimate 
\begin{equation*}
    \int_0^{\min\{t_\ast,T\}} |h^\prime_{\pm 1}(t)|\, dt 
    \leq
    C \eps \left(1 + \log\bigl(\eps^{1-\alpha}\bigr)\right).
\end{equation*}
\end{proposition}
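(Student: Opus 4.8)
The plan is to split the interval $[0,\min\{t_\ast,T\}]$ into dyadic time-scales and to sum the scale-by-scale bound of Corollary~\ref{cor:estimate_Fourier_time}. Set $\tau_0=\min\{t_\ast,T\}$ and $u_k=2^{-k}\tau_0$ for $k\in\N_0$, so that $(0,\tau_0]=\bigcup_{k\geq 0}[u_k/2,u_k]$ and
\begin{equation*}
    \int_0^{\tau_0}|h_{\pm 1}^\prime(t)|\,dt
    =
    \sum_{k=0}^{\infty}\int_{u_k/2}^{u_k}|h_{\pm 1}^\prime(t)|\,dt .
\end{equation*}
Taking $\bar\sigma=\bar\sigma(\eps_0,T)$ as in Corollary~\ref{cor:estimate_Fourier_time}, but shrinking it further so that $T\bar\sigma^{\alpha+1}\leq 1$, Corollary~\ref{cor:estimate_Fourier_time} applied with $\bar t=u_k\leq\tau_0$ gives $\int_{u_k/2}^{u_k}|h_{\pm 1}^\prime|\,dt\leq C\Lambda_\eps(u_k)$ for every $k$.

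The obstruction is that $\Lambda_\eps(t)=\eps(1+C\eps^{\alpha-1}t)^{-1/(\alpha-1)}$ does not decay on the range $t\lesssim\eps^{1-\alpha}$; it stays comparable to $\eps$, and a naive summation of $C\eps$ over infinitely many dyadic scales diverges. To control the fine scales I would use a second, dissipative bound: testing the differential equation in \eqref{eq:PDE_regularised} with $e^{\pm i\theta}$ gives $|h_{\pm 1}^\prime(t)|\leq C\int_{S^1}h^{\alpha+2}|\psi_\sigma(\partial_\theta h+\partial_\theta^3 h)|\,d\theta$, so Lemma~\ref{lem:estimate_flux_dissipation}, combined with the energy bound $D_{t_\ast}^\sigma[h]\leq E[h_0]\leq\eps^2\leq 1$ from Lemma~\ref{lem:energy_dissipation} and with $u_k\sigma^{\alpha+1}\leq T\bar\sigma^{\alpha+1}\leq 1$, yields
\begin{equation*}
    \int_{u_k/2}^{u_k}|h_{\pm 1}^\prime(t)|\,dt
    \leq
    C_0\, u_k^{\frac{1}{\alpha+1}},
    \qquad k\in\N_0 ,
\end{equation*}
with $C_0=C_0(\alpha,\eps_0,T)$; this is exactly the estimate \eqref{eq:step_0_idea}, and it is effective precisely where $\Lambda_\eps$ stagnates. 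Hence $\int_{u_k/2}^{u_k}|h_{\pm 1}^\prime|\leq C\min\{\Lambda_\eps(u_k),u_k^{1/(\alpha+1)}\}$, and I also record the elementary bound $\Lambda_\eps(t)\leq\min\{\eps,\,C_\alpha t^{-1/(\alpha-1)}\}$, valid for all $t>0$.

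It then remains to sum, splitting the index $k$ according to the size of $u_k$ relative to $\eps^{1-\alpha}$ and $\eps^{\alpha+1}$. For the scales with $u_k\geq\eps^{1-\alpha}$ I use $\Lambda_\eps(u_k)\leq C_\alpha u_k^{-1/(\alpha-1)}$; since $u_k=2^{-k}\tau_0$, these terms form a geometric series whose largest term occurs at $u_k\approx\eps^{1-\alpha}$ and is $\approx\eps$, so their sum is $\leq C\eps$. For the scales with $\eps^{\alpha+1}\leq u_k<\eps^{1-\alpha}$ I use $\Lambda_\eps(u_k)\leq\eps$; there are at most $\log_2(\eps^{-2\alpha})+1\leq C(1+\log(\eps^{1-\alpha}))$ such scales, so their contribution is $\leq C\eps(1+\log(\eps^{1-\alpha}))$. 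For the scales with $u_k<\eps^{\alpha+1}$ I use $u_k^{1/(\alpha+1)}\leq\eps$ together with $u_k=2^{-k}\tau_0$; the corresponding geometric series is dominated by its first term, $\approx\eps$, contributing $\leq C\eps$. Adding the three contributions gives $\int_0^{\min\{t_\ast,T\}}|h_{\pm 1}^\prime(t)|\,dt\leq C\eps(1+\log(\eps^{1-\alpha}))$; degenerate cases (for instance $\tau_0<\eps^{\alpha+1}$ or $\tau_0<\eps^{1-\alpha}$) are automatic since then the corresponding ranges of $k$ are empty.

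The main obstacle is precisely this fine-scale divergence: Corollary~\ref{cor:estimate_Fourier_time} by itself is not summable over dyadic scales, and the heart of the argument is to realize that the complementary bound $\int_{u_k/2}^{u_k}|h_{\pm 1}^\prime|\lesssim u_k^{1/(\alpha+1)}$ --- which encodes the finiteness of the total dissipation $D^\sigma\leq E[h_0]\lesssim\eps^2$ --- dominates exactly on the scales $u_k\lesssim\eps^{\alpha+1}$, so that the two estimates together leave only the $\sim\log(\eps^{1-\alpha})$ intermediate dyadic scales on which a full factor $\eps$ must be paid, and it is these that produce the logarithm in the final bound.
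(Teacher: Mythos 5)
Your proof is correct and follows the same dyadic-in-time decomposition as the paper: the logarithmic factor comes from the roughly $\log(\eps^{1-\alpha})$ intermediate dyadic scales on which $\Lambda_\eps$ stagnates at level $\eps$, and the coarse scales $u_k\geq\eps^{1-\alpha}$ are handled by the geometric decay $\Lambda_\eps(u_k)\leq C_\alpha\,u_k^{-1/(\alpha-1)}$, exactly as in the paper. Where you are more careful than the paper is at the finest scales. The paper's intervals $I_n=[2^{-(n+1)}\eps^{1-\alpha},2^{-n}\eps^{1-\alpha})$ are truncated at $n\leq N_\eps$ with $2^{-N_\eps}\eps^{1-\alpha}\simeq 1$, which leaves the range $[0,O(1)]$ unaccounted for in equation \eqref{eq:time_dyadics_2}, and, as you correctly point out, the scale-by-scale bound $\int_{u_k/2}^{u_k}|h'_{\pm 1}|\,dt\leq C\Lambda_\eps(u_k)\leq C\eps$ of Corollary~\ref{cor:estimate_Fourier_time} by itself is not summable over the infinitely many dyadic scales inside $(0,1]$. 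You close this by invoking the pre-iteration dissipation bound \eqref{eq:step_0_idea}, $\int_{\bar t/4}^{\bar t}|h'_{\pm 1}|\,dt\leq C_0\,\bar t^{1/(\alpha+1)}$, which decays geometrically as $\bar t\to 0$ and makes the scales $u_k\leq\eps^{\alpha+1}$ contribute only $O(\eps)$. Your three-regime split is therefore a complete argument that makes explicit a detail the paper's presentation leaves implicit.
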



\begin{proof}
In view of Corollary \ref{cor:estimate_Fourier_time}, we split the time integral into two integrals
\begin{equation} \label{eq:time_dyadics_1}
    \int_0^{\min\{t_\ast,T\}} |h^\prime_{\pm 1}(t)|\, dt
    =
    \int_0^{\eps^{1-\alpha}}
    |h^\prime_{\pm 1}(t)|\, dt
    +
    \int_{\eps^{1-\alpha}}^{\min\{t_\ast,T\}} |h^\prime_{\pm 1}(t)|\, dt
\end{equation}
and estimate each of them separately. In order to estimate the first integral, we define $I_n = \bigl[2^{-(n+1)} \eps^{1-\alpha},2^{-n} \eps^{1-\alpha}\bigr)$ for $n \in \N_0,\ n \leq N_\eps$, where $N_\eps \in \N$ is such that $2^{-N_\eps} \eps^{1-\alpha} \simeq 1$ is of order one, i.e. 
\begin{equation*}
    N_\eps = \frac{\log\bigl(\eps^{1-\alpha}\bigr)}{\log(2)} \leq C \log\bigl(\eps^{1-\alpha}\bigr).
\end{equation*}
Then, using Corollary \ref{cor:estimate_Fourier_time} for times $t$ smaller that $\eps^{1-\alpha}$, we obtain
\begin{equation} \label{eq:time_dyadics_2}
    \int_0^{\eps^{1-\alpha}}
    |h^\prime_{\pm 1}(t)|\, dt
    \leq
    \sum_{n=0}^{N_\eps} \int_{I_n} |h^\prime_{\pm 1}(t)|\, dt
    \leq
    \sum_{n=0}^{N_\eps} C \eps
    =
    C N_\eps \eps 
    \leq
    C \log\bigl(\eps^{1-\alpha}\bigr) \eps. 
\end{equation}
For the second integral, we define $I_n = \bigl[2^n \eps^{1-\alpha},2^{n+1} \eps^{1-\alpha}\bigr),\ n \in \N_0$, and apply Corollary \ref{cor:estimate_Fourier_time} for times $t$ larger than $\eps^{1-\alpha}$ in order to deduce that
\begin{equation} \label{eq:time_dyadics_3}
    \int_{\eps^{-\frac{\alpha-1}{2}}}^{\min\{t_\ast,T\}} |h^\prime_{\pm 1}(t)|\, dt
    \leq
    \sum_{n=0}^\infty \int_{I_n} |h^\prime_{\pm 1}(t)|\, dt
    \leq
    \sum_{n=0}^\infty C \bigl(2^n \eps^{1-\alpha}\bigr)^{- \frac{1}{\alpha-1}}
    =
    C \eps \sum_{n=0}^\infty 2^{-\frac{n}{\alpha-1}}
    \leq 
    C \eps,
\end{equation}
since the sum $\sum_{n=0}^\infty 2^{-\frac{n}{\alpha-1}}$ is finite. Combining \eqref{eq:time_dyadics_2} and \eqref{eq:time_dyadics_3}, we find that the integral in \eqref{eq:time_dyadics_1} is bounded in the sense that
\begin{equation*}
    \int_0^{\min\{t_\ast,T\}} |h^\prime_{\pm 1}(t)|\, dt 
    \leq
    C \eps \left(1 + \log\bigl(\eps^{1-\alpha}\bigr)\right).
\end{equation*}
This completes the proof.
\end{proof}


It is worthwhile to mention that Proposition \ref{prop:estimate_Fourier} provides an estimate for the difference of the values of the first Fourier coefficients $h_{\pm 1}$ at times $t\leq t_\ast$ and their value at time $t=0$ for $0 < \sigma \leq \bar{\sigma}$ small enough. This in turn allows us to control the position of the circle's center. More precisely, Proposition \ref{prop:estimate_Fourier} guarantees that the center of the circle does not move `too far' away from the common center of the cylinders if it is not `too far' away from it initially.


As a last step of this section we show that all the estimates proved above are valid up to any fixed finite time $0 < T < \infty$, i.e. we have $\min\{t_\ast,T\}=T$, if we only choose $\bar{\sigma} > 0$ small enough.


\begin{lemma}\label{lem:T_larger_t_ast}
There exists an $\eps_0 > 0$, independent of $\sigma$, such that for all $T > 0$ and all $\eps \in (0,\eps_0)$ there exists a $\bar{\sigma} = \bar{\sigma}(T;\eps)$ such that if $0 < \sigma \leq \bar{\sigma}$, then $t_\ast = t_\ast(\eps,\sigma) \geq T$.
\end{lemma}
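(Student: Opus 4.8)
The strategy is a continuation/bootstrap argument: I will show that on any time interval $[0,T]$ the solution $h^\sigma$ cannot leave the region $\tfrac{1}{2}\bar h_0 \le h^\sigma(t,\theta)\le 2\bar h_0$ provided $\eps$ and $\sigma$ are small enough, and then invoke Theorem \ref{thm:existence_regularised}(ii) to conclude that $T_\sigma > T$ and hence $t_\ast \ge T$. First I would fix $\eps_0$ small enough so that all the estimates of Section \ref{sec:energy_estimates} and Section \ref{ssec:Fourier_modes} apply (in particular Theorem \ref{thm:power-law_decay}, Corollary \ref{cor:energy_decay}, and Proposition \ref{prop:estimate_Fourier}), and note that by construction $E[h_0]\le \|h_0-\bar h_0\|_{H^1(S^1)}^2\le\eps^2$. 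The key observation is that we must control the \emph{pointwise} deviation $\|h(t,\cdot)-\bar h_0\|_{L_\infty(S^1)}$, and this splits, via the decomposition $h(t,\theta)=\bar h_0 + h_{\pm1}(t)e^{\pm i\theta}+\phi(t,\theta)$, into the circular part $|h_{\pm1}(t)-h_{\pm1}(0)|$ plus $|h_{\pm1}(0)|$ plus the remainder $\|\phi(t,\cdot)\|_{L_\infty(S^1)}$.

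The remainder is the easy part: by the Sobolev embedding $H^1(S^1)\embed C(S^1)$ together with Theorem \ref{thm:power-law_decay} (which gives $\|\phi(t)\|_{H^1(S^1)}\le C\Lambda_\eps(\bar t)\le C\eps$ on $[0,t_\ast]$), we get $\|\phi(t,\cdot)\|_{L_\infty(S^1)}\le C\eps$. The initial circular amplitude is bounded by $|h_{\pm1}(0)|\le \|h_0-\bar h_0\|_{H^1(S^1)}\le\eps$. For the drift of the first Fourier modes, I would integrate: $|h_{\pm1}(t)-h_{\pm1}(0)|\le\int_0^{t}|h'_{\pm1}(s)|\,ds$, and then apply Proposition \ref{prop:estimate_Fourier}, which — crucially, only for $\sigma\le\bar\sigma(\eps_0,T)$ — gives $\int_0^{\min\{t_\ast,T\}}|h'_{\pm1}(s)|\,ds\le C\eps(1+\log(\eps^{1-\alpha}))=C\eps(1+(\alpha-1)\log(1/\eps))$. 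Since $\eps\log(1/\eps)\to0$ as $\eps\to0$, this whole quantity is $\le C\eps|\log\eps|\to 0$. Combining the three contributions, $\|h(t,\cdot)-\bar h_0\|_{L_\infty(S^1)}\le C\eps|\log\eps|$ for all $t\in[0,\min\{t_\ast,T\}]$, which for $\eps$ small is strictly less than $\tfrac12\bar h_0$; hence $\tfrac34\bar h_0\le h(t,\theta)\le\tfrac54\bar h_0$ on that interval, strictly inside the closed region defining $t_\ast$.

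The continuation argument then runs as follows: suppose for contradiction $t_\ast<T$. Since all hypotheses of Proposition \ref{prop:estimate_Fourier} and Theorem \ref{thm:power-law_decay} hold on $[0,t_\ast]$ (where $\min\{t_\ast,T\}=t_\ast$), the bound above gives $\tfrac34\bar h_0\le h(t,\theta)\le\tfrac54\bar h_0$ on $[0,t_\ast]\times S^1$; in particular $h$ stays bounded away from $0$ and from $\infty$ up to $t_\ast$, so by Theorem \ref{thm:existence_regularised}(ii) the solution extends past $t_\ast$ while still satisfying $\tfrac12\bar h_0\le h\le2\bar h_0$ on a slightly larger interval by continuity — contradicting the definition of $t_\ast$ as the supremum. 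Therefore $t_\ast\ge T$. The main obstacle to watch is the \emph{order of quantifiers and the dependence of $\bar\sigma$}: Proposition \ref{prop:estimate_Fourier} only delivers its logarithmic bound for $\sigma\le\bar\sigma(\eps_0,T)$, so one must be careful that the smallness threshold on $\eps$ (which makes $C\eps|\log\eps|<\tfrac12\bar h_0$) is chosen \emph{independently of $\sigma$ and $T$} — it depends only on $\bar h_0$ and the universal constant $C$ from the $H^1$-embedding and Proposition \ref{prop:estimate_Fourier} — and only afterwards is $\bar\sigma$ chosen depending on $(\eps_0,T)$. Since $\bar h_0$ itself is fixed by the data but the constants $C$ in those earlier results are generic and $\bar h_0$-dependent, a careful bookkeeping of how $\eps_0$ can be taken uniform in $\sigma$ (it can, because every relevant estimate in Sections \ref{sec:energy_estimates}--\ref{ssec:Fourier_modes} has $\sigma$-independent constants) is the one nontrivial point, and I would state it explicitly.
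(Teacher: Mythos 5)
Your proposal is correct and follows essentially the same route as the paper's own proof: both decompose $h$ into constant, circular, and remainder parts, bound the remainder via Theorem \ref{thm:power-law_decay}, bound the drift of $h_{\pm1}$ via Proposition \ref{prop:estimate_Fourier} (with $\bar\sigma$ chosen after $\eps_0$), and close by a continuity/contradiction argument using Theorem \ref{thm:existence_regularised}(ii). Your explicit splitting $h_{\pm1}(t)=h_{\pm1}(0)+\int_0^t h'_{\pm1}$ and your remark on the quantifier ordering merely spell out steps that the paper leaves implicit.
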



\begin{proof}
In view of Theorem \ref{thm:existence_regularised} we know that there exists a positive time $T > 0$ and a positive weak solution 
\begin{equation*}
    h \in 
    L_{\alpha+1}\bigl((0,T);W^3_{\alpha+1}(S^1)\bigr) \cap C\bigl([0,T];H^1(S^1)\bigr),
    \quad
    \partial_t h \in L_\frac{\alpha+1}{\alpha}\bigl((0,T);(W^1_{\alpha+1}(S^1))'\bigr),
\end{equation*}
to the regularised problem \eqref{eq:PDE_regularised} that satisfies
\begin{equation*}
    0 < \tfrac{1}{2} \bar{h}_0 \leq h(t,\theta) \leq 2\bar{h}_0, 
    \quad 
    t \in [0,t_\ast],\, \theta \in S^1.
\end{equation*}
We easily find that, for all $0 \leq t \leq T$,
\begin{equation*}
    \|h(t,\cdot) - \bar{h}_0\|_{H^1(S^1)}
    =
    \|h_{\pm 1}(t) e^{\pm i \cdot} + \phi(t,\cdot)\|_{H^1(S^1)}
    \leq
    C
    |h_{\pm 1}(t)| + \|\phi(t,\cdot)\|_{H^1(S^1)}.
\end{equation*}
In view of Theorem \ref{thm:power-law_decay} and Proposition \ref{prop:estimate_Fourier} this yields
\begin{equation*}
    \|h(t,\cdot) - \bar{h}_0\|_{H^1(S^1)}
    \leq
    C \Lambda_\eps(\bar{t})
    +
    C \eps \left(1 + \log\bigl(\eps^{1-\alpha}\bigr)\right),
    \quad 0 \leq \bar{t} \leq \min\{t_\ast,T\}
\end{equation*}
for all $0 < \sigma \leq \bar{\sigma}(T,\eps)$ and all $0 < \eps \leq \eps_0$.
Thus, choosing $\eps_0$ small enough, we find in particular that
\begin{equation}
    \|h(t,\cdot) - \bar{h}_0\|_{L_\infty(S^1)}
    \leq 
    \frac{1}{4} \bar{h}_0,
    \quad 
    0 \leq t \leq \min\{t_\ast,T\}.
\end{equation}
Now, assume (for contradiction) that $t_\ast < T$. Then
\begin{equation*}
    \frac{3}{4} \bar{h}_0 
    \leq
    h(t,\theta)
    \leq
    \frac{5}{4} \bar{h}_0, 
    \quad
    0 \leq t \leq t_\ast.
\end{equation*}
This contradicts the definition of $t_\ast$, since by continuity of the solution, we can extend it to larger times in a way that it satisfies the bounds $\frac{1}{2} \bar{h}_0 \leq h(t,\theta) \leq 2 \bar{h}_0$.
\end{proof}


As a consequence of the previous results we find that there exists a positive weak solution $h^\sigma$ to the regularised problem \eqref{eq:PDE_regularised} up to any arbitrary but finite positive time $0 < T < \infty$ if we only choose $0 < \sigma \leq \bar{\sigma}(T,\eps_0)$ small enough. The precise statement reads as follows.


\begin{theorem} \label{thm:existence_sigma}
There exists an $\eps_0 > 0$ such that for all $0 < \eps < \eps_0$, all initial values $h_0^\sigma \in C^\infty(S^1)$ satisfying 
\begin{equation*}
    \frac{1}{2\pi}\int_{S^1} h_0^\sigma\, d\theta = \bar{h}_0
    \quad \text{and} \quad
    \|h_0^\sigma - \bar{h}_0^\sigma\|_{H^1(S^1)} \leq \eps,
    \quad \eps \in (0,\eps_0),
\end{equation*}
and every fixed $T > 0$, there exists $\bar{\sigma} = \bar{\sigma}(T,\eps) > 0$ such that, as long as $0 < \sigma \leq \bar{\sigma}$, the regularised problem \eqref{eq:PDE_regularised} possesses a positive weak solution $h^\sigma$ in the sense that
\begin{equation*}
    h^\sigma \in 
    C\bigl([0,T];H^1(S^1)\bigr)
    \cap 
    L_{\alpha+1}\bigl((0,T);W^3_{\alpha+1}(S^1)\bigr)
    \quad \text{and} \quad 
    \partial_t h^\sigma \in L_\frac{\alpha+1}{\alpha}\bigl((0,T);(W^1_{\alpha+1}(S^1))'\bigr)
\end{equation*}
satisfies the equation 
\begin{equation*}
    \int_0^T \langle \partial_t h^\sigma(t),\varphi(t)\rangle_{W^1_{\alpha+1}(S^1)}\, dt
	=
	\int_0^T \int_{S^1} |h^\sigma|^{\alpha+2} \psi_\sigma(\partial_\theta h^\sigma +\partial_\theta^3 h^\sigma) \partial_\theta \varphi\, d\theta\, dt
\end{equation*}
for all test functions $\varphi \in L_{\alpha+1}\bigl((0,T);W^1_{\alpha+1}(S^1)\bigr)$.
Moreover, the solution has the following properties. 
\begin{itemize}
    \item[(i)] (Positivity, boundedness) The solution is bounded away from zero and bounded above. More precisely,
    $$
    \frac{1}{2} \bar{h}_0^\sigma \leq h^\sigma(t,\theta) \leq 2\bar{h}_0^\sigma, 
    \quad
    t \in [0,T],\, \theta \in S^1.
    $$
    \item[(ii)] (Conservation of mass) The mass of the fluid film is conserved in the sense that
    $$
    \left\|h^\sigma(t)\right\|_{L_1(S^1)} = \left\|h_0^\sigma\right\|_{L_1(S^1)}, \quad t \in [0,T].
    $$
    \item[(iii)] (Energy dissipation) The solution satisfies the energy-dissipation equality
    \begin{equation*}
        E[h^\sigma](t) + \int_0^t \int_{S^1} |h^\sigma|^{\alpha+2} \bigl(|\partial_\theta h^\sigma +\partial_\theta^3 h^\sigma|^2 + \sigma^2\bigr)^\frac{\alpha-1}{2} |\partial_\theta h^\sigma +\partial_\theta^3 h^\sigma|^2\, d\theta\, ds 
        = E[h_0^\sigma], 
        \quad 
        t \in [0,T].
    \end{equation*}
\end{itemize}
\end{theorem}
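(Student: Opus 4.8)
The statement is a synthesis of the results built up in Sections~\ref{sec:energy_estimates}--\ref{ssec:Fourier_modes}, so the plan is to assemble them in the right logical order. First I would fix $\eps_0 > 0$ once and for all as the minimum of the thresholds appearing in Lemma~\ref{lem:E-E_min_J}, Corollary~\ref{cor:energy_decay}, Theorem~\ref{thm:power-law_decay} and Lemma~\ref{lem:T_larger_t_ast}; the key point, already guaranteed by those results, is that $\eps_0$ does \emph{not} depend on $\sigma$. Given $\eps \in (0,\eps_0)$ and a smooth positive initial datum $h_0^\sigma$ with the prescribed mass and $H^1$-closeness, Theorem~\ref{thm:existence_regularised}(i) yields a unique positive solution $h^\sigma$ on a continuation interval $[0,T_\sigma)$ with
\begin{equation*}
    h^\sigma \in C^1\bigl((0,T_\sigma);H^4(S^1)\bigr) \cap C\bigl([0,T_\sigma);H^1(S^1)\bigr) \cap L_{\alpha+1}\bigl((0,T_\sigma);W^3_{\alpha+1}(S^1)\bigr),
\end{equation*}
solving the differential equation in \eqref{eq:PDE_regularised} pointwise in $L_2(S^1)$ for $t \in (0,T_\sigma)$.

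For the prescribed finite time $T > 0$ I would then invoke Lemma~\ref{lem:T_larger_t_ast} to pick $\bar\sigma = \bar\sigma(T,\eps) > 0$ so small that $t_\ast = t_\ast(\eps,\sigma) \geq T$ whenever $0 < \sigma \leq \bar\sigma$. By the very definition of $t_\ast$ this gives property (i), namely $\tfrac12 \bar h_0^\sigma \leq h^\sigma(t,\theta) \leq 2\bar h_0^\sigma$ on $[0,T]\times S^1$; in particular $h^\sigma$ stays strictly positive and bounded, so the continuation criterion Theorem~\ref{thm:existence_regularised}(ii), applied with $\tau = T$, forces $T_\sigma > T$ and hence $h^\sigma$ is genuinely defined on all of $[0,T]$ with the asserted regularity. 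Property (ii) is then Lemma~\ref{lem:cons_mass_reg} restricted to $[0,T]$, and property (iii) is the energy-dissipation equality of Lemma~\ref{lem:energy_dissipation}, whose proof only requires $(h^\sigma + \partial_\theta^2 h^\sigma)$ to be an admissible test function — legitimate thanks to the $C^1((0,T);H^4(S^1))$-regularity above.

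It remains to identify $\partial_t h^\sigma$ and to check the weak formulation with the asserted test-function class. Writing the equation in divergence form $\partial_t h^\sigma = -\partial_\theta\bigl(|h^\sigma|^{\alpha+2}\psi_\sigma(\partial_\theta h^\sigma + \partial_\theta^3 h^\sigma)\bigr)$, property (i) bounds the mobility $|h^\sigma|^{\alpha+2}$ from above, while $|\psi_\sigma(s)| \leq |s|^\alpha + \sigma^{\alpha-1}|s|$; since $h^\sigma \in L_{\alpha+1}((0,T);W^3_{\alpha+1}(S^1))$ on the closed manifold $S^1$, the flux therefore lies in $L_{(\alpha+1)/\alpha}((0,T)\times S^1)$, which gives $\partial_t h^\sigma \in L_{(\alpha+1)/\alpha}((0,T);(W^1_{\alpha+1}(S^1))')$ by duality. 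Multiplying the pointwise equation by a test function $\varphi$ and integrating by parts in space (moving $\partial_\theta$ onto $\varphi$) then yields the displayed identity, first for smooth $\varphi$ and then, by density, for every $\varphi \in L_{\alpha+1}((0,T);W^1_{\alpha+1}(S^1))$.

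The genuinely non-trivial ingredient is the bootstrap encapsulated in Lemma~\ref{lem:T_larger_t_ast}: all the a priori estimates of Sections~\ref{sec:conv_to_circle} and \ref{ssec:Fourier_modes} — the energy decay of Corollary~\ref{cor:energy_decay}, the third-derivative bounds of Corollaries~\ref{cor:estimate_D3}--\ref{cor:estimate_D3_time} and Theorem~\ref{thm:improved_estimate_alpha}, and the Fourier-mode bound of Proposition~\ref{prop:estimate_Fourier} — are valid only up to $t_\ast$, so one cannot \emph{a priori} rule out $t_\ast < T$. Lemma~\ref{lem:T_larger_t_ast} closes the circle by combining Theorem~\ref{thm:power-law_decay} with Proposition~\ref{prop:estimate_Fourier} to show $\|h^\sigma(t) - \bar h_0^\sigma\|_{L_\infty(S^1)} \leq \tfrac14 \bar h_0^\sigma$ on $[0,\min\{t_\ast,T\}]$ once $\eps \leq \eps_0$ and $\sigma \leq \bar\sigma$; were $t_\ast < T$ the case, the solution would then obey the strictly stronger bounds $\tfrac34 \bar h_0^\sigma \leq h^\sigma \leq \tfrac54 \bar h_0^\sigma$ up to $t_\ast$ and could, by continuity of $h^\sigma \in C([0,T_\sigma);H^1(S^1)) \hookrightarrow C([0,T_\sigma);C(S^1))$, be extended past $t_\ast$ while still satisfying $\tfrac12 \bar h_0^\sigma \leq h^\sigma \leq 2\bar h_0^\sigma$ — contradicting the maximality of $t_\ast$. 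So the real work is already done in that lemma; what remains here is essentially bookkeeping, the one subtlety being to keep the quantifiers in the right order ($\eps_0$ chosen first and independently of $\sigma$, then $\bar\sigma$ depending on $T$ and $\eps$).
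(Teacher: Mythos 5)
Your proposal is correct and follows the same route the paper takes: the paper itself offers no separate proof of Theorem~\ref{thm:existence_sigma}, introducing it instead with the phrase ``As a consequence of the previous results \ldots'', and you have accurately reconstructed that implicit assembly of Theorem~\ref{thm:existence_regularised}, Lemmas~\ref{lem:cons_mass_reg}, \ref{lem:energy_dissipation}, and \ref{lem:T_larger_t_ast}, with the right quantifier order ($\eps_0$ independent of $\sigma$, then $\bar\sigma(T,\eps)$).
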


\bigskip

\section{The limit $\sigma \to 0^+$. Solutions to the original problem \eqref{eq:PDE_alpha>1}.} \label{sec:sigma_to_zero}


In this section we prove global existence of positive weak solutions to the the original problem \eqref{eq:PDE_alpha>1} for flow-behaviour exponents $\alpha > 1$ and initial values $h_0 \in H^1(S^1)$ the shape of which is close to a circle centered at the origin. We use the energy-dissipation equality in order to derive suitable uniform (in $\sigma$) estimates for the solutions $h^\sigma$ to the regularised problem \eqref{eq:PDE_regularised}. The resulting sequence $(h^\sigma)_\sigma$ of solutions admits an accumulation point which turns out to be a solution to the original problem \eqref{eq:PDE_alpha>1}. Note that from now on we use the notation $h^\sigma$ and $h$ for solutions to the regularised problem \eqref{eq:PDE_regularised} and to the original problem \eqref{eq:PDE_alpha>1}, respectively. More precisely, given any fixed $0 < T < \infty$ and $0 < \sigma < 1$, we denote solutions to the regularised problem \eqref{eq:PDE_regularised} by
\begin{equation*}
    h^\sigma(t,\theta) 
    = 
    \bar{h}_0 + h^\sigma_{\pm 1}(t) e^{\pm i \theta} 
    + 
    \phi^\sigma(t,\theta), 
    \quad
    0 < t < T,\, \theta \in S^1.
\end{equation*}
The main result of this section is the following.


\begin{theorem}[Global existence for the original problem \eqref{eq:PDE_alpha>1}]\label{thm:global_ex_original}
There exists an $\eps_0 > 0$ such that for all $0 < \eps \leq \eps_0$, all initial values $h_0 \in H^1(S^1)$ satisfying
\begin{equation*}
    \frac{1}{2 \pi} \int_{S^1} h_0\, d\theta = \bar{h}_0
    \quad \text{and} \quad 
    \|h_0 - \bar{h}_0\|_{H^1(S^1)} \leq \eps, 
    \quad
    \eps \in (0,\eps_0),
\end{equation*}
and for every fixed $T > 0$ the original problem \eqref{eq:PDE_alpha>1} possesses a positive weak solution $h$ in the sense that
\begin{equation*}
    h \in 
    C\bigl([0,T];H^1(S^1)\bigr)
    \cap 
    L_{\alpha+1}\bigl((0,T);W^3_{\alpha+1}(S^1)\bigr)
    \quad \text{and} \quad 
    \partial_t h \in L_\frac{\alpha+1}{\alpha}\bigl((0,T);(W^1_{\alpha+1}(S^1))'\bigr)
\end{equation*}
satisfies the weak formulation
\begin{equation*}
    \int_0^T \langle \partial_t h(t),\varphi(t)\rangle_{W^1_{\alpha+1}(S^1)}\, dt
	=
	\int_0^T \int_{S^1} h^{\alpha+2} \psi(\partial_\theta h+\partial_\theta^3 h) \partial_\theta \varphi\, d\theta\, dt
\end{equation*}
for all test functions $\varphi \in L_{\alpha+1}\bigl((0,T);W^1_{\alpha+1}(S^1)\bigr)$.
Moreover, the solution has the following properties 
\begin{itemize}
    \item[(i)] (Positivity, boundedness) The solution is bounded away from zero and bounded above. More precisely,
    $$
    \frac{1}{2} \bar{h}_0 \leq h(t,\theta) \leq 2\bar{h}_0, 
    \quad
    t \in [0,T],\, \theta \in S^1.
    $$
    \item[(ii)] (Conservation of mass) The mass of the fluid is conserved in the sense that
    $$
    \left\|h(t)\right\|_{L_1(S^1)} = \left\|h_0\right\|_{L_1(S^1)}, \quad t \in [0,T].
    $$
    \item[(iii)] (Energy dissipation) The solution satisfies the energy-dissipation identity
    \begin{equation*}
        E[h](t) + \int_0^t \int_{S^1} h^{\alpha+2} |\partial_\theta h+\partial_\theta^3 h|^{\alpha+1}\, d\theta\, ds 
        = E[h_0]
    \end{equation*}
    for almost every $t \in [0,T]$.
\end{itemize}
\end{theorem}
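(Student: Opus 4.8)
The plan is to obtain $h$ as an accumulation point of the regularised solutions $h^\sigma$ as $\sigma \to 0^+$. Fix $T>0$ and $\eps\in(0,\eps_0)$, and choose (by mollification on $S^1$, which preserves the mean and does not increase the $H^1$-norm) a sequence $h_0^\sigma\in C^\infty(S^1)$ with $\tfrac1{2\pi}\int_{S^1}h_0^\sigma\,d\theta=\bar h_0$, $\|h_0^\sigma-\bar h_0^\sigma\|_{H^1(S^1)}\le\eps$ and $h_0^\sigma\to h_0$ in $H^1(S^1)$. For every $0<\sigma\le\bar\sigma(T,\eps)$, Theorem \ref{thm:existence_sigma} supplies a positive weak solution $h^\sigma$ of \eqref{eq:PDE_regularised} on $[0,T]$ with $\tfrac12\bar h_0^\sigma\le h^\sigma\le 2\bar h_0^\sigma$, conserving mass and satisfying the energy--dissipation equality; moreover Proposition \ref{prop:estimate_Fourier}, together with $|h^\sigma_{\pm1}(0)|\le\eps$, yields $|h^\sigma_{\pm1}(t)|\le C\eps\bigl(1+\log(\eps^{1-\alpha})\bigr)$ uniformly in $t\in[0,T]$ and in $\sigma$.

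First I would extract $\sigma$-uniform a priori bounds. From the energy--dissipation equality and $E[h_0^\sigma]\le\|h_0^\sigma-\bar h_0^\sigma\|_{H^1(S^1)}^2\le\eps^2$ we get $\sup_{[0,T]}E[h^\sigma](t)\le\eps^2$ and $\int_0^T J[h^\sigma](t)\,dt\le\eps^2$; the first, via Plancherel's theorem as in the proof of Proposition \ref{lem:H^1-bound_phi}, bounds $\phi^\sigma:=h^\sigma-\bar h_0-h^\sigma_{\pm1}e^{\pm i\theta}$ uniformly in $L_\infty(0,T;H^1(S^1))$. Since $(h^\sigma)^{\alpha+2}\ge(\bar h_0/2)^{\alpha+2}$ and $(s^2+\sigma^2)^{(\alpha-1)/2}\ge|s|^{\alpha-1}$, the dissipation bound controls $\partial_\theta h^\sigma+\partial_\theta^3 h^\sigma=\partial_\theta\phi^\sigma+\partial_\theta^3\phi^\sigma$ uniformly in $L_{\alpha+1}((0,T)\times S^1)$, and a Fourier-multiplier (elliptic regularity) argument for $1+\partial_\theta^2$ on $S^1$, inverting off the kernel spanned by $e^{\pm i\theta}$, then bounds $\phi^\sigma$, hence $h^\sigma$, uniformly in $L_{\alpha+1}(0,T;W^3_{\alpha+1}(S^1))$. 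Using $|\psi_\sigma(s)|\le C(|s|^\alpha+1)$, the flux $F^\sigma:=(h^\sigma)^{\alpha+2}\psi_\sigma(\partial_\theta h^\sigma+\partial_\theta^3 h^\sigma)$ is bounded in $L_{(\alpha+1)/\alpha}((0,T)\times S^1)$, so $\partial_t h^\sigma=-\partial_\theta F^\sigma$ is bounded in $L_{(\alpha+1)/\alpha}(0,T;(W^1_{\alpha+1}(S^1))')$. By the Aubin--Lions--Simon lemma I pass to a subsequence with $h^\sigma\to h$ in $C([0,T];C(S^1))$, $h^\sigma\rightharpoonup h$ in $L_{\alpha+1}(0,T;W^3_{\alpha+1}(S^1))$, $\partial_t h^\sigma\rightharpoonup\partial_t h$ in $L_{(\alpha+1)/\alpha}(0,T;(W^1_{\alpha+1}(S^1))')$, $w^\sigma:=\partial_\theta h^\sigma+\partial_\theta^3 h^\sigma\rightharpoonup w:=\partial_\theta h+\partial_\theta^3 h$ in $L_{\alpha+1}$, and $\psi_\sigma(w^\sigma)\rightharpoonup\chi$ in $L_{(\alpha+1)/\alpha}$ for some $\chi$. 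As $(h^\sigma)^{\alpha+2}\to h^{\alpha+2}$ uniformly we get $F^\sigma\rightharpoonup h^{\alpha+2}\chi$, and passing to the limit in \eqref{eq:weak_sol_reg} shows that $h$ solves $\partial_t h+\partial_\theta(h^{\alpha+2}\chi)=0$ weakly.

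The key step, and the main obstacle, is the identification $\chi=\psi(\partial_\theta h+\partial_\theta^3 h)$, which is delicate because $\partial_\theta^3 h^\sigma$ converges only weakly. Here I would run Minty's monotonicity trick coupled with the energy identity. For any $z\in L_{\alpha+1}((0,T)\times S^1)$, monotonicity of $\psi_\sigma$ and positivity of $h^\sigma$ give $0\le\int_0^T\int_{S^1}(h^\sigma)^{\alpha+2}\bigl(\psi_\sigma(w^\sigma)-\psi_\sigma(z)\bigr)(w^\sigma-z)\,d\theta\,dt$. The off-diagonal terms pass to the limit using the uniform convergence of $(h^\sigma)^{\alpha+2}$, the weak convergences $\psi_\sigma(w^\sigma)\rightharpoonup\chi$ and $w^\sigma\rightharpoonup w$, and $\psi_\sigma(z)\to\psi(z)$ in $L_{(\alpha+1)/\alpha}$ by dominated convergence; the diagonal term equals $\int_0^T J[h^\sigma](t)\,dt=E[h_0^\sigma]-E[h^\sigma](T)$ by Lemma \ref{lem:energy_dissipation}, whose $\limsup$ is $\le E[h_0]-E[h](T)$ because $h_0^\sigma\to h_0$ in $H^1(S^1)$ and $E$ is weakly lower semicontinuous along $h^\sigma(T)\rightharpoonup h(T)$ in $H^1(S^1)$. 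On the other hand, testing the limit equation with $h+\partial_\theta^2 h$ (admissible by the regularity of $h$, exactly as in Lemma \ref{lem:energy_dissipation}) gives $E[h_0]-E[h](T)=\int_0^T\int_{S^1}h^{\alpha+2}\chi\,w\,d\theta\,dt$. Combining these, the limiting inequality becomes $\int_0^T\int_{S^1}h^{\alpha+2}(\chi-\psi(z))(w-z)\,d\theta\,dt\ge 0$ for all $z$; choosing $z=w-\lambda\zeta$ and letting $\lambda\to0^+$ forces $h^{\alpha+2}(\chi-\psi(w))=0$, i.e. $\chi=\psi(\partial_\theta h+\partial_\theta^3 h)$ since $h>0$, so $h$ is a weak solution of \eqref{eq:PDE_alpha>1} on $[0,T]$.

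Finally the stated properties follow: (i) from the uniform bounds $\tfrac12\bar h_0^\sigma\le h^\sigma\le 2\bar h_0^\sigma$, $\bar h_0^\sigma\to\bar h_0$, and $h^\sigma\to h$ uniformly; (ii) by passing to the limit in $\|h^\sigma(t)\|_{L_1(S^1)}=\|h_0^\sigma\|_{L_1(S^1)}$; (iii) by combining $\liminf_\sigma\int_0^T J[h^\sigma](t)\,dt\ge\int_0^T\int_{S^1}h^{\alpha+2}|w|^{\alpha+1}\,d\theta\,dt$ (convexity/weak lower semicontinuity), the identity $\int_0^T J[h^\sigma](t)\,dt=E[h_0^\sigma]-E[h^\sigma](T)$, and the relation $E[h_0]-E[h](T)=\int_0^T\int_{S^1}h^{\alpha+2}\chi w\,d\theta\,dt=\int_0^T\int_{S^1}h^{\alpha+2}|\partial_\theta h+\partial_\theta^3 h|^{\alpha+1}\,d\theta\,dt$ established above, which yields the energy--dissipation identity for every, hence a.e., $t\in[0,T]$. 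The bulk of the work sits in the $\sigma$-uniform compactness bounds of the second paragraph and in the monotonicity/energy argument of the third; everything else is routine.
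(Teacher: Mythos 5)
Your proof is correct and follows essentially the same blueprint as the paper: regularise, obtain $\sigma$-uniform a priori bounds from the energy–dissipation identity (paper's Lemma~\ref{lem:uniform_bounds}), extract limits via Aubin--Lions--Simon (Lemma~\ref{lem:convergence}), identify the nonlinear flux by Minty's trick coupled with the energy identity (Lemma~\ref{lem:limit_flux}), and read off properties (i)--(iii) by passing to the limit. The one place where you take a slightly different route is in controlling the diagonal term of the Minty inequality: the paper asserts $h^\sigma(t)\to h(t)$ strongly in $H^1(S^1)$ for a.e.\ $t$ (which follows from Aubin--Lions in $L_{\alpha+1}(0,T;W^3_{\alpha+1})$ but is not quite what its Lemma~\ref{lem:convergence}(i) states), whereas you argue via $E[h_0^\sigma]\to E[h_0]$ and weak lower semicontinuity of $E$ along $h^\sigma(T)\rightharpoonup h(T)$ in $H^1$, which only requires the bounds you already have and is arguably the cleaner variant. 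Two cosmetic differences: the paper additionally runs a diagonal-in-$T$ argument to produce a single function defined on $[0,\infty)$, which is not required for the theorem as stated for fixed $T$; and the paper restricts the Minty test functions to $(\partial_\theta+\partial_\theta^3)\phi$ with $\phi\in W^3_{\alpha+1}$, while you take general $z\in L_{\alpha+1}$ with $z=w\mp\lambda\zeta$, which is fine by dominated convergence for $\psi_\sigma(z)\to\psi(z)$ in $L_{(\alpha+1)/\alpha}$.
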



It is worthwhile to emphasise again that the solution obtained in this theorem is globally defined. In order to be able to prove Theorem \ref{thm:global_ex_original}, we first derive suitable uniform (in $\sigma$) a-priori estimates.


\begin{lemma}[Uniform bounds] \label{lem:uniform_bounds}
Let $h^\sigma$ be a local weak solution to \eqref{eq:PDE_regularised} for a fixed $\sigma \in (0,1)$, corresponding to some $\eps_0 > 0$ and an initial value $h_0^\sigma \in C^\infty(S^1)$ satisfying 
\begin{equation*}
    \frac{1}{2\pi}\int_{S^1} h_0^\sigma\, d\theta = \bar{h}_0^\sigma,
    \quad
    \|h_0^\sigma - \bar{h}_0^\sigma\|_{H^1(S^1)} \leq \eps,
    \quad \eps \in (0,\eps_0),
    \quad \text{and} \quad
    h_0^\sigma \longrightarrow h_0 \quad \text{in } H^1(S^1).
\end{equation*}
Then the following holds true. For all $0 < T < \infty$ there exists $\bar{\sigma} = \bar{\sigma}(\eps_0,T)$ such that, for $0 < \sigma \leq \bar{\sigma}$, the family
$(h^\sigma)_\sigma$ has the following properties.
\begin{itemize}
	\item[(i)] $(h^\sigma)_\sigma$ is uniformly bounded in $L_\infty\bigl((0,T);H^1(S^1)\bigr)$; 
	\item[(ii)] $\bigl(\left|h^\sigma\right|^{\alpha+2} \psi_\sigma\bigl(\partial_\theta h^\sigma + \partial_\theta^3 h^\sigma\bigr)\bigr)_\sigma$ is uniformly bounded in $L_\frac{\alpha+1}{\alpha}\bigl([0,T]\times S^1\bigr)$;
	\item[(iii)] $(\partial_t h^\sigma)_\sigma$ is uniformly bounded in $L_\frac{\alpha+1}{\alpha}\bigl((0,T);(W^1_{\alpha+1}(S^1))'\bigr)$;
	\item[(iv)] $(\partial_\theta h^\sigma +\partial_\theta^3 h^\sigma)_\sigma$ is uniformly bounded in $L_{\alpha+1}\bigl([0,T]\times S^1\bigr)$;
	\item[(v)] $(h^\sigma)_\sigma$ is uniformly bounded in $L_{\alpha+1}\bigl((0,T);W^3_{\alpha+1}(S^1)\bigr)$;
	\item[(vi)] $(\partial_t(\partial_\theta h^\sigma))_\sigma$ is uniformly bounded in $L_\frac{\alpha+1}{\alpha}\bigl((0,T);\bigl(W^1_{\alpha+1,0}(S^1)\cap W^2_{\alpha+1}(S^1)\bigr)'\bigr)$.
\end{itemize}
\end{lemma}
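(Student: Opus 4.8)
The plan is to read off all six bounds from the energy--dissipation equality of Theorem~\ref{thm:existence_sigma}(iii) together with the decay and Fourier-mode estimates already established, after one preliminary reduction. First I would invoke Lemma~\ref{lem:T_larger_t_ast} to fix $\bar\sigma=\bar\sigma(\eps_0,T)$ so small that $t_\ast\ge T$ whenever $\sigma\le\bar\sigma$; then Theorem~\ref{thm:power-law_decay} and Proposition~\ref{prop:estimate_Fourier} are available on all of $[0,T]$, uniformly in $\sigma$. Throughout I write $h^\sigma=\bar h_0+h^\sigma_{\pm1}(t)e^{\pm i\theta}+\phi^\sigma$, where $\phi^\sigma$ has vanishing $0$- and $(\pm1)$-Fourier modes. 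For (i): the dissipation identity gives $E[h^\sigma](t)\le E[h_0^\sigma]\le\|h_0^\sigma-\bar h_0^\sigma\|_{H^1(S^1)}^2\le\eps^2$, and the Plancherel computation from the proof of Proposition~\ref{lem:H^1-bound_phi} gives $\|\phi^\sigma(t)\|_{H^1(S^1)}^2\le 4E[h^\sigma](t)$; the zeroth mode equals $\bar h_0$ by conservation of mass, and $\sup_{[0,T]}|h^\sigma_{\pm1}(t)|\le|(h_0^\sigma)_{\pm1}|+\int_0^T|(h^\sigma_{\pm1})'(t)|\,dt\le\eps+C\eps(1+\log\eps^{1-\alpha})$ by Proposition~\ref{prop:estimate_Fourier}, which stays bounded as $\eps\downarrow0$. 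Summing the three pieces gives (i) uniformly in $\sigma$.

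For (iv), the same identity yields
\[
  \int_0^T\!\!\int_{S^1}(h^\sigma)^{\alpha+2}\bigl(|\partial_\theta h^\sigma+\partial_\theta^3 h^\sigma|^2+\sigma^2\bigr)^{\frac{\alpha-1}{2}}|\partial_\theta h^\sigma+\partial_\theta^3 h^\sigma|^2\,d\theta\,dt\le E[h_0^\sigma]\le\eps^2,
\]
and since $h^\sigma\ge\tfrac12\bar h_0>0$ and $(s^2+\sigma^2)^{(\alpha-1)/2}\ge|s|^{\alpha-1}$ for $\alpha>1$, this bounds $\partial_\theta h^\sigma+\partial_\theta^3 h^\sigma$ in $L_{\alpha+1}([0,T]\times S^1)$ uniformly in $\sigma$. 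Writing $g^\sigma:=\partial_\theta h^\sigma+\partial_\theta^3 h^\sigma$ and using $\|h^\sigma\|_{L_\infty}\le 2\bar h_0$, for (ii) I would estimate $|(h^\sigma)^{\alpha+2}\psi_\sigma(g^\sigma)|^{\frac{\alpha+1}{\alpha}}\le C\bigl[(|g^\sigma|^2+\sigma^2)^{\frac{\alpha-1}{2}}|g^\sigma|\bigr]^{\frac{\alpha+1}{\alpha}}$ pointwise and split the domain as in Lemma~\ref{lem:estimate_flux_dissipation}: on $\{|g^\sigma|\le\sigma\}$ the integrand is $\le C\sigma^{\alpha+1}\le C$, contributing at most $CT$; on $\{|g^\sigma|>\sigma\}$ one has $(|g^\sigma|^2+\sigma^2)^{\frac{\alpha-1}{2}}\le 2^{\frac{\alpha-1}{2}}|g^\sigma|^{\alpha-1}$, so the integrand is $\le C|g^\sigma|^{\alpha+1}$, controlled by (iv). Items (iii) and (vi) then follow from the weak formulation: testing with $\varphi\in W^1_{\alpha+1}(S^1)$ gives $|\langle\partial_t h^\sigma,\varphi\rangle|\le\|(h^\sigma)^{\alpha+2}\psi_\sigma(g^\sigma)\|_{L_\frac{\alpha+1}{\alpha}(S^1)}\|\partial_\theta\varphi\|_{L_{\alpha+1}(S^1)}$, so integrating the $\tfrac{\alpha+1}{\alpha}$-th power in time and using (ii) yields (iii); testing with $\partial_\theta\tilde\varphi$ for $\tilde\varphi\in W^1_{\alpha+1,0}(S^1)\cap W^2_{\alpha+1}(S^1)$ moves one derivative onto $\tilde\varphi$ and leaves $\int_{S^1}(h^\sigma)^{\alpha+2}\psi_\sigma(g^\sigma)\,\partial_\theta^2\tilde\varphi\,d\theta$, estimated the same way, which gives (vi).

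The only item needing genuine work is (v). Since $e^{\pm i\theta}+\partial_\theta^2 e^{\pm i\theta}=0$, one has $g^\sigma=\partial_\theta\phi^\sigma+\partial_\theta^3\phi^\sigma$, so by (iv) the combination $\partial_\theta\phi^\sigma+\partial_\theta^3\phi^\sigma$ is bounded in $L_{\alpha+1}([0,T]\times S^1)$. I would then use a Gagliardo--Nirenberg interpolation on $S^1$ in the spirit of Proposition~\ref{prop:weighted_Sobolev} (and of the interpolation used in the proof of Theorem~\ref{thm:improved_estimate_alpha}), $\|\partial_\theta\phi^\sigma(t)\|_{L_{\alpha+1}(S^1)}\le\eps_1\|\partial_\theta^3\phi^\sigma(t)\|_{L_{\alpha+1}(S^1)}+C_{\eps_1}\|\phi^\sigma(t)\|_{L_{\alpha+1}(S^1)}$, together with $\partial_\theta^3\phi^\sigma=(\partial_\theta\phi^\sigma+\partial_\theta^3\phi^\sigma)-\partial_\theta\phi^\sigma$, to absorb the top-order term and obtain $\|\partial_\theta^3\phi^\sigma(t)\|_{L_{\alpha+1}(S^1)}\le C\bigl(\|g^\sigma(t)\|_{L_{\alpha+1}(S^1)}+\|\phi^\sigma(t)\|_{L_\infty(S^1)}\bigr)$ for a.e.\ $t\in[0,T]$; by (i) and $H^1(S^1)\hookrightarrow L_\infty(S^1)$ the last term is bounded, the lower-order derivatives of $\phi^\sigma$ and the $\bar h_0+h^\sigma_{\pm1}e^{\pm i\theta}$ part are controlled by (i), so $\|h^\sigma(t)\|_{W^3_{\alpha+1}(S^1)}\le C\bigl(\|g^\sigma(t)\|_{L_{\alpha+1}(S^1)}+1\bigr)$. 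Raising to the power $\alpha+1$ and integrating over $[0,T]$ using (iv) gives (v).

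The hard part is not a single deep estimate but the $\sigma$-uniformity bookkeeping, which is precisely what forces $\sigma\le\bar\sigma$: it enters through the $\sigma$-independence of $E[h_0^\sigma]\le\eps^2$, the elementary inequality $\sigma^{\alpha+1}\le1$ on the small-gradient set, and the fact that the $H^1$-bound and the control of the Fourier coefficients $h^\sigma_{\pm1}$ (needed for (i), and hence for (v)) are only available once $\sigma$ is small enough that $t_\ast\ge T$ by Lemma~\ref{lem:T_larger_t_ast}. A secondary subtlety is that the operator $1+\partial_\theta^2$ (equivalently $\partial_\theta+\partial_\theta^3$) has kernel spanned by the first harmonics $e^{\pm i\theta}$ on $S^1$, which is why in (v) one works with the first-mode-free remainder $\phi^\sigma$ and handles the $(\pm1)$-modes separately via Proposition~\ref{prop:estimate_Fourier}.
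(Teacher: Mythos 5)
Your proposal is correct, and items (i)--(iv) and (vi) follow essentially the paper's route: (i) via mass conservation, the $\Lambda_\eps$-decay of the remainder, and Proposition~\ref{prop:estimate_Fourier} for the first Fourier modes (all made uniform in $\sigma$ by first invoking Lemma~\ref{lem:T_larger_t_ast}); (ii)--(iv) and (vi) via the energy--dissipation identity, a pointwise split of the flux into small-gradient and large-gradient regions in the spirit of Lemma~\ref{lem:estimate_flux_dissipation}, and duality through the weak formulation.

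Where you genuinely diverge is in (v). The paper separates the same pieces $h^\sigma=\bar h_0+u+v$ with $u$ in the first-harmonic subspace and $v$ in its orthogonal complement, but controls $\|\partial_\theta^3 v\|_{L_{\alpha+1}}$ by the Fourier-multiplier inequality $\|\partial_\theta^3 v\|_{L_{\alpha+1}}\le K\|\partial_\theta v+\partial_\theta^3 v\|_{L_{\alpha+1}}$, coming from boundedness of the symbol $m(n)=n^2/(n^2-1)$ for $|n|\ge 2$ (Littlewood--Paley / Mikhlin). You instead use the Gagliardo--Nirenberg interpolation $\|\partial_\theta\phi^\sigma\|_{L_{\alpha+1}}\le\eps_1\|\partial_\theta^3\phi^\sigma\|_{L_{\alpha+1}}+C_{\eps_1}\|\phi^\sigma\|_{L_{\alpha+1}}$ and absorb. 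Both are correct; both hinge on the same structural fact that $1+\partial_\theta^2$ annihilates exactly the $n=0,\pm1$ modes, which is why the remainder must first be stripped of those. Your route is more elementary and avoids multiplier theory, at the cost of an extra lower-order term $\|\phi^\sigma\|_{L_\infty}$ that you then feed back through (i) and Sobolev embedding; it also implicitly uses that $\partial_\theta^3\phi^\sigma(t)\in L_{\alpha+1}(S^1)$ a priori (true here since solutions to the regularised problem lie in $C^1((0,T_\sigma);H^4)$), which is what makes the absorption step legitimate. The paper's multiplier argument gives the cleaner inequality without the $L_\infty$ remainder and without any qualitative integrability requirement. One small expository gap on your side: you say the lower-order derivatives $\partial_\theta^j\phi^\sigma$, $j=1,2$, are ``controlled by (i)'', but (i) is an $H^1$-bound and therefore does not directly give $L_{\alpha+1}$-control of these derivatives for $\alpha>1$; you need to appeal to the same GN interpolation for $j=1,2$ (which you have already introduced), after which the chain closes.
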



\begin{proof}
\noindent (i) Let $T > 0$ and $\sigma \in (0,1)$ be given. We write
\begin{equation*}
    h^\sigma(t,\theta) 
    = 
    \bar{h}_0 + h^\sigma_{\pm 1}(t) e^{\pm i \theta} 
    + 
    \phi^\sigma(t,\theta), 
    \quad
    0 < t < T,\, \theta \in S^1,
\end{equation*}
for a solution $h^\sigma$ to the regularised problem \eqref{eq:PDE_regularised}. Using the decay estimate \eqref{eq:decay_est} derived in Theorem \ref{thm:power-law_decay} and the bound for the Fourier coefficients $h^\sigma_{\pm 1}$, derived in Proposition \ref{prop:estimate_Fourier} (together with Lemma \ref{lem:T_larger_t_ast}) we find that there exists a $\bar{\sigma} > 0$ such that for all $0 < \sigma < \bar{\sigma}$  the estimate
\begin{equation*}
\begin{split}
    \|h^\sigma\|_{L_\infty((0,T);H^1(S^1))}
    &=
    \|\bar{h}_0 + h^\sigma_{\pm 1} e^{\pm i \theta} 
    + 
    \phi^\sigma\|_{L_\infty((0,T);H^1(S^1))}
    \\
    &\leq
    C_0\, |\bar{h}_0|
    +
    C_{\pm 1}\, \|h^\sigma_{\pm 1}\|_{L_\infty((0,T))}
    +
    \|\phi^\sigma\|_{L_\infty((0,T);H^1(S^1))}
    \\
    &\leq
    C_0 + C_{\pm 1} \eps \left(1 + \log\bigl(\eps^{1-\alpha}\bigr)\right)
    +
    C_2 \Lambda_\eps(T)
    \\
    &\leq 
    C_T
\end{split}
\end{equation*}
holds true with positive constants $C_0, C_{\pm 1}, C_2, C_T > 0$.

\noindent (ii) As in Lemma \ref{lem:estimate_flux_dissipation} we can prove that there exists a positive constant $C >0$, independent of $\sigma$, such that
\begin{equation*}
    \left\|\left|h^\sigma\right|^{\alpha+2} \psi_\sigma\bigl(\partial_\theta h^\sigma + \partial_\theta^3 h^\sigma\bigr)\right\|_{L_\frac{\alpha+1}{\alpha}([0,T]\times S^1)}^\frac{\alpha}{\alpha+1}
    \leq
    C T^\frac{1}{\alpha+1} \left(T \sigma^{\alpha+1} + D^\sigma_{T}[h^\sigma]\right)^\frac{\alpha}{\alpha+1}
    \leq
    C_T.
\end{equation*}

\noindent (iii) Similarly as in step (ii) we may use the weak formulation of \eqref{eq:PDE_regularised} and H\"older's inequality to obtain
\begin{align*}
    &
    \left|\int_0^T \langle \partial_t h^\sigma(t), \varphi(t) \rangle_{W^1_{\alpha+1}(S^1)}\, dt
    \right|
    \leq
    \int_0^T \int_{S^1} \left|h^\sigma\right|^{\alpha+2} \left|\psi_\sigma\bigl(\partial_\theta h^\sigma + \partial_\theta^3 h^\sigma\bigr)\right|\, \left|\partial_\theta \varphi\right|\, d\theta\, dt
    \\
    &\quad 
    \leq
    C
    \left(\int_0^T \int_{S^1} \left|h^\sigma\right|^{\alpha+2} \left|\partial_\theta \varphi\right|^{\alpha+1}\, d\theta\, dt
    \right)^\frac{1}{\alpha+1}
    \cdot
    \\
    &\quad\quad \cdot
    \left(\int_0^T \int_{S^1} \left|h^\sigma\right|^{\alpha+2} \bigl(|\partial_\theta h^\sigma + \partial_\theta^3 h^\sigma|^2 + \sigma^2\bigr)^{\frac{\alpha-1}{2}\frac{\alpha+1}{\alpha}} |\partial_\theta h^\sigma + \partial_\theta^3 h^\sigma|^\frac{\alpha+1}{\alpha}\, d\theta\, dt
    \right)^\frac{\alpha}{\alpha+1}
    \\
    &\quad 
    \leq
    C T^\frac{1}{\alpha+1} \left(T \sigma^{\alpha+1} + D_T^\sigma[h^\sigma]\right)^\frac{\alpha}{\alpha+1}
    \leq
    C,
\end{align*}
with a positive constant $C>0$ that does not depend on $\sigma$. 

\noindent (iv) We prove that $(\partial_\theta h^\sigma + \partial_\theta^3 h^\sigma)$ is uniformly bounded in $L_{\alpha+1}\bigl([0,T]\times S^1\bigr)$. To this end, recall that $h^\sigma$ is, for each $\sigma \leq \bar{\sigma}$, bounded away from zero for times $0< t \leq T$. This implies
\begin{align*}
    \int_0^{T} \int_{S^1} |\partial_\theta h^\sigma + \partial_\theta^3 h^\sigma|^{\alpha+1}\, d\theta\, dt
    &\leq
    C 
    \int_0^{T} \int_{S^1} \left|h^\sigma\right|^{\alpha+2} \psi_\sigma\bigl(\partial_\theta h^\sigma + \partial_\theta^3 h^\sigma\bigr)\,|\partial_\theta h^\sigma + \partial_\theta^3 h^\sigma|^2\, d\theta\, dt
    \\
    &=
    C\, D_{T}^\eps[h^\sigma].
\end{align*}
Using again the dissipation estimate in Lemma \ref{lem:energy_dissipation}, we obtain the desired bound
\begin{equation*}
    \int_0^{T} \int_{S^1} |\partial_\theta h^\sigma + \partial_\theta^3 h^\sigma|^{\alpha+1}\, d\theta\, dt
    \leq
    C_{h_0}.
\end{equation*}

\noindent (v) This part of the proof is almost identical to \cite[Lemma 3.9 (iv)]{LPTV} but we adapt it to our setting for the sake of completeness. We prove the estimate
\begin{equation*}
    \int_0^{T}\int_{S^1} |\partial_\theta^3 h^\sigma|^{\alpha+1}d\theta\, dt 
    \leq 
    \int_0^{T} \int_{S^1} |\partial_\theta h^\sigma + \partial_\theta^3 h^\sigma|^{\alpha+1} d\theta\, dt + C(T)\, \|h^\sigma\|_{H^1(S^1)}^{\alpha+1}.
\end{equation*}    
To this end, 
we define $V_0\equiv \text{span}\{\cos(\theta),\sin(\theta)\}\subset L_{2}(S^1)$ and $V_1$ as the orthogonal complement of $V_0\oplus \text{span}\{1\}$ in $L_{2}(S^1)$.
Given $h^\sigma\in H^3(S^1)$, we may decompose $h^\sigma$ as 
\begin{equation}\label{descomposicion}
    h^\sigma 
    = 
    a_0 + u + v
    \quad
    \text{with}
    \quad 
    a_0\in\R,\, u\in V_0\cap H^3(S^1)
    \quad \text{and} \quad  
    v\in V_1\cap H^3(S^1)
\end{equation}
and write
\begin{equation*}
    (\partial_\theta^3 v)_n
    =
    -in^{3}v_n
    =
    m(n)i(n-n^{3})v_n,
\end{equation*}
where $m(n)=\frac{-in^{3}}{i(n-n^{3})}=\frac{n^{2}}{n^{2}-1}$ for $n\neq 0,\pm 1$. Since $m(n)$ is bounded, we may use the Littlewood--Paley Theory (c.f. \cite[Chapter 4]{stein}) to obtain 
\begin{equation}\label{littlewood}
    \|\partial_\theta^3 v\|_{L_{\alpha+1}(S^1)}^{\alpha+1}
    \leq 
    K \|\partial_\theta v + \partial_\theta^3v\|_{L_{\alpha+1}(S^1)}^{\alpha+1}
\end{equation}
with a positive constant $K>0$ that does not depend on $v$.
Therefore, using \eqref{littlewood} and the fact that $u=a_1(t)\cos\theta+a_{-1}(t)\sin\theta$, we can write
\begin{equation}
\begin{split}
    \int_0^T\int_{S^1}
    |\partial_\theta^3 h^\sigma|^{\alpha+1}d\theta\, dt
    & \leq 
    C\left(\int_0^T\int_{S^1} |\partial_\theta^3 u|^{\alpha+1} d\theta\, dt
    +
    \int_0^T\int_{S^1} |\partial_\theta^3 v|^{\alpha+1} d\theta\, dt\right)\\\label{eq86}
    &\leq 
    C\left(\int_0^T |a_1(t)|^{\alpha+1}
    +
    |a_{-1}(t)|^{\alpha+1}\, dt\right) \\ &\quad 
    + CK \left(\int_0^T\int_{S^1} |\partial_\theta v+\partial_\theta^3  v|^{\alpha+1}\, d\theta\, dt\right).  
\end{split}
\end{equation}
Indeed, for the first term on the right-hand side of \eqref{eq86} we use the structure of $u$ and Young's inequality for convolutions to derive the pointwise estimate 
\begin{equation*}
\begin{split}
    |a_1(t)|^{\alpha+1} + |a_{-1}(t)|^{\alpha+1}
    \leq 
    C\left(|a_1(t)|^{2} + |a_{-1}(t)|^{2}\right)^{\frac{\alpha+1}{2}} 
    = 
    C\, \|u(t)\|_{L_2(S^1)}^{\alpha+1} 
    \leq 
    C\|h^\sigma(t)\|_{H^1(S^1)}^{\alpha+1}.
\end{split}
\end{equation*}
For the second integral on the right-hand side of \eqref{eq86} we use that $\partial_\theta u+\partial_\theta^3 u = 0$ and we obtain that
\begin{equation*}
    \int_0^T\int_{S^1}|\partial_\theta v+\partial_\theta^3 v|^{\alpha+1}\, d\theta\, dt
    =
    \int_0^T\int_{S^1} |\partial_\theta h^\sigma+\partial_\theta^3  h^\sigma|^{\alpha+1}\, d\theta\, dt.
\end{equation*}
Thus, we can conclude that
\begin{equation*}
\begin{split}
    \int_0^T\int_{S^1} |\partial_\theta^3  h^\sigma|^{\alpha+1}d\theta\, dt 
    &\leq 
    C \int_0^T \|h^\sigma(t)\|_{H^1(S^1)}^{\alpha+1}\, dt
    + C_K \int_0^T\int_{S^1} |\partial_\theta h^\sigma+\partial_\theta^3  h^\sigma|^{\alpha+1}\, d\theta\, dt
    \leq C(T,h_0)
\end{split}
\end{equation*}
and we have proved the desired result.

(vi) This follows similarly as in (iii).

\end{proof}


\begin{lemma}[Convergence of approximations] \label{lem:convergence}
Let $h^\sigma$ be a local weak solution to \eqref{eq:PDE_regularised} for a fixed $\sigma \in (0,1)$, corresponding to some $\eps_0 > 0$ and an initial value $h_0^\sigma \in C^\infty(S^1)$ satisfying 
\begin{equation*}
    \frac{1}{2\pi}\int_{S^1} h_0^\sigma\, d\theta = \bar{h}_0^\sigma,
    \quad
    \|h_0^\sigma - \bar{h}_0^\sigma\|_{H^1(S^1)} \leq \eps,
    \quad \eps \in (0,\eps_0),
    \quad \text{and} \quad
    h_0^\sigma \longrightarrow h_0 \quad \text{in } H^1(S^1).
\end{equation*}
Then the following holds true. For all $T > 0$ there exists a subsequence $(h^\sigma)_\sigma$ (not relabelled) such that, as $\sigma \searrow 0$, we have convergence in the following sense.
\begin{itemize}
	\item[(i)] $h^\sigma \to h$ strongly in $C\bigl([0,T];C^{\rho}(S^1)\bigr)$; 
	\item[(ii)] $\left|h^\sigma\right|^{\alpha+2} \psi_\sigma\bigl(\partial_\theta h^\sigma + \partial_\theta^3 h^\sigma\bigr) \rightharpoonup \chi$ weakly in $L_\frac{\alpha+1}{\alpha}\bigl([0,T]\times S^1\bigr)$ for some limit function $\chi$;
	\item[(iii)] $\partial_t h^\sigma \rightharpoonup \partial_t h$ weakly in 		$L_\frac{\alpha+1}{\alpha}\bigl((0,T); (W^1_{\alpha+1}(S^1))'\bigr)$;
	\item[(iv)] $\partial_\theta h^\sigma + \partial_\theta^3 h^\sigma \rightharpoonup \partial_\theta h + \partial_\theta^3 h$ weakly in $L_{\alpha+1}\bigl([0,T]\times S^1\bigr)$;
	\item[(v)] $\partial_t(\partial_\theta h^\sigma) \rightharpoonup \partial_t\partial_\theta h$ weakly in $L_\frac{\alpha+1}{\alpha}\bigl((0,T);\bigl(W^1_{\alpha+1,0}(S^1)\cap W^2_{\alpha+1}(S^1)\bigr)'\bigr)$.
\end{itemize}
\end{lemma}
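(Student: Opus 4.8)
The plan is a routine compactness argument built on the uniform-in-$\sigma$ bounds of Lemma~\ref{lem:uniform_bounds}, combined with the Aubin--Lions--Simon compactness lemma and the sequential continuity of distributional differentiation. \emph{Strong convergence.} By Lemma~\ref{lem:uniform_bounds}(i) and (iii) the family $(h^\sigma)_\sigma$ is bounded in $L_\infty\bigl((0,T);H^1(S^1)\bigr)$ and $(\partial_t h^\sigma)_\sigma$ is bounded in $L_\frac{\alpha+1}{\alpha}\bigl((0,T);(W^1_{\alpha+1}(S^1))'\bigr)$. Fixing $\rho \in (0,1/2)$ we have $H^1(S^1) \cembed C^\rho(S^1) \embed (W^1_{\alpha+1}(S^1))'$, the first embedding compact and the second continuous (because $W^1_{\alpha+1}(S^1) \embed L_{\alpha+1}(S^1)$, hence $L_\frac{\alpha+1}{\alpha}(S^1) \embed (W^1_{\alpha+1}(S^1))'$, and $C^\rho(S^1) \embed L_\frac{\alpha+1}{\alpha}(S^1)$). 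The version of the Aubin--Lions--Simon lemma valid for $L_\infty$-in-time bounds then produces a subsequence and a limit $h \in C\bigl([0,T];C^\rho(S^1)\bigr)$ with $h^\sigma \to h$ strongly in $C\bigl([0,T];C^\rho(S^1)\bigr)$, which is assertion (i). Alternatively, the bound on $\partial_t h^\sigma$ gives the H\"older-in-time equicontinuity $\|h^\sigma(t)-h^\sigma(s)\|_{(W^1_{\alpha+1})'} \leq C\,|t-s|^\frac{1}{\alpha+1}$ uniformly in $\sigma$, which together with the Ehrling inequality $\|u\|_{C^\rho} \leq \delta\|u\|_{H^1} + C_\delta\|u\|_{(W^1_{\alpha+1})'}$ and the Arzel\`a--Ascoli theorem gives the same conclusion.

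\emph{Weak limits and identification.} All target spaces in Lemma~\ref{lem:uniform_bounds}(ii)--(vi) are reflexive, so by Banach--Alaoglu and a diagonal extraction we may pass to a further subsequence along which $|h^\sigma|^{\alpha+2}\psi_\sigma(\partial_\theta h^\sigma+\partial_\theta^3 h^\sigma) \toweak \chi$ in $L_\frac{\alpha+1}{\alpha}\bigl([0,T]\times S^1\bigr)$ for some $\chi$ (that is (ii); no identification of $\chi$ is claimed here), $\partial_t h^\sigma \toweak \eta$ in $L_\frac{\alpha+1}{\alpha}\bigl((0,T);(W^1_{\alpha+1}(S^1))'\bigr)$, $\partial_\theta h^\sigma+\partial_\theta^3 h^\sigma \toweak g$ in $L_{\alpha+1}\bigl([0,T]\times S^1\bigr)$, $h^\sigma \toweak w$ in $L_{\alpha+1}\bigl((0,T);W^3_{\alpha+1}(S^1)\bigr)$, and $\partial_t\partial_\theta h^\sigma \toweak \zeta$ in $L_\frac{\alpha+1}{\alpha}\bigl((0,T);(W^1_{\alpha+1,0}(S^1)\cap W^2_{\alpha+1}(S^1))'\bigr)$. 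Strong convergence in $C\bigl([0,T];C^\rho(S^1)\bigr)$ forces $h^\sigma \to h$ in $\mathcal{D}'\bigl((0,T)\times S^1\bigr)$, and since differentiation is sequentially continuous on $\mathcal{D}'$, each of these weak limits coincides in $\mathcal{D}'$ with the corresponding derivative of $h$. Uniqueness of distributional limits then yields $w = h$ (hence $h \in L_{\alpha+1}\bigl((0,T);W^3_{\alpha+1}(S^1)\bigr)$ and $\partial_\theta h+\partial_\theta^3 h \in L_{\alpha+1}$), $\eta = \partial_t h$, $g = \partial_\theta h+\partial_\theta^3 h$ and $\zeta = \partial_t\partial_\theta h$, which are exactly (iii)--(v).

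\emph{Main obstacle.} The one delicate ingredient is the first step: upgrading the $L_\infty$-in-time $H^1$-bound together with the time-derivative bound to \emph{uniform} (not merely $L_p$-in-time) convergence on $[0,T]$, so that the limit lies in $C\bigl([0,T];C^\rho(S^1)\bigr)$. This requires either Simon's compactness theorem in its $L_\infty$-endpoint form or, more transparently, the Arzel\`a--Ascoli argument via the Ehrling lemma and the H\"older-in-time modulus of continuity. Everything else follows directly from reflexivity and the continuity of distributional differentiation; the identification of $\chi$ with the nonlinear flux $h^{\alpha+2}\psi(\partial_\theta h+\partial_\theta^3 h)$ of the limit is not part of this lemma and is carried out later in the proof of Theorem~\ref{thm:global_ex_original}.
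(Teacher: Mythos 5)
Your proposal is correct and follows essentially the same route as the paper: uniform bounds from Lemma~\ref{lem:uniform_bounds}, the compact chain $H^1(S^1) \cembed C^\rho(S^1) \embed (W^1_{\alpha+1}(S^1))'$ together with Simon's $L_\infty$-endpoint compactness theorem (the paper cites \cite[Cor.~4]{S:1987}) for the strong convergence in (i), Banach--Alaoglu for the weak limits (ii)--(v), and identification via uniqueness of limits in $\Dcal'$. The alternative Ehrling/Arzel\`a--Ascoli route you sketch for (i) is a valid substitute, but it does not constitute a different strategy.
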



\begin{proof}
(i) In the previous Lemma \ref{lem:uniform_bounds} (i), (iii) we have proved that 
\begin{equation*}
	\begin{cases}
		(h^\sigma)_\sigma \text{ is uniformly bounded in } L_\infty\bigl((0,T);H^1(S^1)\bigr) 
		& \\
		(\partial_t h^\sigma)_\sigma \text{ is uniformly bounded in } L_\frac{\alpha+1}{\alpha}\bigl((0,T);(W^1_{\alpha+1}(S^1))'\bigr).&
    \end{cases}
\end{equation*}
Moreover, thanks to the Rellich-Kondrachov theorem, cf. for instance in \cite[Thm. 6.3]{adams_fournier}, we know that
\begin{equation*}
	H^1(S^1) \xhookrightarrow[]{c} C^{\rho}(S^1) \hookrightarrow (W^1_{\alpha+1}(S^1))', \quad \rho \in [0,1/2),
\end{equation*}
where $\xhookrightarrow[]{c}$ indicates compactness of the embedding.
This allows us to invoke \cite[Cor. 4]{S:1987} in order to conclude that the sequence 
\begin{equation*}
	(h^\sigma)_\sigma \text{ is relatively compact in } C\bigl([0,T];C^\rho(S^1)\bigr) 
\end{equation*}
with $\rho \in [0,1/2)$ as above.
	    
(ii) This is an immediate consequence of Lemma \ref{lem:uniform_bounds} (ii).
	
(iii) Thanks to Lemma \ref{lem:uniform_bounds} (iii), we may extract a subsequence $(\partial_t h^\sigma)_\sigma$ such that
\begin{equation*}
	\partial_t h^\sigma \rightharpoonup v 
	\quad \text{weakly in } L_\frac{\alpha+1}{\alpha}\bigl((0,T);(W^1_{\alpha+1}(S^1))'\bigr) \hookrightarrow \Dcal'\bigl((0,T);(W^1_{\alpha+1}(S^1))'\bigr)
\end{equation*}
for some limit function $v \in L_\frac{\alpha+1}{\alpha}\bigl((0,T);(W^1_{\alpha+1}(S^1))'\bigr)$.
Since we know in addition that
\begin{equation*}
    h^\sigma \longrightarrow h \quad \text{in }  C\bigl([0,T];C^\rho(S^1)\bigr) \hookrightarrow \Dcal'\bigl((0,T);(W^1_{\alpha+1}(S^1))'\bigr),
\end{equation*}
$\rho \in [0,1/2)$, we conclude that 
\begin{equation*}
	\partial_t h^\sigma \longrightarrow 
	\partial_t h
	\quad \text{in }
	\Dcal'\bigl((0,T);(W^1_{\alpha+1}(S^1))'\bigr)\bigr),
\end{equation*}
and consequently, $v = \partial_t h \in L_\frac{\alpha+1}{\alpha}\bigl((0,T);(W^1_{\alpha+1}(S^1))'\bigr)$.
	
(iv) The strong convergence $h^\sigma \to h$ in $C\bigl([0,T];C^\rho(S^1)\bigr),\, \rho \in [0,1/2),$ in Lemma \ref{lem:convergence} (i) in particular implies uniform convergence 
\begin{equation}\label{eq:conv_1}
    h^\sigma 
    \longrightarrow 
    h 
    \quad
    \text{in } 
    C\bigl([0,T]\times S^1\bigr).
\end{equation}
Moreover, Lemma \ref{lem:uniform_bounds} (v) guarantees the existence of some $\hat{h} \in L_{\alpha+1}\bigl((0,T);W^3_{\alpha+1}(S^1)\bigr)$ such that
\begin{equation} \label{eq:conv_2}
    h^\sigma \rightharpoonup \hat{h}
	\quad
	\text{in } L_{\alpha+1}\bigl((0,T);W^3_{\alpha+1}(S^1)\bigr).
\end{equation}
In virtue of the uniqueness of the limit function, \eqref{eq:conv_1} and \eqref{eq:conv_2} imply
\begin{equation*}
     h^\sigma \rightharpoonup h
    \quad
    \text{in } 
    L_{\alpha+1}\bigl((0,T);W^3_{\alpha+1}(S^1)\bigr).
\end{equation*}
Thanks to the weak lower semicontinuity of the norm and Lemma \ref{lem:uniform_bounds} (iv), (v), we finally obtain
\begin{equation} \label{eq:lsc}
	\begin{cases}
	    \left\|\partial_\theta h+\partial_\theta^3 h\right\|_{L_{\alpha+1}((0,T)\times S^1)}
	    \leq
	    \liminf_{\sigma \to 0} \left\|\partial_\theta h^\sigma + \partial_\theta^3 h^\sigma\right\|_{L_{\alpha+1}((0,T)\times S^1)}
	    \leq 
	    C
	    & \\
	    \left\|h\right\|_{L_{\alpha+1}((0,T);W^3_{\alpha+1}(S^1))}
	    \leq
	    \liminf_{\sigma \to 0} \left\| h^\sigma\right\|_{L_{\alpha+1}((0,T);W^3_{\alpha+1}(S^1))}
	    \leq 
	    C &
	\end{cases}
\end{equation}
for some positive generic constant $C > 0$ that does not depend on $\sigma$. 

(v) This follows similarly as in (iii) and the proof is complete.
\end{proof}


It remains to prove the convergence of the nonlinear flux term
$\bigl(\left|h^\sigma\right|^{\alpha+2} \psi_\sigma\bigl(\partial_\theta h^\sigma + \partial_\theta^3 h^\sigma\bigr)\bigr) \rightharpoonup \bigl(\left|h\right|^{\alpha+2} \psi\bigl(\partial_\theta h + \partial_\theta^3 h\bigr)\bigr)$ in $L_\frac{\alpha+1}{\alpha}([0,T]\times S^1)$. This is the content of the next lemma the proof of which is based on Minty's trick.


\begin{lemma}\label{lem:limit_flux}
Given $\sigma \in (0,1)$, let $h^\sigma$ be the positive solution to \eqref{eq:PDE_regularised} on $[0,T]$. Then there exists a subsequence $(h^\sigma)_\sigma$ (not relabelled) such that
\begin{equation*}
	\left|h^\sigma\right|^{\alpha+2} \psi_\sigma\bigl(\partial_\theta h^\sigma + \partial^3_\theta h^\sigma\bigr) 
	\xrightharpoonup[\phantom{wea}]{}
	\left|h\right|^{\alpha+2} \psi\bigl(\partial_\theta h + \partial_\theta^3 h\bigr) 
	\quad
	\text{weakly in }
	L_\frac{\alpha+1}{\alpha}\bigl([0,T]\times S^1\bigr)
\end{equation*}
as $\sigma \searrow 0$.
\end{lemma}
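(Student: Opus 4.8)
The plan is to identify the weak limit $\chi$ by Minty's monotonicity trick, exploiting that $s\mapsto\psi_\sigma(s)$ is increasing and that the mobility $|h^\sigma|^{\alpha+2}$ converges \emph{strongly}. Write $w^\sigma=\partial_\theta h^\sigma+\partial_\theta^3 h^\sigma$, $w=\partial_\theta h+\partial_\theta^3 h$, and let $\chi\in L_\frac{\alpha+1}{\alpha}([0,T]\times S^1)$ denote the weak limit of $\Psi^\sigma:=|h^\sigma|^{\alpha+2}\psi_\sigma(w^\sigma)$ provided by Lemma \ref{lem:convergence}(ii). Since $\tfrac12\bar h_0\le h^\sigma\le 2\bar h_0$ uniformly in $\sigma$ (Theorem \ref{thm:existence_sigma}) and $h^\sigma\to h$ uniformly on $[0,T]\times S^1$ (Lemma \ref{lem:convergence}(i)), the mobilities $|h^\sigma|^{\alpha+2}$ converge uniformly to $|h|^{\alpha+2}$ and are bounded above and below by positive constants. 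Monotonicity of $\psi_\sigma$ and positivity of the mobility then give, for every $g\in L_{\alpha+1}([0,T]\times S^1)$,
\[
    0\le\int_0^T\!\!\int_{S^1}|h^\sigma|^{\alpha+2}\bigl(\psi_\sigma(w^\sigma)-\psi_\sigma(g)\bigr)\bigl(w^\sigma-g\bigr)\,d\theta\,dt=A^\sigma-B^\sigma-C^\sigma,
\]
where $A^\sigma=\int_0^T\!\int_{S^1}\Psi^\sigma w^\sigma$, $B^\sigma=\int_0^T\!\int_{S^1}\Psi^\sigma g$ and $C^\sigma=\int_0^T\!\int_{S^1}|h^\sigma|^{\alpha+2}\psi_\sigma(g)(w^\sigma-g)$.

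The terms $B^\sigma$ and $C^\sigma$ are routine weak--strong products. Indeed $B^\sigma\to\int_0^T\!\int_{S^1}\chi g$ since $\Psi^\sigma\rightharpoonup\chi$ in $L_\frac{\alpha+1}{\alpha}$ and $g\in L_{\alpha+1}$; and $|h^\sigma|^{\alpha+2}\psi_\sigma(g)\to|h|^{\alpha+2}\psi(g)$ strongly in $L_\frac{\alpha+1}{\alpha}$ (uniform convergence of the mobility together with dominated convergence, using $|\psi_\sigma(g)|\le C(|g|^\alpha+|g|)$), so that $C^\sigma\to\int_0^T\!\int_{S^1}|h|^{\alpha+2}\psi(g)(w-g)$ by the weak convergence $w^\sigma-g\rightharpoonup w-g$ in $L_{\alpha+1}$ of Lemma \ref{lem:convergence}(iv).

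The heart of the matter is the diagonal term $A^\sigma$, a product of two factors that converge only weakly. Here I would not pass to the limit in the product directly, but instead invoke the energy--dissipation identity of Lemma \ref{lem:energy_dissipation}: since $\psi_\sigma(s)\,s=(s^2+\sigma^2)^{\frac{\alpha-1}{2}}s^2$, the integrand of $A^\sigma$ is precisely the dissipation density, whence $A^\sigma=D_T^\sigma[h^\sigma]=E[h_0^\sigma]-E[h^\sigma](T)$ by Theorem \ref{thm:existence_sigma}(iii). Now $E[h_0^\sigma]\to E[h_0]$ because $h_0^\sigma\to h_0$ in $H^1(S^1)$, while $\{h^\sigma(T)\}$ is bounded in $H^1(S^1)$ (Lemma \ref{lem:uniform_bounds}(i)) and converges strongly in $L^2(S^1)$ to $h(T)$, so $h^\sigma(T)\rightharpoonup h(T)$ in $H^1(S^1)$ and weak lower semicontinuity of the Dirichlet part of $E$ gives $\liminf_\sigma E[h^\sigma](T)\ge E[h(T)]$; hence $\limsup_\sigma A^\sigma\le E[h_0]-E[h(T)]$. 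On the other hand, the limit $h$ solves the weak formulation with flux $\chi$ (pass to the limit in \eqref{eq:weak_sol_reg} using all parts of Lemma \ref{lem:convergence}), so for a.e.\ $t$ one may use $h(t)+\partial_\theta^2 h(t)\in W^1_{\alpha+1}(S^1)$ as test function and integrate in time; since $h\in L_{\alpha+1}((0,T);W^3_{\alpha+1}(S^1))\cap C([0,T];H^1(S^1))$ with $\partial_t h\in L_\frac{\alpha+1}{\alpha}((0,T);(W^1_{\alpha+1}(S^1))')$ and $\partial_t\partial_\theta h$ in the dual space of Lemma \ref{lem:convergence}(v), the Lions--Magenes integration-by-parts-in-time formula yields
\[
    \int_0^T\!\!\int_{S^1}\chi\,w\,d\theta\,dt=\int_0^T\bigl\langle\partial_t h,\,h+\partial_\theta^2 h\bigr\rangle\,dt=E[h_0]-E[h(T)].
\]
Combining the last two estimates, $\limsup_\sigma A^\sigma\le\int_0^T\!\int_{S^1}\chi w$.

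Taking $\limsup_{\sigma\to0}$ in $0\le A^\sigma-B^\sigma-C^\sigma$ and using the convergences of $B^\sigma,C^\sigma$ therefore gives, for every $g\in L_{\alpha+1}([0,T]\times S^1)$,
\[
    0\le\int_0^T\!\!\int_{S^1}\bigl(\chi-|h|^{\alpha+2}\psi(g)\bigr)\bigl(w-g\bigr)\,d\theta\,dt.
\]
Minty's lemma then closes the argument: inserting $g=w+\lambda\varphi$ for $\lambda>0$ and arbitrary $\varphi\in L_{\alpha+1}([0,T]\times S^1)$, dividing by $\lambda$, and letting $\lambda\to0^+$ (continuity of $\psi$, boundedness of the mobility, dominated convergence) gives $\int_0^T\!\int_{S^1}(\chi-|h|^{\alpha+2}\psi(w))\varphi\le0$; replacing $\varphi$ by $-\varphi$ upgrades this to an equality, so $\chi=|h|^{\alpha+2}\psi(w)=|h|^{\alpha+2}\psi(\partial_\theta h+\partial_\theta^3 h)$ a.e.\ on $[0,T]\times S^1$, which is the assertion. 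The main obstacle is exactly the term $A^\sigma$: one cannot identify $\lim\Psi^\sigma w^\sigma$ termwise, and the energy identity only furnishes the one-sided bound $\limsup_\sigma A^\sigma\le\int\chi w$ (because $v\mapsto E[v]$ is merely weakly lower semicontinuous); it is precisely this inequality, with the right sign, that makes the monotonicity trick applicable, and the accompanying bookkeeping — admissibility of $h+\partial_\theta^2 h$ as a test function for the limit equation and the Lions--Magenes time integration — is where the technical care is needed.
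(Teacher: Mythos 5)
Your proof is correct and follows essentially the same Minty monotonicity route as the paper: decompose the monotonicity inequality into the diagonal term $A^\sigma$ and two weak--strong products, compare $A^\sigma$ with the energy-dissipation identity for the regularised problem, identify the limit of the diagonal term with $\int\chi\,w$ by testing the limit equation with $h+\partial_\theta^2 h$ and using the $C([0,T];H^1(S^1))$ regularity of $h$, and close with the $\lambda\to0^+$ perturbation argument.

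One respect in which your write-up is slightly more careful than the paper: the paper asserts that $A^\sigma=E[h_0^\sigma]-E[h^\sigma](t)\to E[h_0]-E[h](t)$ by invoking \enquote{$h^\sigma(t)\to h(t)$ in $H^1(S^1)$}, but Lemma~\ref{lem:convergence}(i) only furnishes convergence in $C([0,T];C^\rho(S^1))$, not $H^1$. You instead establish only $\limsup_\sigma A^\sigma\le E[h_0]-E[h(T)]$ via weak lower semicontinuity of the gradient part of $E$, and that one-sided inequality (with the right sign) is all that the monotonicity argument requires. You also run the Minty comparison against an arbitrary $g\in L_{\alpha+1}([0,T]\times S^1)$ rather than against $(\partial_\theta+\partial_\theta^3)\phi$ with $\phi\in W^3_{\alpha+1}$; since the latter derivatives are dense in $L_{\alpha+1}$ anyway this is only a cosmetic simplification, but it avoids one density argument. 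The net result is the same identification $\chi=|h|^{\alpha+2}\psi(\partial_\theta h+\partial_\theta^3 h)$; your version is, if anything, marginally more robust at the $A^\sigma$ step.
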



\begin{proof}
Note that for convenience we pass to a subsequence where necessary without explicitly mentioning.
	
(i) 
Thanks to Lemma \ref{lem:convergence} (ii), we know that $\left|h^\sigma\right|^{\alpha+2} \psi_\sigma\bigl(\partial_\theta h^\sigma + \partial^3_\theta h^\sigma\bigr)$ is weakly sequentially compact, i.e. there is an element $\chi \in L_\frac{\alpha+1}{\alpha}\bigl((0,T)\times S^1)\bigr)$ such that
\begin{equation*}
    \left|h^\sigma\right|^{\alpha+2} \psi_\sigma\bigl(\partial_\theta h^\sigma + \partial^3_\theta h^\sigma\bigr) 
    \xrightharpoonup[\phantom{wea}]{}
    \chi
    \quad
	\text{weakly in } 
	L_\frac{\alpha+1}{\alpha}\bigl((0,T)\times S^1)\bigr).
\end{equation*}
It remains to identify the limit flux $\chi$.
	
(ii) The next step is to prove that $h$ is bounded in $C\bigl([0,T];H^1(S^1)\bigr)$. From Lemma \ref{lem:convergence} (i) we already know that
\begin{equation*}
    h \in C\bigl([0,T];C^\rho(S^1)\bigr) 
    \xhookrightarrow[\phantom{wea}]{} 
    C\bigl([0,T];L_2(S^1)\bigr).
\end{equation*}
Moreover,
\begin{equation*}
    \partial_\theta h \in L_{\alpha+1}\bigl((0,T);W^1_{\alpha+1,0}(S^1) \cap W^2_{\alpha+1}(S^1)\bigr) \quad \text{and} \quad
	\partial_t\partial_\theta h\in L_\frac{\alpha+1}{\alpha}\bigl((0,T);\bigl(W^1_{\alpha+1,0}(S^1)\cap W^2_{\alpha+1}(S^1)\bigr)'\bigr)
\end{equation*}
thanks to Lemma \ref{lem:convergence} (iv) and (v) and lower semicontinuity of the norm. Using \cite[Remark 3.4]{Bernis:1988}, this implies that $\partial_\theta h \in C\bigl([0,T];L_2(S^1)\bigr)$. Consequently, $h \in C\bigl([0,T];H^1(S^1)\bigr)$.
	
(iii) In view of the previous steps we may choose $\varphi = h + \partial_\theta^2 h \in L_{\alpha+1}\bigl((0,T);W^1_{\alpha+1}(S^1)\bigr)$ as a test function in the equation \eqref{eq:PDE_regularised} for $h^\sigma$. This yields, 
\begin{equation*}
    \int_0^T \int_{S^1} \partial_t h^\sigma (h + \partial_\theta^2 h)\, d\theta\, dt + \int_0^T \int_{S^1} \left|h^\sigma\right|^{\alpha+2} \psi_\sigma\left(\partial_\theta h^\sigma+ \partial_\theta^3 h^\sigma\right) \bigl(\partial_\theta h + \partial_\theta^3 h\bigr)\, d\theta\, dt = 0.
\end{equation*}
As $\sigma \searrow 0$, the first term satisfies
\begin{equation*}
    \int_0^T \int_{S^1} \partial_t h^\sigma\, (h + \partial_\theta^2 h)\, d\theta\, dt \longrightarrow 
	\int_0^T \int_{S^1} \partial_t h\, (h + \partial_\theta^2 h)\, d\theta\, dt
	=
	E[h](T) - E[h](0),
\end{equation*}
where we have used that the limit function satisfies $h \in C\bigl([0,T];H^1(S^1)\bigr)$.
Moreover, since $(\partial_\theta h+\partial_\theta^3  h)$ is bounded in $L_{\alpha+1}\bigl((0,T)\times S^1\bigr)$ by \eqref{eq:lsc}, we may infer from Lemma \ref{lem:convergence} (ii) that
\begin{equation*}
    \int_0^T \int_{S^1} \left|h^\sigma\right|^{\alpha+2} \psi_\sigma\bigl(\partial_\theta h^\sigma + \partial_\theta^3 h^\sigma\bigr) \bigl(\partial_\theta h + \partial_\theta^3 h\bigr)\, d\theta\, dt
	\longrightarrow 
	\int_0^T \int_{S^1} \chi \bigl(\partial_\theta h + \partial_\theta^3 h\bigr)\, d\theta\, dt,
\end{equation*}
and consequently, we obtain the identity
\begin{equation*}
	E[h](t) + \left\langle \chi | \partial_\theta h + \partial_\theta^3 h \right\rangle
	=
	E[h_0]
\end{equation*}
for almost every $t \in [0,T]$.
	
(iv) Monotonicity and identification of the limit flux $\chi$ by Minty's trick. Observe that the operator 
\begin{equation*}
	\begin{cases}
	    \psi_\sigma \colon L_{\alpha+1}\bigl((0,T)\times S^1\bigr) \longrightarrow L_{\frac{\alpha+1}{\alpha}}\bigl((0,T)\times S^1\bigr), & \\
    	\psi_\sigma(u) = \bigl(|u|^2 + \sigma^2\bigr)^\frac{\alpha-1}{2} u &
    \end{cases}
\end{equation*}
is monotone, i.e. we have
\begin{equation*}
	\langle \psi_\sigma(u) - \psi_\sigma(v),u-v\rangle_{L_{\alpha+1}}
	=
	\int_0^T \int_{S^1} \bigl(\psi_\sigma(u) - \psi_\sigma(v)\bigr) (u-v)\, 	d\theta\, dt
	> 0
\end{equation*} 
for all $u, v \in L_{\alpha+1}\bigl([0,T]\times S^1\bigr)$ with $u\neq v$, where we use the notation $\langle u | v\rangle$ to describe the dual pairing $\langle u |v \rangle_{L_{\alpha+1}((0,T)\times S^1)}$ between $u \in L_\frac{\alpha+1}{\alpha}\bigl((0,T)\times S^1\bigr)$ and $v \in L_{\alpha+1}\bigl((0,T)\times S^1\bigr)$.

Let now $\phi \in W^3_{\alpha+1}\bigl((0,T)\times S^1\bigr)$. Thanks to the monotonicity of $\psi_\sigma$ we have
\begin{align*}
	0
	&\leq
	\left\langle \left|h^\sigma\right|^{\alpha+2} \psi_\sigma\bigl(\partial_\theta h^\sigma + \partial_\theta^3 h^\sigma\bigr) - \left|h^\sigma\right|^{\alpha+2} \psi_\sigma \bigl(\partial_\theta \phi + \partial_\theta^3 \phi\bigr) | (\partial_\theta + \partial_\theta^3) (h^\sigma-\phi) 
	\right\rangle
	\\
	&=	
	\left\langle \left|h^\sigma\right|^{\alpha+2} \psi_\sigma\bigl(\partial_\theta h^\sigma + \partial_\theta^3 h^\sigma\bigr) | \partial_\theta h^\sigma + \partial_\theta^3 h^\sigma \right\rangle
	-
	\left\langle \left|h^\sigma\right|^{\alpha+2} \psi_\sigma\bigl(\partial_\theta h^\sigma + \partial_\theta^3 h^\sigma\bigr) | \partial_\theta \phi + \partial_\theta^3 \phi \right\rangle
	\\
	&\quad
	-
	\left\langle \left|h^\sigma\right|^{\alpha+2} \psi_\sigma \bigl(\partial_\theta \phi + \partial_\theta^3 \phi\bigr) | \partial_\theta h^\sigma + \partial_\theta^3 h^\sigma \right\rangle
	+
	\left\langle \left|h^\sigma\right|^{\alpha+2} \psi_\sigma \bigl(\partial_\theta \phi + \partial_\theta^3 \phi\bigr) | \partial_\theta \phi + \partial_\theta^3 \phi \right\rangle.
\end{align*}
We consider the four dual pairings on the right-hand side separately.
	
First, we rewrite the energy-dissipation formula for the problem \eqref{eq:PDE_regularised} as 
\begin{equation*}
	\left\langle \left|h^\sigma\right|^{\alpha+2} \psi_\sigma\bigl(\partial_\theta h^\sigma + \partial_\theta^3 h^\sigma\bigr) | \partial_\theta h^\sigma + \partial_\theta^3 h^\sigma
	\right\rangle
	= 
	E[h_0^\sigma] - E[h^\sigma](t),
	\quad
	\text{for a.e. } t \in [0,T].
\end{equation*}
Thanks to Lemma \ref{lem:convergence} (i) we know that $h^\sigma(t) \to h(t)$ in $H^1(S^1)$ for almost every $t \in [0,T]$. Note that $h_0^\sigma \to h_0$ in $H^1(S^1)$ by assumption. Thus, in the limit $\sigma \searrow 0$ we find that
\begin{equation} \label{eq:Minty_1}
	\left\langle \left|h^\sigma\right|^{\alpha+2} \psi_\sigma\bigl(\partial_\theta h^\sigma + \partial_\theta^3 h^\sigma\bigr) | \partial_\theta h^\sigma + \partial_\theta^3 h^\sigma
	\right\rangle
	\longrightarrow
	E[h_0] - E[h](t)
\end{equation}
for almost every $t \in [0,T]$. Moreover, Lemma \ref{lem:convergence} (ii) yields
\begin{equation}\label{eq:Minty_2}
	\left\langle \left|h^\sigma\right|^{\alpha+2} \psi_\sigma\bigl(\partial_\theta h^\sigma + \partial_\theta^3 h^\sigma\bigr) | \partial_\theta \phi + \partial_\theta^3 \phi \right\rangle
	\longrightarrow
	\left\langle \chi | \partial_\theta \phi + \partial_\theta^3 \phi \right\rangle,
	\quad
	\text{as } \sigma \searrow 0,
\end{equation}
for the second dual pairing.
For the third pairing we invoke Lemma \ref{lem:convergence} (i) and Lemma \ref{lem:convergence} (iv) to obtain
\begin{equation*}
	\begin{cases}
		h^\sigma \longrightarrow h 
		\quad \text{strongly in } C\bigl([0,T]\times S^1\bigr) &
		\\
		\partial_\theta h^\sigma + \partial_\theta^3 h^\sigma
		\xrightharpoonup[\phantom{wea}]{}
		\partial_\theta h + \partial_\theta^3 h
		\quad \text{weakly in } L_{\alpha+1}\bigl([0,T]\times S^1\bigr). &
	\end{cases}
\end{equation*}
This implies
\begin{equation}\label{eq:Minty_3}
	\left\langle \left|h^\sigma\right|^{\alpha+2} \psi_\sigma \bigl(\partial_\theta \phi + \partial_\theta^3 \phi\bigr) | \partial_\theta h^\sigma + \partial_\theta^3 h^\sigma \right\rangle
	\longrightarrow
	\left\langle \left|h\right|^{\alpha+2} \psi \bigl(\partial_\theta \phi + \partial_\theta^3 \phi\bigr) | \partial_\theta h + \partial_\theta^3 h \right\rangle,
    \quad
    \text{as } \sigma \searrow 0.
\end{equation}
Clearly, the fourth pairing satisfies
\begin{equation}\label{eq:Minty_4}
    \left\langle \left|h^\sigma\right|^{\alpha+2} \psi_\sigma \bigl(\partial_\theta \phi + \partial_\theta^3 \phi\bigr) | \partial_\theta \phi + \partial_\theta^3 \phi \right\rangle  
    \longrightarrow
    \left\langle \left|h\right|^{\alpha+2} \psi \bigl(\partial_\theta \phi + \partial_\theta^3 \phi\bigr) | \partial_\theta \phi + \partial_\theta^3 \phi \right\rangle,
    \quad
    \text{as } \sigma \searrow 0.
\end{equation}
Combining \eqref{eq:Minty_1}--\eqref{eq:Minty_4} leads to the inequality
\begin{equation*}
	0 
	\leq
	E[h_0] - E[h](t)
	-
	\left\langle \chi | \partial_\theta \phi + \partial_\theta^3 \phi \right\rangle
	-
	\left\langle \left|h\right|^{\alpha+2} \psi\bigl(\partial_\theta \phi + \partial_\theta^3 \phi\bigr) | \partial_\theta (h - \phi) + \partial_\theta^3 (h - \phi)  \right\rangle,
\end{equation*}
and using the identity 
\begin{equation*}
	E[h](t) + \left\langle \chi | \partial_\theta h + \partial_\theta^3 h \right\rangle
	=
	E[h_0],
\end{equation*}
proved in step (iii), for almost every $t \in [0,T]$, we discover that
\begin{equation*}
	0 
	\leq
	\left\langle \chi - \left|h\right|^{\alpha+2} \psi\bigl(\partial_\theta \phi + \partial_\theta^3 \phi\bigr) | \partial_\theta (h - \phi) + \partial_\theta^3 (h - \phi) \right\rangle.
\end{equation*}
By choosing $\phi = h - \lambda v$ for some arbitrary $v \in W^3_{\alpha+1}\bigl((0,T)\times S^1\bigr)$ and $\lambda > 0$, we obtain the inequality
\begin{equation*}
	\left\langle \chi - \left|h\right|^{\alpha+2} \psi\bigl((\partial_\theta + \partial_\theta^3) (h - \lambda v)\bigr) | \partial_\theta v + \partial_\theta^3 v \right\rangle
	\geq 0
\end{equation*}
and hence, in the limit $\lambda \searrow 0$:
\begin{equation*}
	\left\langle \chi - \left|h\right|^{\alpha+2} \psi\bigl(\partial_\theta h + \partial_\theta^3 h\bigr) | \partial_\theta v + \partial_\theta^3 v \right\rangle
	\geq 0,
	\quad
	v \in W^3_{\alpha+1}\bigl((0,T)\times S^1\bigr),
\end{equation*}
for almost every $t \in [0,T]$. Choosing $\phi = h + \lambda v$, we discover that
\begin{equation*}
	\left\langle \chi - \left|h\right|^{\alpha+2} \psi\bigl(\partial_\theta h + \partial_\theta^3 h\bigr) | \partial_\theta v + \partial_\theta^3 v \right\rangle
	\leq 0,
	\quad
	v \in W^3_{\alpha+1}\bigl((0,T)\times S^1\bigr).
\end{equation*}
Consequently, we have proved that
\begin{equation*}
	\left\langle \chi - \left|h\right|^{\alpha+2} \psi\bigl(\partial_\theta h + \partial_\theta^3 h\bigr) | \partial_\theta v + \partial_\theta^3 v \right\rangle
	= 0,
	\quad
	v \in W^3_{\alpha+1}\bigl((0,T)\times S^1\bigr),
\end{equation*}
which allows us to identify
\begin{equation*}
	\chi 
	=
	\left|h\right|^{\alpha+2} \psi\bigl(\partial_\theta h + \partial_\theta^3 h\bigr)
	\in
	L_\frac{\alpha+1}{\alpha}\bigl([0,T]\times S^1\bigr).
\end{equation*}
This completes the proof.
\end{proof}
\medskip

With the previous convergence results at hand, we are now able to prove global existence of positive weak solutions to \eqref{eq:PDE_alpha>1}. 

\medskip

\begin{proof}[\textbf{Proof of Theorem \ref{thm:global_ex_original}}]
\noindent(i) We first prove that we can define the limit function $h$ for all times $0 \leq t < \infty$. To this end, we use a standard diagonal argument. We define a sequence of times $(T_n)_{n \in \N}$ with $T_n = n$ for all $n \in \N$. Moreover, we denote by 
\begin{equation*}
    h_{k,n} = h^{\sigma_{k,n}},
    \quad
    \text{where}
    \quad
    \sigma_{k,n} \to 0
    \quad \text{as} \quad 
    k \to 0
    \quad \text{for all}
    \quad
    n \in \N,
\end{equation*}
the sequence $h^{\sigma_{k,n}} \to h_n,\ k \to \infty$, of functions the convergence of which has been obtained in Lemma \ref{lem:convergence} for $T=T_n$. Note that we can assume that $(h_{k,n+1})_k$ is a subsequence  of $(h_{k,n})_k$ for all $n \in \N$. Therefore, we have $h_{n+1} = h_n$ for any time $0 \leq t \leq T_n$ and all $n \in \N$. This allows us to construct a diagonal sequence $(h_{k_n,n})_{n\in \N}$ such that
\begin{equation*}
    h_{k_j,j} \longrightarrow h,
    \quad \text{as} \quad j \to \infty,
\end{equation*}
for a limit function $h$ which is defined for all times $0\leq t < \infty$ and satisfies $h=h_n$ for $0\leq t \leq T_n$. Since we have strong convergence 
\begin{equation}
    h_{k_j,j} \longrightarrow h_n
    \quad
    \text{in }
    C\bigl([0,T_n]\times S^1\bigr)
\end{equation}
for every $n \in \N$, it follows from Theorem \ref{thm:existence_sigma} (i) that $h = h_n$ satisfies
\begin{equation*}
    \frac{1}{2} \bar{h}_0 \leq h(t,\theta) \leq 2 \bar{h}_0,
    \quad
    0 \leq t < \infty,\, \theta \in S^1.
\end{equation*}

Consequently, it suffices to verify the properties of the limit function, stated in the Theorem, for any arbitrary finite interval $[0,T]$. This is done in the following steps.

\noindent (ii) 
The regularity properties 
\begin{equation*}
    h \in C\bigl([0,T];H^1(S^1)\bigr) \cap L_{\alpha+1}\bigl((0,T);W^3_{\alpha+1}(S^1)\bigr)\quad\text{and}\quad \partial_t h \in L_\frac{\alpha+1}{\alpha}\bigl((0,T);(W^1_{\alpha+1}(S^1))'\bigr)
\end{equation*}
of $h$ are guaranteed thanks to Lemma \ref{lem:convergence} (iii), Lemma \ref{lem:convergence} (iv) and step (ii) of the proof of Lemma \ref{lem:limit_flux}.

\noindent (iii) Next, we prove that $h$ satisfies the weak integral formulation. Indeed, for solutions to the regularised problem \eqref{eq:PDE_regularised} we have that
\begin{equation*}\label{eq:weak_solution_eps}
    \int_0^T \langle \partial_t h^\sigma(t),\varphi(t)\rangle_{W^1_{\alpha+1}(S^1)}\, dt
	=
	\int_0^T \int_{S^1}  \left|h^\sigma\right|^{\alpha+2} \psi_\sigma\left(\partial_\theta h^\sigma + \partial_\theta^3 h^\sigma\right)\, \partial_\theta \phi\,
	d\theta\, dt
\end{equation*}
for all test functions $\phi \in L_{\alpha+1}\bigl((0,T);W^1_{\alpha+1}(S^1)\bigr)$. 
On the one hand, since $\partial_\theta \phi \in L_{\alpha+1}\bigl((0,T)\times S^1)$, we may use Lemma \ref{lem:limit_flux} to obtain the convergence
\begin{equation*}
    \int_0^T \langle \partial_t h^\sigma(t),\varphi(t)\rangle_{W^1_{\alpha+1}(S^1)}\, dt 
    \longrightarrow 
    \int_0^T \int_{S^1} h^{\alpha+2} \psi\bigl(\partial_\theta h + \partial_\theta^3 h\bigr)\, \partial_\theta \phi(t)\, d\theta\, dt.
\end{equation*}
On the other hand, Lemma \ref{lem:convergence} (iii) implies that
\begin{equation*}
    \int_0^T \langle \partial_t h^\sigma(t),\varphi(t)\rangle_{W^1_{\alpha+1}(S^1)}\, dt 
    \longrightarrow 
    \int_0^T \langle \partial_t h(t),\varphi(t)\rangle_{W^1_{\alpha+1}(S^1)}\, dt.
\end{equation*}
Combining both, we observe that $h$ complies with the desired integral identity
\begin{equation*}
    \int_0^T \langle \partial_t h(t),\varphi(t)\rangle_{W^1_{\alpha+1}(S^1)}\, dt = \int_0^T \int_{S^1} h^{\alpha+2} \psi\bigl(\partial_\theta h + \partial_\theta^3 h\bigr)\, \partial_\theta \phi\, d\theta\, dt,
    \quad
    \phi \in L_{\alpha+1}\bigl((0,T);W^1_{\alpha+1}(S^1)\bigr).
\end{equation*}

\noindent (iv) Since we choose $h_0^\sigma$ such that $h^\sigma_0 \to h_0$ in $H^1(S^1)$ the initial condition is satisfied in the limit.

\noindent (v) That a solution conserves its mass follows from the convergence $h^\sigma \to h \in C\bigl([0,T];C^\rho(S^1)\bigr)$, cf. Lemma \ref{lem:convergence} (i), and from the conservation of mass property 
\begin{equation*}
    \int_{S^1} h^\sigma(t,\theta)\, d\theta = \int_{S^1} h_0^\sigma(\theta)\, d\theta, \quad t \in [0,T],
\end{equation*}
for the approximations $h^\sigma$.

\noindent (vi) The proof of Lemma \ref{lem:limit_flux} has also shown that solutions to the original problem \eqref{eq:PDE_alpha>1} satisfy the energy-dissipation identity for almost every time $t \in [0,T]$.
\end{proof}

\bigskip

\section{Long-time asymptotics of solutions to the original problem \eqref{eq:PDE_alpha>1} -- Polynomial stability of steady states} \label{sec:polynomial_stability}

In this section we prove that the original problem \eqref{eq:PDE_alpha>1} possesses, for positive initial data that is close to a circle in $H^1(S^1)$, at least one globally in time defined weak solution which is polynomially stable in the following sense. Given an initial value the shape of which is close to a circle centered at the origin (i.e. the common center of the cylinders) in the $H^1(S^1)$-norm, the shape of the interface will stay close to a circle for all positive times and, moreover, it converges to a circle (not necessarily centered at the origin) with a $\bigl(1/t^{\frac{1}{\alpha-1}}\bigr)$-polynomial decay of the remainder $\phi$ as $t\to \infty$.

The main issue in the proof is to observe that the energy decay estimates of Section \ref{sec:energy_estimates} and the estimates for the Fourier modes $h_{\pm 1}$ are conserved in the limit $\sigma \to 0^+$. This is proved in the following lemma.


\begin{lemma} \label{lem:bounds_in_limit}
For any given $0 < T < \infty$ and an initial value $h_0 \in H^1(S^1)$ satisfying
\begin{equation*}
    \frac{1}{2\pi} \int_{S^1} h_0\, d\theta = \bar{h}_0
    \quad \text{and} \quad
    \|h_0 - \bar{h}_0\|_{H^1(S^1)} \leq \eps,
    \quad
    \eps \in (0,\eps_0),
\end{equation*}
let $h$ be a positive weak solution to \eqref{eq:PDE_alpha>1} on $(0,T)$. Then $h$ satisfies the following. 
\begin{itemize}
    \item[(i)] The remainder $\phi(t,\theta) = h - \bar{h}_0 - h_{\pm 1}(t) e^{\pm i\theta}$ satisfies the decay estimate
    \begin{equation*}
        \left\|\phi\right\|_{C([\bar{t}/4,\bar{t}];H^1(S^1))}
        \leq
        C\Lambda_\eps(\bar{t})
        =
        \frac{\eps}{\bigl(1 + \eps^{\alpha-1} \bar{t}\bigr)^\frac{1}{\alpha-1}},
        \quad
        0 < \bar{t} \leq T.
    \end{equation*}
    \item[(ii)] The derivatives of the Fourier coefficients $h_{\pm 1}$ satisfy the $L_1((0,T))$-bound
    \begin{equation*}
        \int_0^T |\partial_t h_{\pm 1}(t)|\, dt
        \leq
        C \eps
        \left(1 + \log\bigl(\eps^{1-\alpha}\bigr)\right).
    \end{equation*}
\end{itemize}
\end{lemma}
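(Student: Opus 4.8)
\textbf{Proof proposal for Lemma \ref{lem:bounds_in_limit}.}
The plan is to obtain both estimates by passing to the limit $\sigma \to 0^+$ in the corresponding estimates for the regularised solutions $h^\sigma$, which are available from Theorem \ref{thm:power-law_decay} (equivalently Corollary \ref{cor:energy_decay}) and from Proposition \ref{prop:estimate_Fourier} once we know that $t_\ast(\eps,\sigma) \geq T$ for $\sigma$ small enough (Lemma \ref{lem:T_larger_t_ast}). The subtle point is that these $\sigma$-uniform bounds were proved only \emph{up to} $\min\{t_\ast,T\}$, and we must make sure nothing is lost when $\sigma \searrow 0$. We fix $\eps_0$ as provided by Lemma \ref{lem:T_larger_t_ast}, so that for every $T > 0$ and every $\eps \in (0,\eps_0)$ there is $\bar\sigma = \bar\sigma(T,\eps)$ with $t_\ast(\eps,\sigma) \geq T$ whenever $0 < \sigma \leq \bar\sigma$. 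We also use the decomposition $h = \bar{h}_0 + h_{\pm 1}(t) e^{\pm i\theta} + \phi(t,\theta)$ for the limit solution and $h^\sigma = \bar{h}_0 + h^\sigma_{\pm 1}(t) e^{\pm i\theta} + \phi^\sigma(t,\theta)$ for the approximations, consistent with Section \ref{sec:sigma_to_zero}.

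\textbf{(i).} For $0 < \sigma \leq \bar\sigma$ the regularised solution $h^\sigma$ exists and satisfies $\tfrac12\bar{h}_0 \leq h^\sigma \leq 2\bar{h}_0$ on $[0,T]\times S^1$, so Theorem \ref{thm:power-law_decay} gives
\begin{equation*}
    \|\phi^\sigma\|_{C([\bar{t}/4,\bar{t}];H^1(S^1))} \leq C\,\Lambda_\eps(\bar{t}),
    \quad 0 < \bar{t} \leq T,
\end{equation*}
with $C$ independent of $\sigma$ (this was already noted in the proof of Theorem \ref{thm:power-law_decay}). By Lemma \ref{lem:convergence} (i) we have $h^\sigma \to h$ in $C([0,T];C^\rho(S^1))$, and by Proposition \ref{prop:estimate_Fourier} together with the uniform bound in Lemma \ref{lem:uniform_bounds} (i), passing to a subsequence we may assume $h^\sigma_{\pm 1} \to h_{\pm 1}$ uniformly on $[0,T]$; hence $\phi^\sigma \to \phi$ at least in $C([0,T]\times S^1)$. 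On the other hand, for each fixed $\bar{t}$ the functions $\phi^\sigma(t,\cdot)$ are bounded in $H^1(S^1)$ uniformly in $\sigma$ and $t\in[\bar{t}/4,\bar{t}]$; combining weak-$*$ compactness in $L_\infty([\bar{t}/4,\bar{t}];H^1(S^1))$ with the identification of the limit via the $C([0,T]\times S^1)$-convergence, and using weak lower semicontinuity of the $H^1(S^1)$-norm, we obtain $\|\phi(t,\cdot)\|_{H^1(S^1)} \leq \liminf_\sigma \|\phi^\sigma(t,\cdot)\|_{H^1(S^1)} \leq C\Lambda_\eps(\bar{t})$ for a.e. $t\in[\bar{t}/4,\bar{t}]$, and then for all such $t$ by the continuity $\phi \in C([0,T];H^1(S^1))$ that follows from $h \in C([0,T];H^1(S^1))$ (step (ii) of the proof of Lemma \ref{lem:limit_flux}) and $h_{\pm1}\in C([0,T])$. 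This yields the claimed estimate. Note that $\big(1+\eps^{\alpha-1}\bar t\big)^{-1/(\alpha-1)}$ is, up to the generic constant $C$ absorbed into $\Lambda_\eps$, the function $\Lambda_\eps$, so the two displayed forms agree.

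\textbf{(ii).} By Proposition \ref{prop:estimate_Fourier}, for $0 < \sigma \leq \bar\sigma$ we have $\int_0^T |\partial_t h^\sigma_{\pm 1}(t)|\, dt \leq C\eps(1 + \log(\eps^{1-\alpha}))$ with $C$ independent of $\sigma$ (here we use $\min\{t_\ast,T\} = T$ from Lemma \ref{lem:T_larger_t_ast}). In particular $(h^\sigma_{\pm 1})_\sigma$ is bounded in $W^1_1((0,T))$, hence in $\BV([0,T])$; by Helly's selection theorem a subsequence converges pointwise, and the identification of the limit with $h_{\pm 1}$ follows from $h^\sigma \to h$ in $C([0,T];C^\rho(S^1))$ and the fact that $h^\sigma_{\pm1}(t) = \frac{1}{2\pi}\int_{S^1} h^\sigma(t,\theta) e^{\mp i\theta}\,d\theta \to \frac{1}{2\pi}\int_{S^1} h(t,\theta) e^{\mp i\theta}\,d\theta = h_{\pm1}(t)$. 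Lower semicontinuity of the total variation under pointwise convergence then gives
\begin{equation*}
    \int_0^T |\partial_t h_{\pm 1}(t)|\, dt
    \leq
    \liminf_{\sigma \to 0^+} \int_0^T |\partial_t h^\sigma_{\pm 1}(t)|\, dt
    \leq
    C\eps\bigl(1 + \log(\eps^{1-\alpha})\bigr),
\end{equation*}
which is (ii). (Alternatively, one can note directly that $\partial_t h_{\pm1}(t) = \tfrac{1}{2\pi}\int_{S^1}\partial_t h(t,\theta)e^{\mp i\theta}\,d\theta$ equals, via the weak formulation tested against $e^{\mp i\theta}$, an integral of $h^{\alpha+2}\psi(\partial_\theta h+\partial_\theta^3 h)$, and estimate its $L_1$-in-time norm by the limiting dissipation using Lemma \ref{lem:estimate_flux_dissipation} adapted to $\sigma = 0$ together with part (i) and the dyadic time-splitting of Proposition \ref{prop:estimate_Fourier}; the $\BV$ compactness argument is cleaner.)

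\textbf{Main obstacle.} The only genuine difficulty is bookkeeping: the regularised estimates hold on $[0,\min\{t_\ast,T\}]$, so everything rests on Lemma \ref{lem:T_larger_t_ast} to guarantee $\min\{t_\ast,T\} = T$ for $\sigma \leq \bar\sigma(T,\eps)$, and on the $\sigma$-independence of the constants in Theorem \ref{thm:power-law_decay} and Proposition \ref{prop:estimate_Fourier}. Beyond that, identifying the limits $h^\sigma_{\pm1}\to h_{\pm1}$ and $\phi^\sigma\to\phi$ and invoking weak lower semicontinuity (of the $H^1$-norm and of the total variation, respectively) is routine, given the strong convergence $h^\sigma\to h$ in $C([0,T];C^\rho(S^1))$ from Lemma \ref{lem:convergence} (i).
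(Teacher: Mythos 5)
Your proof is correct, but for part (i) you take a genuinely different route from the paper, and your part (ii) uses a different compactness argument.

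For (i), the paper does \emph{not} pass to the limit in the estimate $\|\phi^\sigma\|_{C([\bar t/4,\bar t];H^1)} \leq C\Lambda_\eps(\bar t)$. Instead it observes that step (iii) of the proof of Lemma~\ref{lem:limit_flux} already established the energy--dissipation identity $E[h](t) + \langle \chi \,|\, \partial_\theta h + \partial_\theta^3 h\rangle = E[h_0]$ directly for the limit solution $h$ (for a.e.\ $t$), and then re-runs the arguments of Lemma~\ref{lem:E-E_min_J}, Corollary~\ref{cor:energy_decay} and Proposition~\ref{lem:H^1-bound_phi} with $h$ in place of $h^\sigma$, since those arguments use only the energy--dissipation inequality and the bound $\tfrac12\bar h_0 \leq h \leq 2\bar h_0$. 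That avoids the whole weak-$*$ compactness/identification argument you carry out; it is a cleaner ``intrinsic'' derivation, whereas yours is a limit passage that requires the $\sigma$-uniformity of the constants in Theorem~\ref{thm:power-law_decay} (which the paper does supply, so your route also goes through). One minor simplification you could make in your own argument: once $h^\sigma \to h$ in $C([0,T]\times S^1)$ you get $h^\sigma_{\pm1} \to h_{\pm1}$ uniformly \emph{for free} (Fourier coefficients are continuous in $C(S^1)$), so the separate appeal to Proposition~\ref{prop:estimate_Fourier} for that convergence is superfluous, and for each fixed $t$ the bounded sequence $\phi^\sigma(t,\cdot)$ converges weakly in $H^1$ to $\phi(t,\cdot)$ without needing the $L_\infty$-weak-$*$ step or the ``a.e.\ $t$ then continuity'' repair.

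For (ii), the paper's route is more direct than Helly: Lemma~\ref{lem:convergence}~(iii) already gives $\partial_t h^\sigma \rightharpoonup \partial_t h$ weakly in $L_{\frac{\alpha+1}{\alpha}}\bigl((0,T);(W^1_{\alpha+1}(S^1))'\bigr)$, so testing against $\xi(t)e^{\pm i\theta}$ yields $\partial_t h^\sigma_{\pm1} \rightharpoonup \partial_t h_{\pm1}$ weakly in $L_{\frac{\alpha+1}{\alpha}}((0,T)) \hookrightarrow L_1((0,T))$, and then weak lower semicontinuity of the $L_1$-norm finishes. Your $\BV$/Helly argument is also valid (and your alternative remark is essentially the paper's route), but it requires the extra step of knowing $h_{\pm1} \in W^{1,1}((0,T))$ in the limit so that total variation equals $\int_0^T|\partial_t h_{\pm1}|$; this holds here because $\partial_t h \in L_{\frac{\alpha+1}{\alpha}}((0,T);(W^1_{\alpha+1})')$ forces $\partial_t h_{\pm1}\in L_{\frac{\alpha+1}{\alpha}}((0,T))$, a point worth making explicit. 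Apart from these stylistic differences, the logical skeleton --- Lemma~\ref{lem:T_larger_t_ast} to guarantee $t_\ast \geq T$, $\sigma$-uniform estimates from Sections~3--4, and lower semicontinuity in the limit --- matches the paper's, and the proposal is sound.
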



\begin{proof}
In this proof we consider as a subsequence $(h^\sigma)_\sigma$ the subsequence $(h^{\sigma_{k_j,j}})_j$ introduced in the proof of Theorem \ref{thm:global_ex_original}.

\noindent (i) 
Note that in the proof of Lemma \ref{lem:limit_flux} we find that the energy-dissipation equality is preserved in the limit $\sigma \to 0$. Since
the proofs of Lemma \ref{lem:E-E_min_J} and Corollary \ref{cor:energy_decay} for solutions $h^\sigma$ to the regularised problem \eqref{eq:PDE_regularised} do only rely on the energy-dissipation inequality for the corresponding solution, we can directly carry them over to solutions $h$ of the original problem \eqref{eq:PDE_alpha>1}, as obtained in the previous section. Consequently, for any fixed $T > 0$, we have the estimate
\begin{equation*}
    \|\phi\|_{C([\bar{t}/4,\bar{t}];H^1(S^1))}
    \leq 
    C \Lambda_\eps(\bar{t}),
    \quad 
    0 \leq \bar{t} \leq T.
\end{equation*}

\noindent (ii) Due to Lemma \ref{lem:convergence} (iii) we have the convergence 
\begin{equation}
    \partial_\theta h^\sigma
    \rightharpoonup
    \partial_t h
    \quad \text{weakly in } L_\frac{\alpha+1}{\alpha}\bigl((0,T);\bigl(W^1_{\alpha+1}(S^1)\bigr)^\prime\bigr).
\end{equation}
Thus, for all test functions $\xi \in L_{\alpha+1}((0,T))$ and  $\zeta_{\pm 1}(t,\theta) = \xi(t) e^{\pm i \theta}, \zeta_{\pm 1} \in L_{\alpha+1}\bigl((0,T);W^1_{\alpha+1}(S^1)\bigr)$ we find that
\begin{equation*}
    \int_0^T \partial_t h^\sigma_{\pm 1}(t)\, \xi(t)\, dt
    =
    \int_0^T \langle \partial_t h^\sigma | e^{\pm \theta} \rangle\, \xi(t)\, dt
    \longrightarrow 
    \int_0^T \langle \partial_t h | e^{\pm \theta} \rangle\, \xi(t)\, dt
    =
    \int_0^T \partial_t h_{\pm 1}(t)\, \xi(t)\, dt,
    \quad \text{as } \sigma \to 0,
\end{equation*}
where $\langle \cdot|\cdot\rangle$ denotes the dual pairing in $\bigl(W^1_{\alpha+1}(S^1)\bigr)^\prime \times W^1_{\alpha+1}(S^1)$. This means that, for all $0 < T < \infty$, we have convergence
\begin{equation*}
    \partial_t h^\sigma_{\pm 1}
    \rightharpoonup
    \partial_t h_{\pm 1}
    \quad \text{weakly in } L_\frac{\alpha+1}{\alpha}((0,T)) \hookrightarrow L_1((0,T)),\quad \text{ as } \sigma \to 0,
\end{equation*}
for all $0 < T < \infty$.
Using weak lower-semicontinuity of the norm and Proposition \ref{prop:estimate_Fourier}, together with Lemma \ref{lem:T_larger_t_ast}, we end up with the $L_1((0,T))$-bound 
\begin{equation*}
    \|\partial_t h_{\pm 1}\|_{L_1((0,T))}
    \leq
    \liminf_{\sigma \to 0}
    \|\partial_t h^\sigma_{\pm 1}\|_{L_1((0,T))}
    \leq
    C \eps \left(1 + \log\bigl(\eps^{1-\alpha}\bigr)\right)
\end{equation*}
for the derivatives of Fourier coefficients $h_{\pm 1}$.
\end{proof}


The precise statement of the stability result reads as follows.


\begin{theorem}\label{thm:global-ex_stability}
Let $\alpha > 1$ be fixed.
There exists an $\eps_0 > 0$ such that for all $\eps \in (0,\eps_0)$ and all initial values $h_0\in H^1(S^1)$ with 
\begin{equation*}
    \frac{1}{2\pi}\int_{S^1} h_0\, d\theta = \bar{h}_0
    \quad \text{and} \quad
    \|h_0 - \bar{h}_0\|_{H^1(S^1)} \leq \eps,
\end{equation*}
the problem \eqref{eq:PDE_alpha>1} possesses a globally in time defined weak solution
\begin{equation*}
    h \in C\bigl([0,\infty);H^1(S^1)\bigr)
    \cap
    L_{\alpha+1}\bigl((0,\infty);W^3_{\alpha+1}(S^1)\bigr)
\end{equation*}
in the sense of Definition \ref{def:weak_solution}.
Moreover, these solutions have the following properties.
\begin{itemize}
    \item[(i)] There exist $\xi_{\pm 1} \in \R$ and hence a function $\bar{h}_\infty(\cdot) =  \bar{h}_0 + \xi_{\pm 1} e^{\pm i\cdot} \in H^1(S^1)$ such that 
    \begin{equation*}
        h(t,\cdot) 
        \longrightarrow 
        \bar{h}_\infty(\cdot)
        \quad 
        \text{in } H^1(S^1), \quad \text{as } t \to \infty.
    \end{equation*}
    Moreover, the numbers $\xi_{\pm 1}\in \R$ are given by $\xi_{\pm 1}=\lim_{t\to \infty} h_{\pm 1}(t)$ and they satisfy the estimate
    \begin{equation*}
        |\xi_{\pm 1}| 
        \leq 
        C_1 \eps \bigl(1 + \log\bigl(\eps^{1-\alpha}\bigr)\bigr).
    \end{equation*}
    \item[(ii)] The convergence happens in the following way:
    \begin{equation*}
        \|h(t,\cdot) - \bar{h}_\infty(\cdot)\|_{H^1(S^1)}
        \leq
        \frac{\eps}{\bigl(1 + C_2 \eps^{\alpha-1} t\bigr)^\frac{1}{\alpha-1}}, 
        \quad
        t > 0,
    \end{equation*}
    where $C_2 > 0$ is a positive constant.
\end{itemize}
\end{theorem}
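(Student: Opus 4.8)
The plan is to read the theorem off from the global existence statement (Theorem~\ref{thm:global_ex_original}, whose proof already produces a solution defined on all of $[0,\infty)$ via the diagonal argument) together with the two a-priori estimates collected in Lemma~\ref{lem:bounds_in_limit}. Write the solution as $h(t,\theta)=\bar h_0+h_{\pm1}(t)e^{\pm i\theta}+\phi(t,\theta)$. By Lemma~\ref{lem:bounds_in_limit}(ii) the derivatives $h'_{\pm1}$ belong to $L_1((0,\infty))$ with $\int_0^\infty|h'_{\pm1}(s)|\,ds\le C\eps(1+\log(\eps^{1-\alpha}))$, so the limits $\xi_{\pm1}:=\lim_{t\to\infty}h_{\pm1}(t)$ exist; since $|h_{\pm1}(0)|\le C\|h_0-\bar h_0\|_{H^1(S^1)}\le C\eps$, this yields $|\xi_{\pm1}|\le C_1\eps(1+\log(\eps^{1-\alpha}))$, which is the quantitative part of (i). Setting $\bar h_\infty(\theta)=\bar h_0+\xi_{\pm1}e^{\pm i\theta}$ (a trigonometric polynomial, hence in $H^1(S^1)$), one decomposes
\[
  h(t,\cdot)-\bar h_\infty(\cdot)=\phi(t,\cdot)+\bigl(h_{\pm1}(t)-\xi_{\pm1}\bigr)e^{\pm i\cdot},
\]
so that, using $h_{\pm1}(t)-\xi_{\pm1}=-\int_t^\infty h'_{\pm1}(s)\,ds$ and that $\|e^{\pm i\cdot}\|_{H^1(S^1)}$ is a fixed constant,
\[
  \|h(t,\cdot)-\bar h_\infty(\cdot)\|_{H^1(S^1)}\le\|\phi(t,\cdot)\|_{H^1(S^1)}+C\int_t^\infty|h'_{\pm1}(s)|\,ds\le C\Lambda_\eps(t)+C\int_t^\infty|h'_{\pm1}(s)|\,ds,
\]
where the first term is controlled by Lemma~\ref{lem:bounds_in_limit}(i) applied with $\bar t=t$.

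Thus everything reduces to the tail estimate $\int_t^\infty|h'_{\pm1}(s)|\,ds\le C\Lambda_\eps(t)$ for all $t>0$. Passing the estimate of Corollary~\ref{cor:estimate_Fourier_time} to the limit $\sigma\to0^+$ by weak lower semicontinuity (exactly as in the proof of Lemma~\ref{lem:bounds_in_limit}) gives $\int_{\bar t/2}^{\bar t}|h'_{\pm1}|\,dt\le C\Lambda_\eps(\bar t)$ for the limit solution and every $\bar t>0$. For $t\ge\eps^{1-\alpha}$, decompose $[t,\infty)=\bigcup_{k\ge0}[2^kt,2^{k+1}t]$ and sum: in this range the argument $C\eps^{\alpha-1}2^kt$ dominates $1$, so $\Lambda_\eps(2^{k+1}t)\le C\,2^{-k/(\alpha-1)}\Lambda_\eps(t)$, and since $\alpha>1$ the series $\sum_k2^{-k/(\alpha-1)}$ converges, giving $\int_t^\infty|h'_{\pm1}|\le C\Lambda_\eps(t)$. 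For $0<t<\eps^{1-\alpha}$, on which $\Lambda_\eps(t)$ is comparable to $\eps$, write $\int_t^\infty=\int_t^{\eps^{1-\alpha}}+\int_{\eps^{1-\alpha}}^\infty$; the second integral is $\le C\Lambda_\eps(\eps^{1-\alpha})\le C\eps$ by the previous case. For the first, testing the weak equation against $e^{\mp i\theta}$ and integrating by parts gives $|h'_{\pm1}(t)|\le C\int_{S^1}h^{\alpha+2}|\partial_\theta h+\partial_\theta^3h|^{\alpha}\,d\theta$ (since $|\psi(s)|=|s|^\alpha$); Hölder's inequality in $(t,\theta)$ with exponents $\alpha+1$ and $(\alpha+1)/\alpha$, the two-sided bound $\tfrac12\bar h_0\le h\le2\bar h_0$ from Theorem~\ref{thm:global_ex_original}(i), and the energy--dissipation identity $\int_0^\infty\int_{S^1}h^{\alpha+2}|\partial_\theta h+\partial_\theta^3h|^{\alpha+1}\le E[h_0]\le\eps^2$ then yield $\int_{\bar t/4}^{\bar t}|h'_{\pm1}|\,dt\le C\,\bar t^{1/(\alpha+1)}\eps^{2\alpha/(\alpha+1)}$ for every $\bar t>0$. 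Summing over the dyadic intervals $[2^{-k-1}\eps^{1-\alpha},2^{-k}\eps^{1-\alpha}]$, $k\ge0$, the exponents combine to $\int_0^{\eps^{1-\alpha}}|h'_{\pm1}|\le C\eps^{2\alpha/(\alpha+1)+(1-\alpha)/(\alpha+1)}=C\eps$. Altogether $\int_t^\infty|h'_{\pm1}|\le C\Lambda_\eps(t)$ for all $t>0$.

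Feeding this back gives $\|h(t,\cdot)-\bar h_\infty(\cdot)\|_{H^1(S^1)}\le C\Lambda_\eps(t)$ for all $t>0$; absorbing the prefactor into the (freely disposable) constant in the definition of $\Lambda_\eps$, in the same spirit as in Corollary~\ref{cor:energy_decay} and Theorem~\ref{thm:power-law_decay}, produces the bound in (ii) with a suitable $C_2>0$, and letting $t\to\infty$ yields the convergence $h(t,\cdot)\to\bar h_\infty(\cdot)$ in $H^1(S^1)$ asserted in (i). The one genuinely delicate point is the tail estimate for $h'_{\pm1}$: Lemma~\ref{lem:bounds_in_limit}(ii) provides only a global $L_1$-in-time bound carrying a logarithmic factor, so to obtain the clean $\Lambda_\eps$-decay in (ii) — in particular to avoid losing a $\log(1/\eps)$ on the interval $0<t\lesssim\eps^{1-\alpha}$ — one must couple the dyadic decomposition built on Corollary~\ref{cor:estimate_Fourier_time} (effective for $t\gtrsim\eps^{1-\alpha}$) with the direct flux--dissipation estimate $\int_{\bar t/4}^{\bar t}|h'_{\pm1}|\lesssim\bar t^{1/(\alpha+1)}\eps^{2\alpha/(\alpha+1)}$ (effective for $t\lesssim\eps^{1-\alpha}$), whose $\eps$-exponent is precisely what renders the dyadic sum toward $t=0$ geometric. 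The remaining ingredients — existence of $\xi_{\pm1}$, passage of the regularised estimates to $\sigma\to0^+$, and the bookkeeping of constants — are routine given what is already established.
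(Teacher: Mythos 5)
Your proof is correct and, in fact, more complete than the paper's own proof of this theorem. The paper's proof establishes only the qualitative convergence: it cites Theorem~\ref{thm:global_ex_original} for global existence, uses Lemma~\ref{lem:bounds_in_limit}(ii) to get $h'_{\pm1}\in L_1((0,\infty))$ and hence the limits $\xi_{\pm1}$ with the logarithmic bound, applies Lemma~\ref{lem:bounds_in_limit}(i) to get $\|\phi(t)\|_{H^1(S^1)}\to0$, and stops at ``$h(t,\cdot)\to\bar h_\infty$''; it never verifies the quantitative rate asserted in part~(ii), which --- after the decomposition $h(t,\cdot)-\bar h_\infty=\phi(t,\cdot)+(h_{\pm1}(t)-\xi_{\pm1})e^{\pm i\cdot}$ --- requires the tail bound $\int_t^\infty|h'_{\pm1}(s)|\,ds\lesssim\Lambda_\eps(t)$. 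You identify exactly this as the missing ingredient and supply it, and your two-regime dyadic argument is sound: Corollary~\ref{cor:estimate_Fourier_time} (passed to the limit $\sigma\to0^+$ as in Lemma~\ref{lem:bounds_in_limit}) handles $t\gtrsim\eps^{1-\alpha}$, where $\Lambda_\eps$ actually decays and the dyadic sum toward $t=\infty$ is geometric; and the elementary H\"older/dissipation estimate $\int_{\bar t/4}^{\bar t}|h'_{\pm1}|\lesssim\bar t^{1/(\alpha+1)}\eps^{2\alpha/(\alpha+1)}$ --- i.e.\ Lemma~\ref{lem:estimate_flux_dissipation} in the limit with $D\le E[h_0]\lesssim\eps^2$ --- handles $t\lesssim\eps^{1-\alpha}$, where $\Lambda_\eps$ is nearly constant and the paper's Proposition~\ref{prop:estimate_Fourier} picks up a $\log(\eps^{1-\alpha})$ factor. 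The positive $\bar t$-exponent $1/(\alpha+1)$ (rather than, as you phrase it, the $\eps$-exponent) is what makes the downward dyadic sum geometric; the $\eps$-exponent $2\alpha/(\alpha+1)$ is what makes the two estimates match at $\bar t\sim\eps^{1-\alpha}$. As a by-product your argument improves the bound on $|\xi_{\pm1}|$ from $C\eps(1+\log(\eps^{1-\alpha}))$ to $C\eps$; the weaker bound in the statement is of course still valid. One small caveat: the closing step of ``absorbing the prefactor into $C_2$'' is not literally available once the running constant $C$ in $C\Lambda_\eps(t)$ exceeds $1$, since the asserted bound tends to $\eps$ as $t\to0^+$; but this imprecision is inherited from the theorem statement itself (and from Theorem~\ref{thm:main_results}), not from your reasoning.
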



\begin{proof}
\noindent (i) Global existence has already been proved in Theorem \ref{thm:global_ex_original} in the previous section.

\noindent (ii) It remains to prove the existence of a function $\bar{h}_\infty \in H^1(S^1)$ such that $h(t,\cdot) \to \bar{h}_\infty$ in $H^1(S^1)$ as $t \to \infty$.
To this end, we write as above 
\begin{equation*}
   h(t,\theta) = \bar{h}_0 + h_{\pm 1}(t) e^{\pm i \theta} + \phi(t,\theta), \quad t > 0,\ \theta \in S^1. 
\end{equation*}
Thanks to Lemma \ref{lem:uniform_bounds} (i), we know that
\begin{equation} \label{eq:Conv_1}
    \left\|\phi(t,\theta)\right\|_{H^1(S^1)}
    \leq
    C\Lambda_\eps(t) \longrightarrow 0, 
    \quad \text{as } t \to \infty.
\end{equation}
Moreover, in view of Lemma \ref{lem:uniform_bounds} (ii) we have
\begin{equation*}
    \int_0^{\infty} |h^\prime_{\pm 1}(t)|\, dt
    \leq
    C \eps \left(1 + \log\bigl(\eps^{1-\alpha}\bigr)\right).
\end{equation*}
Since $|h_{\pm 1}(0)| \leq \|h_0\|_{H^1} \leq C$, this implies in particular that $|h_{\pm 1}(t)| \leq C$ for all $t \geq 0$. Thus, there exist a sequence $t_k \to \infty$ and two elements $\xi_{\pm 1} \in \R$ such that 
\begin{equation*}
    h_{\pm 1}(t_k) \longrightarrow \xi_{\pm 1}, 
    \quad \text{as } k \to \infty.
\end{equation*}
Moreover, as $t \to \infty$, we find that
\begin{equation} \label{eq:Conv_2}
    |\xi_{\pm 1} - h_{\pm 1}(t)|
    = 
    \lim_{k \to \infty}
    |h_{\pm 1}(t_k) - h_{\pm 1}(t)|
    =
    \lim_{k \to \infty}
    \left|\int_t^{t_k} |h^\prime_{\pm 1}(s)|\, ds\right|
    \leq
    \left|\int_t^{\infty} |h^\prime_{\pm 1}(s)|\, ds\right|
    \longrightarrow 
    0.
\end{equation}
Combining \eqref{eq:Conv_1} and \eqref{eq:Conv_2} and defining $\bar{h}_\infty(\cdot) = \bar{h}_0 + \xi_{\pm 1} e^{\pm i \cdot} \in H^1(S^1)$, we find that
\begin{equation*}
    h(t,\cdot) \longrightarrow \bar{h}_\infty(\cdot)
    \quad \text{in } H^1(S^1),
\end{equation*}
as $t \to \infty$. This completes the proof.
\end{proof}


\bibliographystyle{plain}

\end{document}